\newcommand{\A}{\mathcal{A}}
\newcommand{\C}{\mathcal{C}}
\newcommand{\R}{\mathbb{R}}
\newcommand{\N}{\mathbb{N}}
\newcommand{\cN}{\mathcal{N}}
\newcommand{\supp}{\operatorname{supp}}
\newcommand{\capacity}{\operatorname{cap}}
\renewcommand{\Re}{\operatorname{Re}}
\newcommand{\dist}{\operatorname{dist}}
\newcommand{\case}[1]{\par\medskip\par\noindent\emph{#1}} 
\newcommand{\step}[1]{\par\medskip\par\noindent\emph{#1}} 
\newcommand{\avint}{-\kern-10.7pt\int}
\def\XXint#1#2#3{{\setbox0=\hbox{$#1{#2#3}{\int}$}
\vcenter{\hbox{$#2#3$}}\kern-0.5\wd0}}
\renewcommand{\phi}{\varphi}
\renewcommand{\epsilon}{\varepsilon}
\theoremstyle{plain}
\newtheorem{theorem}{Theorem}[section]
\newtheorem{lemma}[theorem]{Lemma}
\newtheorem{proposition}[theorem]{Proposition}
\newtheorem{corollary}[theorem]{Corollary}
\theoremstyle{definition}
\theoremstyle{remark}
\newtheorem{remark}[theorem]{Remark}
\numberwithin{equation}{section}
\title[{Parabolic Domains with Thin
  Lipschitz Complement}]{Parabolic Boundary Harnack Principles in Domains with Thin
  Lipschitz Complement}
\author{Arshak Petrosyan}
\address{Department of Mathematics, Purdue University, West Lafayette,
  IN 47907}
\email{arshak@math.purdue.edu}
\thanks{The authors were supported in part by NSF grant DMS-1101139}
\author{Wehnui Shi}
\address{Mathematisches Institut, Universit\"{a}t Bonn, Endenicher Allee 64,  53115 Bonn,
Germany}
\email{wenhui.shi@hcm.uni-bonn.de}
\begin{document}
\begin{abstract}
 We prove forward and backward parabolic boundary Harnack principles
 for nonnegative solutions of the heat equation in the complements of
 thin parabolic Lipschitz sets given as subgraphs
$$
E=\{(x,t): x_{n-1}\leq f(x'',t),x_n=0\}\subset \R^{n-1}\times\R
$$
for parabolically Lipschitz functions $f$ on $\R^{n-2}\times\R$.

We are motivated by applications to parabolic free boundary
problems with thin (i.e co-dimension two) free boundaries. In particular, at the end of the
paper we show how to prove the spatial $C^{1,\alpha}$ regularity of the
free boundary in the parabolic Signorini problem.
\end{abstract}
\keywords{parabolic boundary Harnack principle, backward boundary
  Harnack principle, heat
  equation, kernel functions, parabolic Signorini problem, thin free
  boundaries, regularity of the free boundary}

\subjclass[2010]{Primary 35K20, Secondary 35R35, 35K85}
\maketitle
\section{Introduction}
\label{sec:introduction}
The purpose of this paper is to study forward and backward boundary
Harnack principles for nonnegative solutions of the heat equation
in a certain type of domains in $\R^{n}\times\R$, which are, roughly
speaking, complements of thin parabolically Lipschitz sets $E$. By
the latter we understand closed sets, lying in the vertical hyperplane $\{x_n=0\}$, and which are locally given as subgraphs of parabolically
Lipschitz functions
 (see Fig.~\ref{fig:parabolic}). 

This kind of sets appear naturally in free boundary problems governed
by parabolic equations, where the free boundary lies in a given
hypersurface and thus has co-dimension two. Such free boundaries are
also known as thin free boundaries.  In particular, our study was
motivated by the
parabolic Signorini problem, recently studied in \cite{DGPT}.

The boundary Harnack
principles that we prove in this paper provide important technical
tools in problems with thin free boundaries. For instance, they open
up the possibility for proving that the thin Lipschitz free boundaries
have H\"older continuous spatial normals, following the original idea in
\cite{AC}. In particular, we show that this argument indeed can be
successfully carried out in the parabolic Signorini problem.

We have to point out that the elliptic counterparts of the results in
this paper are very well known, see e.g.\ \cite{AC,CSS,ALM}. However, 
there are significant differences between the elliptic and parabolic
boundary Harnack principles, mostly because of the time-lag in the
parabolic Harnack inequality. This results in two types of
the boundary Harnack principles for the parabolic equations: the forward
one (also known as the Carleson estimate) and the 
backward one. Besides, those results are known only for a much smaller
class of domains than in the elliptic case.  Thus, to put our results
in a better perspective, we start with a discussion of the known
results both in the elliptic and parabolic cases.

\subsection*{Elliptic boundary Harnack principle}

By now classical boundary Harnack principle for harmonic functions \cite{Ke-ell,Dahl,Wu-ell} says
that if $D$ is a bounded Lipschitz domain in $\R^n$, $x_0\in \partial
D$, and  $u$ and $v$ are positive harmonic functions on $D$ vanishing on
$B_{r}(x_0)\cap \partial D$ for a small $r>0$, then there
exist positive constants $M$ and $C$, depending only on the dimension
$n$ and the Lipschitz constant of $D$, such that
\begin{equation*}
\frac{u(x)}{v(x)}\leq C\frac{u(y)}{v(y)} \quad \text{for } x,y\in B_{r/M}(x_0)\cap D.
\end{equation*}
 Note that this result is scale-invariant, hence by a standard
 iterative argument, one then immediately obtains that the ratio $u/v$
 extends to $\overline{D}\cap B_{r/M}(x_0)$ as a H\"older continuous function. Roughly speaking, this theorem says that two positive harmonic
functions vanishing continuously on a certain part of the boundary
will decay at the same rate near that part of the boundary. 

The above boundary Harnack principle depends heavily on the geometric
structure of the domains. The scale
invariant boundary Harnack principle (among other classical theorems
of real analysis) was extended by \cite{JK} from
Lipschitz domains to the so-called NTA (non-tangentially accessible)
domains. Moreover, if the Euclidean 
metric is replaced by the internal metric, then similar results
hold for so-called uniform John domains \cite{ALM,HA}.

In particular, the boundary Harnack principle is known for the domains
of the following type
\begin{equation*}
D=B_1\setminus E_f, \quad E_f=\{x\in \R^n :  x_{n-1}\leq f(x''), x_{n} = 0\},
\end{equation*}
where $f$ is a Lipschitz function on $\R^{n-2}$, with $f(0)=0$, where
it is used for instance in the thin obstacle problem
\cites{AC,ACS2,CSS}. In fact, there is a relatively simple proof of
the boundary Harnack principle for the domains as above, already
indicated in \cite{AC}: there exists
a bi-Lipschitz transformation from $D$ to a halfball $B_1^+$, which is
a Lipschitz domain. The harmonic functions in $D$ transform to solutions of
a uniformly elliptic equation in divergence form with bounded measurable
coefficients in $B_1^+$, for which the boundary Harnack principle is
known \cite{CFMS}.

\subsection*{Parabolic boundary Harnack principle} The parabolic
version of the boundary Harnack principle is much more challenging
than the elliptic one, mainly because of the time-lag issue in the parabolic
Harnack inequality. The latter is called sometimes the forward Harnack
inequality, to emphasize the way it works: for nonnegative caloric
functions (solutions of the heat equation), if the earlier value is positive at some spatial point, after a necessary
waiting time, one can expect that the value will become positive
everywhere in a compact set containing that point. Under the condition
that the caloric function vanishes on the lateral boundary of the
domain, one may overcome the time-lag issue and get a backward type
Harnack principle (so combining together one gets an elliptic-type
Harnack inequality) 

The forward and backward boundary Harnack principle are known for
parabolic Lipschitz domains, not necessarily
cylindrical, see \cite{Kemper,Garofalo3,Salsa}. Moreover, they were
shown more recently in \cite{HLN} to hold for unbounded parabolically Reifenberg flat domains. In this paper, we will generalize 
the parabolic boundary Harnack principle to the domains of the
following type (see Figure \ref{fig:parabolic}): 
$$
D=\Psi_1\setminus E_f,
$$
where 
\begin{align*}\Psi_1&=\{(x,t):|x_i|<1,i=1,\ldots,n-2, |x_{n-1}|<4nL, |x_n|<1, |t|<1\};\\
E_f&=\{(x,t):x_{n-1}\leq f(x'',t), x_n=0\}
\end{align*}
and $f(x'',t)$ is a parabolically Lipschitz function satisfying
$$
|f(x'',t)-f(y'',s)|\leq L(|x''-y''|^2+|t-s|)^{1/2}; \quad f(0,0)=0.
$$
\definecolor{lightblue}{rgb}{0.863,0.816,0.867}%
\definecolor{lightpink}{rgb}{0.824,0.672,0.66}%
\begin{figure}
\begin{picture}(152,150)
\put(0,0){\includegraphics[height=150pt]{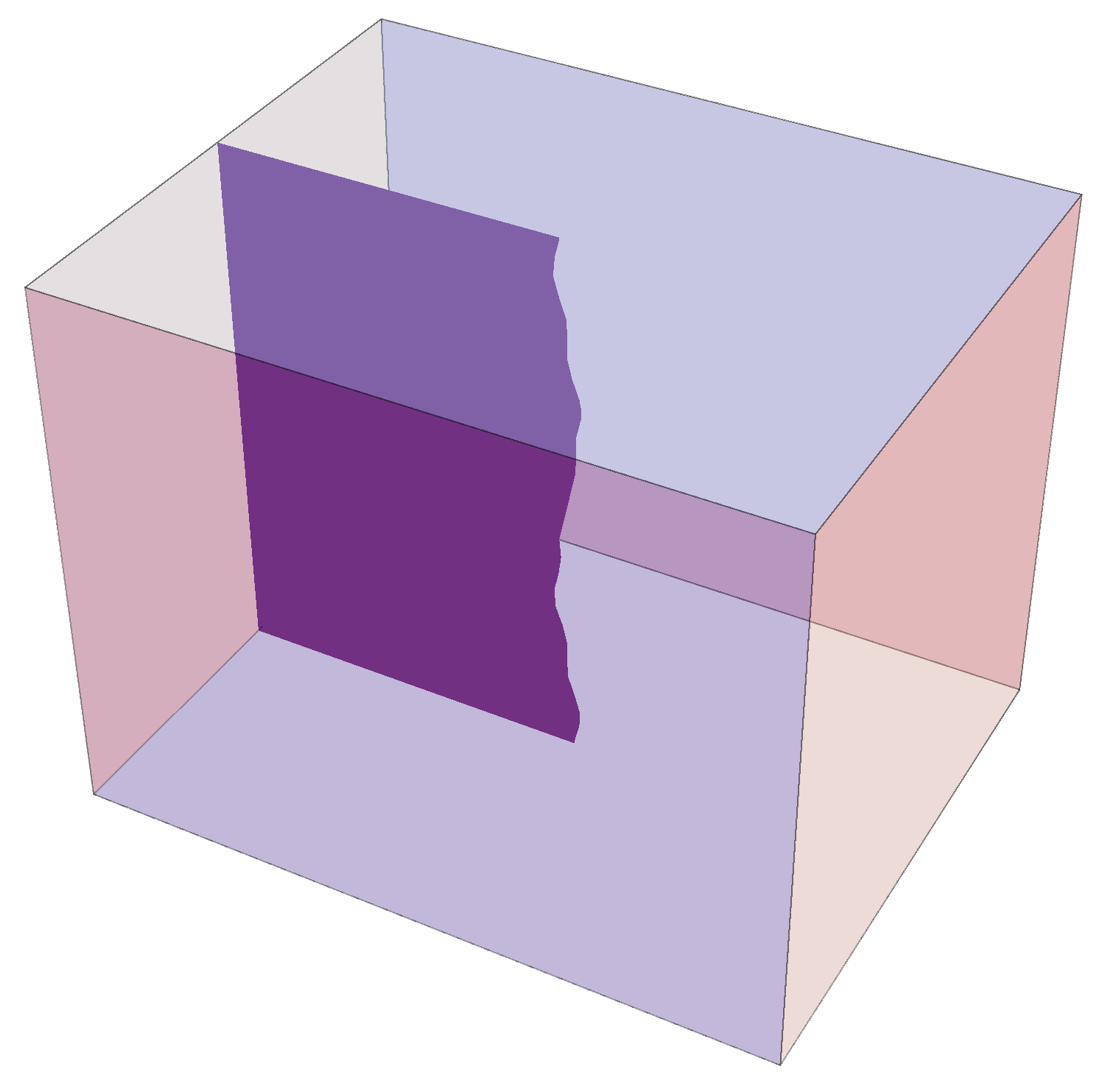}}
\put(48,72){\footnotesize \color{white}{$E_f$}}
\put(45,106){\footnotesize \color{white}{$u=0$}}
\put(92,90){\footnotesize $u>0$}
\put(85,113){\footnotesize $\Delta u-u_t=0$}
\end{picture}
\caption{Domain with a thin Lipschitz complement}
\label{fig:parabolic}
\end{figure}
Note that $D$ is not cylindrical ($E_f$ is not time invariant), and it
does not fall into any category of the domains on which the forward or
backward Harnack principle is known. Inspired by the elliptic inner
NTA domains (see e.g. \cite{ACS2}), it seems natural to equip the domain $D$ with the
intrinsic geodesic distance $\rho _D((x,t),(y,s))$, where $\rho
_D((x,t),(y,s))$ is defined as the infimum of the Euclidean length of rectifiable curves $\gamma$ joining $(x,t)$ and $(y,s)$ in $D$, and consider
the abstract completion $D^*$ of $D$ with respect to this inner
metric $\rho_D$. We will not be working directly with the inner
metric in this paper,  since it seems easier to work with the
Euclidean parabolic cylinders due to the time-lag issues and different
scales in space and time variables. However, we do use the fact that
the interior points of $E_f$ (in relative topology) correspond to two
different boundary points in the completion $D^*$.

Even though we assume in this paper that  $E_f$ lies on the hyperplane $\{x_{n}=0\}$ in
$\R^n\times\R$, our proofs, except those on the doubling of the caloric
measure and
the backward boundary Harnack principle, are easily generalized to the case
when $E_f$ is a hypersurface which is Lipschitz in space variable and
independent of time variable. 

\subsection*{Structure of the paper}
The paper is organized as follows.

In Section~\ref{sec:notat-prel} we give basic definitions and
introduce the notations used in this paper.

In Section~\ref{sec:barriers} we consider the Perron-Wiener-Brelot
(PWB) solution to the Dirichlet problem of the heat equation for
$D$. We show that $D$ is regular and has a H\"older continuous barrier
function at each parabolic boundary point. 

In Section~\ref{sec:forw-bound-harn} we establish a forward boundary Harnack
inequality for nonnegative caloric functions vanishing continuously on a
part of the lateral boundary following the lines of Kemper's paper
(\cite{Kemper}).  

In Section~\ref{sec:kernel-functions} we study the kernel functions
for the heat operator. We show that each boundary point $(y,s)$ in the
interior of $E_f$ (as a subset of the hyperplane $\{x_{n}=0\}$)
corresponds to two independent kernel functions. Hence the parabolic
Euclidean boundary for $D$ is not homeomorphic to the parabolic Martin
boundary.  

In Section~\ref{sec:backw-bound-harn}
we show the doubling property of the caloric measure with respect to $D$, which will imply a backward Harnack inequality for caloric functions vanishing on the whole lateral boundary.

Section~\ref{sec:applications} is dedicated to various forms of the
boundary Harnack principle from Sections~\ref{sec:forw-bound-harn} and
\ref{sec:backw-bound-harn}, including a version for solutions of the
heat equation with a nonzero right-hand side. We conclude the section
and the paper with an application to the parabolic Signorini problem.

\section{Notation and Preliminaries}
\label{sec:notat-prel}

\subsection{Basic Notation}
\begin{align*}
&\R^n && \text{the $n$-dimensional Euclidean space}\\
&x'=(x_1,\ldots,x_{n-1})\in\R^{n-1} && \text{for $x=(x_1,\ldots,
  x_n)\in\R^n$}\\
&x''=(x_1,\ldots,x_{n-2})\in\R^{n-2} &&\text{for $x=(x_1,\ldots,
  x_n)\in\R^n$}\\
\intertext{Sometimes it will be convenient to identify $x'$, $x''$ with
  $(x',0)$ and $(x'', 0,0)$, respectively.}
& x\cdot y=\sum_{i=1}^n x_iy_i,&&\text{the inner product for
  $x,y\in\R^n$}\\
&|x|=(x\cdot x)^{1/2}&&\text{the Euclidean norm of $x\in\R^n$}\\
&\|(x,t)\|=(|x|^2+|t|)^{1/2}&&\text{the parabolic norm of $(x,t)\in\R^n\times\R$}\\
&\overline E, E^\circ,\partial E&&\text{the closure, the
interior, the boundary of $E$}\\ 
&\partial_p E && \text{the parabolic boundary of $E$ in
  $\R^n\times\R$}\\
& B_r(x):=\{y\in\R^n: |x-y|<r\}&&\text{open ball in $\R^n$}\\
& B'_r(x'), B''_r(x'')&&\text{(thin) open balls in $\R^{n-1}$,
  $\R^{n-2}$}\\
&
\begin{aligned}
& Q_r(x,t):=B_r(x)\times (t-r^2,t)\\
\end{aligned}
&& 
\begin{aligned}\text{lower parabolic cylinders in }&\R^{n}\times\R
\end{aligned}\\
&\dist_p(E, F)=\inf_{\substack{(x,t)\in E\\(y,s)\in F}} \|(x-y,t-s)\| &&\text{the
  parabolic distance between sets $E$, $F$}
\end{align*}
We will also need the notion of \emph{parabolic Harnack chain} in a domain
$D\subset\R^n\times\R$. For two points $(z_1,h_1)$ and $(z_2,h_2)$ in
$D$ with $h_2-h_1\geq \mu^2 |z_2-z_1|^2$,  $0<\mu<1$,
we say that a sequence of parabolic cylinders $Q_{r_i}(x_i,t_i)\subset
D$,
$i=1,\ldots,N$ is a Harnack 
chain from $(z_1,h_1)$ to $(z_2,h_2)$ with a constant $\mu$ if

\begin{align*}
&(z_1,h_1)\in Q_{r_1}(x_1,t_1),\quad (z_2,h_2)\in Q_{r_N}(x_N,t_N)\\
&\mu\,
 r_i \leq \dist_p(Q_{r_i}(x_i,t_i),\partial_pD)\leq \frac1\mu
r_i,\quad i=1,\ldots,N,\\
&Q_{r_{i+1}}(x_{i+1},t_{i+1})\cap Q_{r_i}(x_i,t_i)\not=\emptyset,\quad i=1,\ldots,N-1,\\
&t_{i+1}-t_i\geq \mu^2 r_{i}^2,\quad i=1,\ldots,N-1.
\end{align*}
The number $N$ is called the length of the Harnack chain. By the
parabolic Harnack inequality, if $u$ is a nonnegative caloric function
in $D$ and there is a Harnack chain of length $N$ and constant $\mu$
from $(z_1,h_1)$ to $(z_2,h_2)$, then
$$
u(z_1,h_1)\leq C(\mu, n, N)\, u(z_2,h_2).
$$ 
Further, for given $L\geq 1$ and $r>0$ we also introduce the (elongated)
parabolic boxes, specifically adjusted to our purposes 
\begin{align*}
\Psi_r''&=\{(x'',t)\in\R^{n-2}\times\R:|x_i|< r, i=1,\ldots, n-2, |t|< r^2\} \\
\Psi'_r&=\{(x',t)\in\R^{n-1}\times\R: (x'',t)\in\Psi_r'', 
 |x_{n-1}|<4nLr\}\\
\Psi_r&=\{(x,t)\in\R^{n}\times\R:(x',t)\in\Psi_r', 
 |x_{n}|<r\} \\
\Psi_r(y,s)&=(y,s)+\Psi_r.
\end{align*}
We also define the following neighborhoods
\begin{alignat*}{2}
&\cN_r(E):=\bigcup_{(y,s)\in E} \Psi_r(y,s),&\quad&\text{for any set $E\subset
  \R^n\times \R$.}
\end{alignat*}

\subsection{Domains with thin Lipschitz complement}
\label{sec:domains-with-thin}

Let $f:\R^{n-2}\times\R\to \R$ be a parabolically Lipschitz function
with a Lipschitz constant $L\geq 1$ in a sense that
\[
|f(x'',t)-f(y'',s)|\leq L (|x''-y''|^2+|t-s|)^{1/2},\quad (x'',t),
(y'',s)\in\R^{n-2}\times\R
\]
Then consider the following two sets:
\begin{align*}
G_f&=\{(x,t):  x_{n-1}=f(x'',t),x_n=0\}\\
E_f&=\{(x,t): x_{n-1}\leq f(x'',t),x_n=0\}
\end{align*}
We will call them \emph{thin Lipschitz graph} and \emph{subgraph}
respectively (with ``thin'' indicating their lower dimension). We are
interested in a behavior of caloric functions in domains of the type
$\Omega\setminus E_f$, where $\Omega$ is open in $\R^n\times\R$. We
will say that $\Omega\setminus E_f$ is a domain
with a \emph{thin Lipschitz complement}.

We are interested mostly in local behavior of caloric functions near
the points on $G_f$ and therefore we concentrate our study on the
case
$$
D=D_f:=\Psi_1\setminus E_f
$$
with a normalization condition
$$
f(0,0)=0\iff (0,0)\in G_f.
$$

We will state most of our results for $D$ defined as above, however,
the results will still hold, if we replace $\Psi_1$ in the
construction above with a rectangular box 
$$
\tilde \Psi=\Big(\prod_{i=1}^n(a_i,b_i)\Big)\times(\alpha,\beta)
$$
such that for some
constants $c_0, C_0>0$ depending on $L$ and $n$, we have
$$
\tilde \Psi\subset \Psi_{C_0},\quad
\Psi_{c_0}(y,s)\subset\tilde\Psi,\quad \text{for all }(y,s)\in G_f,\ s\in[\alpha+c_0^2,\beta-c_0^2] 
$$
and consider the complement 
$$
\tilde D=\tilde D_f:=\tilde \Psi\setminus E_f.
$$
Even more generally, one may take $\tilde\Psi$ to be a
cylindrical domain of the type $\tilde \Psi=\mathcal{O}\times(\alpha,\beta)$ where
$\mathcal{O}\subset\R^n$ has the property that $\mathcal{O}_\pm=\mathcal{O}\cap\{\pm x_{n}>0\}$
are Lipschitz domains. For instance, we can take $\mathcal{O}=B_1$. Again, most of the results that we state will
be valid also in this case, with a possible change in constants that
appear in estimates.

\subsection{Corkscrew points}

Since will be working in $D=\Psi_1\setminus E_f$ as above, it will be
convenient to redefine sets $E_f$ and $G_f$ as follows:
\begin{align*}
G_f&=\{(x,t)\in\overline \Psi_1: x_{n-1}=f(x'',t),x_n=0\},\\
E_f&=\{(x,t)\in\overline \Psi_1: x_{n-1}\leq f(x'',t),x_n=0\},
\end{align*}
so that they are subsets of $\overline \Psi_1$. It is easy to see from
the definition of $D$ that it is connected and its parabolic boundary
is given by
$$
\partial_p D=\partial_p\Psi_1\cup E_f.
$$
As we will see, the domain $D$ has a parabolic NTA-like structure, with the
catch that at points on $E_f$ (and close to it) we need to define two
pairs of future and past corkscrew points, pointing into $D_+$ and
$D_-$ respectively, where
$$
D_+=D\cap\{x_{n}>0\}=(\Psi_1)_+,\quad D_-=D\cap\{x_{n}<0\}=(\Psi_1)_-.
$$
More specifically, fix $0<r<1/4$ and $(y,s)\in
\cN_r(E_f)\cap \partial_p D$, define
\begin{alignat*}{2}
\overline{A}^\pm_r(y,s)&=(y'', y_{n-1}+ r/2, \pm r/2, s+2r^2),&\quad&\text{if
}s\in [-1, 1-4r^2),\\
\underline{A}^\pm_r(y,s)&=(y'', y_{n-1}+r/2, \pm r/2, s-2r^2),&&\text{if
}s\in (-1+4r^2, 1].
\end{alignat*}
Note that by definition, we always have $\overline
A^+_r(y,s)$, $\underline A^+_r(y,s)\in D_+$ and  $\overline
A^-_r(y,s)$, $\underline A^-_r(y,s)\in D_-$. We also have that 
\begin{align*}
&\overline A^\pm_r(y,s), \underline A^\pm_r(y,s) \in \Psi_{2r}(y,s),\\
&\Psi_{r/2}(\overline A^\pm_r(y,s)), \Psi_{r/2}(\underline
A^\pm_r(y,s))\cap \partial D=\emptyset.
\end{align*}
Moreover, the corkscrew points have the following property.
\begin{lemma}[Harnack chain property I]\label{lem:H-chain-prop-I} Let
  $0<r<1/4$, $(y,s)\in \partial_p D\cap \cN_r(E_f)$, 
  and  $(x,t)\in D$ be such that
$$
(x,t)\in \Psi_r(y,s)\quad\text{and}\quad \Psi_{\gamma r}(x,t)\cap \partial_p D=\emptyset.
$$
Then there exists a Harnack chain in $D$ with a constant
$\mu$ and length $N$, depending only on $\gamma$, $L$, and $n$, from
$(x,t)$ to either $\overline A^+_r(y,s)$ or $\overline A^-_r(y,s)$,
provided $s\leq 1-4r^2$, and
from either $\underline A^+_r(y,s)$ or $\underline A^-_r(y,s)$ to
$(x,t)$, provided $s\geq -1+4r^2$. 

In particular, there exists a constant $C=C(\gamma,L,n)>0$ such that for any
nonnegative caloric function $u$ in $D$
\begin{alignat*}{2}
u(x,t)
&\leq
 C\,\max\{u(\overline A^+_r(y,s)),u(\overline A^-_r(y,s))\},&&\quad \text{if }s\leq 1-4r^2,\\
 u(x,t)&\geq C^{-1}\, \min\{u(\underline A^+_r(y,s)),(\underline A^-_r(y,s))\},&&\quad \text{if }s\geq -1+4r^2.
\end{alignat*}
\end{lemma}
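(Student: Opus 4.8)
The plan is to construct the Harnack chain explicitly by first moving from $(x,t)$ to a corkscrew point at a comparable scale, and then climbing up a dyadic tower of corkscrew points until reaching scale $r$. First I would observe that since $\Psi_{\gamma r}(x,t)$ avoids $\partial_p D$, the point $(x,t)$ sits in the interior of $D$ at distance $\gtrsim \gamma r$ from the boundary, and $(x,t)$ lies on one definite side, say $x_n>0$ (the case $x_n<0$ is symmetric, and $x_n=0$ is impossible since that slab belongs to $\partial_p D$ near $E_f$). The first leg of the chain is a short chain of a bounded number of cylinders, with length and constant depending only on $\gamma$, joining $(x,t)$ to the point $\overline A^+_\rho(y,s)$ for $\rho\sim\gamma r$; this uses only that both points lie in a ball of radius $\sim r$ in $D_+$ away from the boundary, and that $D_+=(\Psi_1)_+$ is a fixed Lipschitz domain where interior Harnack chains of controlled length exist.

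The main step is then the dyadic ascent: I would show that for $\rho\le 1/8$ there is a Harnack chain of bounded length and constant (depending on $L,n$ only) from $\overline A^+_\rho(y,s)$ to $\overline A^+_{2\rho}(y,s)$. Both of these points have $x_n$-coordinate $\pm\rho/2$ resp.\ $\pm\rho$, are at parabolic distance $\gtrsim_L \rho$ from $E_f$ and from $\partial_p\Psi_1$, and differ in the remaining coordinates by amounts comparable to $\rho$ in space and $\rho^2$ in time, with the crucial sign condition that the later point $\overline A^+_{2\rho}$ has time coordinate $s+8\rho^2$ strictly larger than $s+2\rho^2$; the parabolic time-lag condition $t_{i+1}-t_i\ge\mu^2 r_i^2$ in a Harnack chain is exactly what this monotone increase in time accommodates. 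Iterating this doubling $O(\log_2(r/\rho))=O(\log(1/\gamma))$ times and concatenating with the first leg yields a chain from $(x,t)$ to $\overline A^+_{r'}(y,s)$ with $r'\in[r/2,r)$, and one final bounded chain reaches $\overline A^+_r(y,s)$; all the lengths and constants aggregate to quantities depending only on $\gamma$, $L$, $n$. The case $s\ge -1+4r^2$ with the past corkscrew points $\underline A^\pm_r$ is identical after reversing the direction of time (which reverses the roles of future and past but keeps the heat equation, since a nonnegative caloric function run backward in time is still controlled by the chain inequality in the stated direction).

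Finally, the displayed inequalities for $u$ follow immediately: applying the parabolic Harnack inequality along the chain gives $u(x,t)\le C\,u(\overline A^+_r(y,s))$ in the case $x_n>0$ and $u(x,t)\le C\,u(\overline A^-_r(y,s))$ in the case $x_n<0$, hence in either case $u(x,t)\le C\max\{u(\overline A^+_r(y,s)),u(\overline A^-_r(y,s))\}$, and symmetrically for the past corkscrew points; note we only need the $\max$ (resp.\ $\min$) because a priori we do not know which side $(x,t)$ lies on near $E_f$. The one point requiring care — and what I expect to be the main obstacle — is verifying uniformly in $(y,s)\in\cN_r(E_f)\cap\partial_p D$ that the intermediate corkscrew points $\overline A^\pm_\rho(y,s)$ genuinely stay inside $D$ at distance $\gtrsim_L\rho$ from all of $\partial_p D$ (both from the thin set $E_f$, using the Lipschitz bound on $f$ to control how far $f$ can move as $(x'',t)$ varies over a $\rho$-box, and from the outer faces of $\Psi_1$, using $r<1/4$ and the elongation factor $4nL$ built into $\Psi_1$ in the $x_{n-1}$-direction), so that the intermediate cylinders in the chain actually satisfy $\mu r_i\le \dist_p(Q_{r_i}(x_i,t_i),\partial_p D)\le \mu^{-1}r_i$; this is a routine but slightly tedious computation with the parabolic box geometry, and it is precisely why the paper's $\Psi_r$ boxes are elongated by the factor $4nL$ in the $x_{n-1}$ direction.
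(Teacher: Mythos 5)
Your overall plan (an explicit chain ascending dyadically in scale, with the Lipschitz bound on $f$ and the $4nL$-elongation of the boxes keeping the intermediate cylinders away from $E_f$) is the natural way to flesh out what the paper only sketches, but the specific routing breaks the forward-in-time constraint that is part of the very definition of a parabolic Harnack chain. Your first leg joins $(x,t)$ to $\overline A^+_\rho(y,s)$ with $\rho\sim\gamma r$, and the ascent then passes through the points $\overline A^+_{2^k\rho}(y,s)$. But $\overline A^+_\rho(y,s)$ has time coordinate $s+2\rho^2=s+2\gamma^2r^2$, while the only constraint on $t$ is $|t-s|<r^2$; so for $\gamma<1/\sqrt2$ there are admissible $(x,t)$ with $t>s+2\rho^2$, and then the target of your first leg lies in the \emph{past} of $(x,t)$: no Harnack chain from $(x,t)$ to it exists at all, since the definition forces $t_{i+1}-t_i\ge\mu^2 r_i^2$ (and the Harnack inequality would in any case only control the earlier point by the later one). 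The same objection applies to the early doubling steps, until $2(2^k\rho)^2$ exceeds $t-s$. You were alert to the time-lag in the doubling step but not here; ``interior Harnack chains of controlled length exist between two points far from the boundary of the Lipschitz domain $D_+$'' is an elliptic statement and is false parabolically without a time gap of the right sign and size. The repair is to anchor the ascent at $(x,t)$ rather than at the small corkscrew points over $(y,s)$: the time budget to the final target is $(s+2r^2)-t\in[r^2,3r^2]$, so build a single chain that moves monotonically forward in time while its radii double from $\sim\gamma r$ up to $\sim r$ (a step of radius $\rho_i$ advances time by an amount between $\mu^2\rho_i^2$ and about $2\rho_i^2$, so the budget can be spent so as to land exactly at time $s+2r^2$ over $(y'',y_{n-1}+r/2,\pm r/2)$); with that re-routing your distance estimates, the count $N\lesssim\log(1/\gamma)+C(n,L)$, and the time-reflected construction for $\underline A^\pm_r$ all go through, and the conclusion for $u$ follows as you say.

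A smaller slip: $x_n=0$ is not impossible for $(x,t)\in D$. Only $E_f$ itself lies in $\partial_p D$; the portion of the thin hyperplane with $x_{n-1}>f(x'',t)$, $x_n=0$ consists of interior points of $D$, and such a point can satisfy $\Psi_{\gamma r}(x,t)\cap\partial_p D=\emptyset$. This is harmless for the argument (from such a point the chain may step to either side, and the conclusion only involves the maximum, respectively minimum, over the two corkscrew points), but the trichotomy should be stated accordingly rather than by excluding $x_n=0$.
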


\begin{proof}
This is easily seen when $(y,s)\not \in \cN_r(G_f)$ (in this case the chain length $N$ does not
depend on $L$). When $(y,s)\in \cN_r(G_f)$, one needs to use the
parabolic Lipschitz continuity of $f$.
\end{proof}

Next, we want to define the corkscrew points when $(y,s)$ is further
away for $E_f$. Namely, if $(y,s)\in \partial_p D\setminus
(\mathcal{N}_r(E_f))$, we define a single pair of future and past
corkscrew points by
\begin{alignat*}{2}
\overline {A}_r(y,s)&=(y(1-r),s+2r^2),&\quad& \text{if }s\in [-1,1-4r^2)\\
\underline {A}_r(y,s)&=(y(1-r),s-2r^2),&& \text{if } s\in
(-1+4r^2,1].
\end{alignat*}
Note that the points $\overline A_r(y,s)$ and  $\underline A_r(y,s)$
will have properties similar to those of $\overline A_r^\pm(y,s)$ and
$\underline A_r^\pm(y,s)$. That is, 
\begin{align*}
&\overline A_r(y,s), \underline A_r(y,s) \in \Psi_{2r}(y,s),\\
&\Psi_{r/2}(\overline A_r(y,s)), \Psi_{r/2}(\underline
A_r(y,s))\cap \partial D=\emptyset,
\end{align*}
and we have the following version of Lemma~\ref{lem:H-chain-prop-I} above
\begin{lemma}[Harnack chain property II]\pushQED{\qed}
\label{lem:H-chain-prop-II}
 Let $r\in(0,1/4)$, $(y,s)\in \partial_p D\setminus \cN_r(E_f)$
  and  $(x,t)\in D$ be such that
$$
(x,t)\in \Psi_r(y,s)\quad\text{and}\quad \Psi_{\gamma r}(x,t)\cap \partial_p D=\emptyset.
$$
Then there exists a Harnack chain in $D$ with a constant
$\mu$ and length $N$, depending only on $\gamma$, $L$, and $n$, from
$(x,t)$ to $\overline A_r(y,s)$, provided $s\leq 1-4r^2$, 
and
from $\underline A_r(y,s)$ to $(x,t)$, provided $s\geq -1+4r^2$.

In particular, there exists a constant $C=C(\gamma,L,n)>0$ such that for any
nonnegative caloric function $u$ in $D$
\begin{alignat*}{2}
&u(x,t)\leq C\,u(\overline A_r(y,s))&\quad&\text{if
}s\leq 1-4r^2,\\
&u(x,t)\geq C^{-1} u(\underline A_r(y,s))&&\text{if }s\geq -1+4r^2.
\end{alignat*}
\popQED
\end{lemma}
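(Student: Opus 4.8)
The plan is to recognise this as the $f$-independent, flat-boundary situation and reduce it to the standard Harnack chain construction of the type used in the proof of Lemma~\ref{lem:H-chain-prop-I}. First I would check that $(y,s)\in\partial_p D\setminus\cN_r(E_f)$ forces $(y,s)\in\partial_p\Psi_1$ and $\dist_p((y,s),E_f)\gtrsim r$ with a dimensional constant: when $|y_n|\ge r$ this is trivial since $E_f\subset\{x_n=0\}$ and any curve from $(y,s)$ to $E_f$ must traverse the $x_n$-coordinate from $y_n$ to $0$; and when $|y_n|<r$ the condition $(y,s)\notin\cN_r(E_f)$ forces $y_{n-1}\ge f(y'',s)+4nLr$, so by the parabolic Lipschitz continuity of $f$ every point of $E_f$ near $(y,s)$ lies at $x_{n-1}$-height at least a fixed multiple of $Lr$ below $y_{n-1}$, hence at parabolic distance $\gtrsim r$. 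Consequently, in the region where the chain will live the only part of $\partial_p D$ that is genuinely close is a piece of the fixed box $\partial_p\Psi_1$, any nearby piece of $E_f$ being kept at parabolic distance $\gtrsim r$.

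Next I would treat the forward assertion: assume $s\le 1-4r^2$ and construct a Harnack chain from $(x,t)$ to $\overline A_r(y,s)=(y(1-r),s+2r^2)$. Both endpoints lie in $D\cap\Psi_{2r}(y,s)$; by hypothesis $(x,t)$ sits at parabolic distance $\gtrsim\gamma r$ from $\partial_p D$, while $\Psi_{r/2}(\overline A_r(y,s))\cap\partial D=\emptyset$ puts $\overline A_r(y,s)$ at parabolic distance $\asymp r$ from $\partial_p D$; and $(x,t)\in\Psi_r(y,s)$ gives $s-r^2<t<s+r^2<s+2r^2$, so the time coordinate increases strictly along the chain and the endpoint constraint $h_2-h_1\ge\mu^2|z_2-z_1|^2$ holds for a small $\mu=\mu(L,n)$, since $|y(1-r)-x|\lesssim Lr$. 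I would then build the chain by the standard procedure: first move from $(x,t)$ forward in time and inward, gaining distance from $\partial_p D$ until it is $\asymp r$ (a chain whose length depends on $\gamma$ and $n$), and then, using that $D$ near the relevant part of $\partial_p\Psi_1$ is modelled by a half-space, a quarter-space, or a finite intersection of half-spaces, run a polygonal path inside $D\cap\Psi_{2r}(y,s)$ with round cylinders of radius $\asymp r$ kept at parabolic distance $\asymp r$ from $\partial_p D$, over to $\overline A_r(y,s)$ (a chain whose length depends on $L$ and $n$, because of the $x_{n-1}$-elongation of the boxes $\Psi_\rho$). Concatenating gives a Harnack chain in $D$ of length $N$ and constant $\mu$, both depending only on $\gamma$, $L$, $n$; the parabolic Harnack inequality applied along it yields $u(x,t)\le C\,u(\overline A_r(y,s))$. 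The backward assertion, with $\underline A_r(y,s)=(y(1-r),s-2r^2)$ and $s\ge-1+4r^2$, I would obtain from the reflection $t\mapsto -t$, which interchanges the two.

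I do not expect a genuine obstacle; the only care needed is bookkeeping. One must keep the time-lag constraints along the chain — each successive cylinder $Q_{r_i}$ must advance time by at least $\mu^2 r_i^2$ while keeping $\mu r_i\le\dist_p(Q_{r_i},\partial_p D)\le\mu^{-1}r_i$ — which is possible precisely because we always head toward the strictly later time $s\pm 2r^2$; one must treat uniformly the cases in which $(y,s)$ lies on a face, an edge, or a lower-dimensional corner of $\partial_p\Psi_1$; and one must route the polygonal path so that it also avoids whatever piece of $E_f$ happens to be within $\asymp Lr$ of $(y,s)$, which is exactly the Lipschitz-corner manoeuvre from the proof of Lemma~\ref{lem:H-chain-prop-I}. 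Finally I would record that the restrictions $s\le 1-4r^2$ and $s\ge-1+4r^2$ are precisely what keep $\overline A_r(y,s)$, resp.\ $\underline A_r(y,s)$, inside the slab $\{|t|<1\}$ with the $4r^2$ of headroom the construction uses.
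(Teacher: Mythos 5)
Your construction is correct and is essentially what the paper intends: Lemma~\ref{lem:H-chain-prop-II} is stated there without proof, being regarded as the routine analogue of Lemma~\ref{lem:H-chain-prop-I}, and your reduction to the flat (half-space/corner) geometry of $\partial_p\Psi_1$, the observation that $(y,s)\notin\cN_r(E_f)$ keeps $E_f$ at parabolic distance $\gtrsim r$, and the bookkeeping of radii, boundary distances and time-lags (with the Lipschitz manoeuvre around $G_f$ accounting for the dependence on $L$) is exactly the intended elementary Harnack-chain argument. One small caveat: the closing appeal to the reflection $t\mapsto -t$ is not literally valid, since $\partial_p D$ is not time-symmetric (for $t$ within $\gamma^2 r^2$ of the top $t=1$ the hypothesis $\Psi_{\gamma r}(x,t)\cap\partial_p D=\emptyset$ need not transfer to the reflected domain, whose parabolic boundary contains the image of that face); but nothing is lost, because the backward chain is obtained directly by running your forward construction from $\underline A_r(y,s)$ forward in time to $(x,t)$.
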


To state our next lemma, we need to use parabolic scaling operator on
$\R^{n}\times\R$. For any $(y,s)\in\R^n\times\R$ and $r>0$ we define
$$
T^r_{(y,s)}: (x,t)\mapsto \left(\frac{x-y}r,\frac{t-s}{r^2}\right). 
$$

\begin{lemma}[Localization property]\label{lem:loc-property} For $r\in
  (0,1/4)$ and $(y,s)\in\partial_p D$ and there exists a point
  $(\tilde y, \tilde s)\in \partial_p D\cap \Psi_{2r}(y,s)$ and $\tilde r\in [r,4r]$ such that
$$
\Psi_r(y,s)\cap D \subset \Psi_{\tilde r} (\tilde y, \tilde s)\cap D\subset \Psi_{8r}(y,s)\cap D
$$
and the parabolic scaling $T^{\tilde r}_{(\tilde y,\tilde
  s)}(\Psi_{\tilde r}(\tilde y,\tilde s)\cap D)$ is either

\begin{enumerate}
\item a rectangular box $\tilde\Psi$ such that
  $\Psi_{c_0}\subset\tilde \Psi \subset\Psi_{C_0}$
for some positive constants $c_0$ and $C_0$ depending on $L$ and $n$. 
\item union of two rectangular boxes as in \textup{(1)} with a common
  vertical side;
\item domain $\tilde D_{\tilde f}=\tilde\Psi\setminus E_f$
  with a thin Lipschitz complement at the end of Section~\ref{sec:domains-with-thin}.  
\end{enumerate}

\end{lemma}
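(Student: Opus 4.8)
The plan is to perform a case analysis based on where the point $(y,s)\in\partial_p D$ sits relative to the thin set $E_f$ and its graph boundary $G_f$, and in each case to exhibit the enlarged box $\Psi_{\tilde r}(\tilde y,\tilde s)$ explicitly so that the two inclusions and the structure of the rescaled domain can be checked by hand. I would distinguish (a) $(y,s)\in\partial_p\Psi_1$ with $\Psi_r(y,s)$ far from $E_f$; (b) $(y,s)$ near $E_f$ but with $\Psi_{cr}(y,s)$ disjoint from $G_f$, so that locally $E_f$ looks like the full half-hyperplane $\{x_{n-1}\le c, x_n=0\}$ — splitting further according to whether $\Psi_r(y,s)$ meets the "slit" $E_f$ or lies entirely in $\{x_{n-1}>f\}\cap\{x_n=0\}^c$; and (c) the main case $(y,s)\in\cN_r(G_f)$, where the blow-up must see the genuine thin Lipschitz subgraph.

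In case (a) one simply slides $(y,s)$ slightly into $\partial_p\Psi_1$ (or keeps it) and picks $\tilde r$ comparable to $r$ so that $\Psi_{\tilde r}(\tilde y,\tilde s)\cap D$ is the intersection of a parabolic box with the fixed smooth-ish box $\Psi_1$; after rescaling by $T^{\tilde r}_{(\tilde y,\tilde s)}$ this is a rectangular box satisfying alternative (1). (If $(y,s)$ is near a "corner" of $\partial_p\Psi_1$ one still gets a rectangular box, possibly of different aspect ratio, still trapped between $\Psi_{c_0}$ and $\Psi_{C_0}$.) In case (b), if $\Psi_r(y,s)$ straddles the thin slit but stays away from $G_f$, the rescaled picture is $\R^n\times\R$ (or a box) minus a full affine half-hyperplane piece, which up to a further harmless enlargement is the union of two rectangular boxes glued along the common vertical face lying in $\{x_n=0\}$ — this is alternative (2); if instead $\Psi_r(y,s)$ never touches $\{x_n=0,\ x_{n-1}\le f\}$ at all, we are back in alternative (1) with $D$ locally just a box. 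The slight enlargement of $r$ to some $\tilde r\le 4r$ is precisely what lets us absorb the case where $(y,s)$ itself is not centered on the slit: choose $(\tilde y,\tilde s)$ to be the nearest point of $E_f$ (or of $\partial_p\Psi_1$) and $\tilde r$ just large enough that $\Psi_r(y,s)\subset\Psi_{\tilde r}(\tilde y,\tilde s)$, the triangle inequality for the parabolic box-norm giving $\tilde r\le 4r$ and $\Psi_{\tilde r}(\tilde y,\tilde s)\subset\Psi_{8r}(y,s)$.

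The main obstacle — and the substance of the lemma — is case (c). Here I would take $(\tilde y,\tilde s)$ to be a point of $G_f$ in $\Psi_{2r}(y,s)$ (which exists because $(y,s)\in\cN_r(G_f)$ and $f$ is defined everywhere), and $\tilde r\in[r,4r]$ chosen as above. After applying $T^{\tilde r}_{(\tilde y,\tilde s)}$, the rescaled thin set is the subgraph of $\tilde f(x'',t):=\tilde r^{-1}\bigl(f(\tilde y''+\tilde r x'',\ \tilde s+\tilde r^2 t)-\tilde y_{n-1}\bigr)$, which is again parabolically Lipschitz with the same constant $L$ (parabolic scaling is exactly the symmetry under which the Lipschitz condition $|f(x'',t)-f(y'',s)|\le L(|x''-y''|^2+|t-s|)^{1/2}$ is invariant) and which vanishes at the origin since $(\tilde y,\tilde s)\in G_f$. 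The ambient box $T^{\tilde r}_{(\tilde y,\tilde s)}(\Psi_{\tilde r}(\tilde y,\tilde s))$ is a rectangular box $\tilde\Psi$; one checks it contains $\Psi_{c_0}$ and is contained in $\Psi_{C_0}$ using $\tilde r\in[r,4r]$, $(\tilde y,\tilde s)\in\Psi_{2r}(y,s)$, the Lipschitz bound on $f$ (so that vertical extent in the $x_{n-1}$-direction, scaled by the $4nL$ factor built into $\Psi$, still dominates the graph), and the fact that $\tilde y_{n-1}=f(\tilde y'',\tilde s)$ lies within $Lr$ of the slab. This puts the rescaled domain in alternative (3). The two set inclusions $\Psi_r(y,s)\cap D\subset\Psi_{\tilde r}(\tilde y,\tilde s)\cap D\subset\Psi_{8r}(y,s)\cap D$ are then a direct consequence of the choices of $(\tilde y,\tilde s)$ and $\tilde r$ together with the triangle inequality for the elongated boxes, and are uniform in the data since all constants track only $L$ and $n$. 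The one point requiring genuine care is verifying that the chosen $\tilde\Psi$ really does satisfy $\Psi_{c_0}(z,\tau)\subset\tilde\Psi$ for all $(z,\tau)\in G_{\tilde f}$ in the relevant time range — i.e. that the rescaled box is "admissible" in the sense of Subsection~\ref{sec:domains-with-thin} — which again reduces to the Lipschitz estimate for $f$ and the generous $4nL$ width in the $x_{n-1}$ variable.
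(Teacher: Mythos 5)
Your proposal is correct and follows essentially the same route as the paper: a three-way case split according to whether the box meets $E_f$ at all, meets $E_f$ but stays away from $G_f$ (giving two boxes with a common vertical face), or comes close to $G_f$ (in which case one re-centers at a nearby point of $G_f$, takes $\tilde r\in[r,4r]$, and uses the invariance of the parabolic Lipschitz condition under the scaling $T^{\tilde r}_{(\tilde y,\tilde s)}$ to land in alternative (3)). The paper's own proof is just a terse version of this same case analysis, so no further comparison is needed.
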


\begin{proof} Consider the following cases:

1) $\Psi_{r}(y,s)\cap E_f=\emptyset$. In this case we take $(\tilde
y,\tilde s)=(y,s)$ and $\rho=r$. Then $\Psi_r(y,s)\cap
\Psi_1$ falls into category (1).

2) $\Psi_r(y,s)\cap E_f\not=\emptyset$, but $\Psi_{2r}(y,s)\cap
G_f=\emptyset$. In this case we take $(\tilde
y,\tilde s)=(y,s)$ and $\rho=2r$. In this case $\Psi_{2r}(y,s)\cap D$ splits
into the disjoint union of $\Psi_{2r}(y,s)\cap(\Psi_1)_\pm$ that falls
into category (2).

3) $\Psi_{2r}(y,s)\cap G_f\not=\emptyset$. In this case
choose $(\tilde y,\tilde s)\in \Psi_{3r}(y,s)\cap G_f$ with an
additional property $-1+r^2/4\leq \tilde s\leq 1-r^2/4$ and let $\rho=4r$.
Then $\Psi_\rho(\tilde y, \tilde s)\cap D =(\Psi_\rho(\tilde y,\tilde
s)\setminus E_f)\cap\Psi_1$ falls into category (3).
\end{proof}

\section{Regularity of $D$ for the heat equation}
\label{sec:barriers}

In this section we show that the domains $D$ with thin Lipschitz
complement $E_f$ are regular for the heat equation by using the existence of
an exterior thin cone at points on $E_f$ and applying Wiener-type
criterion for the heat equation \cite{EG}. Furthermore, we show the
existence of H\"older continuous local barriers at the points on
$E_f$, which we will use in the next section to prove the H\"older
continuity regularity of the solutions up to the parabolic boundary.

\subsection{PWB solutions}(\cite{Doob,Lieberman}) Given an open subset $\Omega \subset
\R^n\times\R$, let $\partial \Omega $ be its Euclidean
boundary. Define the parabolic boundary $\partial_p\Omega$ of $\Omega$
to be the set of all points $(x,t)\in \partial \Omega $ such that for any
$\epsilon >0$ the lower parabolic cylinder $Q_\epsilon(x,t)$ contains points not in
$\Omega $.  

We say that a function $u:\Omega \rightarrow (-\infty, +\infty]$ is
supercaloric if $u$ is lower semi-continuous, finite on dense subsets
of $\Omega$, and satisfies
the comparison principle in each parabolic cylinder $Q\Subset\Omega$: if $v\in C(\overline{Q})$ solves 
$\Delta v-\partial_t v=0$ in $Q$ and $v=u$ on $\partial _pQ$, then $v\leq u$ in $Q$.

A subcaloric function is defined as the negative of a supercaloric function. A function is caloric if it is supercaloric and subcaloric.

Given $g$, any real-valued function defined on $\partial _p\Omega $,
we define the upper solution 
\begin{align*}
\overline{H}_g=\inf\{u:u \text{ is supercaloric or identically } +\infty \text{ on each component of }\Omega,\\
 \liminf_{(y,s)\rightarrow (x,t)}u(y,s)\geq g(x,t) \text{ for all }(x,t)\in \partial _p\Omega, u \text{ bounded below on } \Omega \},
\end{align*}
and the lower solution 
\begin{align*}
\underline{H}_g=\sup\{u: u \text{ is subcaloric or identically } -\infty \text{ on each component of } \Omega,\\
 \limsup_{(y,s)\rightarrow (x,t)} u(y,s)\leq g(x,t) \text{ for all } (x,t)\in \partial _p\Omega, u \text{ bounded above on } \Omega \}.
\end{align*}
If $\overline{H}_g=\underline{H}_g$, then
$H_g=\overline{H}_g=\underline{H}_g$ is the Perron-Wiener-Brelot (PWB)
solution to the Dirichlet problem for $g$. It is shown in 1.VIII.4 and 1.XVIII.1 in \cite{Doob}
that if $g$ is a bounded continuous function, then the PWB solution $H_g$ exists and is unique for
any bounded domain $\Omega $ in $\R^n\times\R$.

Continuity of the PWB solution at points of $\partial _p\Omega $ is
not automatically guaranteed. A point $(x,t)\in \partial _p\Omega $ is a
regular boundary point if $\lim _{(y,s)\rightarrow (x,t)} H_g(y,s)=g(x,t)$ for
every bounded continuous function $g$ on $\partial _pD$. A necessary
and sufficient condition for a parabolic boundary point to be regular
is the existence of a local barrier for earlier time at that point
(Theorem~3.26 in \cite{Lieberman}). By a local barrier at
$(x,t)\in \partial_p \Omega$ we mean here a nonnegative continuous
function $w$ in $\overline{Q_r(x,t)\cap \Omega}$ for some $r>0$,
which has the following properties: 
(i) $w$ is supercaloric in $Q_r(x,t)\cap \Omega$;
(ii) $w$ vanishes only at $(x,t)$.

\subsection{Regularity of $D$ and barrier functions}\label{sec:self-similar-barrier} For the domain $D$ defined
in the introduction we have $\partial _p D=\partial _p\Psi_1\cup
E_f$. The regularity of $(x,t)\in \partial _p\Psi_1$ follows immediately
from the exterior cone condition for the Lipschitz domain. For $(x,t)\in
E_f$, instead of the full exterior cone we only know the existence of
a flat exterior cone centered at $(x,t)$ by the Lipschitz nature of the
thin graph. This will still be enough for the regularity, by the
Wiener-type criterion for the heat equation. We give the details below.

For $(x,t)\in E_f$, with parabolically Lipschitz $f$, there exist
$c_1,c_2>1$, depending on $n$ and $L$, such that
the exterior of $D$ contains a flat parabolic cone $\C(x,t)$ defined by
$$
\C(x,t)=(x,t)+\C
$$
$$
\C=\{(y,s)\in \R^n\times\R: s\leq 0,  y_{n-1}\leq
-c_1|y''|-c_2\sqrt{-s},y_{n}=0\}.
$$
Then by the Wiener-type criterion for the heat equation, established in \cite{EG}, the regularity
of $(x,t)\in E_f$ will follow once we show that
$$
\sum_{k=1}^\infty 2^{kn/2} \capacity(\A(2^{-k})\cap \C)=+\infty,
$$
where
$$
\A(c)=\{(y,s):(4\pi c)^{-n/2}\leq\Gamma(y,-s)\leq (2\pi
c)^{-n/2}\},
$$
$\Gamma$ is the heat kernel
\begin{align*}
\Gamma(y,s)=\left\{\begin{array}{ll}
(4\pi s)^{-n/2}e^{-|y|^2/4s}, & s>0,\\
0, & s\leq 0,
\end{array}\right.
\end{align*}
and $\capacity(K)$ is the thermal capacity for compact set $K$ 
defined by
\begin{multline*}
\capacity(K)=\sup\{\mu(K):\text{$\mu$ is a nonnegative Radon measure
    }\\\text{supported in $K$, s.t.\ $\mu*\Gamma\leq 1$ on $\R^n\times\R$}\}.
\end{multline*}
Because of the
self-similarity of $\C$, it is enough to verify that
$$
\capacity(\A(1)\cap\C)>0.
$$
The latter is easy to see, since we can take as $\mu$ the restriction
of $H^{n}$ Hausdorff measure to  $\A(1)\cap\C$ and note that
\begin{align*}
(\mu*\Gamma)(x,t)&=\int_{\A(1)\cap\C}\Gamma(x-y,t-s)dy'ds\\
&\leq \int_{-1}^0
\frac{1}{\sqrt{4\pi(t-s)^+}}ds\leq\int_{-1}^0
\frac{1}{\sqrt{4\pi(-s)}}ds<\infty
\end{align*}
for any $(x,t)\in\R^{n}\times\R$. Since $H^n(\A(1)\cap\C)>0$, we
therefore conclude that $\capacity(\A(1)\cap\C)>0$.
We therefore established the following fact.

\begin{proposition} The domain $D=D_f$ is regular for the heat equation.\qed
\end{proposition}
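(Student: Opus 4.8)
The plan is to verify regularity separately at each point of $\partial_p D=\partial_p\Psi_1\cup E_f$. At a point of $\partial_p\Psi_1$ the domain $D$ locally agrees with a Lipschitz cylinder, hence satisfies a full exterior cone condition there, and the classical exterior‑cone barrier for the heat equation shows such a point is regular. All the content is therefore concentrated at points $(x,t)\in E_f$, where the only exterior obstruction we are guaranteed is the \emph{thin} flat cone $\C(x,t)=(x,t)+\C$, with $\C$ as defined above.

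For such $(x,t)$ I would invoke the Wiener‑type criterion of Evans--Gariepy \cite{EG}: the point is regular provided $\sum_{k\ge 1}2^{kn/2}\,\capacity(\A(2^{-k})\cap\C)=+\infty$, with $\A(c)$ and the thermal capacity $\capacity$ as above. The geometry of $\C$ makes this series transparent. From $\Gamma(\lambda y,\lambda^2 s)=\lambda^{-n}\Gamma(y,s)$ one reads off that $\A(2^{-k})$ is the parabolic dilation $(y,s)\mapsto(2^{-k/2}y,2^{-k}s)$ of $\A(1)$; the cone $\C$ is invariant under all such parabolic dilations; and $\capacity$ is homogeneous of degree $n$ under them (push a test measure forward by the dilation and rescale by $\lambda^n$, using the homogeneity of $\Gamma$ to see admissibility is preserved). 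Hence $\capacity(\A(2^{-k})\cap\C)=2^{-kn/2}\capacity(\A(1)\cap\C)$, so every term of the series equals the fixed number $\capacity(\A(1)\cap\C)$ and the series diverges exactly when $\capacity(\A(1)\cap\C)>0$.

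It remains to show $\capacity(\A(1)\cap\C)>0$, which is the one genuine estimate. Take $\mu$ to be the restriction of the $n$‑dimensional Hausdorff measure $H^{n}$ to $\A(1)\cap\C$. Since $\A(1)\cap\C$ is a nonempty relatively open piece of the $n$‑dimensional slice $\{y_n=0\}$ (the variables $y''\in\R^{n-2}$, $y_{n-1}$ and $s\le 0$ all range freely, subject only to the annulus condition and the cone inequality), we get $H^{n}(\A(1)\cap\C)>0$. To bound the potential one uses that on $\{y_n=0\}$ the Gaussian integral of $\Gamma(x-y,t-s)$ over the $n-1$ free spatial directions $y'$ equals $(4\pi(t-s))^{-1/2}$ when $t>s$ and $0$ otherwise; integrating in $s$ over $[-1,0]$ gives $(\mu*\Gamma)(x,t)\le\int_{-1}^{0}(4\pi(-s))^{-1/2}\,ds<\infty$ uniformly in $(x,t)\in\R^n\times\R$. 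Dividing $\mu$ by this finite bound yields an admissible measure of positive total mass, so $\capacity(\A(1)\cap\C)>0$, the Wiener series diverges, and every point of $E_f$ is regular.

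The only subtle point here is conceptual rather than technical: even though the obstacle at a point of $E_f$ is thin — it lies in a hyperplane and has zero $n$‑dimensional density in $\R^n\times\R$ — it is still thermally substantial, because the heat kernel is only mildly singular in the spatial variables, so an $(n-1)$‑spatial‑dimensional, time‑extended cone retains positive thermal capacity. Once this is recognised, the proof is a routine combination of the exterior‑cone barrier on $\partial_p\Psi_1$, the Wiener criterion on $E_f$, the parabolic self‑similarity of $\C$, and the elementary potential estimate above.
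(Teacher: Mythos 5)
Your proposal is correct and follows essentially the same route as the paper: splitting $\partial_p D$ into $\partial_p\Psi_1$ (exterior cone) and $E_f$, applying the Evans--Gariepy Wiener-type criterion with the flat parabolic cone $\C$, reducing by parabolic self-similarity to $\capacity(\A(1)\cap\C)>0$, and verifying this by restricting $H^n$ to $\A(1)\cap\C$ and bounding the heat potential by integrating the Gaussian over the $n-1$ free spatial variables. The only difference is that you spell out the dilation-homogeneity of the thermal capacity that the paper leaves implicit under ``self-similarity,'' which is a welcome but not essential elaboration.
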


We next show that we can use the self-similarity of $\C$ to
construct a H\"older continuous barrier function at every $(x,t)\in
E_f$.

\begin{lemma}\label{lem:existence} There exist a nonnegative continuous function $U$ on
  $\overline{\Psi_1}$ with the following properties:
\begin{itemize} 
\item[(i)] $U>0$ in $\overline {\Psi_1}\setminus\{(0,0)\}$ and
  $U(0,0)=0$
\item[(ii)] $\Delta U-\partial_t U=0$ in $\Psi_1\setminus \C$.
\item[(iii)] $U(x,t)\leq C(|x|^2+|t|)^{\alpha/2}$ for $(x,t)\in
  \Psi_1$ and some $C>0$ and $0<\alpha<1$ depending only on $n$ and $L$.
\end{itemize}
\end{lemma}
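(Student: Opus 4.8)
The plan is to build $U$ as a barrier adapted to the self-similar exterior cone $\C$, exploiting its dilation invariance to reduce the construction to a single scale and a fixed-point/iteration or direct series argument. First I would produce, by the Perron method on the domain $\Psi_1\setminus\C$, a caloric function with suitable boundary data: set $U_0$ to be the PWB solution of the heat equation in $\Psi_1\setminus\C$ with continuous boundary data $g$ on $\partial_p(\Psi_1\setminus\C)$ that vanishes on the part of $\C$ near $(0,0)$ and is positive (say, equal to $\|(x,t)\|$ or a smooth positive bump) on $\partial_p\Psi_1$ and on the ``outer'' portion of the cone. Since $D=D_f$ — and hence a fortiori $\Psi_1\setminus\C$, which has the same kind of flat exterior cone at the relevant boundary points — is regular for the heat equation by the preceding Proposition, this $U_0$ is continuous up to $\overline{\Psi_1}$, nonnegative, caloric in $\Psi_1\setminus\C$, and vanishes at $(0,0)$; this already gives (i) and (ii). The content is (iii), the quantitative Hölder-type decay rate at the vertex.

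For (iii) I would use the self-similarity $\C=\lambda\C$ for every $\lambda>0$, together with the strong maximum principle. Consider the parabolic dilation $\delta_\lambda(x,t)=(\lambda x,\lambda^2 t)$ and the cylinders $\Psi_{2^{-k}}$. Because $\C$ is dilation-invariant and $\C$ is a genuine (flat, but positive-capacity) exterior set, the caloric measure estimate / boundary regularity gives a universal constant $\theta\in(0,1)$ with
\begin{equation*}
\sup_{\Psi_{1/2}} U_0 \le \theta \sup_{\Psi_1} U_0,
\end{equation*}
by comparing $U_0$ on $\overline{\Psi_1}\setminus\C$ with its maximum and using that it vanishes on a fixed relative-open piece of $\C$ of definite size and capacity — this is where the Wiener criterion / the positivity $\capacity(\A(1)\cap\C)>0$ established above is used, in the form of a quantitative oscillation-decay (boundary Hölder) estimate. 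Rescaling this inequality along $\delta_{2^{-k}}$, which maps $\Psi_1\setminus\C$ into $\Psi_{2^{-k}}\setminus\C$ since $\C$ is scale-invariant, yields $\sup_{\Psi_{2^{-k}}} U_0\le \theta^{k}\sup_{\Psi_1} U_0$ for all $k\ge 1$. Interpolating between dyadic scales with the interior parabolic estimate (or simply monotonicity of $\sup_{\Psi_r}U_0$ in $r$) converts this into $U_0(x,t)\le C(|x|^2+|t|)^{\alpha/2}$ with $\alpha=\log_{2}(1/\theta)$, which may exceed $1$; to force $0<\alpha<1$ as stated one can either raise $U_0$ to a suitable power or, more cleanly, replace $U_0$ by $\min(U_0, C'\|(x,t)\|^{\alpha_0})$ for any fixed $\alpha_0\in(0,1)$ (still supercaloric away from the vertex as a minimum of a caloric function and a supercaloric power of the parabolic norm), relabel, and rename $U$; I would arrange the final $U$ to be this truncated/modified function, noting that a minimum of supercaloric functions is supercaloric, so (ii) is kept on $\Psi_1\setminus\C$ and (i) is immediate.

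The main obstacle I anticipate is the quantitative oscillation-decay step $\sup_{\Psi_{1/2}}U_0\le\theta\sup_{\Psi_1}U_0$ with a constant $\theta<1$ depending only on $n$ and $L$: one must show the exterior cone $\C$ removes a definite amount of caloric mass at \emph{every} dyadic scale uniformly. This is exactly the statement underlying the Wiener-criterion regularity proof given above, but one needs the effective (not merely qualitative) version. I would extract it from the thermal-capacity estimate $\capacity(\A(1)\cap\C)\ge c(n,L)>0$ proved above by a standard comparison: the caloric function $U_0/\sup_{\Psi_1}U_0$ is $\le 1$, vanishes on $\A(1)\cap\C$, so by the capacitary potential estimate (e.g. as in \cite{EG} or a parabolic De Giorgi--Nash--Moser oscillation lemma near a set of positive thermal capacity) its supremum on the next smaller cylinder drops by a factor bounded away from $1$. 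A secondary, purely bookkeeping, difficulty is checking that the parabolic boxes $\Psi_r$ (with their anisotropic $4nL$ stretching in the $x_{n-1}$-direction) are compatible with the dilation-invariance argument — but since $\C$ is invariant under $\delta_\lambda$ and $\Psi_{2^{-k}}=\delta_{2^{-k}}(\Psi_1)$, this is automatic, and the $L$-dependence enters only through $\theta$ and the shape constants $c_0,C_0$. Once the decay estimate is in hand, assembling $U$ and verifying (i)--(iii) is routine.
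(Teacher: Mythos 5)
Your overall strategy --- solve a Dirichlet problem in $\Psi_1\setminus\C$ and then use the parabolic self-similarity of $\C$ to iterate a one-scale decay estimate down dyadic scales --- is the same skeleton as the paper's proof, but the two places where you deviate from it both cause genuine problems. First, your choice of boundary data ``vanishing on the part of $\C$ near $(0,0)$'' destroys conclusion (i): those cone points are regular (the same flat-cone/Wiener argument as in Section 3 applies to $\Psi_1\setminus\C$), so $U_0$ would vanish on that entire portion of $\C$, not only at $(0,0)$. The paper instead takes the data $|x|^2+|t|$, which is positive off the origin and parabolically homogeneous of degree $2$; the homogeneity is precisely what allows comparing $U$ with $\frac{M_U(r)}{c_0}U(x/r,t/r^2)$ in $\Psi_r\setminus\C$ (the inequality holds on $\C$ by homogeneity and on $\partial_p\Psi_r$ because $U\geq c_0$ on $\partial_p\Psi_1$), and the decay factor comes solely from continuity at the vertex ($U\leq c_0/2$ on some $\Psi_\lambda$), giving $M_U(\lambda r)\leq M_U(r)/2$ with no capacity estimate whatsoever. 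Your route, by contrast, hangs on a quantitative oscillation-decay lemma (``the supremum drops by a factor $\theta<1$ past a boundary set of positive thermal capacity''), which is not proved in the paper and is the heaviest step of your argument; within the paper's toolkit one could get $\theta<1$ softly, as in Lemma~\ref{lem:caloric1}, from $v\leq 1-\omega^{(\cdot)}_{\Psi_1\setminus\C}(\C)$ together with regularity, the strong maximum principle and compactness --- but that again requires the data to vanish on the cone, i.e.\ it collides with (i), while with data positive on the cone your iteration must carry an inhomogeneous term coming from the (small but nonzero) boundary values at scale $r$.

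Second, your final ``fix'' to force $\alpha<1$ is both unnecessary and wrong. Unnecessary, because on the bounded box $\Psi_1$ an estimate $U\leq C\|(x,t)\|^{\beta}$ with $\beta\geq 1$ trivially implies (iii) for every $\alpha\in(0,1)$ after enlarging $C$ by an $(n,L)$-dependent factor. Wrong, because $(|x|^2+|t|)^{\alpha_0/2}$ is not supercaloric: at points with $x=0$ one computes $(\Delta-\partial_t)(|x|^2+|t|)^{\beta}=(2n\mp 1)\,\beta\,|t|^{\beta-1}>0$ (sign $-$ for $t>0$, $+$ for $t<0$), and the same positivity persists wherever $|t|$ dominates $|x|^2$; hence $\min\bigl(U_0,\,C'\|(x,t)\|^{\alpha_0}\bigr)$ is not supercaloric. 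Moreover, even if it were, conclusion (ii) requires $U$ to be exactly caloric in $\Psi_1\setminus\C$, so both the truncation and the alternative of replacing $U_0$ by a power $U_0^{\gamma}$ would violate (ii). Dropping this step entirely and taking the positive, degree-two homogeneous data $|x|^2+|t|$, as the paper does, repairs (i), (ii) and (iii) simultaneously.
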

\begin{proof} Let $U$ be a solution of the
  Dirichlet problem in $\Psi_1\setminus \C$ with boundary values
  $U(x,t)=|x|^2+|t|$ on $\partial_p(\Psi_1\setminus \C)$. Then $U$ will be
  continuous on $\overline \Psi_1$ and will satisfy the following properties:
\begin{itemize} 
\item[(i)] $U>0$ in $\overline {\Psi_1}\setminus\{(0,0)\}$ and $U(0,0)=0$;
\item[(ii)] $\Delta U-\partial_t U=0$ in $\Psi_1\setminus \C$.
\end{itemize}
In particular, there exists $c_0>0$ and $\lambda>0$ such that
$$
U\geq c_0\quad\text{on }\partial_p\Psi_1,\quad U\leq  c_0/2\quad\text{on
}\Psi_\lambda.
$$
We then can compare $U$ with its own parabolic scaling. Indeed, let
$M_U(r)=\sup_{\Psi_r} U$, for $0<r<1$. Then by the comparison principle for the heat
equation we will have
$$
U(x,t)\leq \frac{M_U(r)}{c_0} U(x/r,t/r^2),\quad\text{for }(x,t)\in
\Psi_r.
$$
(Carefully note that this inequality is
satisfied on $\C$ by the homogeneity of the boundary data on $\C$).
Hence, we will obtain that
$$
M_U(\lambda r)\leq \frac{ M_U(r)}{2},\quad\text{for any } 0<r<1,
$$
which will imply the H\"older continuity of $U$ at the origin by the
standard iteration. The proof is complete
\end{proof}

\section{Forward Boundary Harnack Inequalities}
\label{sec:forw-bound-harn}

In this section, we show the boundary H\"older regularity of the
solutions to the Dirichlet problem and follow the lines of
\cite{Kemper} to show the forward boundary Harnack inequality
(Carleson estimate).

We also need the notion of the caloric measure. Given a domain $\Omega
\subset \R^n\times\R$ and $(x,t)\in \Omega$, the caloric measure on
$\partial _p\Omega$ is denoted by $\omega ^{(x,t)}_\Omega$.  The
following facts about caloric measures can be found in
\cite{Doob}. For $B$ a Borel subset of $\partial _p\Omega$, we have
$\omega _\Omega ^{(x,t)}(B)=H_{\chi _B}(x,t)$, which is the PWB
solution to the Dirichlet problem 
\begin{equation*}
\Delta u - u_t = 0 \quad \text{in } \Omega; \quad u=\chi _B \text{ on } \partial _p\Omega,
\end{equation*}
where $\chi_B$ is the characteristic function of $B$. Given $g$ a bounded and continuous function on $\partial _p\Omega$, the PWB solution to the Dirichlet problem
\begin{equation*}
\Delta u-u_t=0 \text{ in }\Omega; \quad u=g \text{ on } \partial _p\Omega.
\end{equation*}
is given by $u(x,t)=\int_{\partial _p\Omega} g(y,s) d \omega ^{(x,t)}_\Omega (y,s)$. For a regular domain $\Omega$, one has the following useful property of caloric measures (\cite{Doob}):
\begin{proposition}
\label{prop:weaksol}
If $E$ is a fixed Borel subset of $\partial _p\Omega$, then the function $(x,t)\mapsto \omega ^{(x,t)}_\Omega(E)$ extends to $(y,s)\in \partial _p \Omega$ continuously provided $\chi _E$ is continuous at $(y,s)$.
\end{proposition}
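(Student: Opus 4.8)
The plan is to deduce this from the regularity of $\Omega$ (i.e.\ from continuity of $H_g$ at parabolic boundary points for \emph{continuous} data $g$) by squeezing the possibly discontinuous function $\chi_E$ between continuous boundary data that agree with it at $(y,s)$.

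First I would record the elementary facts about the caloric measure of a bounded regular domain collected in \cite{Doob}: for each $(x,t)\in\Omega$, $\omega^{(x,t)}_\Omega$ is a Borel probability measure on $\partial_p\Omega$; for every bounded continuous $g$ on $\partial_p\Omega$ one has $H_g(x,t)=\int_{\partial_p\Omega}g\,d\omega^{(x,t)}_\Omega$; and, since $\Omega$ is regular, $\lim_{(x,t)\to(y,s)}H_g(x,t)=g(y,s)$ for every such $g$ and every $(y,s)\in\partial_p\Omega$. Since $\omega^{(x,t)}_\Omega(E)=\int_{\partial_p\Omega}\chi_E\,d\omega^{(x,t)}_\Omega$, monotonicity of the integral gives the comparison I will exploit: if $g\le\chi_E\le h$ on $\partial_p\Omega$ with $g,h$ continuous, then $H_g(x,t)\le\omega^{(x,t)}_\Omega(E)\le H_h(x,t)$ for all $(x,t)\in\Omega$.

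Next I would unwind the hypothesis. As $\chi_E$ takes only the values $0$ and $1$, its continuity at $(y,s)$ amounts to the existence of a relatively open neighborhood $V\subset\partial_p\Omega$ of $(y,s)$ on which $\chi_E\equiv\chi_E(y,s)$. Suppose first $\chi_E(y,s)=1$. Using that $\partial_p\Omega$ is a metric space (Urysohn's lemma, or an explicit cutoff of the distance to $(y,s)$), pick a continuous $g\colon\partial_p\Omega\to[0,1]$ with $g(y,s)=1$ and $\{g>0\}\subset V$; then $g\le\chi_E$ on all of $\partial_p\Omega$, because $\chi_E=1$ on $V\supset\{g>0\}$ while $g=0$ off $V$. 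Hence $H_g(x,t)\le\omega^{(x,t)}_\Omega(E)\le 1$ for all $(x,t)\in\Omega$, and letting $(x,t)\to(y,s)$ and using regularity,
$$
1\ge\limsup_{(x,t)\to(y,s)}\omega^{(x,t)}_\Omega(E)\ge\liminf_{(x,t)\to(y,s)}\omega^{(x,t)}_\Omega(E)\ge\lim_{(x,t)\to(y,s)}H_g(x,t)=1,
$$
so $\omega^{(x,t)}_\Omega(E)\to 1=\chi_E(y,s)$ as $(x,t)\to(y,s)$ in $\Omega$. When $\chi_E(y,s)=0$ one argues symmetrically, choosing a continuous $h\colon\partial_p\Omega\to[0,1]$ with $h(y,s)=0$ and $h\equiv 1$ off $V$, so that $\chi_E\le h$ everywhere and $\limsup_{(x,t)\to(y,s)}\omega^{(x,t)}_\Omega(E)\le\lim_{(x,t)\to(y,s)}H_h(x,t)=0$. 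In either case the extension obtained by assigning the value $\chi_E(y,s)$ at $(y,s)$ is continuous there.

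The argument is essentially bookkeeping once the caloric-measure facts of \cite{Doob} and the regularity of $\Omega$ are in hand; no parabolic feature (time-lag, anisotropic scaling) intervenes, and the statement is really a general property of PWB solutions in regular domains. The one point requiring attention is that the comparison data must dominate (resp.\ be dominated by) $\chi_E$ on \emph{all} of $\partial_p\Omega$, not merely near $(y,s)$ — hence the requirements $\{g>0\}\subset V$ and $h\equiv 1$ off $V$ — since otherwise the sandwich $H_g\le\omega^{(x,t)}_\Omega(E)\le H_h$ would fail on $\Omega$.
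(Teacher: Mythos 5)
Your argument is correct. Note that the paper does not prove this proposition at all: it is quoted as a known fact about caloric measures on regular domains with a reference to Doob's book, so there is no in-paper proof to compare against. What you supply is a self-contained verification using exactly the ingredients the paper records just before the statement -- the representation $H_g(x,t)=\int_{\partial_p\Omega}g\,d\omega^{(x,t)}_\Omega$ for bounded continuous $g$, the identity $\omega^{(x,t)}_\Omega(E)=H_{\chi_E}(x,t)$ for Borel $E$ (resolutivity of $\chi_E$, which you correctly take from Doob rather than reprove), and regularity of $\Omega$, i.e.\ $H_g(x,t)\to g(y,s)$ at every parabolic boundary point for continuous data. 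The sandwich itself is sound: continuity of the two-valued function $\chi_E$ at $(y,s)$ does force it to be constant on a relative neighborhood $V$, the cutoff $g$ with $\{g>0\}\subset V$ (resp.\ $h\equiv 1$ off $V$) gives a global pointwise comparison with $\chi_E$, monotone integration against the probability measure $\omega^{(x,t)}_\Omega$ transfers this to $H_g\le\omega^{(x,t)}_\Omega(E)\le H_h$, and regularity pins the limit at $\chi_E(y,s)$; the normalization $\omega^{(x,t)}_\Omega(\partial_p\Omega)=H_1=1$ used for the trivial upper bound is valid since $\Omega$ is bounded. You are also right that nothing parabolic intervenes: the argument is a general property of PWB solutions in regular domains, which is presumably why the authors were content with a citation. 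The only caveat is the one you already flag yourself: the comparison functions must dominate or be dominated by $\chi_E$ on all of $\partial_p\Omega$, which your choice of $g$ and $h$ ensures.
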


\subsection{Forward boundary Harnack principle} From now on, we will
write the caloric measure with respect to $D=\Psi_1\setminus E_f$ as $\omega
^{(x,t)}$ for simplicity. Before we prove the forward boundary Harnack
inequality, we first show the H\"older continuity of the caloric
functions up to the boundary, which follows from the estimates on the barrier function constructed in Section~\ref{sec:barriers}. 

In what follows, for $0<r<1/4$ and $(y,s)\in\partial_pD$ we will
denote
$$
\Delta_r(y,s)=\Psi_r(y,s)\cap \partial_p D,
$$
and call it the \emph{parabolic surface ball} at $(y,s)$ of radius $r$.

\begin{lemma}
\label{lem:holder}
Let $0<r<1/4$ and $(y,s)\in \partial_p D$. Then there exist
$C=C(n,L)>0$ and $\alpha=\alpha(n,L) \in (0,1)$ such that if $u$ is
positive and caloric in $\Psi_r(y,s)\cap D$ and vanishes continuously on $\Delta _r(y,s)$, then
\begin{equation}\label{eq:holder}
u(x,t)\leq C\left(\frac{|x-y|^2+|t-s|}{r^2}\right)^{\alpha/2}M_u(r)
\end{equation}
for all $(x,t)\in \Psi_r(y,s)\cap D$, where $M_u(r)=\sup_{\Psi_r(y,s)\cap D}u$.
\end{lemma}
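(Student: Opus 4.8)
The plan is to reduce the boundary Hölder estimate \eqref{eq:holder} to a comparison with the self-similar barrier $U$ constructed in Lemma~\ref{lem:existence}, after a preliminary localization step. First I would use Lemma~\ref{lem:loc-property} to pass from the surface ball $\Delta_r(y,s)$ to one of the three model configurations: a rectangular box, a union of two boxes, or a rescaled domain $\tilde D_{\tilde f}$ with thin Lipschitz complement. By the parabolic scaling operator $T^{\tilde r}_{(\tilde y,\tilde s)}$, it suffices to prove the estimate at unit scale, i.e.\ to show that a positive caloric function $u$ on $\Psi_1(\tilde y,\tilde s)\cap D$ vanishing continuously on the portion of $\partial_p D$ inside $\Psi_1$ satisfies $u(x,t)\le C\,\|(x-\tilde y,t-\tilde s)\|^{\alpha} M_u(1)$ near $(\tilde y,\tilde s)$; the general estimate then follows by applying this at every dyadic scale between $r$ and the distance to the boundary, which is the standard iteration that also produces the decay rate $\alpha$.

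The core of the argument is the comparison at unit scale. In cases (1) and (2) of Lemma~\ref{lem:loc-property} the boundary point sits on a Lipschitz portion of $\partial_p D$ (the lateral boundary of a box), and the desired Hölder bound is classical boundary regularity for the heat equation in Lipschitz cylinders; one can get it from a standard barrier (e.g.\ a power of the distance) or cite Lieberman. The essential new case is (3), a point on $G_f$, where I would invoke the barrier $U$ from Lemma~\ref{lem:existence}: since $U$ is supercaloric in $\Psi_1\setminus\C$, the exterior thin cone $\C(\tilde y,\tilde s)\subset (\tilde D)^c$ lets us apply the comparison principle to $u$ and a suitable multiple of $U(\,\cdot - (\tilde y,\tilde s))$ on $\Psi_{1/2}(\tilde y,\tilde s)\cap D$, after first using the maximum principle and the continuous vanishing of $u$ on $\Delta$ to bound $u\le M_u(1)$ on the relevant part of the parabolic boundary while $U\ge c_0>0$ on the complementary part. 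Property~(iii) of Lemma~\ref{lem:existence}, $U(x,t)\le C\|(x,t)\|^{\alpha}$, then transfers directly to $u$ and gives \eqref{eq:holder}.

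The main technical point to handle with care is matching the geometry of the exterior cone $\C$ used to build $U$ with the exterior cone actually available at points of $G_f$ for the localized domain $\tilde D_{\tilde f}$: after rescaling, $\tilde f$ is again parabolically Lipschitz with the same constant $L$, so the flat exterior cone with aperture governed by $c_1,c_2$ (from Section~\ref{sec:self-similar-barrier}) is still contained in the complement, and $U$ was built precisely for that cone — so the comparison is legitimate with constants depending only on $n$ and $L$. A second point requiring attention is that $u$ is only assumed caloric (not bounded) on $\Psi_r(y,s)\cap D$ and vanishing on $\Delta_r(y,s)$ but not necessarily on the rest of $\partial(\Psi_r(y,s)\cap D)$; this is exactly why the comparison is done on a slightly smaller box $\Psi_{r/2}$, where $M_u(r)$ controls $u$ on the "inner" portion of the boundary and the continuous vanishing controls it on the thin part, so the barrier argument closes. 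Everything else — the dyadic iteration, the bookkeeping of constants under $T^{\tilde r}_{(\tilde y,\tilde s)}$ — is routine.
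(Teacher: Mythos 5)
Your core mechanism is the paper's: compare $u$ with a rescaled copy of the self-similar barrier $U$ of Lemma~\ref{lem:existence}, using that $U\geq c_0:=\inf_{\partial_p\Psi_1}U>0$ so that $\frac{M_u(r)}{c_0}\,U\circ T^r_{(y,s)}$ dominates $u$ on $\partial_p(\Psi_r(y,s)\cap D)$ (the vanishing of $u$ on $\Delta_r(y,s)$ takes care of the thin part), and that property (iii) of Lemma~\ref{lem:existence} already carries the H\"older modulus. In the paper this is done in one stroke with the barrier placed at $(y,s)$ itself: every $(y,s)\in\partial_p D$ has an exterior (possibly rotated) copy of the thin cone $\C$ in the complement of $D$ — the flat cone inside $E_f$ if $(y,s)\in E_f$, a full cone from the complement of $\Psi_1$ otherwise — so $U\circ T^r_{(y,s)}$ is caloric in $\Psi_r(y,s)\cap D$ and a single maximum-principle comparison gives \eqref{eq:holder}. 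In particular no dyadic iteration is needed here: the iteration was already performed inside the proof of Lemma~\ref{lem:existence}, and your unit-scale statement with the $\alpha$-decay built in is exactly what property (iii) provides.

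The step that does not work as written is the preliminary localization via Lemma~\ref{lem:loc-property}. First, the recentred point $(\tilde y,\tilde s)$ may lie at parabolic distance comparable to $r$ from $(y,s)$, so an estimate of the form $u\leq C\,\|(x-\tilde y,t-\tilde s)\|^{\alpha}M_u$ does not vanish as $(x,t)\to(y,s)$ and hence does not yield \eqref{eq:holder}, which is a decay estimate at $(y,s)$. Second, $u$ is only defined in $\Psi_r(y,s)\cap D$ and only assumed to vanish on $\Delta_r(y,s)$, while $\Psi_{\tilde r}(\tilde y,\tilde s)\cap D$ may extend into $\Psi_{8r}(y,s)\cap D$, where you have neither the function nor the boundary data; so the proposed reduction to unit scale in the enlarged box is not available. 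The repair is simply to drop the localization and the case split it was meant to produce: distinguish only whether $(y,s)$ lies on $E_f$ (use the flat exterior cone guaranteed by the Lipschitz bound on $f$) or on $\partial_p\Psi_1$ (use the full exterior cone, after a rotation), and run your barrier comparison at $(y,s)$ at scale $r$ — which is precisely the paper's proof. With that change, your remaining points (the cone aperture depends only on $n,L$ after parabolic rescaling, and the constants behave well under $T^r_{(y,s)}$) are fine.
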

\begin{proof}
Let $U$ be the barrier function at $(0,0)$ in Lemma~\ref{lem:existence} and
$c_0=\inf_{\partial_p\Psi_1} U>0$. We then use the parabolic
scaling $T_{(y,s)}^r$ to construct a barrier function at $(y,s)$. If
$(y,s)\in \cN_r(E_f)$, then there is an exterior cone
$\C(y,s)$ at $(y,s)$ with a universal opening, depending only on $n,
L$, and 
$$
{U}_{(y,s)}^r:={U}\circ
T_{(y,s)}^r
$$
will be a local barrier function at $(y,s)$ and will satisfy 
\begin{equation}\label{eq:scale2}
0\leq
{U}_{(y,s)}^r(x,t)\leq C\left(\frac{|x-y|^2+|t-s|}{r^2}\right)^{\alpha/2},
\text{ for }(x,t)\in \Psi _r(y,s). 
\end{equation}
This construction can be made also at $(y,s)\in\partial_p
D\setminus \cN_r(E_f)$ as these points also have the exterior cone property and we
may still use the same formula for $U_{(y,s)}^r$, but after a
possible rotation of the coordinate axes in $\R^n$.

Then, by the maximum principle in $\Psi_r(y,s)\cap D$, we easily
obtain that 
\begin{equation}\label{eq:scale3}
u(x,t)\leq \frac{M_u(r)}{c_0}{U}_{(y,s)}^r(x,t),\quad \text{for }(x,t)\in \Psi_r(y,s)\cap D.
\end{equation}
Combining \eqref{eq:scale2} and \eqref{eq:scale3} we obtain \eqref{eq:holder}.
\end{proof}

The main result in this section is the following forward boundary
Harnack principle, also known as the Carleson estimate.
\begin{theorem}[Forward boundary Harnack principle or Carleson estimate]\label{thm:carleson} Let $r\in
  (0,1/4)$, $(y,s)\in \partial_p D$ with $s\leq 1-4r^2$, and $u$ be a
  nonnegative caloric function in $D$, continuously
vanishing on $\Delta _{3r}(y,s)$. Then there exists $C=C(n,L)>0$ such
that for $(x,t)\in \Psi_{r/2}(y,s)\cap D$
\begin{equation}\label{eq:carleson}
u(x,t)\leq C 
\begin{cases}
\max\{u(\overline{A}^+_r(y,s)),u(\overline{A}^-
_r(y,s))\},&\text{if }(y,s)\in \partial_p D\cap \cN_r(E_f)\\
u(\overline A_r(y,s)), & \text{if }(y,s)\in \partial_p D\setminus \cN_r(E_f)
\end{cases}
\end{equation}
\end{theorem}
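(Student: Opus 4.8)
The plan is to follow Kemper's classical argument for the Carleson estimate, adapted to our thin-complement geometry, and reduce everything to a finite iteration controlled by the Harnack chain properties (Lemmas~\ref{lem:H-chain-prop-I} and \ref{lem:H-chain-prop-II}) together with the boundary Hölder decay from Lemma~\ref{lem:holder}. The key point is that a nonnegative caloric function vanishing on a lateral surface ball cannot be much larger deep inside $\Psi_{r/2}(y,s)\cap D$ than it is at a fixed corkscrew point; the obstruction in the parabolic setting is the time-lag, which prevents a naive Harnack chain argument and forces us to work backward in time from the future corkscrew point.

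First I would fix $(y,s)$ and $r$, and, by the localization property (Lemma~\ref{lem:loc-property}) together with parabolic rescaling $T^{\tilde r}_{(\tilde y,\tilde s)}$, reduce to the normalized situation where the relevant piece of $D$ is (a rescaled copy of) one of the model domains, so that all constants become universal in $n,L$. Next I would set up a stopping-time / dyadic decomposition argument: suppose for contradiction that $u(x_0,t_0)$ is very large compared to $u(\overline A^+_r(y,s))$ (and $u(\overline A^-_r(y,s))$ in the thin case, or $u(\overline A_r(y,s))$ otherwise) for some $(x_0,t_0)\in\Psi_{r/2}(y,s)\cap D$. Using the interior Harnack inequality we may assume $(x_0,t_0)$ is close to the boundary $\Delta_{3r}(y,s)$, since otherwise a bounded Harnack chain to the corkscrew point already gives the estimate. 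Then I would produce a sequence of points $(x_k,t_k)$ with $u(x_{k+1},t_{k+1})\geq 2\,u(x_k,t_k)$, each $(x_k,t_k)$ lying in $\Psi_{r_k}(y_k,s_k)\cap D$ for boundary points $(y_k,s_k)\in\Delta_{3r}(y,s)$ and radii $r_k\to 0$; the doubling is forced because, by Lemma~\ref{lem:holder}, near the boundary $u$ is bounded by $C(\text{scale})^{\alpha/2}M_u$, so if $u$ were large at a given scale it must be even larger (relative to the local sup) at a strictly smaller scale. Here one uses the forward Harnack inequality half of Lemmas~\ref{lem:H-chain-prop-I}/\ref{lem:H-chain-prop-II}: any point with $u$ large is comparable, via a Harnack chain with universal constants, to the future corkscrew point $\overline A^\pm_{r_k}(y_k,s_k)$ (or $\overline A_{r_k}(y_k,s_k)$) at its own scale, and this corkscrew point lies in $\Psi_{2r_k}(y_k,s_k)$, hence within the previous cylinder, so the values $u(\overline A^\pm_{r_k})$ themselves form a doubling sequence of points drifting toward $\Delta_{3r}(y,s)$.

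The hard part will be converting this doubling sequence into a contradiction with the \emph{global} bound $M_u(3r)<\infty$, i.e.\ with the fact that $u$ is merely a fixed caloric function in $D$. The mechanism is that the sequence $r_k$ decreases geometrically while $u(x_k,t_k)$ grows geometrically, and the boundary Hölder estimate \eqref{eq:holder} applied at scale $r_k$ forces $u(x_k,t_k)\leq C r_k^{\alpha/2} r^{-\alpha/2} M_u(r)\to 0$ — contradicting $u(x_k,t_k)\to\infty$ — \emph{provided} we can keep all the $(x_k,t_k)$ inside a fixed larger cylinder $\Psi_{3r}(y,s)\cap D$ where $M_u(3r)$ is finite, and provided the times $t_k$ do not run past the slab boundary $s\leq 1-4r^2$; the time constraint is exactly why we restrict to $s\leq 1-4r^2$ and work with future corkscrew points. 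Managing the time variable carefully — ensuring each step of the Harnack chain respects the forward-in-time condition $t_{i+1}-t_i\geq \mu^2 r_i^2$ while the spatial scales shrink — is the delicate bookkeeping. Finally I would note that the two cases in \eqref{eq:carleson}, $(y,s)\in\cN_r(E_f)$ versus $(y,s)\notin\cN_r(E_f)$, differ only in which corkscrew points appear (a pair versus a single one), since in the thin case a Harnack chain from an interior point may reach either the $D_+$ or the $D_-$ side, and the $\max$ accounts for this; the structure of the argument is otherwise identical, and the constants depend only on $n$ and $L$ through the Harnack chain length and the Hölder exponent $\alpha$ from Lemma~\ref{lem:existence}.
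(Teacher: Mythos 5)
Your proposal does not follow the paper's argument: despite your reference to ``Kemper's classical argument,'' what you sketch is the oscillation/growth proof of the Carleson estimate (in the style of \cite{ACS} and of Caffarelli--Salsa \cite{CS}), built on the boundary H\"older decay of Lemma~\ref{lem:holder} and the Harnack chain Lemmas~\ref{lem:H-chain-prop-I}--\ref{lem:H-chain-prop-II}, concluded by a blow-up contradiction. The paper instead argues through the caloric measure, which \emph{is} the Kemper--Salsa route: it first proves Lemma~\ref{lem:caloric2} by a dyadic induction comparing $\omega^{(x,t)}(\Delta_{r'})$ with its values at the corkscrew points, then represents $u$ in the localized domain $\tilde D_R$, moves the corkscrew points by Harnack chains, and --- after an even-in-$x_n$ symmetrization, needed because $\partial_p\tilde D_R$ is not locally Euclidean along $E_f$ --- applies Besicovitch differentiation to obtain the density bound \eqref{eq:caloric6} and integrates. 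Your route is more elementary (no representation formula, no symmetrization, no differentiation of measures), and it is viable in this geometry; the paper's route has the advantage that the caloric-measure estimates it produces (Lemmas~\ref{lem:caloric1} and \ref{lem:caloric2}) are exactly the tools reused afterwards for the kernel functions, the doubling property and the backward Harnack principle.

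Two steps in your sketch, however, need genuine repair. First, to convert ``$u$ is large at $(x_k,t_k)$'' into ``$(x_k,t_k)$ is close to $\partial_pD$'' you need a quantitative form of Lemma~\ref{lem:H-chain-prop-I}: after normalizing $\max\{u(\overline A^+_r(y,s)),u(\overline A^-_r(y,s))\}=1$, one must know that the constant $C(\gamma)$ grows at most like a power $\gamma^{-K}$ (equivalently, chain length $O(\log(1/\gamma))$), so that $u(x_k,t_k)\ge M_k$ forces $\dist_p((x_k,t_k),\partial_pD)\lesssim M_k^{-1/K}\,r$; the paper states the lemma only qualitatively, your sketch never supplies this dependence, and without it the doubling iteration cannot be set up with displacements that sum to less than $r/2$. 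Second, your final contradiction ``with $M_u(3r)<\infty$'' is not available: the hypotheses give continuous vanishing only on the open surface ball $\Delta_{3r}(y,s)$, and $u$ may be unbounded near the relative boundary of $\Delta_{3r}(y,s)$ (for instance a kernel function with pole at such an edge point) or near the top $t=1$ in the borderline case $s=1-4r^2$, so $\sup_{\Psi_{3r}(y,s)\cap D}u$ need not be finite. The standard fix is to take the scales $\rho_k$ geometrically decreasing so the whole sequence stays in $\Psi_r(y,s)$, whose closure meets $\partial_pD$ only inside $\Delta_{2r}(y,s)$ and stays strictly below $t=1$; the contradiction is then with continuity and vanishing of $u$ at the limit point, not with a global sup bound. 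With these two repairs (and with the time variable handled as you indicate, all comparisons being made against the fixed future corkscrew points, which lie in the future of all of $\Psi_r(y,s)$ because $s\le 1-4r^2$), your argument does yield an alternative proof of \eqref{eq:carleson}.
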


To prove the Carleson estimate above, we need the following two lemmas
on the properties of the caloric measure in $D$, which correspond to
Lemmas 1.1 and 1.2 in \cite{Kemper}, respectively.

\begin{lemma}\label{lem:caloric1} For $0<r<1/4$, $(y,s)\in \partial_p D$
  with $s\leq 1-4r^2$, and
  $\gamma \in (0,1)$, there exists $C=C(\gamma, L)>0$ such that 
$$
\omega ^{(x,t)}(\Delta _r(y,s))\geq C,\quad\text{for }(x,t)\in \Psi _{\gamma
  r}(y,s)\cap D.
$$
\end{lemma}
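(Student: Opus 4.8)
The plan is to establish a uniform lower bound on the caloric measure of a surface ball, evaluated at the corkscrew points, and then propagate this bound to all of $\Psi_{\gamma r}(y,s)\cap D$ using the Harnack chain properties. Since $\omega^{(x,t)}(\Delta_r(y,s)) = H_{\chi_{\Delta_r(y,s)}}(x,t)$ is a nonnegative caloric function in $D$, and the corkscrew points $\overline A_r^\pm(y,s)$ (or $\overline A_r(y,s)$, or $\underline A_r^\pm(y,s)$ when $s$ is too large) are connected to any $(x,t)\in\Psi_{\gamma r}(y,s)\cap D$ by a Harnack chain of length and constant depending only on $\gamma$, $L$, $n$ — this is exactly the content of Lemmas~\ref{lem:H-chain-prop-I} and \ref{lem:H-chain-prop-II} — it suffices to bound $\omega^{(A)}(\Delta_r(y,s))$ from below at a corkscrew point $A$. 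Here I would want to use a forward corkscrew point $\overline A$ when $s\le 1-4r^2$ so that the Harnack inequality points in the right direction; the hypothesis $s\le 1-4r^2$ is precisely what guarantees these points exist in $D$.

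To get the lower bound at the corkscrew point, I would use a barrier/comparison argument. The idea (following Kemper) is to compare $\omega^{(x,t)}(\Delta_r(y,s))$ with the caloric measure of a smaller, standard reference domain sitting inside $\Psi_r(y,s)\cap D$ and containing the corkscrew point in its interior at a definite distance from its parabolic boundary. More concretely: after the parabolic rescaling $T^r_{(y,s)}$, the domain $\Psi_r(y,s)\cap D$ becomes one of the three normalized model configurations from Lemma~\ref{lem:loc-property} (a box, a union of two boxes, or a normalized thin-complement domain $\tilde D_{\tilde f}$), with the rescaled corkscrew point sitting at a universal location and a universal distance from the parabolic boundary; in all these model domains one can build, once and for all, an explicit nonnegative subcaloric barrier $v$ that vanishes on $\partial_p\Psi_1 \setminus \Delta_1$, is bounded above by $1$, equals $0$ on the relevant parts, and satisfies $v \ge c > 0$ at the location of the rescaled corkscrew point, where $c$ depends only on $L$ and $n$. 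By the maximum principle $v \le \omega^{(\cdot)}(\Delta_r(y,s))$ in the rescaled picture, giving $\omega^{(A)}(\Delta_r(y,s))\ge c$ at each corkscrew point $A$. Scaling back is harmless since caloric measure is invariant under parabolic dilations.

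Combining the two steps: for $(x,t)\in\Psi_{\gamma r}(y,s)\cap D$, pick the corkscrew point $A$ for $(y,s)$ (one of $\overline A_r^\pm$ or $\overline A_r$, choosing the one in the same component $D_\pm$ as $(x,t)$ when $E_f$ is nearby; if $(x,t)$ is already far from $\partial_p D$, use the Harnack chain from the statement of the corkscrew lemmas directly). The Harnack chain from $(x,t)$ to $A$ — whose length and constant are controlled by $\gamma, L, n$ — together with the parabolic Harnack inequality applied to the caloric function $(x,t)\mapsto\omega^{(x,t)}(\Delta_r(y,s))$ yields $\omega^{(x,t)}(\Delta_r(y,s)) \ge C(\gamma,L,n)^{-1}\,\omega^{(A)}(\Delta_r(y,s)) \ge C(\gamma,L,n)^{-1} c(L,n) =: C$, as desired.

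The main obstacle is the construction of the explicit barrier $v$ in the model domains, and in particular making sure it vanishes on all of $\partial_p\Psi_1 \setminus \Delta_1$ while staying bounded below at the corkscrew location. The case of the thin-complement model $\tilde D_{\tilde f}$ is the delicate one: here one needs a barrier that is insensitive to the thin slit $E_f$ and that respects the non-cylindrical nature of the domain in time. A clean way around an ad hoc construction is to instead invoke the regularity of $D$ for the heat equation and the Hölder barrier $U$ from Lemma~\ref{lem:existence}, or simply to note that since the rescaled domain is regular and connected with the corkscrew point at a universal interior position, the caloric measure of the (relatively open, universally large) set $\Delta_1$ evaluated at that fixed point is automatically a positive number depending only on the finitely many parameters $L, n$ that determine the model — a compactness argument over the (precompact) family of model domains then gives the uniform lower bound $c(L,n)>0$. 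I expect the write-up to follow Kemper's Lemma~1.1 closely, with the only new ingredient being the bookkeeping of the two corkscrew points near $E_f$.
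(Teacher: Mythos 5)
Your reduction has a genuine gap at its first and main step. You claim that \emph{every} $(x,t)\in \Psi_{\gamma r}(y,s)\cap D$ is joined to a corkscrew point by a Harnack chain with length and constant depending only on $\gamma,L,n$, citing Lemmas~\ref{lem:H-chain-prop-I} and \ref{lem:H-chain-prop-II}. Those lemmas require the interiority hypothesis $\Psi_{\gamma r}(x,t)\cap\partial_p D=\emptyset$; points of $\Psi_{\gamma r}(y,s)\cap D$ can lie arbitrarily close to $E_f$ or to $\partial_p\Psi_1$, and for them no chain of controlled length exists, so the chain constant is not uniform. Moreover the direction is wrong: a chain from $(x,t)$ to the \emph{future} corkscrew point $\overline A_r(y,s)$ yields $u(x,t)\leq C\,u(\overline A_r)$, an upper bound; a lower bound at $(x,t)$ would require a chain from the \emph{past} point $\underline A_r(y,s)$ to $(x,t)$ (needing $s\geq -1+4r^2$ and, again, interiority of $(x,t)$). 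More fundamentally, no purely interior Harnack argument can produce a bound that holds uniformly up to the boundary: the lower bound near $\Delta_r(y,s)$ comes from the boundary data ($\omega^{(\cdot)}(\Delta_r(y,s))$ has boundary values $1$ there), so it must be captured by a comparison up to the boundary, not by chains. Your second ingredient is also where the real work sits and is left unproved: the ``compactness over model domains'' step would need continuity of caloric measure at a fixed interior point under perturbation of the thin Lipschitz slit, which is not established anywhere and is not routine.

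The paper avoids both difficulties with one universal comparison, and no corkscrew points or chains appear. For $(y,s)\in\cN_r(E_f)$ it takes the flat exterior cone $\mathcal{C}(y,s)\subset E_f$ from Section~\ref{sec:barriers} (a single fixed cone, contained in $E_f$ for every $L$-Lipschitz $f$, which is exactly what makes the bound uniform in $f$) and sets $v(x,t)=\omega^{(x,t)}_{\Psi_r(y,s)\setminus\mathcal{C}(y,s)}(\mathcal{C}(y,s))$. Its rescaled version $v_0=\omega^{(\cdot)}_{\Psi_1\setminus\mathcal{C}}(\mathcal{C})$ is continuous on $\overline{\Psi_\gamma}$ (the complement of the cone is a regular domain, Proposition~\ref{prop:weaksol}), equals $1$ on $\mathcal{C}$, hence by the strong maximum principle $v_0\geq c(\gamma,n,L)>0$ on the compact set $\overline{\Psi_\gamma}$; parabolic scaling transfers this to $v$. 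Since $\mathcal{C}(y,s)\cap\overline{\Psi_r(y,s)}\subset\Delta_r(y,s)$ and $D\cap\Psi_r(y,s)\subset\Psi_r(y,s)\setminus\mathcal{C}(y,s)$, the maximum principle in $D\cap\Psi_r(y,s)$ gives $\omega^{(x,t)}(\Delta_r(y,s))\geq v(x,t)\geq C$ at \emph{every} point of $\Psi_{\gamma r}(y,s)\cap D$ at once, boundary points included; for $(y,s)\notin\cN_r(E_f)$ one uses the full exterior cone or quotes Kemper's Lemma~1.1. If you want to salvage your outline, replace the Harnack-chain step by this kind of up-to-the-boundary barrier comparison.
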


\begin{proof} Suppose first $(y,s)\in \cN_r(E_f)$. Consider the
  caloric function $$
v(x,t):=\omega ^{(x,t)}_{\Psi_r(y,s)\setminus \C(y,s)}(\C(y,s)),
$$
where $\C(y,s)$ is the flat exterior cone defined in Section~\ref{sec:barriers}. The domain $\Psi_r(y,s)\setminus \C(y,s)$ is regular, hence by
Proposition \ref{prop:weaksol}, $v(x,t)$ is continuous on $\overline{\Psi_{\gamma r}(y,s)}$. We
next claim that there exists  $C=C(\gamma,n,L)>0$ such that
$$
v(x,t)\geq C\quad\text{in }\Psi_{\gamma r}(y,s).
$$
Indeed, consider the
normalized version of $v$
$$
v_0(x,t):=\omega ^{(x,t)}_{\Psi_1\setminus \C}(\C),
$$
which is related to $v$ through the identity
$v=v_0\circ T^r_{(y,s)}$. Then, from the continuity of $v_0$ in
$\overline{\Psi_\gamma}$, equality $v_0=1$ on $\C$, and the
strong maximum principle we obtain that $v_0\geq C=C(\gamma,n,L)>0$ on
$\overline{\Psi_{\gamma}}$. Using the parabolic scaling, we obtain
the claimed inequality for $v$. Moreover, applying comparison principle to $v(x,t)$ and $\omega ^{(x,t)}(\Delta _r(y,s))$ in $D\cap \Psi _r(y,s)$, we have
\[
\omega ^{(x,t)}(\Delta _r(y,s))\geq v(x,t)\geq C, \quad \text{for }
(x,t)\in D\cap \Psi _{\gamma r}(y,s).
\]
In the case when $(y,s)\in\partial_p D\setminus \cN_r(E_f)$, we may
modify the proof by changing the flat cone $\C(y,s)$ with the full cone
contained in the complement of $D$, or directly applying Kemper's
Lemma~1.1 in \cite{Kemper}.
\end{proof}

\begin{lemma}\label{lem:caloric2}
For $0<r<1/4$, $(y,s)\in \partial_p D$ with $s\leq 1-4r^2$, there
exists a constant $C=C(n,L)>0$, such that for any $r'\in (0,r)$ and
$(x,t)\in D\setminus \Psi _r(y,s)$, we have

\newlength\mywidth
\settowidth{\mywidth}{if $(y,s)\in \cN_r(E_f)$,}
\begin{equation}\label{eq:caloric2}
\omega^{(x,t)}(\Delta_{r'}(y,s))\leq C
\left\{
\begin{alignedat}{2}
&\omega^{\overline{A}_r(y,s)}(\Delta_{r'}(y,s)),&&\text{\hskip -\mywidth if
  $(y,s)\not\in \cN_r(E_f)$;}\\
&\max\{\omega ^{\overline{A}^+_r(y,s)}(\Delta_{r'}(y,s)), \omega
^{\overline{A}^-_r(y,s)}(\Delta_{r'}(y,s))\},\\
&&&\text{\hskip-\mywidth if $(y,s)\in \cN_r(E_f)$}.\\
\end{alignedat}
\right.
\end{equation}
\end{lemma}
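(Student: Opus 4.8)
The strategy is to adapt Kemper's proof of his Lemma~1.2 to the domain $D$, combining the parabolic maximum principle with the lower bound of Lemma~\ref{lem:caloric1}, the boundary H\"older estimate of Lemma~\ref{lem:holder}, and the Harnack chain properties of Lemmas~\ref{lem:H-chain-prop-I} and~\ref{lem:H-chain-prop-II}. Put $w(x,t):=\omega^{(x,t)}(\Delta_{r'}(y,s))$; this is a nonnegative caloric function in $D$ with $0\le w\le 1$ which, by Proposition~\ref{prop:weaksol}, vanishes continuously on $\partial_p D\setminus\overline{\Psi_{r'}(y,s)}$ and hence, since $r'<r$, on $\partial_p D\setminus\Psi_r(y,s)$. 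Moreover, because $\omega^{(x,t)}$ assigns no mass to $\partial_p D\cap\{\tau\ge t\}$ while $\Delta_{r'}(y,s)\subset\{\tau>s-(r')^2\}$, we have $w\equiv 0$ on $D\cap\{t\le s-(r')^2\}$. Write $\mathcal{M}$ for the right-hand side of \eqref{eq:caloric2}, i.e.\ $\mathcal{M}=w(\overline A_r(y,s))$ when $(y,s)\notin\cN_r(E_f)$ and $\mathcal{M}=\max\{w(\overline A_r^+(y,s)),\,w(\overline A_r^-(y,s))\}$ when $(y,s)\in\cN_r(E_f)$.

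The heart of the proof is the estimate
\[
w(x,t)\le C\,\mathcal{M}\qquad\text{for every }(x,t)\in\partial\Psi_r(y,s)\cap D .
\]
Granting it, the lemma follows from the parabolic maximum principle for $w$ in $D\setminus\overline{\Psi_r(y,s)}$: the parabolic boundary of this domain is contained in $\bigl(\partial_p D\setminus\Psi_r(y,s)\bigr)\cup\bigl(\partial\Psi_r(y,s)\cap D\bigr)$, and on it $w$ is either $0$ — on $\partial_p D\setminus\Psi_r(y,s)$ and on the part of $\partial\Psi_r(y,s)\cap D$ with $t\le s-(r')^2$ — or at most $C\mathcal{M}$ by the displayed estimate, so $w\le C\mathcal{M}$ throughout $D\setminus\overline{\Psi_r(y,s)}$, and in particular on $D\setminus\Psi_r(y,s)$.

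To prove the displayed estimate, fix $(x,t)\in\partial\Psi_r(y,s)\cap D$. If $t\le s-(r')^2$ then $w(x,t)=0$, so assume $t>s-(r')^2$, which forces $t$ to lie within $r^2$ of $s$. If $\dist_p\bigl((x,t),\partial_p D\bigr)\ge\delta r$ for a small universal $\delta=\delta(n,L)$, then — exactly as in the proofs of Lemmas~\ref{lem:H-chain-prop-I} and~\ref{lem:H-chain-prop-II} — one constructs a parabolic Harnack chain of universally bounded length, going forward in time, from $(x,t)$ to $\overline A_r(y,s)$, respectively to one of $\overline A_r^+(y,s)$, $\overline A_r^-(y,s)$; since these corkscrew points sit at time $s+2r^2>t$ and within parabolic distance $\sim r$ of $(x,t)$, the parabolic Harnack inequality yields $w(x,t)\le C\mathcal{M}$. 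This is precisely where the \emph{maximum} of the two corkscrew values must appear in the thin case: a chain can be made to reach $\overline A_r^+(y,s)$ from the $D_+$ side and $\overline A_r^-(y,s)$ from the $D_-$ side, but in general not both. If instead $\dist_p\bigl((x,t),\partial_p D\bigr)<\delta r$, pick $(\hat y,\hat s)\in\partial_p D$ realizing this distance; since $(x,t)\in\partial\Psi_r(y,s)$ and $\delta$ is small, $\Delta_{c_*r}(\hat y,\hat s)$ is disjoint from $\overline{\Delta_{r'}(y,s)}$ for a universal $c_*=c_*(n,L)>0$, so $w$ vanishes continuously there and Lemma~\ref{lem:holder} applied at $(\hat y,\hat s)$ and scale $c_*r$ gives
\[
w(x,t)\le C\Bigl(\tfrac{\dist_p((x,t),\partial_p D)}{c_*r}\Bigr)^{\alpha}\,\sup_{\Psi_{c_*r}(\hat y,\hat s)\cap D}w .
\]
One then expects to close the estimate by feeding this supremum back into the same dichotomy — the points realizing it still lie at parabolic distance $\sim r$ from $(y,s)$, so the same disjointness from $\Delta_{r'}(y,s)$ persists — using the H\"older gain, together with Lemma~\ref{lem:caloric1}, to keep the constants under control over finitely many steps until a point falling under the Harnack-chain case is reached.

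I expect this last case — points of $\partial\Psi_r(y,s)\cap D$ lying within a small fraction of their scale from $\partial_p D$ — to be the main obstacle: such points cannot be joined to a corkscrew point by a Harnack chain of universally bounded length, so one is forced to exploit that $w$ vanishes on the adjacent part of $\partial_p D$ and to iterate the localized H\"older estimate, the delicate point being to do so with universal constants; here the structure of $\partial_p D$ near $G_f$ (via the localization Lemma~\ref{lem:loc-property}, reducing to boxes, unions of boxes, or smaller copies of $D$) and the classical boundary Harnack principle on Lipschitz pieces come into play. A separate, routine reduction — covering $\Delta_{r'}(y,s)$ by a bounded number of surface balls of radius $r'/N$ and relocating the corkscrew point along a short Harnack chain — lets one assume from the start that $r'$ is small relative to $r$, which is what keeps $c_*$ universal.
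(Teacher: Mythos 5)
Your reduction (maximum principle in $D\setminus\overline{\Psi_r(y,s)}$ once the bound is known on $\partial\Psi_r(y,s)\cap D$, Harnack chains for points at distance $\gtrsim r$ from $\partial_p D$, Lemma~\ref{lem:holder} near the boundary, and the easy case $r'\sim r$ via Lemma~\ref{lem:caloric1}) assembles the right ingredients, but the step you yourself flag as "the main obstacle" is precisely where the proof lives, and your sketch for it does not close. Feeding $\sup_{\Psi_{c_*r}(\hat y,\hat s)\cap D}w$ back into the same dichotomy at the fixed scale $\sim r$ gains only a fixed factor $C(\delta/c_*)^\alpha<1$ per step, while the target value $\mathcal{M}=\omega^{\overline A_r^\pm(y,s)}(\Delta_{r'}(y,s))$ can be arbitrarily small as $r'/r\to 0$; since the only a priori bound on the successive suprema is $w\le 1$, the number of iterations needed to reach $C\mathcal{M}$ is of order $\log(1/\mathcal{M})$, hence not universal, and meanwhile the boxes $\Psi_{c_*r}(\hat y_j,\hat s_j)$ drift (possibly toward $\Delta_{r'}(y,s)$, where $w\approx 1$, or to where the comparison corkscrew points are no longer reachable by short chains). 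So the constant $C(n,L)$ is never obtained by this route.

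The paper's proof (following Kemper) resolves exactly this by a different decomposition: an induction over dyadic scales $\rho_k=2^{k-1}r'$ increasing from $r'$ up to $\sim r$, with statement $(S_k)$ asserting $\omega^{(x,t)}(\Delta_{r'})\le C\max\{\omega^{\overline A^{\pm}_{\rho_k}(y,s)}(\Delta_{r'})\}$ (or the single corkscrew value) for $(x,t)\in D\setminus\Psi_{\rho_k}(y,s)$, with $C$ independent of $k$. The base case follows from Lemma~\ref{lem:caloric1} plus one Harnack step, and in the induction step the near-boundary points of $\partial\Psi_{\rho_{k+1}}(y,s)\cap D$ are handled by applying the \emph{induction hypothesis} in a box of radius $\rho_k$ centered at a nearby boundary point and then Lemma~\ref{lem:holder}, with the smallness parameter $\mu$ chosen so that the H\"older decay factor is at most $1/C_1$, where $C_1$ is the Harnack constant linking $\overline A_{\rho_k}$ to $\overline A_{\rho_{k+1}}^\pm$; this exact compensation is what keeps $C$ from growing with $k$, i.e.\ the comparison is always to corkscrew points at the \emph{current} scale, never across the full gap from $r'$ to $r$ in one shot. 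That mechanism—comparing against a quantity of the same order at each scale rather than against the final, possibly tiny, value $\mathcal{M}$—is absent from your argument, and without it (or some substitute) the near-boundary case is a genuine gap rather than a routine verification.
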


\begin{proof}
For notational simplicity, we define
\begin{align*}
&\Delta ':=\Delta _{r'}(y,s), \quad \Delta := \Delta_r(y,s),\\
&\Psi^k:=\Psi_{2^{k-1}r'}(y,s),
\\
&\overline{A}_k^{\pm}:=\overline{A}_{2^{k-1}r'}^{\pm}(y,s),\quad\text{if
}\Psi^k\cap E_f\not=\emptyset\\
&\overline{A}_k:=\overline{A}_{2^{k-1}r'}(y,s),\quad\text{if
}\Psi^k\cap E_f=\emptyset\\
&\qquad\text{for }k=0,1,\ldots, \ell\quad \text{with}\quad 2^{\ell-1}r'<3r/4<2^{\ell}r'.
\end{align*}
We want to clarify here that for  $(y,s)\not\in E_f$ and small $r'$ and
$k$, it may happen that $\Psi^k$ does not intersect $E_f$. To be more
specific, let $\ell_0$ be the smallest
nonnegative integer such that $\Psi^{\ell_0}\cap E_f\not=
\emptyset$. Then we define $\overline A_k$ for $0\leq k\leq \min\{\ell_0-1,\ell\}$
and the pair $\overline A_k^\pm$ for $\ell_0\leq k\leq \ell$.

To prove the lemma, we want to show that there exists a universal constant $C$, in
particular independent of $k$, such that for $(x,t)\in D\setminus \Psi^k$
\begin{equation}\label{eq:Sk}\tag{$S_k$}
\omega^{(x,t)}(\Delta ')\leq C\begin{cases}\omega
  ^{\overline{A}_k}(\Delta '),&\text{if $1\leq k\leq
    \min\{\ell_0-1,\ell\}$},\\
\max\{\omega^{\overline{A}_k^+}(\Delta '),
\omega^{\overline{A}_k^-}(\Delta ')\}, & \text{if $\ell_0\leq
  k\leq \ell$}.
\end{cases}
\end{equation}
Once this is established, \eqref{eq:caloric2} will follow from $(S_{l})$ and the Harnack inequality.

The proof of $(S_k)$ is going to be by induction in $k$. We start with
an observation that by the Harnack inequality, there is $C_1>0$ independent of
$k$, $r'$ such that
\begin{equation}\label{eq:interiorharnack}
\begin{aligned}
\omega^{\overline A_k}(\Delta') &\leq C_1 \omega^{\overline
  A_{k+1}}(\Delta') && \text{for }0\leq k\leq \min\{\ell_0-2,\ell-1\}\\
\omega^{\overline A_{\ell_0-1}}(\Delta')&\leq C_1\max\{
\omega^{\overline A_{\ell_0}^{+}}(\Delta'),
\omega^{\overline{A}_{\ell_0}^{-}}(\Delta')\},&& \text{if }\ell_0\leq \ell\\
\omega^{\overline{A}_{k}^{\pm}}(\Delta ')&\leq C_1\omega
^{\overline{A}_{k+1}^{\pm}}(\Delta'),&& \text{for } \ell_0\leq k\leq \ell-1.
\end{aligned}
\end{equation}

\medskip\noindent\emph{Proof of $(S_1)$}: Without loss of
generality assume $(y,s)\in \partial_p D\cap \overline D_+$.

Case 1) Suppose first that $\Psi^1\cap E_f=\emptyset$, i.e., $\ell_0>1$. 
In this case
$\overline A_{0}=\overline A_{r'/2}(y,s)\in
\Psi_{(3/4)r'}(y,s)$ and by Lemma~\ref{lem:caloric1}
there exists a universal $C_0>0$, such that $\omega ^{\overline
  A_{0}}(\Delta ')\geq C_0$. By \eqref{eq:interiorharnack} we
have $\omega ^{\overline A_{0}}(\Delta ')\leq C_1\omega
^{\overline{A}_1}(\Delta ')$. Letting $C_2=C_1/C_0$, we then have 
\begin{equation}
\omega ^{(x,t)}(\Delta ')\leq 1\leq C_2\omega ^{\overline{A}_1}(\Delta ').
\end{equation}

\medskip
Case 2) Suppose now, $\Psi^1\cap E_f\not=\emptyset$, but $\Psi^0\cap
E_f=\emptyset$, i.e., $\ell_0=1$. In this case we start as in Case 1)
and finish by applying the second inequality in
\eqref{eq:interiorharnack}, which yields

\begin{equation}
\omega ^{(x,t)}(\Delta ')\leq 1\leq C_2\max\{\omega
^{\overline{A}_1^+}(\Delta '),\omega
^{\overline{A}_1^-}(\Delta ')\}.
\end{equation}

Case 3) Finally, assume that  $\Psi^0\cap E_f\not=\emptyset$, i.e., $\ell_0=0$. 
Without loss of generality
assume also that $(y,s)\in \partial_p D\cap \overline D_+$. In this case $\overline A_{0}^+\in
\Psi_{(3/4)r'}(y,s)$ and therefore $\omega^{\overline A_0^+}(\Delta')\geq
C_0$. Besides, by \eqref{eq:interiorharnack}, we have that
$\omega^{\overline A_0^+}(\Delta')\leq
  C_1\omega^{\overline A_1^+}(\Delta')$, which yields
\begin{equation}
\omega^{(x,t)}(\Delta ')\leq 1\leq C_2\omega ^{\overline{A}_1^+}(\Delta ').
\end{equation}
This proves $(S_1)$ with the constant $C=C_2$.
 
\medskip
We now turn to the proof of the induction step.

\medskip\noindent\emph{Proof of $(S_k)\Rightarrow
  (S_{k+1})$}: More
precisely, we will show that if $(S_k)$ holds with some universal
constant $C$ (to be specified) then $(S_{k+1})$ also holds with
the same constant.

By the maximum principle, we need to verify
$(S_{k+1})$ for  $(x,t)\in \partial_p (D\setminus \Psi^{k+1})$. Since
$\omega^{(x,t)}(\Delta')$ vanishes on $(\partial_p D)\setminus
\Psi^{k+1}$, we may assume that $(x,t)\in (\partial \Psi^{k+1})\cap D$. 
We will need to consider three cases, as in the proof of $(S_1)$: 

1) $\Psi^{k+1}\cap E_f=\emptyset$, i.e., $\ell_0>k+1$; 

2) $\Psi^{k+1}\cap E_f\not=\emptyset$, 
but $\Psi^k\cap E_f=\emptyset$, i.e., $\ell_0=k+1$;

3) $\Psi^{k}\cap E_f\not=\emptyset$, i.e., $\ell_0\leq k$. 

Since the proof is similar in all three cases, we will treat only Case 2) in detail.

\medskip
Case 2) So suppose  $\Psi^{k+1}\cap E_f\not=\emptyset$
but $\Psi^k\cap E_f=\emptyset$. We consider two
subcases, depending weather $(x,t)\in \partial\Psi^{k+1}$ is close to
$\partial_p D$ or not.

\smallskip
Case 2a) First assume that $(x,t)\in \cN_{\mu 2^kr'}(\partial_p D)$ for some small positive $\mu=\mu(L,n)<1/2$ (to be specified). Take $(z,h)\in \Psi_{\mu
  2^k r'}(x,t)\cap \partial_p D$ and observe that $\omega^{(x,t)}(\Delta')$ is
caloric in $\Psi_{2^{k-1}r'}(z,h)\cap D$ and vanishes continuously on
$\Delta _{2^{k-1}r'}(z,h)$ (by
Proposition~\ref{prop:weaksol}). Besides, by the induction assumption
that $(S_k)$ holds, we have
$$
\omega^{(x,t)}(\Delta')\leq C \omega^{\overline A_{k}}(\Delta'),\quad
\text{for }(x,t)\in \Psi_{2^{k-1}r'}(z,h)\cap D\subset D\setminus \Psi^k.
$$
Hence,  by Lemma~\ref{lem:holder}, if
$\mu=\mu(n,L)>0$ is small enough, we obtain that
$$
\omega^{(x,t)}(\Delta')\leq \frac1{C_1} C \omega ^{\overline{A}_k}(\Delta ')
,\quad\text{for }(x,t)\in \Psi_{\mu 2^kr'}(z,h).
$$
Here $C_1$ is the constant in \eqref{eq:interiorharnack}.
This, combined with \eqref{eq:interiorharnack}, gives
\begin{align*}
\omega^{(x,t)}(\Delta')&\leq \frac{C}{C_1}\omega^{\overline{A}_k}(\Delta')\\
&\leq \frac{C}{C_1}\cdot C_1\max\{\omega^{\overline{A}_{k+1}^+}(\Delta'),\omega^{\overline{A}_{k+1}^-}(\Delta') \}\\
&=C \max\{\omega^{\overline{A}_{k+1}^+}(\Delta'),\omega^{\overline{A}_{k+1}^-}(\Delta') \}.
\end{align*}
This proves $(S_{k+1})$ for $(x,t)\in \cN_{\mu2^k r'}(\partial_p
D)\cap \partial \Psi^{k+1}$.

\smallskip
Case 2b) Assume now $\Psi_{\mu 2^k r'}(x,t)\cap \partial_p D=\emptyset$. In
this case, it is easy to see that we can construct a parabolic Harnack
chain in $D$ of universal length from $(x,t)$ to either $\overline
A_{k+1}^+$ or $\overline
A_{k+1}^-$,  which implies that for some universal constant $C_3>0$
\begin{equation*}\label{eq:beta22}
\omega ^{(x,t)}(\Delta')\leq
C_3\max\{\omega^{\overline{A}_{k+1}^+}(\Delta'),\omega^{\overline{A}_{k+1}^-}(\Delta')\}.
\end{equation*}
Thus, combing Cases 2a) and 2b), we obtain that $(S_{k+1})$ holds with 
provided $C=\max\{C_2,C_3\}$. This completes the proof of our
induction step in Case 2). As we mentioned earlier, Cases 1) and 3)
are obtained by a small modification from Case 1) as in the proof of
$(S_1)$.
This completes the proof of the lemma.
\end{proof}

Now we prove the Carleson estimate. With Lemma~\ref{lem:caloric1} and
Lemma~\ref{lem:caloric2} at hand, we use ideas similar to those in \cite{Salsa}. 

\begin{proof}[Proof of Theorem~\ref{thm:carleson}] We start with a
  remark that if $(y,s)\not\in\cN_{r/4}(E_f)$ then we can restrict $u$
  to $D_+$ or $D_-$ and obtain the second estimate in \eqref{eq:carleson}
  from the known result for parabolic Lipschitz domains. We thus consider only the case $(y,s)\in\cN_{r/4}(E_f)$. Besides,
  replacing $(y,s)$ with $(y',s')\in \Psi_{r/4}(y,s)\cap E_f$ we may
  further assume that $(y,s)\in E_f$, but then we will need to change
  the assumption that $u$ vanishes on $\Delta_{2r}(y,s)$ and prove the estimate
  \eqref{eq:carleson} for $(x,t)\in \Psi_{r}(y,s)\cap D$.

With the above assumptions in mind, let $0<r<1/4$ and $R=8r$. Let $\tilde
D_{R}(y,s):=\Psi_{\tilde R}(\tilde{y},\tilde{s})\cap D$ be given by the localization property
Lemma~\ref{lem:loc-property}. Note that we will be either in case (2)
or (3) of that lemma, moreover, we can choose $(\tilde y,\tilde
s)=(y,s)$.

For the notational brevity, let $\omega_R^{(x,t)}:=\omega _{\tilde D_{R}(y,s)}^{(x,t)}$ be the caloric
measure with respect to $\tilde D_{R}(y,s)$. We will also skip the
center $(y,s)$
in the notations $\tilde D_{R}(y,s)$ for $\Psi_\rho(y,s)$ and $\Delta_\rho(y,s)$.

Since $u$ is caloric in $\tilde D_R$ and continuously vanishes up to
$\Delta_{2r}$, we have
\begin{equation}\label{eq:carleson1}
u(x,t)=\int_{(\partial_p \tilde
  D_R)\setminus\Delta_{2r}}u(z,h)d\omega_R^{(x,t)}(z,h),\quad
(x,t)\in \tilde D_R.
\end{equation}
Note that for $(x,t)\in \Psi_r\cap D$, we have $(x,t)\not\in
\Psi_{r/2}(z,h)$ for any
$(z,h)\in (
\partial_p\tilde D_R)\setminus\Delta_{2r}$. Hence,
applying Lemma~\ref{lem:caloric2}\footnote{We have to scale the domain
  $\tilde D_R$
with $T^{\tilde R}_{(\tilde y,\tilde s)}$ first and apply  Lemma
\ref{lem:caloric2} to $r/2\tilde R<1/8$ if we are in case (3) of the
localization property Lemma~\ref{lem:loc-property}; in the case
(2) we apply the known results for parabolic Lipschitz domains.} to $\omega_R^{(x,t)}$ in
$\tilde D_R$, we will have that for
$(x,t)\in \Psi_{r}\cap D$ and sufficiently small $r'$ 
\begin{multline*}
\omega_R^{(x,t)}(\Delta_{r'}(z,h))\leq C\max \big\{ \omega_R^{\overline{A}^+_{r/2,R}(z,h)}(\Delta_{r'}(z,h)), \omega_R^{\overline{A}^-_{r/2,R}(z,h)}(\Delta_{r'}(z,h))\big\}\\
\text{for } (z,h)\in \cN_{r/2}(E_f)\cap
(\partial_p\tilde D_R)\setminus \Delta_{2r}
\end{multline*}
and 
\begin{multline*}
\omega^{(x,t)}_R(\Delta _{r'}(z,h))\leq C\,\omega
^{\overline{A}_{r/2,R}(z,h)}_R(\Delta _{r'}(z,h)),\\
 \text{for } (z,h)\in \partial_p \tilde D_R\setminus
 (\cN_{r/2}(E_f)\cup \Delta_{2r}),
\end{multline*}
where $C=C(L,n)$ and by $\overline A_{r/2,R}^\pm$ and $\overline A_{r/2,R}$ we denote
the corkscrew points with respect to the domain $\tilde D_R$.
To proceed, we note that for $(z,h)\in \partial_p\tilde D_R$ with $h>s+r^2$, by the maximum
principle 
$$
\omega_R^{(x,t)}(\Delta_{r'}(z,h))=0
$$ for any
$(x,t)\in \Psi_{r}\cap D$ provided $r'$ is small enough.
For $(z,h)\in (\partial_p\tilde D_R)\setminus\Delta_{2r}$ with
$h\leq s+r^2$, we note that with the help of Lemmas
\ref{lem:H-chain-prop-I} and \ref{lem:H-chain-prop-II} we can
construct a Harnack chain of controllable length in $D$ from
$\overline A_{r/2,R}^\pm(z,h)$ or $\overline A_{r/2,R}(z,h)$ to
$\overline A_{r}^+(y,s)$ or $\overline A_{r}^-(y,s)$ (corkscrew
points with respect to the original $D$). This will imply that for
$(x,t)\in \Psi_r\cap D$
and $(z,h)\in\partial_p\tilde D_R\setminus \Delta_{2r}$
\begin{multline}\label{eq:caloric5}
\omega_R^{(x,t)}(\Delta_{r'}(z,h))\leq C\max
\{\omega_R^{\overline{A}^+_r(y,s)}(\Delta_{r'}(z,h)),
\omega_R^{\overline{A}^-_r(y,s)}(\Delta_{r'}(z,h))\}.
\end{multline}
We now want to apply Besicovitch's theorem on the differentiation of
Radon measures. However, since $\partial_p \tilde D_R$ locally is
not topologically  equivalent to a Euclidean space, we make the
following symmetrization argument. 
For $x\in\R^n$ let $\hat x$ be its mirror image with respect to the
hyperplane $\{x_{n}=0\}$. 
We then can write
\begin{align*}
&u(x,t)+u(\hat x,t)=\int_{\partial_p \tilde
  D_R\setminus\Delta_{2r}} [u(z,h)+u(\hat z,h)]
d\omega^{(x,t)}_R(z,h)\\
&\qquad=\frac12\int_{\partial_p \tilde
  D_R\setminus\Delta_{2r}} [u(z,h)+u(\hat z,h)]
\left(d\omega^{(x,t)}_R(z,h)+d\omega^{(\hat x,t)}_R(z,h)\right)\\
&\qquad=\int_{\partial_p ((\tilde D_R)_+)\setminus
  \Delta_{2r}}[u(z,h)+u(\hat z,h)]\chi
\left(d\omega^{(x,t)}_R(z,h)+d\omega^{(\hat x,t)}_R(z,h)\right),
\end{align*}
where $\chi=1/2$ on $\partial_p ((\tilde D_R)_+)\cap\{x_{n}=0\}$
and $\chi=1$ on the remaining part of $\partial_p ((\tilde D_R)_+)$
and the measures $d\omega^{(x,t)}_R$ and $d\omega^{(\hat x,t)}_R$ are extended as
zero on the thin space outside $E_f$, i.e., on $\partial_p ((\tilde
D_R)_+)\setminus \partial_p \tilde D_R$. We then use
the estimate \eqref{eq:caloric5} for $(x,t)$ and $(\hat x,t)$ in
$\Psi_{r}\cap D$. Now note that in this situation we can apply Besicovitch's theorem on
differentiation, since we can locally project $\partial_p ((\tilde
D_R)_+)$ to hyperplanes, similarly to \cite{HW}. This will yield
\begin{equation}\label{eq:caloric6}
\frac{d\omega_R^{(x,t)}(z,h)+d\omega_R^{(\hat x,t)}(z,h)}{d\omega_R^{\overline{A}^{+}_r(y,s)}(z,h)+d\omega_R^{\overline{A}^{-}_r(y,s)}(z,h)}\leq C
\end{equation}
for $(z,h)\in\partial_p((\tilde D_R)_+)\setminus\Delta_{2r}$
and $(x,t)\in \Psi_{r}\cap D$.
Hence, 
we obtain
\begin{align*}
&u(x,t)+u(\hat x,t)\\&\qquad\leq C\int_{\partial_p ((\tilde
  D_R)_+)\setminus \Delta_{2r}}[ u(z,h)+u(\hat z,h)]\left(d\omega_R^{\overline{A}^+_r(y,s)}(z,h)+d\omega_R^{\overline{A}^-_r(y,s)}(z,h)\right)\\
&\qquad\leq C\left(u(\overline{A}^+_r(y,s))+u(\overline{A}^-_r(y,s))\right),\\
&\qquad\leq C\max\{u(\overline{A}^+_r(y,s)), u(\overline{A}^-_r(y,s))\}, \quad (x,t)\in \Psi_r\cap D.
\end{align*}
This completes the proof of the theorem.
\end{proof}

The following theorem is a useful consequence of Theorem
\ref{thm:carleson}, whose proof is similar to that of Theorem~1.1 in \cite{Garofalo2} with Theorem~\ref{thm:carleson} above in hand. Hence here we only state the theorem without giving a proof.

\begin{theorem}\label{thm:carleson2} For $0<r<1/4$,
  $(y,s)\in \partial _pD$ with $s\leq 1-4r^2$, let $u$ be caloric in
  $D$ and continuously vanishes on $\partial _pD\setminus
  \Delta_{r/2}(y,s)$. Then there exists $C=C(n,L)$ such that for
  $(x,t)\in D\setminus \Psi_r(y,s)$ we have
\begin{equation}\label{eq:carleson2}
u(x,t)\leq C\begin{cases}
\max\{u(\overline{A}_r^+(y,s)),
u(\overline{A}_r^-(y,s))\},&\text{if }(y,s)\in\cN_r(E_f)\\
u(\overline{A}_r(y,s)), &\text{if }(y,s)\not\in\cN_r(E_f).
\end{cases}
\end{equation}
Moreover, applying Lemma~\ref{lem:caloric1} and the maximum principle we have: for $(x,t)\in D\setminus \Psi_r(y,s)$,
\begin{multline}\label{eq:carleson3}
u(x,t)\leq C\omega^{(x,t)}(\Delta _{2r}(y,s))\times\\ \times\begin{cases}
\max\{u(\overline{A}_r^+(y,s)),
u(\overline{A}_r^-(y,s))\},&\text{if }(y,s)\in\cN_r(E_f)\\
u(\overline{A}_r(y,s)), &\text{if }(y,s)\not\in\cN_r(E_f).
\end{cases}
\end{multline}
\end{theorem}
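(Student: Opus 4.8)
The plan is to derive \eqref{eq:carleson2} from the forward Carleson estimate of Theorem~\ref{thm:carleson} and the Harnack chain Lemmas~\ref{lem:H-chain-prop-I}--\ref{lem:H-chain-prop-II} via a maximum-principle globalization, and then to upgrade it to \eqref{eq:carleson3} by comparing $u$ with a caloric measure, exactly as the excerpt indicates. Throughout I assume $u\ge 0$ (as in Theorem~\ref{thm:carleson}) and, harmlessly, that $u$ is bounded; I write the argument for the thin branch $(y,s)\in\cN_r(E_f)$, the non-thin branch being identical with single corkscrew points in place of the pairs $\overline A^+,\overline A^-$.

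First I would reduce the global estimate to a bound on a small inner boundary. Put $M=\max\{u(\overline A_r^+(y,s)),u(\overline A_r^-(y,s))\}$. Since $\overline{\Psi_{3r/4}(y,s)}\subset\Psi_r(y,s)$ while $\partial\Psi_{3r/4}(y,s)\cap\Psi_{r/2}(y,s)=\emptyset$, the hypothesis forces $u$ to vanish continuously on $(\partial_p D)\setminus\Psi_{3r/4}(y,s)$, hence in particular on $(\partial_p D)\cap\partial\Psi_{3r/4}(y,s)$. The maximum principle in $D\setminus\overline{\Psi_{3r/4}(y,s)}$ then gives $u(x,t)\le\sup_{\partial\Psi_{3r/4}(y,s)\cap D}u$ for every $(x,t)\in D\setminus\Psi_r(y,s)$, so it remains to bound $u$ on $\partial\Psi_{3r/4}(y,s)\cap D$ by $CM$.

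For that, fix $(\xi,\tau)\in\partial\Psi_{3r/4}(y,s)\cap D$ --- note $(\xi,\tau)\in\Psi_r(y,s)$ --- and a small $\gamma=\gamma(n,L)>0$. If $\Psi_{\gamma r}(\xi,\tau)\cap\partial_p D=\emptyset$, then (since $s\le 1-4r^2$) Lemma~\ref{lem:H-chain-prop-I} gives a Harnack chain of controlled length from $(\xi,\tau)$ to $\overline A_r^+(y,s)$ or $\overline A_r^-(y,s)$, so $u(\xi,\tau)\le CM$. Otherwise pick $(z,h)\in\partial_p D\cap\Psi_{\gamma r}(\xi,\tau)$; since $(\xi,\tau)$ saturates one of the defining inequalities of the box $\Psi_{3r/4}$, a direct estimate (tracking the $4nL$-stretching in the $x_{n-1}$-variable) shows that for $\gamma$ small $(z,h)\notin\Psi_{r/2}(y,s)$ and one may pick $\rho=\rho(n,L)\,r$ with $\Psi_{3\rho}(z,h)\cap\Psi_{r/2}(y,s)=\emptyset$, $(\xi,\tau)\in\Psi_{\rho/2}(z,h)$ and $h\le 1-4\rho^2$. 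Then $u$ vanishes continuously on $\Delta_{3\rho}(z,h)$, so Theorem~\ref{thm:carleson} at $(z,h)$ of radius $\rho$ bounds $u(\xi,\tau)$ by $C$ times $u$ at $\overline A_\rho^\pm(z,h)$ (or $\overline A_\rho(z,h)$); these points lie in $D_\pm\cap\Psi_{Cr}(y,s)$ at times $\le h+2\rho^2<s+2r^2$, so Lemmas~\ref{lem:H-chain-prop-I}--\ref{lem:H-chain-prop-II} (or a direct chaining inside $D_\pm$) give Harnack chains of controlled length joining them to the corresponding $\overline A_r^\pm(y,s)$, whence again $u(\xi,\tau)\le CM$. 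This proves \eqref{eq:carleson2}. To get \eqref{eq:carleson3} I would set $v(x,t):=\omega^{(x,t)}(\Delta_r(y,s))$, which is caloric in $D$ with $0\le v\le 1$ and, by Lemma~\ref{lem:caloric1} (with $\gamma=7/8$, admissible since $\overline{\Psi_{3r/4}(y,s)}\subset\Psi_{7r/8}(y,s)$ and $s\le 1-4r^2$), satisfies $v\ge c_0=c_0(n,L)>0$ on $\partial\Psi_{3r/4}(y,s)\cap D$. On $\partial_p(D\setminus\overline{\Psi_{3r/4}(y,s)})$ one then has $u\le (CM/c_0)\,v$ --- zero against $v\ge0$ on the part lying in $\partial_p D$, and $u\le CM\le (CM/c_0)\,v$ on $\partial\Psi_{3r/4}(y,s)\cap D$ --- so the maximum principle yields $u\le (CM/c_0)\,v\le (CM/c_0)\,\omega^{(\cdot)}(\Delta_{2r}(y,s))$ on $D\setminus\Psi_r(y,s)$, using $\Delta_r\subset\Delta_{2r}$; and \eqref{eq:carleson2} is recovered from $v\le 1$.

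The one genuinely delicate point should be the geometric bookkeeping in the middle step: choosing $\gamma$ and $\rho$ so that the auxiliary boundary point $(z,h)$ and its $3\rho$-box avoid $\Delta_{r/2}(y,s)$ in spite of the anisotropy of the boxes, and verifying that each linking Harnack chain runs strictly forward in time with length controlled by $n$ and $L$ alone --- this is where the two separate families of corkscrew points, pointing into $D_+$ and into $D_-$, must be carried along. Everything else is a routine application of Theorem~\ref{thm:carleson}, the Harnack chain lemmas, and the maximum principle.
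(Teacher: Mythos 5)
Your proof is correct and follows essentially the route the paper intends: the paper omits the argument, deferring to Theorem~1.1 of \cite{Garofalo2}, and that argument is precisely your combination of Theorem~\ref{thm:carleson} plus Harnack chains (Lemmas~\ref{lem:H-chain-prop-I}--\ref{lem:H-chain-prop-II}) to reach the corkscrew pair $\overline{A}_r^{\pm}(y,s)$, followed by the maximum principle in $D\setminus\overline{\Psi_{3r/4}(y,s)}$ against the caloric measure, which Lemma~\ref{lem:caloric1} bounds from below on $\partial\Psi_{3r/4}(y,s)\cap D$. One cosmetic point: drop the blanket boundedness assumption on $u$ (the theorem is later applied to kernel functions, which blow up near $(y,s)$); your argument only ever uses the finiteness of $\sup_{\partial\Psi_{3r/4}(y,s)\cap D}u$, which already follows from interior continuity and the continuous vanishing of $u$ on $\partial_p D\setminus\Delta_{r/2}(y,s)$.
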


\section{Kernel functions}
\label{sec:kernel-functions}

Before proceeding to the backward boundary Harnack principle, we need
the notion of kernel functions associated to the heat operator and the
domain $D$. In \cite{Garofalo2}, the backward Harnack principle is a
consequence of the global comparison principle (Theorem
\ref{thm:global}) by a simple time-shifting argument. In our case,
since $D$ is not cylindrical, the above simple argument does not
work. So we will first prove some properties of the kernel functions
which can be used to show the doubling property of the caloric
measures as in \cite{Wu}. Then, using arguments as in
\cite{Garofalo2}, we obtain the the backward Harnack principle.

\subsection{Existence of kernel functions} 
\label{sec:exist-kern-funct}

Let $(X, T)\in D$ be
fixed. Given $(y,s)\in \partial _pD$ with $s<T$, a function
$K(x,t;y,s)$ defined in $D$ is called a kernel function at $(y,s)$ for the heat equation with respect to $(X,T)$ if,
\begin{itemize}
   \item[(i)]$K(\cdot,\cdot;y,s)\geq 0$ in $D$,
   \item[(ii)]$(\Delta-\partial _t)K(\cdot,\cdot;y,s)=0$ in $D$,
   \item[(iii)]${\displaystyle \lim_{\substack{(x,t)\rightarrow (z,h)\\(x,t)\in D}}K(x,t; y,s)=0}$ for $(z,h)\in \partial _pD\setminus\{(y,s)\}$,
   \item[(iv)]$K(X,T;y,s)=1$.
 \end{itemize}
If $s\geq T$, $K(x,t;y,s)$ will be taken identically equal to zero. We note that by maximum principle $K(x,t;y,s)=0$ when $t<s$.

The existence of the kernel functions (for the heat operator on domain $D$) follows directly from Theorem~\ref{thm:carleson}. Let $(y,s)\in \partial _pD$ with $s<T-\delta^2$ for some $\delta>0$, consider
\begin{equation}\label{eq:kernel}
v_n(x,t)=\frac{\omega^{(x,t)}(\Delta _{\frac{1}{n}}(y,s))}{\omega ^{(X,T)}(\Delta _{\frac{1}{n}}(y,s))},\quad (x,t)\in D, \quad \frac{1}{n}<\delta.
\end{equation}
We clearly have $v_n(x,t)\geq 0$, $(\Delta -\partial _t)v_n(x,t)=0$ in
$D$ and $v_n(X,T)=1$. Given $\epsilon\in (0,1/4)$ small, by Theorem
\ref{thm:carleson2} and the Harnack inequality $\{v_n\}$ is uniformly
bounded on $\overline{D\setminus \Psi_\epsilon(y,s)}$ if $n\geq
2/\epsilon$. Moreover, by the up to the boundary regularity
(see Proposition~\ref{prop:weaksol} and Lemma~\ref{lem:holder}), the family
$\{v_n\}$ 
is uniformly H\"older in $\overline{D\setminus \Psi _\epsilon
  (y,s)}$. Hence, up to a subsequence, $\{v_n\}$ converges uniformly on
$\overline{D\setminus \Psi _\epsilon (y,s)}$ to some nonnegative caloric function $v$ satisfying $v(X,T)=1$.
Since $\epsilon$ can be taken arbitrarily small, $v$ vanishes on $\partial
_pD\setminus \{(y,s)\}$. Therefore, $v(x,t)$
is a kernel function at $(y,s)$.  

\theoremstyle{definition}
\newtheorem{convention}[theorem]{Convention}
\begin{convention}\label{convention}
From now on, to avoid cumbersome details we will make a time extension
of domain $D$ for $1\leq t<2$ by looking at
$$
\tilde D=\tilde \Psi\setminus E_f,\quad \tilde\Psi=(-1,1)^n\times(-1,2)
$$
as in Section~\ref{sec:domains-with-thin}. We then fix $(X,T)$ with
$T=3/2$ and $X\in \{x_{n}=0\}$, $X_{n-1}>3nL$ and normalize all kernels
$K(\cdot, \cdot; \cdot, \cdot)$ at this point $(X,T)$. In this way we
will be able to state the results in this section for our original
domain $D$.  Alternatively, we could fix $(X,T)\in D$, and then state
the results in the part of the domain $D\cap
\{(x,t):-1<t<T-\delta^2\}$ with some
$\delta>0$, with the additional dependence of constants on $\delta$.
\end{convention}
\subsection{Nonuniqueness of kernel functions at $E_f\setminus G_f$}

The idea is, if we consider the completion $D^*$ of domain $D$ with
respect to the inner metric $\rho_D$ and let $\partial^* D=
D^*\setminus D$, then it is clear that each Euclidean boundary point $(y,s)
\in G_f$ and $(y,s)\in \partial_p\Psi_1$ will correspond to only one $(y,s)^*
\in \partial^*D$, and each $(y,s)\in E_f\setminus G_f$ will
correspond to exactly two points $(y,s)_+^*$,
$(y,s)_-^*\in \partial^* D$. It is not hard to imagine
that the kernel functions corresponding to $(y,s)_+^*$ and
$(y,s)_-^*$ are linearly independent and they are the two
linearly independent kernel functions at $(y,s)$. In this section we will make this idea precise by considering
two-sided caloric measures $\vartheta_+$ and $\vartheta_-$. We will study the
properties of $\vartheta_+$ and $\vartheta_-$ and their relationship with the caloric
measure $\omega_D$.

First we introduce some more notations. Given $(y,s)\in \partial_pD\setminus G_f$, let 
\begin{equation}\label{eq:r0}
r_0=\sup \{r\in (0,1/4):\Delta_{2r}(y,s)\cap G_f=\emptyset\}.
\end{equation}
Note that $r_0$ is a constant depending on $(y,s)$ and is such that
for any $0<r<r_0$, $\Psi_{2r}(y,s)\cap D$ is either separated by $E_f$
into two disjoint sets $\Psi_{2r}^+$ and $\Psi_{2r}^-$ or $\Psi_{2r}(y,s)\cap D \subset D_+$ (or $D_-$). We define for
$0<r<r_0$ the following shifting operators $F_r^+$ and $F_r^-$:
\begin{align}
F_r^+(x,t)& =(x'', x_{n-1}+4nL r, x_{n}+r, t+4r^2),\label{eq:shift11}\\
F_r^-(x,t)& =(x'', x_{n-1}+4nL r, x_{n}-r, t+4r^2).\label{eq:shift22}
\end{align}
For any $0<r<r_0$, define \begin{equation}\label{eq:shift00}
D_r^+=D\setminus (E_{r,1}^+\cup E_{r,2}^+\cup E_{r,3}^+\cup E_{r,4}^+),
\end{equation}
where 
\begin{align*}
E_{r,1}^+& :=\{(x,t)\in \R^n\times\R: x_{n-1}\leq f(x'',t),-r\leq x_{n}\leq 0\},\\
E_{r,2}^+& :=\{(x,t):1-r\leq x_{n}\leq 1\},\\
E_{r,3}^+& :=\{(x,t):4nL(1-r)\leq x_{n-1}\leq 4nL\},\\
E_{r,4}^+& :=\{(x,t):1-4r^2\leq t\leq 1\}.
\end{align*}
It is easy to see that $D_r^+\subset D$ and $F_r^+(D_r^+)\subset D$.
Similarly we can define $D_r^-\subset D$ satisfying
$F_r^-(D_r^-)\subset D$. Notice that $D^+_r\nearrow D$, $D^-_r\nearrow
D$ as $r\searrow 0$. Moreover, it is clear that for each $r\in (0,1/4)$
\begin{align}
& \cN_{1/4}(E_f)\cap \partial_pD\subset (\partial _pD_r^+\cup \partial
_pD_r^-)\cap \partial_p D, \label{eq:add1}\\
& E_f\subset \partial_pD_r^+\cap \partial_pD^-_r. \label{eq:add2}
\end{align}

Let $\omega ^{+}_r$ and $\omega ^{-}_r$ denote the caloric measures
with respect to $D_r^+$ and $D_r^-$ respectively. Given $(x,t)\in D$ and $r>0$ small enough such that $(x,t)\in D^+_r\cap D^-_r$,
$\omega ^{\pm^{(x,t)}}_r$ are Radon measures
on $\partial_p( D^\pm_r)\cap \partial _p ( D_\pm)$ (recall $D_+ (D_-)= D\cap \{x_n>0 (<0)\}$). Moreover, let $K$
be a relatively compact Borel subset of $\partial_p( D^\pm_r)\cap \partial _p ( D_\pm)$, by the
comparison principle
$\omega^{\pm^{(x,t)}}_r(K)\leq\omega^{\pm^{(x,t)}}_{r'}(K)$ for
$0<r'<r$. Hence there exist Radon measures $\vartheta_\pm^{(x,t)}$ on $\partial_p( D^\pm_r)\cap \partial _p ( D_\pm)$, such that
$$\omega ^{\pm^{(x,t)}}_r|_{\partial_p( D^\pm_r)\cap \partial _p ( D_\pm)}\stackrel{\ast}{\rightharpoonup} \vartheta_\pm^{(x,t)},  \quad r\rightarrow
0.$$
For $(y,s)\in (\cN_{1/4}(E_f)\cap \partial_p D)\setminus
G_f$ and $0<r<r_0$, denote
$$\Delta_r^\pm (y,s) :=\Delta_r(y,s)\cap \partial_p D_\pm, \quad \text{if } \Delta_r(y,s)\cap \partial_p(D_\pm)\neq \emptyset .$$
Note that if $\Delta_r(y,s)\subset E_f$, then $\Delta_r^\pm (y,s)=\Delta_r(y,s)$.
It is easy to see that $(x,t)\mapsto \vartheta_{\pm}^{(x,t)}(\Delta
_r^\pm(y,s))$ are caloric in $D$.

To simplify the notations we will write $\Delta _r$, $\Delta _r^\pm$ instead of $\Delta
_r(y,s)$, $\Delta_r^\pm (y,s)$. If $\Delta_r(y,s)\cap \partial_p(D_+)$ (or $\Delta_r(y,s)\cap \partial_p(D_-)$) is empty, we set $\vartheta_+^{(x,t)}(\Delta_r^+(y,s))=0$ (or $\vartheta_-^{(x,t)}(\Delta_r^-(y,s))=0$).

We also note that with Convention~\ref{convention} in mind, the future
corkscrew points $\overline A_{r}^\pm(y,s)$ or $\overline A_{r}(y,s)$, $0<r<r_0$
and are defined for all $s\in[-1,1]$.

\begin{proposition}\label{prop:limitfunc}
Given $(y,s)\in (\cN_{1/4}(E_f)\cap \partial_pD)\setminus G_f$, for $0<r<r_0$ we have,
\begin{itemize}
\item[(i)] $$\sup_{(x,t)\in \partial_p D_{r'}^+\cap D}\vartheta^{(x,t)}_+(\Delta ^+ _r), \quad \sup_{(x,t)\in \partial _pD_{r'}^-\cap D}\vartheta^{(x,t)}_-(\Delta ^-_r)\rightarrow 0, \text{ as }r'\rightarrow 0.$$
\item[(ii)] $\vartheta^{(x,t)}_+(\Delta _r^+)+\vartheta^{(x,t)}_-(\Delta _r^-)=\omega ^{(x,t)}(\Delta _r)$, for $(x,t)\in D$.\\
\item[(iii)] There exists a constant $C=C(n,L)$ such that for any $0<r'<r$
\begin{eqnarray*}
\vartheta^{(x,t)}_+(\Delta _{r'}^+)\leq C\vartheta_+^{\overline{A}_r^+(y,s)}(\Delta _{r'}^+)\vartheta^{(x,t)}_+(\Delta _{2r}^+),\quad \text{for}\ (x,t)\in D\setminus\Psi_r^+(y,s),\\
\vartheta^{(x,t)}_-(\Delta _{r'}^-)\leq C\vartheta_-^{\overline{A}_r^-(y,s)}(\Delta _{r'}^-)\vartheta^{(x,t)}_-(\Delta _{2r}^-),\quad \text{for}\ (x,t)\in D\setminus\Psi_r^-(y,s).
\end{eqnarray*}
\item[(iv)] For $(X,T)$ as defined above and $(y,s)\in E_f\setminus G_f$, there exists a positive constant $C=C(n,L,r_0)$ such that
\begin{equation*}
C^{-1} \vartheta^{(X,T)}_+(\Delta _{r}^+) \leq \vartheta^{(X,T)}_-(\Delta _{r}^-)\leq C \vartheta^{(X,T)}_+(\Delta _{r}^+).
\end{equation*}
\end{itemize}
\end{proposition}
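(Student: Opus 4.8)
The plan is to establish the four items by exploiting the fact that each $\vartheta_\pm$ is the monotone limit of caloric measures $\omega_r^\pm$ with respect to the domains $D_r^\pm$, which are parabolic Lipschitz-type domains (unions of boxes, away from $G_f$), so that the already-proven Carleson-type estimates of Section~\ref{sec:forw-bound-harn} apply after the shifting operators $F_r^\pm$ are used to move interior points into $D_r^\pm$.

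For (i), the key observation is that on $\partial_p D_{r'}^+\cap D$ one of the ``new'' faces $E_{r',1}^+,\dots,E_{r',4}^+$ is nearby, so $\omega_{r'}^+{}^{(x,t)}(\Delta_r^+)$ vanishes continuously on the piece of $\partial_p D_{r'}^+$ lying in $D$ and stays bounded; one then passes to the limit $r'\to 0$. More precisely, since $\Delta_r^+$ is a fixed compact set separated from the faces of $D_{r'}^+$ for $r'$ small, and since $\omega_{r'}^+{}^{(x,t)}(\Delta_r^+)$ is caloric in $D_{r'}^+$ vanishing on $\partial_p D_{r'}^+\setminus \Delta_r^+$, Lemma~\ref{lem:holder} applied near $\partial_p D_{r'}^+\cap D$ forces the values at points of that interior boundary to be controlled by $(\text{dist})^{\alpha/2}$, which is $O((r')^{\alpha/2})$; the supremum over $(x,t)\in \partial_p D_{r'}^+\cap D$ thus tends to $0$. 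I would then observe the weak-$*$ convergence upgrades this to the $\vartheta_+$ statement. For (ii), I would use \eqref{eq:shift00}--\eqref{eq:add2}: decompose $\omega_{r}^{(x,t)}(\Delta_r)$ over $\partial_p D$ and note that on $\cN_{1/4}(E_f)\cap\partial_p D$ the boundary splits as $\partial_p D_r^+\cup \partial_p D_r^-$ with $E_f$ common; the PWB/caloric-measure additivity together with the monotone convergence $D_r^\pm\nearrow D$ and (i) (which kills the contribution of the spurious faces) yields $\vartheta_+^{(x,t)}(\Delta_r^+)+\vartheta_-^{(x,t)}(\Delta_r^-)=\omega^{(x,t)}(\Delta_r)$ in the limit. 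One has to be a little careful about the overlap on $E_f$: points of $E_f\setminus G_f$ are counted once in $\omega^{(x,t)}$ but appear in \emph{both} $\partial_p D_r^+$ and $\partial_p D_r^-$; the resolution is that $\vartheta_+$ ``sees'' only the approach from $D_+$ and $\vartheta_-$ only from $D_-$, matching the two-sided structure, so no double counting occurs — this is exactly the nonuniqueness phenomenon being quantified.

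For (iii), I would first prove the analogous estimate for $\omega_r^\pm$ on $D_r^\pm$ at scales $r'<r<r_0$: since within $\Psi_{2r}(y,s)$ (with $r<r_0$) the domain $D_r^+$ looks, after the parabolic rescaling $T^r_{(y,s)}$ and possibly $F_{r}^+$, like one of the Lipschitz model domains, the combination of Theorem~\ref{thm:carleson2} (estimate \eqref{eq:carleson3}) and Lemma~\ref{lem:caloric1} gives
\begin{equation*}
\omega_r^{+}{}^{(x,t)}(\Delta_{r'}^+)\le C\,\omega_r^{+}{}^{(x,t)}(\Delta_{2r}^+)\,\omega_r^{+}{}^{\overline A_r^+(y,s)}(\Delta_{r'}^+)
\end{equation*}
for $(x,t)\in D_r^+\setminus\Psi_r^+(y,s)$, with $C$ independent of the small parameter used to define $D_r^+$ (here one uses that the relevant corkscrew points for $D_r^+$ are comparable, via bounded Harnack chains, to $\overline A_r^\pm(y,s)$ for the original $D$). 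Then I let the auxiliary parameter tend to $0$ and invoke weak-$*$ convergence plus (i). For (iv), restricting to $(y,s)\in E_f\setminus G_f$ so that $\Delta_r^\pm(y,s)=\Delta_r(y,s)$, I would connect $\overline A_r^+(y,s)$ to $\overline A_r^-(y,s)$ (or rather to $(X,T)$) by a finite Harnack chain in $D$ — possible because $(y,s)\notin G_f$ means $D$ near $(y,s)$ is split cleanly and both corkscrew points can be joined to the fixed pole $(X,T)=(X,3/2)$ through $D$ in a controlled number of steps depending on $r_0,n,L$ — and apply the Harnack inequality to the caloric functions $(x,t)\mapsto\vartheta_\pm^{(x,t)}(\Delta_r^\pm)$ together with (iii) at $(x,t)=(X,T)$ to transfer between $\vartheta_+^{(X,T)}(\Delta_r^+)$ and $\vartheta_-^{(X,T)}(\Delta_r^-)$.

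I expect the main obstacle to be item (ii) — specifically, justifying the exact additivity in the limit rather than a mere inequality. The inequality $\vartheta_+^{(x,t)}(\Delta_r^+)+\vartheta_-^{(x,t)}(\Delta_r^-)\le\omega^{(x,t)}(\Delta_r)$ is immediate from $\omega_r^\pm\le\omega_{r'}^\pm$ and $D_r^\pm\subset D$; the reverse requires showing that the caloric measure $\omega^{(x,t)}$ of $\Delta_r$ does not ``leak'' mass as seen from the regularized domains, i.e. that the harmonic/caloric measure of the bad faces $E_{r',j}^\pm$ relative to $\Delta_r$ goes to zero — which is precisely what (i) delivers — and then a careful decomposition of the representation formula \eqref{eq:carleson1} applied in $D_{r'}^+\cup D_{r'}^-$ versus $D$, using that a nonnegative caloric function in $D$ vanishing on most of $\partial_p D$ is recovered as the increasing limit of its restrictions to $D_{r'}^\pm$. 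The bookkeeping of which boundary pieces coincide and which are spurious, and maintaining uniformity of all constants in the regularization parameter, is the technical heart of the argument.
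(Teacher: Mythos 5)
Your overall strategy is the same as the paper's (approximate $\vartheta_\pm$ by the caloric measures $\omega^\pm_{r''}$ of the regularized domains $D^\pm_{r''}$, boundary H\"older decay for (i), a limiting/representation argument using (i) for (ii), a one-sided version of the Lemma~\ref{lem:caloric2} induction, uniform in the regularization, for (iii)), but several steps have genuine gaps. In (i), you apply Lemma~\ref{lem:holder} to $\omega_{r'}^{+\,(x,t)}(\Delta_r^+)$ and then evaluate at points of $\partial_p D_{r'}^+\cap D$; but those are boundary points of $D_{r'}^+$ where that function is zero by definition, so this says nothing about $\vartheta_+$. The paper's argument needs two parameters $r''<r'$: the H\"older bound is proved for $\omega^+_{r''}$, uniformly in $r''$, at the points of $\partial_p D_{r'}^+\cap D$ (which lie at distance $\sim r'$ from where $\omega^+_{r''}$ vanishes), and only then $r''\to 0$. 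Moreover, your claim that $\Delta_r^+$ is ``separated from the faces of $D_{r'}^+$'' is false in the parabolic metric: the faces $\{x_n=-r'\}\cap\{x_{n-1}\le f\}$ and $\{x_{n-1}=f,\,-r'\le x_n<0\}$ lie within distance $r'$ of $\Delta_r^+$ across the thin set. What makes the zero boundary data usable is precisely the choice $r<r_0$ (no point of $G_f$ in $\Psi_{2r}(y,s)$), so that in each box $\Psi_{r_0}(z,h)$ around an interior boundary point the removed slab separates the domain and the component containing $(z,h)$ does not see $\Delta_r^+$; your sketch never invokes $r_0$, and this is the geometric heart of (i). In (ii), the inequality $\vartheta_++\vartheta_-\le\omega$ is not ``immediate from $\omega_r^\pm\le\omega_{r'}^\pm$ and $D_r^\pm\subset D$'': that only gives each term separately $\le\omega^{(x,t)}(\Delta_r)$. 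The sum requires its own comparison argument (e.g.\ for $\omega-\omega^+_{r''}-\omega^-_{r''}$ in $D^+_{r''}\cap D^-_{r''}$ with careful boundary bookkeeping), or, as in the paper, a second approximation $g_n\searrow\chi_{\Delta_r}$ supported near $E_f$.

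The most serious gap is (iv). Harnack chains and part (iii) compare values of one and the same caloric function at different points; they give no mechanism to pass between the two \emph{different} functions $(x,t)\mapsto\vartheta_+^{(x,t)}(\Delta_r^+)$ and $(x,t)\mapsto\vartheta_-^{(x,t)}(\Delta_r^-)$, and such a passage is exactly the content of (iv) (their inequivalence as boundary limits is what produces the two independent kernel functions later). The paper bridges the two families as follows: (iii) brings $\vartheta_-^{(X,T)}(\Delta_r^-)$ down to $\vartheta_-^{\overline{A}^-_{r_0}(y,s)}(\Delta_r^-)$; then (ii) gives $\vartheta_-\le\omega$; then a Harnack chain for the single function $\omega^{(\cdot)}(\Delta_r)$ leads from $\overline{A}^-_{r_0}(y,s)$ around $G_f$ to $\overline{A}^+_{2r_0}(y,s)$; then the key inequality $\omega^{\overline{A}^+_{2r_0}(y,s)}(\Delta_r)\le 2\,\vartheta_+^{\overline{A}^+_{2r_0}(y,s)}(\Delta_r^+)$ is obtained from (ii) together with \eqref{eq:theta-comparison}, which is proved using the $x_n$-symmetry of $D$ and the comparison principle; finally Harnack for $\vartheta_+^{(\cdot)}(\Delta_r^+)$ returns to $(X,T)$. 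Without the step combining (ii) with the symmetry/comparison inequality (or some substitute for it), your chain cannot close, so (iv) as you sketch it is unproved.
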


\begin{proof}
\step{Proof of (i)}:
We assume that $\Delta_r^\pm \not=\emptyset$. If either of them is empty, the conclusion holds obviously.

For $0<r<r_0$ we have 
\begin{eqnarray*}
\partial _pD_{r}^+\cap D=\{(x,t)\in D:x_{n-1}=4nL(1-r) \text{ or
}x_{n}=1-r\}\cup\\ 
\{(x,t)\in D: x_{n-1}\leq f(x'',t),x_{n}=-r\text{ or } x_{n-1}=f(x'',t),-r\leq
x_{n}<0\}. 
\end{eqnarray*}
Given $(y,s)\in (\cN_{1/4}(E_f)\cap \partial_pD)\setminus G_f$, let $0<r''<r'<r_0$, then $\omega
^{+^{(x,t)}}_{r''}(\Delta _r^+(y,s))$ is caloric in $D^+_{r''}$ and from
the way $r_0$ is chosen vanishes continuously on $\Delta _{r_0}(z,h)$
for each $(z,h)\in \partial _p D_{r''}^+ \cap D$. Notice that 
$$
\partial _pD_{r'}^+ \cap D \subset \bigcup _{(z,h)\in \partial
  _pD_{r''}^+\cap D}\Psi _{r_0}(z,h),
$$ 
hence applying Lemma~\ref{lem:holder} in each $\Psi _{r_0}(z,h)\cap
D_{r''}^+$ we obtain constants $C=C(n,L)$ and $\gamma =\gamma(n,L)$,
$\gamma\in (0,1)$ such that 
\begin{equation}\label{eq:last}
\omega^{+^{(x,t)}}_{r''}(\Delta _r^+)\leq C\left(\frac{|x-z|+|t-h|^{\frac{1}{2}}}{r_0}\right)^{\gamma}\leq C \left(\frac{r'}{r_0}\right)^{\gamma},\quad \forall (x,t)\in \partial _pD_{r'}^+\cap D.
\end{equation}
The constant $C$ and $\gamma $ above do not depend on $(z,h)\in \partial _pD_{r''}^+\cap D$, $r$ or $r''$ because of the existence of the exterior flat parabolic cones centered at each $(z,h)$ with an uniform opening depending only on $n$ and $L$ .

Let $r''\rightarrow 0$ in \eqref{eq:last}, then we get
$$\vartheta_+^{(x,t)}(\Delta _r^+)\leq C \left(\frac{r'}{r_0}\right)^{\gamma}, \quad \text{ uniformly for } (x,t)\in \partial _pD_{r'}^+\cap D.$$
Therefore,
$$\lim _{r'\rightarrow 0}\sup _{(x,t)\in \partial _pD_{r'}^+\cap D}\vartheta_+^{(x,t)}(\Delta _r^+)=0, $$
which finishes the proof.

\step{Proof of (ii)}: Let $\chi _{\Delta _r}$ be the characteristic
function of $\Delta _r$ on $\partial _pD$. Let $g_n$ be a sequence of nonnegative continuous functions on $\partial _p D$ such that
 $g_n\nearrow \chi _{\Delta _r}$. 
Let $u_n$ be the solution to the heat equation in $D$ with
boundary values $g_n$. Then by the maximum principle, $u_n(x,t)\nearrow
\omega ^{(x,t)}(\Delta _r)$ for $(x,t)\in D$. 

Now we estimate $\vartheta_+^{(x,t)}(\Delta _r^+)+\vartheta_-^{(x,t)}(\Delta
_r^-)$. Let $u_{n,r'}^+(x,t)$ be the solution to the heat equation in
$D_{r'}^+$ with boundary value equal to $g_n$ on $\partial
_pD_{r'}^+\cap \partial _pD$ and equal to $\vartheta_+^{(x,t)}(\Delta
_r^+)$ otherwise. Since
$\vartheta_+^{(x,t)}(\Delta _r^+)=\lim_{r''\rightarrow 0}\omega
^{+^{(x,t)}}_{r''}(\Delta _r^+)$ takes the boundary value $\chi _{\Delta
  _r^+}$ on $\partial _pD_{r'}^+\cap \partial _pD$, then by the maximum
principle we have $u_{n,r'}^+(x,t)\leq \vartheta_+^{(x,t)}(\Delta _r^+)$
for $(x,t)\in D_{r'}^+$. Similarly, $u_{n,r'}^-(x,t)\leq
\vartheta_-^{(x,t)}(\Delta _r^-)$ for $(x,t)\in D_{r'}^-$. Therefore, for
$(x,t)\in D_{r'}^+\cap D_{r'}^-$ and $0<r'<r$ sufficiently small we
have  
\begin{equation}\label{eq:twofs}
u_{n,r'}^+(x,t)+u_{n,r'}^-(x,t)\leq \vartheta_+^{(x,t)}(\Delta _r^+)+\vartheta_-^{(x,t)}(\Delta _r^-).
\end{equation}
Let $r'\searrow 0$, then $D_{r'}^+\cap D_{r'}^-\nearrow D$. By the comparison principle there is a nonnegative function $\tilde{u}_n$ in $\Psi_1$ and caloric in $D$ such that
\begin{equation}\label{eq:tildeu}
u^+_{n,r'}(x,t)+u^-_{n,r'}(x,t)\nearrow \tilde{u}_n (x,t)\text{ as } r'\searrow 0,\quad (x,t)\in D.
\end{equation}
By (i) just shown above and \eqref{eq:twofs},
\begin{align*}
& \sup _{\partial_pD_{r'}^+\cap D}u_{n,r'}^+(x,t)+\sup _{\partial_pD_{r'}^-\cap D}u_{n,r'}^-(x,t)\\
& \leq \sup _{\partial _pD_{r'}^+\cap D}\vartheta_+^{(x,t)}(\Delta _r^+)+\sup _{\partial_pD_{r'}^-\cap D}\vartheta_-^{(x,t)}(\Delta _r^-)\rightarrow 0\quad \text{ as } r'\rightarrow 0,
\end{align*}
hence it is not hard to see that
$\tilde{u}_n$ takes the boundary value $g_n$ continuously on $\partial _pD$. Hence by the maximum principle $\tilde{u}_n=u_n$ in $D$. This combined
with \eqref{eq:twofs} and \eqref{eq:tildeu} gives 
\begin{equation}\label{eq:un}
u_n(x,t)\leq \vartheta_+^{(x,t)}(\Delta _r^+)+\vartheta_-^{(x,t)}(\Delta _r^-).
\end{equation}
Letting $n\rightarrow \infty$ in \eqref{eq:un}, we obtain
$$\omega ^{(x,t)}(\Delta_r)\leq \vartheta_+^{(x,t)}(\Delta _r^+)+\vartheta_-^{(x,t)}(\Delta _r^+).$$

By taking the approximation $g_n\searrow \chi _{\Delta _r}$, $0\leq
g_n\leq 2$ and $\supp {g_n}\subset \cN_{2r}(E_f)\cap \partial_pD $ we obtain the reverse
inequality and hence the equality.

\step{Proof of (iii)}: We only show it for $\vartheta_+$ and assume additionally $\Delta^\pm_{r'}\not=\emptyset$.

First for $0<r''<r'<r_0$, by Lemma~1.1 in \cite{Kemper} there exists $C=C(n)\geq 0$ such that 
\begin{equation*}
\omega_{\Psi_{2r'}(y,s)\cap D_+}^{\overline{A}_{r'}^+(y,s)}(\Delta_{r'}^+)\geq C.
\end{equation*}
Applying the comparison principle in $\Psi_{2r'}(y,s)\cap D_+$ we have
\begin{equation}\label{eq:theta-lemma2}
\vartheta_+^{\overline{A}_{r'}^+(y,s)}(\Delta_{r'}^+)\geq C.
\end{equation}  

Next for $0<r''<r'<r_0$, applying the same induction arguments as in \\
Lemma~\ref{lem:caloric2} we have
\begin{equation}\label{eq:cal}
\omega_{r''}^{+^{(x,t)}}(\Delta _{r'}^+)\leq C \omega_{r''}^{+^{\overline{A}^+_r(y,s)}}(\Delta _{r'}^+),\quad \text{for }(x,t)\in D^+_{r''}\setminus (\Psi_r(y,s))_+,
\end{equation}
where $C=C(n,L)$ is
independent of $r'$ and $r''$. The reason that $C$ is uniform in $r''$
is as follows. By the maximum principle it is enough to show
\eqref{eq:cal} for $(x,t)\in \partial (\Psi_r(y,s))_+\cap D^+_{r''}$,
which is contained in $D_+$. Hence the same iteration procedure as in
Lemma~\ref{lem:caloric2} but only on the $D_+$ side gives
\eqref{eq:cal}, and the proof is uniform in $r''$. Therefore, letting
$r''\rightarrow 0$ in \eqref{eq:cal}, we obtain 
$$\vartheta_+^{(x,t)}(\Delta _{r'}^+)\leq C \vartheta_+^{\overline{A}^+_r(y,s)}(\Delta _{r'}^+).$$
Applying Lemma~\ref{lem:caloric1} and the maximum principle, we deduce
(iii).

\step{Proof of (iv)}: Applying (iii), (ii), Harnack inequality and
Lemma~\ref{lem:caloric1} we have that for given $(y,s)\in E_f\setminus
G_f$ and $0<r<r_0$, 
\begin{align*}
\vartheta^{(X,T)}_-(\Delta _r^-)& \leq C\vartheta_-^{\overline{A}_{r_0}^-(y,s)}(\Delta_r^-)\leq C\omega^{\overline{A}_{r_0}^-(y,s)}(\Delta_r)\\
&\leq C \omega^{\overline{A}_{2r_0}^+(y,s)}(\Delta_r)\leq C\vartheta^{\overline{A}_{2r_0}^+(y,s)}_+(\Delta_r^+)\\
&\leq C\vartheta_+^{(X,T)}(\Delta_r^+),
\end{align*}
for $C=C(n,L,r_0)$. The second last inequality holds because 
\begin{equation}\label{eq:theta-comparison}
\vartheta^{\overline{A}_{2r_0}^+(y,s)}_+(\Delta_r^+)\geq \vartheta^{\overline{A}_{2r_0}^+(y,s)}_-(\Delta_r^-),
\end{equation}
which follows from the $x_n$ symmetry of $D$ and the comparison principle. \eqref{eq:theta-comparison} together with (ii) just shown above yield the result. 

\end{proof}

Now we use $\vartheta_+$ and $\vartheta_-$ to construct two linear
independent kernel functions at $(y,s)\in E_f\setminus G_f$.  

\begin{theorem}\label{thm:twokernel}
Given $(y,s)\in E_f\setminus G_f$, there exist at least two linearly
independent kernel functions at $(y,s)$. 
\end{theorem}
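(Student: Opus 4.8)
\emph{Overall strategy.} The plan is to produce the two kernel functions as normalized limits of the two‑sided caloric measures $\vartheta_+$ and $\vartheta_-$ from Proposition~\ref{prop:limitfunc}, and to separate them by exploiting the one‑sidedness that is built into $\vartheta_+$ and $\vartheta_-$: roughly speaking, $K_+$ is forced to blow up only along the approach to $(y,s)$ through $D_+$, and $K_-$ only through $D_-$.

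\emph{Construction.} Fix $(y,s)\in E_f\setminus G_f$, let $r_0$ be as in \eqref{eq:r0}, write $\Delta_r^\pm=\Delta_r^\pm(y,s)$, and for $0<r<r_0$ set
$$
v_r^\pm(x,t):=\frac{\vartheta_\pm^{(x,t)}(\Delta_r^\pm)}{\vartheta_\pm^{(X,T)}(\Delta_r^\pm)},\qquad (x,t)\in D ,
$$
which is well defined since $\vartheta_\pm^{(X,T)}(\Delta_r^\pm)>0$ by \eqref{eq:theta-lemma2} and the Harnack inequality. Each $v_r^\pm$ is a nonnegative caloric function in $D$ with $v_r^\pm(X,T)=1$. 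By Proposition~\ref{prop:limitfunc}(iii), the Harnack inequality, and the up‑to‑the‑boundary Hölder estimate of Lemma~\ref{lem:holder} (applied exactly as in Section~\ref{sec:exist-kern-funct}), for each $\varepsilon\in(0,1/4)$ the families $\{v_r^\pm\}_{0<r<\varepsilon}$ are uniformly bounded and uniformly Hölder on $\overline{D\setminus\Psi_\varepsilon(y,s)}$. An Arzelà–Ascoli/diagonal argument then yields a single sequence $r_j\downarrow 0$ along which $v_{r_j}^+\to K_+$ and $v_{r_j}^-\to K_-$ locally uniformly in $D$, where $K_\pm$ are nonnegative caloric in $D$ with $K_\pm(X,T)=1$.

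\emph{$K_\pm$ are kernel functions at $(y,s)$.} Properties (i), (ii), (iv) of the definition are immediate. For property (iii), note that by Proposition~\ref{prop:limitfunc}(ii) we have $0\le\vartheta_\pm^{(x,t)}(\Delta_r^\pm)\le\omega^{(x,t)}(\Delta_r)$; since $D$ is regular (Proposition~\ref{prop:weaksol}), for any $(z,h)\in\partial_pD\setminus\{(y,s)\}$ and $r$ small enough that $(z,h)\notin\overline{\Delta_r(y,s)}$ this forces $\vartheta_\pm^{(x,t)}(\Delta_r^\pm)\to 0$ as $(x,t)\to(z,h)$; dividing by the fixed normalization, applying Lemma~\ref{lem:holder} on a small cylinder about $(z,h)$ to get a bound uniform in $j$, and passing to the limit gives $K_\pm(x,t)\to 0$ as $(x,t)\to(z,h)$, exactly as in the construction of Section~\ref{sec:exist-kern-funct}.

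\emph{Linear independence.} Since $K_+(X,T)=K_-(X,T)=1$, it suffices to show $K_+\not\equiv K_-$. Choose $\rho_0\in(0,r_0)$; then $\Psi_{2\rho_0}(y,s)\cap\{x_n=0\}\subset E_f$ and $\Psi_{2\rho_0}(y,s)\cap D$ splits into the disjoint slits $\Psi_{\rho_0}^+(y,s)$ and $\Psi_{\rho_0}^-(y,s)$. Applying Proposition~\ref{prop:limitfunc}(iii) with outer radius $\rho_0$ to $(x,t)\in D\setminus\Psi_{\rho_0}^+(y,s)$, using $\vartheta_+^{(x,t)}(\Delta_{2\rho_0}^+)\le 1$, and letting $r_j\downarrow 0$ yields
$$
K_+\le M_+:=C\,K_+(\overline A_{\rho_0}^+(y,s))\qquad\text{on }D\setminus\Psi_{\rho_0}^+(y,s),
$$
and symmetrically $K_-\le M_-$ on $D\setminus\Psi_{\rho_0}^-(y,s)$. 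Now neither $K_+$ nor $K_-$ can be bounded on all of $D$: a bounded nonnegative caloric function in $D$ vanishing continuously on $\partial_pD\setminus\{(y,s)\}$ must vanish identically, since the single parabolic boundary point $\{(y,s)\}$ carries no caloric measure (it is polar, of zero thermal capacity), and this would contradict $K_\pm(X,T)=1$. Combined with the two displayed bounds, this forces $K_+$ to be unbounded on its own slit $\Psi_{\rho_0}^+(y,s)$ and $K_-$ to be unbounded on $\Psi_{\rho_0}^-(y,s)$. In particular $K_+$ is unbounded on $\Psi_{\rho_0}^+(y,s)$ while $K_-\le M_-$ there, so $K_+$ is not a scalar multiple of $K_-$; hence $K_+$ and $K_-$ are linearly independent.

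\emph{Main obstacle.} The delicate point is the last step — explaining why $K_+$ must actually blow up on $\Psi_{\rho_0}^+(y,s)$. The clean route above combines the one‑sided Carleson‑type control of Proposition~\ref{prop:limitfunc}(iii) (which confines all possible growth of $K_+$ to $\Psi_{\rho_0}^+(y,s)$, and of $K_-$ to $\Psi_{\rho_0}^-(y,s)$) with the fact that a nonzero kernel function cannot be globally bounded. If one prefers not to invoke that $\{(y,s)\}$ is polar, an alternative is to bound the "wrong side" ratio directly: Proposition~\ref{prop:limitfunc}(iii) gives $K_+(\overline A_r^-(y,s))/K_+(\overline A_r^+(y,s))\le C\,\vartheta_+^{\overline A_r^-(y,s)}(\Delta_{2r}^+)$, and the right‑hand side tends to $0$ as $r\to 0$ because, through the domains $D_{r'}^+$, reaching the thin set $E_f$ near $(y,s)$ from the corkscrew $\overline A_r^-(y,s)$ requires first travelling around the edge $G_f$, which lies at the fixed distance $\gtrsim r_0\gg r$ from $\overline A_r^-(y,s)$; the symmetric estimate shows $K_-(\overline A_r^-(y,s))/K_-(\overline A_r^+(y,s))\to\infty$, again giving $K_+\not\equiv K_-$.
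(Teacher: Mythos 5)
Your construction of $K^+$ and $K^-$ as limits of the normalized measures $\vartheta_\pm^{(\cdot)}(\Delta_{r_j}^\pm)/\vartheta_\pm^{(X,T)}(\Delta_{r_j}^\pm)$, with compactness from Proposition~\ref{prop:limitfunc}(iii), the Harnack inequality and Lemma~\ref{lem:holder}, is exactly the paper's construction (Section~\ref{sec:exist-kern-funct} together with \eqref{eq:kernelplus}--\eqref{eq:kernelminus}); where you genuinely diverge is the linear-independence step. The paper uses Proposition~\ref{prop:limitfunc}(i) to show that $K^+$ vanishes as $(x,t)\to(y,s)$ from $D_-$ and $K^-$ vanishes from $D_+$, and then argues that if $K^+\equiv K^-$ the common function would vanish continuously on all of $\partial_pD$ and hence, by the maximum principle, identically in $D$, contradicting $K^\pm(X,T)=1$; this stays entirely within estimates already proved. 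You instead use Proposition~\ref{prop:limitfunc}(iii) to confine each kernel's possible growth to its own slit and then force blow-up there by the fact that a bounded nonnegative caloric function vanishing continuously on $\partial_pD\setminus\{(y,s)\}$ must vanish identically because the singleton is polar. That argument is correct and even a bit more quantitative (it locates the blow-up of $K^+$ in $(\Psi_{\rho_0}(y,s))_+$ and of $K^-$ in $(\Psi_{\rho_0}(y,s))_-$), but it imports a potential-theoretic ingredient the paper never needs (polar sets are null for caloric measure); if you prefer to stay inside the paper's toolkit, note that $\omega^{(x,t)}(\{(y,s)\})=0$ also follows from Lemma~\ref{lem:green}, since $\omega^{(x,t)}(\Delta_r(y,s))\leq Cr^n G(x,t;\underline{A}^\pm_r(y,s))\to 0$ as $r\to 0$ for fixed $(x,t)$, and then your ``bounded kernel must vanish'' claim follows from the PWB representation $u\leq M\,\omega^{(\cdot)}(\{(y,s)\})$. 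Your fallback argument avoiding polarity is essentially the paper's mechanism in disguise: the decay $\vartheta_+^{\overline{A}_r^-(y,s)}(\Delta_{2r}^+)\leq C(r/r_0)^\gamma$ is precisely the quantitative estimate in the proof of Proposition~\ref{prop:limitfunc}(i), since $\overline{A}_r^-(y,s)\in\partial_pD_{r/2}^+\cap D$ when $\Psi_{2r}(y,s)\cap G_f=\emptyset$, so that route reduces to the paper's one-sided vanishing phrased as a ratio bound at the corkscrew points. Both proofs are sound; the paper's is self-contained, yours buys a sharper description of where each kernel is unbounded.
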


\begin{proof}
Given $(y,s)\in E_f\setminus G_f$, let $r_0$ be as in
\eqref{eq:r0}. For $m>1/r_0$ we consider the sequence 
\begin{equation}\label{eq:kernelplus}
v_m^+(x,t)=\frac{\vartheta_+^{(x,t)}(\Delta
  _{\frac{1}{m}}^+(y,s))}{\vartheta_+^{(X,T)}(\Delta
  _{\frac{1}{m}}^+(y,s))},\quad (x,t)\in D. 
\end{equation}
By Proposition~\ref{prop:limitfunc}(iii) and the same arguments as in
Section~\ref{sec:exist-kern-funct}, we have, up to a subsequence, that
$v_m(x,t)$ converges to a kernel function at $(y,s)$ normalized at
$(X,T)$. We denote it by $K^+(x,t;y,s)$. 

If we consider instead 
\begin{equation}\label{eq:kernelminus}
v_m^-(x,t)=\frac{\vartheta_-^{(x,t)}(\Delta _{\frac{1}{m}}^-(y,s))}{\vartheta_-^{(X,T)}(\Delta _{\frac{1}{m}}^-(y,s))},\quad (x,t)\in D,
\end{equation}
we will obtain another kernel function at $(y,s)$, which we will
denote $K^-(x,t;y,s)$. 

We now show that for $(y,s)$ fixed, $K^+(\cdot,\cdot;y,s)$ and $
K^-(\cdot,\cdot;y,s)$ are linearly independent. In fact, by
Proposition~\ref{prop:limitfunc}(i), \eqref{eq:kernelplus} and
\eqref{eq:kernelminus} we have $K^+(x,t;y,s)\rightarrow 0$ as
$(x,t)\rightarrow(y,s)$ from $D_-$ and $K^-(x,t;y,s)\rightarrow 0$ as
$(x,t)\rightarrow(y,s)$ from $D_+$. If
$K^+(\cdot,\cdot;y,s)=K^-(\cdot, \cdot;y,s)$, then we also have
$K^+(x,t;y,s)\rightarrow 0$ as $(x,t)\rightarrow (y,s)$ from $D_+$,
which will mean that $K^+(x,t; y,s)$ is a caloric function continuously
vanishing on the whole $\partial _pD$. By the maximum principle $K^+$
will vanish in the entire $D$, which contradicts the 
normalization condition $K^+(X,T;y,s)=1$. Moreover, since $K^+(X,T;y,s)=K^-(X,T;y,s)=1$,
it is impossible that $K^+(\cdot,\cdot;y,s)=\lambda
K^-(\cdot,\cdot;y,s)$ for a constant $\lambda \neq 1$. Hence $K^+$ and
$K^-$ are linearly independent. 
\end{proof}

\begin{remark}\label{rem:Martin}
The non-uniqueness of the kernel functions at $(y,s)$ shows that the parabolic Martin boundary of $D$ is not homeomorphic to Euclidean parabolic boundary $\partial _pD$. 
\end{remark}

Next we show $K^+$ and $K^-$ in fact span the space of all the kernel
functions at $(y,s)$. We use an argument similar to the one in \cite{Kemper}.

\begin{lemma}\label{lem:kernel3}
Let $(y,s)\in E_f\setminus G_f$. There exists a positive constant $C=C(n,L,r_0)$ such that if $u$ is a kernel function at $(y,s)$ in $D$, we have either
\begin{equation}\label{eq:uniquekernel}
u\geq CK^+,
\end{equation}
or \begin{equation}\label{eq:uniquekernel2}
u\geq CK^-.
\end{equation}
Here $K^+$, $K^-$ are the kernel functions at $(y,s)$ constructed from \eqref{eq:kernelplus} and \eqref{eq:kernelminus}.
\end{lemma}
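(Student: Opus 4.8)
The plan is to adapt Kemper's comparison argument for kernel functions to the two-sided situation here: we compare the given kernel function $u$ at $(y,s)$ with the normalized pre-kernels $v_m^+$ (or $v_m^-$) of \eqref{eq:kernelplus}--\eqref{eq:kernelminus}, and let $m\to\infty$. Throughout, recall that since $(y,s)\in E_f\setminus G_f$ one has $\Psi_r(y,s)\cap G_f=\emptyset$ for all small $r$, so the thin face of $\Psi_r(y,s)$ lies entirely in $E_f$ and $\Psi_r(y,s)\cap D=\Psi_r^+(y,s)\sqcup\Psi_r^-(y,s)$ with $\Psi_r^\pm(y,s)\subset D_\pm$; and recall that a kernel function $u$ at $(y,s)$ vanishes identically for $t<s$ and, being positive caloric, is strictly positive on $D\cap\{t\ge s\}$.

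First I would select a side. Since a kernel function $u$ vanishes continuously on $\partial_pD\setminus\{(y,s)\}\supset\partial_pD\setminus\Delta_{r/2}(y,s)$, estimate \eqref{eq:carleson3} applies to $u$; evaluating it at $(X,T)$ and using $u(X,T)=1$ gives, for every small $r$,
\[
\max\{u(\overline A^+_r(y,s)),\,u(\overline A^-_r(y,s))\}\;\ge\;\frac{c_1}{\omega^{(X,T)}(\Delta_{r}(y,s))}
\]
(the factor of $2$ from $\Delta_{2r}$ in \eqref{eq:carleson3} being absorbed by one application of the Harnack chains of Lemma~\ref{lem:H-chain-prop-I}). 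Hence there is a sign, say $+$, and a sequence $r_j\downarrow0$ along which $u(\overline A^+_{r_j}(y,s))\ge c_1/\omega^{(X,T)}(\Delta_{r_j}(y,s))$; the $-$ sign is symmetric and leads to $u\ge CK^-$.

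Next I would carry out the comparison. Fix $m$ with $1/m=r_j$ and work in $D\cap\{t>s+2/m^2\}$, which contains neither $\overline{\Psi_{1/m}(y,s)}$ nor the source of $v_m^+$ (confined to times $<s+1/m^2$). Both $u$ and $v_m^+$ are nonnegative, caloric there, and vanish continuously on $(\partial_pD)\cap\{t>s+2/m^2\}$, so by the maximum principle it is enough to prove $v_m^+\le C\,u$ on the time-slice $\{t=s+2/m^2\}\cap D$ with $C$ independent of $m$ and $u$. There, the forward boundary Harnack principle (Theorem~\ref{thm:carleson}, Lemma~\ref{lem:caloric2}), applied in the Lipschitz slab past the waiting time $\sim1/m^2$ and combined with Harnack chains, compares $u$ and $v_m^+$ with their values at the corkscrew point $\overline A^+_{1/m}(y,s)$, which sits exactly at time $s+2/m^2$; and since $v_m^+(\overline A^+_{1/m}(y,s))=\vartheta_+^{\overline A^+_{1/m}(y,s)}(\Delta^+_{1/m})/\vartheta_+^{(X,T)}(\Delta^+_{1/m})$ has numerator at most $1$ (as $\vartheta_+$ is a subprobability measure) and denominator $\ge c\,\omega^{(X,T)}(\Delta_{1/m})$ by Proposition~\ref{prop:limitfunc}(ii),(iv), we obtain $v_m^+(\overline A^+_{1/m}(y,s))\le C/\omega^{(X,T)}(\Delta_{1/m})\le(C/c_1)\,u(\overline A^+_{1/m}(y,s))$ from the side selection, whence $v_m^+\le C'u$ on the slice. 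Letting $m\to\infty$ along $r_j$: for fixed $(x_0,t_0)$ with $t_0>s$ one has $t_0>s+2/m^2$ eventually, so $K^+(x_0,t_0)=\lim_mv_m^+(x_0,t_0)\le C'u(x_0,t_0)$ (using, for the convergence, the uniqueness of the normalized one-sided limit, which follows as in Kemper from Proposition~\ref{prop:limitfunc}(iii)), while for $t_0\le s$ both sides vanish; hence $u\ge(C')^{-1}K^+$.

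The main obstacle is the parabolic time-lag. Because $v_m^+$ carries caloric mass at times slightly below $s$ whereas $u$ vanishes there outright, one cannot compare them by the maximum principle on a neighborhood of $(y,s)$, and is forced onto the region $\{t>s+2/m^2\}$ past the waiting time; the entire argument then rests on the slice comparison $v_m^+\le Cu$ at the fixed time $s+2/m^2$. Making that uniform in $m$ (and in $u$) is precisely where the forward boundary Harnack principle must be combined carefully with the uniform-in-$r',r''$ estimates of Proposition~\ref{prop:limitfunc}: one has to bound $v_m^+$ from above away from the corkscrew point --- via the Carleson estimate, via Proposition~\ref{prop:limitfunc}(i) on the $D_-$-side, and via the boundary Hölder estimate of Lemma~\ref{lem:holder} near $E_f$ --- while maintaining a matching lower bound for $u$, which is delicate since only the forward direction of the boundary Harnack principle is available at this stage. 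A minor additional point is the uniqueness of the one-sided normalized limit used to pass from the sequence $r_j$ to all of $K^+$.
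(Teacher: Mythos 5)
Your route is genuinely different from the paper's (which never compares $u$ with $v_m^+$ directly; instead it shifts $u$ forward in time via the operators $F_r^\pm$ of \eqref{eq:shift11}--\eqref{eq:shift22}, writes the representation formula for $u\circ F_r^+$ in $D_r^+$, and uses the Harnack inequality across the genuine time lag $\sim r^2$ between $F_r^+(\Delta_r^+)$ and $\overline A_r^+(y,s)$ to get $u\circ F_r^+\geq C\,u(\overline A_r^+)\,\omega_r^{+}(\Delta_r^+)$ on all of $D_r^+$, then lets $r\to0$ using Proposition~\ref{prop:limitfunc}). Unfortunately, the central step of your version --- the slice comparison $v_m^+\leq C\,u$ on all of $\{t=s+2/m^2\}\cap D$ with $C$ independent of $m$ --- is exactly where the argument breaks, and the tools you invoke cannot deliver it. The corkscrew point $\overline A^+_{1/m}(y,s)$ lies on that very time level, so there is no forward Harnack chain from it to any other point of the slice: a lower bound for $u$ at a slice point at spatial distance $d$ from $(y,s)$ in terms of $u(\overline A^+_{1/m}(y,s))$ would require a waiting time $\gtrsim d^2$, whereas only $O(1/m^2)$ is available; for $d$ of order one both $u$ and $v_m^+$ are exponentially small there and nothing in Sections 3--4 compares them. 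The Carleson estimate and Lemma~\ref{lem:caloric2} only bound functions \emph{from above} by corkscrew values, and the local comparison theorem is useless here because its past corkscrew points lie at times below $s$, where $u\equiv0$. What you would actually need is an elliptic-type (backward) Harnack inequality for $u$, but that is Theorem~\ref{thm:backward}, whose proof goes through the doubling property and Theorems~\ref{thm:kernel8}--\ref{thm:kernel1}, hence through the present lemma --- so invoking it, even implicitly, is circular. Your own closing paragraph acknowledges that the matching lower bound for $u$ is ``delicate''; in fact it is not a technical refinement but the missing idea, and the paper's shift-operator device is precisely what replaces it.

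Two smaller points. First, the side-selection inequality is fine in substance, but the parenthetical about absorbing the factor $2$ ``by one application of the Harnack chains'' is not right as stated: a Harnack chain acts on values of $u$ at corkscrew points, not on $\omega^{(X,T)}(\Delta_{2r})$ versus $\omega^{(X,T)}(\Delta_r)$, and replacing $\Delta_{2r}$ by $\Delta_r$ outright would require the doubling property of Theorem~\ref{thm:doubling} (again circular). The correct repair, which is what the paper does in \eqref{eq:kernel4}--\eqref{eq:kernel5}, is to apply Theorem~\ref{thm:carleson2} at radius $r/2$ and then run a forward Harnack chain from $\overline A^\pm_{r/2}(y,s)$ to $\overline A^\pm_r(y,s)$. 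Second, the appeal to ``uniqueness of the normalized one-sided limit'' to identify $\lim_m v_m^+$ with $K^+$ is not established at this stage of the paper (uniqueness comes only after this lemma); it can be avoided by running your dichotomy along a subsequence of the sequence defining $K^+$, as the paper does with its Case 1/Case 2 alternative, but as written it is another unsupported step.
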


\begin{proof}
For $0<r<r_0$ we consider $u^\pm_r:D_r^\pm\rightarrow \R$, where $u^\pm_r(x,t)=u(F_r^\pm(x,t))$. $u^\pm _r$ are caloric in $D^{\pm}_r$ and continuous up to the boundary. Then for $(x,t)\in D^{\pm}_r$,
\begin{align*}
u^{\pm}_r(x,t)&= \int_{\partial _pD^{\pm}_r}u_r^{\pm}(z,h)d\omega
_r^{{\pm}^{(x,t)}}(z,h)\geq \int_{\Delta _r^\pm(y,s)}u_r^{\pm}(z,h)d\omega
_r^{{\pm}^{(x,t)}}(z,h)\\ 
&\geq  \inf_{(z,h)\in \Delta _r^\pm(y,s)}u_r^{\pm}(z,h)\omega
^{{\pm}^{(x,t)}}_r(\Delta _r^\pm(y,s)). 
\end{align*}
Note that the parabolic distance between $F^{\pm}_r(\Delta _r^\pm(y,s))$
and $\partial _pD$ is equivalent to $r$ and the time lag between it
and $\overline{A}^{\pm}_r(y,s)$ is equivalent to $r^2$, hence by the
Harnack inequality there exists $C=C(n,L)$ such that 
$$
\inf_{(z,h)\in \Delta _r^\pm(y,s)}u^{\pm}_r(z,h)\geq
Cu(\overline{A}^{\pm}_r(y,s)).
$$
Hence,
\begin{equation}\label{eq:kernel3}
u^{\pm}_r(x,t)\geq C u(\overline{A}^{\pm}_r(y,s))\omega
^{{\pm}^{(x,t)}}_r(\Delta _r^\pm(y,s)),\quad \text{for}\ (x,t)\in
D^{\pm}_r. 
\end{equation}
On the other hand, $u$ is a kernel function at $(y,s)$ and vanishes on $\partial _pD\setminus \Delta _{r/4}(y,s)$ for any
$0<r<1$. Applying Theorem~\ref{thm:carleson2} we obtain 
\begin{multline}\label{eq:kernel4}
u(x,t)\leq C \max\{u(\overline{A}_{r/2}^+(y,s)),
u(\overline{A}_{r/2}^-(y,s))\}\omega ^{(x,t)}(\Delta
_r(y,s)),\\\text{for }(x,t)\in D\setminus \Psi_{r/2}(y,s).
\end{multline}

\case{Case 1.}
$u(\overline{A}^+_{r/2}(y,s))\geq u(\overline{A}^-_{r/2}(y,s))$ in \eqref{eq:kernel4}.

By Proposition~\ref{prop:limitfunc}(ii) and the Harnack inequality,
\begin{equation*}
u(x,t)\leq C u(\overline{A}_r^+(y,s))(\vartheta_+^{(x,t)}(\Delta _r^+)+\vartheta_-^{(x,t)}(\Delta _r^-)),\quad (x,t)\in D\setminus \Psi_{r/2}(y,s)
\end{equation*}
In particular,
\begin{equation}\label{eq:kernel5}
1=u(X,T)\leq C u(\overline{A}_r^+(y,s))(\vartheta_+^{(X,T)}(\Delta^+ _r)+\vartheta_-^{(X,T)}(\Delta^- _r)).
\end{equation}
Now \eqref{eq:kernel3} for $u^+_r$, \eqref{eq:kernel5} and Proposition~\ref{prop:limitfunc}(iv) yield the existence of $C_1=C_1(n,L, r_0)$ such that for any $0<r<r_0$,
\begin{equation}\label{eq:kernelcase1}
u^+_r(x,t)\geq C\frac{\omega_r^{+^{(x,t)}}(\Delta _r^+)}{\vartheta_+^{(X,T)}(\Delta _r^+)+\vartheta_-^{(X,T)}(\Delta _r^-)}\geq C_1\frac{\omega_r^{+^{(x,t)}}(\Delta _r^+)}{\vartheta_+^{(X,T)}(\Delta _r^+)}, \quad (x,t)\in D_r^+.
\end{equation}
Since by the maximum principle in $D_r^+$
\begin{equation}\label{eq:kernel10}
\omega ^{+^{(x,t)}}_r(\Delta _r^+)\geq \vartheta_+^{(x,t)}(\Delta _r^+)-\sup_{(z,h)\in \partial _pD^+_r\cap D}\vartheta_+^{(z,h)}(\Delta _r^+),
\end{equation} 
then \eqref{eq:kernelcase1} can be written as
\begin{equation}\label{eq:kernel100}
u^+_r(x,t)\geq C_1\left(\frac{\vartheta_+^{(x,t)}(\Delta _r^+)}{\vartheta_+^{(X,T)}(\Delta _r^+)}-\sup_{(z,h)\in \partial _pD^+_r\cap D}\frac{\vartheta_+^{(z,h)}(\Delta _r^+)}{\vartheta_+^{(X,T)}(\Delta _r^+)}\right),\quad (x,t)\in D_r^+.
\end{equation}
By Proposition~\ref{prop:limitfunc}(iii) and the Harnack inequality, there exists $C_2=C_2(n,L,r_0)$ such that for $(z,h)\in \partial _pD^+_r\cap D$,
\begin{equation}\label{eq:kernel11}
\frac{\vartheta^{(z,h)}_+(\Delta _r^+)}{\vartheta^{(X,T)}_+(\Delta _r^+)}\leq C\frac{\vartheta_+^{\overline{A}_{r_0}^+}(\Delta _r^+)}{\vartheta_+^{(X,T)}(\Delta _r^+)} \cdot \vartheta^{(z,h)}_+(\Delta _{r_0}^+)\leq C_2 \vartheta^{(z,h)}_+(\Delta _{r_0}^+),
\end{equation}
Hence \eqref{eq:kernel100} and \eqref{eq:kernel11} imply
\begin{equation*}
u^+_r(x,t)\geq C_1\left(\frac{\vartheta_+^{(x,t)}(\Delta _r^+)}{\vartheta^{(X,T)}_+(\Delta _r^+)}-C_2\sup_{(z,h)\in \partial _pD_r^+\cap D}\vartheta^{(z,h)}_+(\Delta _{r_0}^+)\right), \quad (x,t)\in D^+_r.
\end{equation*}

\case{Case 2.}
$u(\overline{A}^+_{r/2}(y,s))\leq u(\overline{A}^-_{r/2}(y,s))$ in \eqref{eq:kernel4}. Similarly,
\begin{equation*}
u^-_r(x,t)\geq C_1\left(\frac{\vartheta_-^{(x,t)}(\Delta _r^-)}{\vartheta^{(X,T)}_-(\Delta _r^-)}-C_2\sup_{(z,h)\in \partial _pD_r^-\cap D}\vartheta^{(z,h)}_-(\Delta _{r_0}^-)\right), \quad (x,t)\in D^-_r.
\end{equation*}
Note that as $r\searrow 0$, $D_r^\pm \nearrow D$, and $u_r^\pm
\rightarrow u$. Let $r_j\rightarrow 0$ be such that either
Case 1 applies for all $r_j$ or Case 2 applies. Hence, over a subsequence, it follows  by 
Proposition~\ref{prop:limitfunc}(i) and \eqref{eq:kernelplus} that either 
\begin{align*}
u(x,t)&\geq C_1\lim_{r_j\rightarrow 0} \left(\frac{\vartheta_+^{(x,t)}(\Delta _{r_j}^+)}{\vartheta^{(X,T)}_+(\Delta _{r_j}^+)}-C_2\sup_{(z,h)\in \partial _pD_{r_j}^+\cap D}\vartheta^{(z,h)}_+(\Delta _{r_0}^+)\right)\\
&=C_1K^+(x,t), \quad \text{for all }(x,t)\in D,
\end{align*}
or
\begin{equation*}
u(x,t)\geq C_1K^-(x,t), \quad \text{for all }(x,t)\in D.
\end{equation*}
\end{proof}

The next theorem says that $K^+(\cdot,\cdot;y,s)$ and $K^-(\cdot,\cdot;y,s)$ span the space of kernel functions at $(y,s)$.

\begin{theorem}\label{thm:kernel8}
If $u$ is a kernel function at $(y,s)\in E_f\setminus G_f$ normalized
at $(X,T)$, then there exists a constant $\lambda \in [0,1]$ which may
depend on $(y,s)$, such that $u(\cdot,\cdot)=\lambda
K^+(\cdot,\cdot;y,s)+(1-\lambda)K^-(\cdot,\cdot;y,s)$ in $D$, where
$K^+$ and $K^-$ are kernel function obtained from
\eqref{eq:kernelplus} and \eqref{eq:kernelminus}. 
\end{theorem}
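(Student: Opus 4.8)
The plan is to show that an arbitrary kernel function $u$ at $(y,s)$ is a convex combination of $K^+$ and $K^-$ by first using Lemma~\ref{lem:kernel3} to subtract off the ``dominant'' piece and then iterating. By Lemma~\ref{lem:kernel3}, either $u\geq CK^+$ or $u\geq CK^-$; without loss of generality assume $u\geq CK^+$. Set
$$
\lambda_1=\sup\{c\geq 0: u-cK^+\geq 0\text{ in }D\}.
$$
Since $u(X,T)=K^+(X,T;y,s)=1$, we have $0<\lambda_1\leq 1$ (positivity of $\lambda_1$ comes from $u\geq CK^+$, and $\lambda_1\leq1$ from evaluating at $(X,T)$, using that $K^+\geq0$ and $u(X,T)=1$). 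First I would check that $w_1:=u-\lambda_1K^+$ is a nonnegative caloric function in $D$ vanishing on $\partial_pD\setminus\{(y,s)\}$, so it is either identically zero (in which case $\lambda_1=1$ and we are done with $\lambda=1$) or, after normalizing by $w_1(X,T)=1-\lambda_1>0$, it is again a kernel function at $(y,s)$.

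The key structural point is the behavior of $w_1$ as one approaches $(y,s)$ from the two sides. By Theorem~\ref{thm:twokernel} (more precisely, Proposition~\ref{prop:limitfunc}(i) together with the constructions \eqref{eq:kernelplus}, \eqref{eq:kernelminus}), $K^+(x,t;y,s)\to0$ as $(x,t)\to(y,s)$ from $D_-$, and $K^-(x,t;y,s)\to0$ as $(x,t)\to(y,s)$ from $D_+$. I claim that $w_1$ must vanish as $(x,t)\to(y,s)$ from $D_+$: if not, one can apply Lemma~\ref{lem:kernel3} to the normalized kernel function $w_1/(1-\lambda_1)$, and since its limit from $D_+$ is not zero while its limit from $D_-$ would have to be examined, the only consistent alternative allowed by Lemma~\ref{lem:kernel3} forces $w_1/(1-\lambda_1)\geq CK^+$, i.e.\ $u\geq(\lambda_1+C(1-\lambda_1))K^+$, contradicting the maximality of $\lambda_1$ (here $C>0$ is the universal constant from Lemma~\ref{lem:kernel3}). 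Hence $w_1$ vanishes from $D_+$, so $w_1/(1-\lambda_1)$ is a kernel function at $(y,s)$ vanishing on approach from $D_+$. By the symmetric version of Lemma~\ref{lem:kernel3} applied to this function, it must satisfy $w_1/(1-\lambda_1)\geq CK^-$.

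Now set $\lambda_2=\sup\{c\geq0: w_1-c(1-\lambda_1)K^-\geq0\}$ and $w_2:=w_1-\lambda_2(1-\lambda_1)K^-=u-\lambda_1K^+-\lambda_2(1-\lambda_1)K^-$; then $w_2\geq0$ is caloric vanishing on $\partial_pD\setminus\{(y,s)\}$, and by the analogous maximality argument $w_2$ vanishes on approach to $(y,s)$ from \emph{both} $D_+$ and $D_-$ (it vanishes from $D_-$ because otherwise Lemma~\ref{lem:kernel3} would give $w_2\geq cK^-$ contradicting maximality of $\lambda_2$; it vanishes from $D_+$ because $w_1$ does and $K^-$ does). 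A caloric function in $D$ vanishing continuously on all of $\partial_pD$ must be identically zero by the maximum principle. Therefore $w_2\equiv0$, i.e.\ $u=\lambda_1K^++\lambda_2(1-\lambda_1)K^-$. Evaluating at $(X,T)$ gives $1=\lambda_1+\lambda_2(1-\lambda_1)$, so $\lambda_2=1$ (when $\lambda_1<1$), and with $\lambda:=\lambda_1$ we obtain $u=\lambda K^++(1-\lambda)K^-$; the boundary cases $\lambda\in\{0,1\}$ are covered by the degenerate situations above.

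The main obstacle I expect is making rigorous the dichotomy ``either $w_1$ vanishes from $D_+$, or $\lambda_1$ was not maximal.'' This requires carefully checking that Lemma~\ref{lem:kernel3} is genuinely an \emph{exclusive-or} in the relevant sense when one already knows the vanishing behavior on one side: concretely, that a kernel function which does \emph{not} vanish from $D_+$ cannot satisfy $w_1\geq CK^-$ unless it also dominates $K^+$, which in turn uses that $K^-$ itself vanishes from $D_+$ together with the strong maximum principle and the Harnack chain structure near $E_f\setminus G_f$. Handling the limiting/normalization when $1-\lambda_1$ or $1-\lambda_2$ is zero, and confirming $w_i$ is genuinely a PWB-admissible kernel function (nonnegative, correct normalization, correct boundary vanishing via Proposition~\ref{prop:weaksol}), are the remaining technical points but should be routine given the tools already developed.
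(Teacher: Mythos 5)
Your overall skeleton (extract $\lambda_1K^+$, then $\lambda_2(1-\lambda_1)K^-$, using the dichotomy of Lemma~\ref{lem:kernel3} together with the maximality of the two suprema) is the same as the paper's, but the step you use to close the argument has a genuine gap, and it is exactly the one you flag yourself. The inference ``if $w_1$ does not vanish as $(x,t)\to(y,s)$ from $D_+$, then the only consistent alternative in Lemma~\ref{lem:kernel3} is $w_1/(1-\lambda_1)\geq CK^+$'' is not valid: the two alternatives in Lemma~\ref{lem:kernel3} are not tied to any boundary-limit behavior, and the alternative $w_1/(1-\lambda_1)\geq CK^-$ is perfectly compatible with non-vanishing from $D_+$ (for instance $\tfrac12K^++\tfrac12K^-$ satisfies it and does not vanish from $D_+$). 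The same defect affects your claim that $w_2$ vanishes from $D_-$. Moreover, the statement you propose to prove to fill the gap --- that a kernel function which does not vanish from $D_+$ must dominate $K^+$ --- is essentially equivalent to the theorem itself (it says the function has a positive $K^+$-component), so assuming it is circular, and proving it directly would require a new boundary Harnack--type analysis at the pole that is not among the tools developed in the paper.

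The repair is that no boundary-limit analysis is needed at all, and this is how the paper argues. Suppose $\lambda_1<1$ and $\lambda_2<1$. Then $w_2=w_1-\lambda_2(1-\lambda_1)K^-$ is nonnegative, caloric, vanishes on $\partial_pD\setminus\{(y,s)\}$ (since $0\leq w_2\leq u$), and $w_2(X,T)=(1-\lambda_1)(1-\lambda_2)>0$, so its normalization is a kernel function at $(y,s)$; apply Lemma~\ref{lem:kernel3} to it. The branch $\geq CK^-$ yields $w_1\geq(\lambda_2+C(1-\lambda_2))(1-\lambda_1)K^-$, contradicting the maximality of $\lambda_2$, while the branch $\geq CK^+$ yields $u\geq(\lambda_1+c)K^+$ for some $c>0$, contradicting the maximality of $\lambda_1$. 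Since both branches are impossible, $\lambda_2=1$; then $w_1-(1-\lambda_1)K^-\geq0$ vanishes at $(X,T)$, hence vanishes identically by the strong maximum principle, giving $u=\lambda_1K^++(1-\lambda_1)K^-$. (The same strong maximum principle also settles the case $\lambda_1=1$ with $w_1\not\equiv0$, which your write-up leaves unaddressed.) With this modification your argument coincides with the paper's proof.
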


\begin{proof}
By Lemma~\ref{lem:kernel3} if $u$ is a kernel function at $(y,s)$, then either (i) $u\geq CK^+$ or (ii) $u\geq CK^-$ with $C=C(r_0,n,L)$.

If (i) takes place, let
$$\lambda =\sup\{C: u(x,t)\geq CK^+(x,t), \forall (x,t)\in D\},$$
then we must have $\lambda \leq 1$, because $u(X,T)=K^+(X,T)=1$. If
$\lambda =1$, then $u(x,t)=K^+(x,t)$ for all $(x,t)\in D$ by the strong maximum principle and we are
done. If $\lambda <1$, consider 
$$u_1(x,t):=\frac{u(x,t)-\lambda K^+(x,t)}{1-\lambda},$$
which is another kernel function at $(y,s)$ satisfying either (i) or (ii). If (i) holds for $u_1$ for some $C>0$, then $u(x,t)\geq (C(1-\lambda )+\lambda )K^+(x,t)$, with $C(1-\lambda)+\lambda >\lambda$ which contradicts to the supreme of $\lambda$. Hence (ii) must be true for $u_1$. Let $$\tilde{\lambda}=\sup\{C: u_1(x,t)\geq CK^-(x,t),\forall (x,t)\in D\}.$$
The same reason as above gives $\tilde{\lambda}\leq 1$. We claim
$\tilde{\lambda} =1$. 

Proof of the claim: If not, then $\tilde{\lambda} <1$. We get
$$u_2(x,t):=\frac{u_1(x,t)-\tilde{\lambda} K^-(x,t)}{1-\tilde{\lambda}}$$
is again a kernel function at $(y,s)$. If $u_2$ satisfies (i) for some $C>0$, then
$$u_1(x,t)\geq u_1(x,t)-\tilde{\lambda} K^-(x,t)\geq C(1-\tilde{\lambda} )K^+(x,t),$$
which implies
$$u(x,t)\geq (\lambda +C(1-\tilde{\lambda}))K^+(x,t)$$
is again a contradiction to the supreme of $\lambda$. Hence $u_2$ has to satisfy (ii) for some $C>0$, then we have
$$u_2(x,t)\geq (C(1-\tilde{\lambda})+\tilde{\lambda})K^-(x,t),$$
but this contradicts to the supreme of $\tilde{\lambda}$. Hence we proved the claim.

The fact that $\tilde{\lambda}=1$ implies that $u_1(x,t)=K^-(x,t)$ in $D$ by the strong maximum principle. Hence if (i) applies to $u$ we have $u(x,t)=\lambda K^+(x,t)+(1-\lambda)K^-(x,t)$ with $\lambda \in (0,1]$. If (ii) applies to $u$ we get the equality with $\lambda \in [0,1)$.
\end{proof}

\subsection{Radon-Nikodym derivative as a kernel function}
We first show that the kernel function at $(y,s)\in G_f$ or $(y,s)\in \partial _pD\setminus
E_f$ is unique. The proof
for the uniqueness is similar as Lemma~1.6 and Theorem~1.7 in
\cite{Kemper}. More precisely, we   
will need the following direction shift operator $F^0_r$:   
\begin{align}
F^0_r(x,t)&=(x'', x_{n-1}+4nLr, x_n, t+8r^2), \quad 0<r<1/4\label{eq:shift}\\
D^0_r&=\{(x,t)\in D: F^0_r(x,t)\in D\}.\notag
\end{align}
Let ${\omega}^0_r $ denote the caloric measure for $D^0_r$.
Note that $D^0_r$ is also a cylindrical domain with a thin Lipschitz complement.

\begin{theorem}\label{thm:kernel1}
For all $(y,s)\in \partial _pD$ the limit of \eqref{eq:kernel} exists. If we denote the limit by $K_0(\cdot,\cdot; y,s)$, i.e.
\begin{equation*}
K_0(x,t;y,s)=\lim_{n\rightarrow \infty}\omega^{(x,t)}(\Delta
_{\frac{1}{n}}(y,s))/\omega ^{(X,T)}(\Delta _{\frac{1}{n}}(y,s)). 
\end{equation*}
then 
\mbox{}
\begin{itemize}
\item[(i)] For $(y,s)\in G_f$ or $(y,s)\in \partial_pD\setminus E_f$, $K_0$ is the unique kernel function at $(y,s)$.

\item[(ii)] If $(y,s)\in E_f\setminus G_f$, then $K_0$ is a kernel function at $(y,s)$ and
\begin{equation}\label{eq:halfkernel}
K_0(x,t;y,s)=\frac{1}{2}K^+(x,t;y,s)+\frac{1}{2}K^-(x,t;y,s),
\end{equation}
where $K^+$ and $K^-$ are kernel functions at $(y,s)$ given by the limit of \eqref{eq:kernelplus} and \eqref{eq:kernelminus}.
\end{itemize}
\end{theorem}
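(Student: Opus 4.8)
The overall plan is to reduce existence of the limit in \eqref{eq:kernel} to \emph{uniqueness} of the appropriate kernel function: by the compactness argument of Section~\ref{sec:exist-kern-funct}, every subsequential limit of the normalized sequence $v_n(x,t)=\omega^{(x,t)}(\Delta_{1/n}(y,s))/\omega^{(X,T)}(\Delta_{1/n}(y,s))$ is a kernel function at $(y,s)$ normalized at $(X,T)$, so if the kernel function is unique then the whole sequence converges to it. For \textbf{part (i)}, with $(y,s)\in G_f$ or $(y,s)\in\partial_pD\setminus E_f$, the task is thus to prove uniqueness of the kernel function, following Lemma~1.6 and Theorem~1.7 of \cite{Kemper}. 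The point is that near such $(y,s)$ the domain is ``one-sided'': either $D$ is locally a (union of two) Lipschitz domain(s), where one invokes Kemper directly, or, at a point of $G_f$, the two corkscrew points $\overline A^\pm_r(y,s)$ are joined through $D$ by a Harnack chain of controlled length near the tip of $E_f$ (Lemmas~\ref{lem:H-chain-prop-I}--\ref{lem:H-chain-prop-II}), so that they are comparable and act as a single corkscrew point. Using the shift $F^0_r$ and the domains $D^0_r\nearrow D$, the argument of Lemma~\ref{lem:kernel3} gives, for any kernel function $u$ at $(y,s)$, a lower bound $u(F^0_r(x,t))\ge C\,\omega^{0,(x,t)}_r(\Delta_r(y,s))\,u(\overline A_r(y,s))$ on $D^0_r$; combined with the upper estimate \eqref{eq:carleson3} and the passage $r\to0$, this yields $u\ge C\tilde K_0$ for a fixed subsequential limit $\tilde K_0$ of $\{v_n\}$, and the supremum argument from the proof of Theorem~\ref{thm:kernel8} — simpler here, as it involves a one-parameter family — forces $u\equiv\tilde K_0$. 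Uniqueness then gives the existence of $K_0$, which is the unique kernel function at $(y,s)$.

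For \textbf{part (ii)}, let $(y,s)\in E_f\setminus G_f$ and let $r_0$ be as in \eqref{eq:r0}. By Proposition~\ref{prop:limitfunc}(ii), for $n>1/r_0$ and $(x,t)\in D$,
\begin{equation*}
\frac{\omega^{(x,t)}(\Delta_{1/n}(y,s))}{\omega^{(X,T)}(\Delta_{1/n}(y,s))}=\frac{\vartheta^{(x,t)}_+(\Delta^+_{1/n})+\vartheta^{(x,t)}_-(\Delta^-_{1/n})}{\vartheta^{(X,T)}_+(\Delta^+_{1/n})+\vartheta^{(X,T)}_-(\Delta^-_{1/n})}.
\end{equation*}
Since $D$ and $E_f\subset\{x_n=0\}$ are invariant under the reflection $x\mapsto\hat x$ across $\{x_n=0\}$, this reflection sends $D^+_r$ to $D^-_r$ and $\Delta^+_\rho(y,s)$ to $\Delta^-_\rho(y,s)$, whence $\omega^{+,(x,t)}_r(\Delta^+_\rho)=\omega^{-,(\hat x,t)}_r(\Delta^-_\rho)$ for all $r<r_0$ and, letting $r\to0$, $\vartheta^{(x,t)}_+(\Delta^+_\rho)=\vartheta^{(\hat x,t)}_-(\Delta^-_\rho)$. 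In particular $\hat X=X$ (as $X\in\{x_n=0\}$) gives $\vartheta^{(X,T)}_+(\Delta^+_{1/n})=\vartheta^{(X,T)}_-(\Delta^-_{1/n})$, so the ratio above equals $\tfrac12 v^+_n(x,t)+\tfrac12 v^-_n(x,t)$ with $v^\pm_n$ from \eqref{eq:kernelplus}--\eqref{eq:kernelminus}.

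It then remains to upgrade Theorem~\ref{thm:twokernel} by showing $v^+_n\to K^+$ and $v^-_n\to K^-$ as \emph{full} limits. Along any subsequence on which $v^+_n$ converges, the limit $w^+$ is, by Proposition~\ref{prop:limitfunc}(iii) and the compactness argument from the proof of Theorem~\ref{thm:twokernel}, a kernel function at $(y,s)$ normalized at $(X,T)$, and by Proposition~\ref{prop:limitfunc}(i) it tends to $0$ as $(x,t)\to(y,s)$ from $D_-$. By Theorem~\ref{thm:kernel8}, $w^+=\lambda K^++(1-\lambda)K^-$ for some $\lambda\in[0,1]$; if $\lambda<1$ then $w^+\ge(1-\lambda)K^->0$ in $D$, so $w^+$ cannot vanish from $D_-$ at $(y,s)$ unless $K^-$ does, but $K^-$ already vanishes from $D_+$ at $(y,s)$ (proof of Theorem~\ref{thm:twokernel}), and since an interior point of $E_f$ is approached within $D$ only from $D_+$ or from $D_-$, this would force $K^-$ to vanish on all of $\partial_pD$, hence $K^-\equiv0$ by the maximum principle, contradicting $K^-(X,T)=1$. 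Therefore $\lambda=1$, $w^+=K^+$, and $v^+_n\to K^+$; the argument for $v^-_n\to K^-$ is identical (alternatively, $v^-_n(x,t)=v^+_n(\hat x,t)$ by the reflection identity). Combining, $\omega^{(x,t)}(\Delta_{1/n}(y,s))/\omega^{(X,T)}(\Delta_{1/n}(y,s))\to\tfrac12 K^+(x,t;y,s)+\tfrac12 K^-(x,t;y,s)$, which is \eqref{eq:halfkernel}.

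The step I expect to be the main obstacle is the uniqueness in part (i): carrying out Kemper's shift-operator comparison requires carefully combining the (one-sided) NTA-type structure of $D$ near $G_f$ — in particular the Harnack chains of Lemmas~\ref{lem:H-chain-prop-I}--\ref{lem:H-chain-prop-II} — with the boundary Harnack estimates of Section~\ref{sec:forw-bound-harn}. By contrast, once Theorem~\ref{thm:twokernel} and Theorem~\ref{thm:kernel8} are available, part (ii) is comparatively soft, being essentially the decomposition of Proposition~\ref{prop:limitfunc}(ii) together with the $x_n$-symmetry of $D$.
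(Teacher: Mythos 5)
Your treatment of part (ii) is correct and is essentially the paper's own argument, fleshed out: the exact identity $v_n=\tfrac12 v_n^+ +\tfrac12 v_n^-$ from Proposition~\ref{prop:limitfunc}(ii) together with the $x_n$-reflection symmetry (and $\hat X=X$), and then the identification of all subsequential limits of $v_n^\pm$ with $K^\pm$ via Theorem~\ref{thm:kernel8}, Proposition~\ref{prop:limitfunc}(i) and the maximum principle — this is the same mechanism the paper uses (and repeats in the proof of Corollary~\ref{cor:cont}), and it correctly upgrades the subsequential convergence of Theorem~\ref{thm:twokernel} to full convergence.

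The gap is in part (i), at the step where you assert that for $(y,s)\in G_f$ the two corkscrew points $\overline A^+_r(y,s)$ and $\overline A^-_r(y,s)$ ``are joined through $D$ by a Harnack chain of controlled length \dots so that they are comparable and act as a single corkscrew point.'' These two points sit at the \emph{same} time level $s+2r^2$, and a parabolic Harnack chain as defined in Section~\ref{sec:notat-prel} must advance in time ($t_{i+1}-t_i\geq \mu^2 r_i^2$); Lemmas~\ref{lem:H-chain-prop-I}--\ref{lem:H-chain-prop-II} never compare values at equal times, and for a general nonnegative caloric function no such same-time comparison near the boundary is available — it is precisely a backward-Harnack statement. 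For kernel functions it is eventually true (Remark~\ref{rem:equivalent}), but that remark rests on the doubling Theorem~\ref{thm:doubling}, whose proof uses Theorem~\ref{thm:kernel1} and Corollary~\ref{cor:cont}, so invoking it here would be circular. The paper's device, which is the one genuinely new ingredient of part (i) beyond Kemper, is to introduce the single point $\overline{\overline A}_r(y,s)=(y'',y_{n-1}+4nLr,0,s+4r^2)$ on the thin plane, at parabolic distance comparable to $r$ from $E_f$ and with a time lag $2r^2$ \emph{above} $\overline A^\pm_r(y,s)$: by the forward Harnack inequality (chains going around the edge $G_f$, forward in time), the value at $\overline{\overline A}_r$ of any nonnegative caloric function — in particular of $\omega^{(\cdot)}(\Delta_{r'}(y,s))$ and of the shifted kernel function in the $F^0_r$, $D^0_r$, $\omega^0_r$ argument — dominates its values at both $\overline A^+_r$ and $\overline A^-_r$. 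With this time-lagged single corkscrew replacing your same-time comparability, the max over the two sides in \eqref{eq:carleson3} can be absorbed and the Kemper-type scheme you outline (shift lower bound, Carleson upper bound, supremum plus strong maximum principle for uniqueness, cf.\ Kemper's Lemma~1.6 and Theorem~1.7) goes through; as written, that step of your part (i) does not.
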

\begin{proof}
For $(y,s)\in G_f$ and $r$ small enough, we denote
$\overline{\overline{A}}_r(y,s)=(y'',y_{n-1}+4nrL, 0,s+4r^2)$, which is on
$\{x_{n}=0\}$ and have a time-lag $2r^2$ above
$\overline{A}_r^{\pm}$. Then by the Harnack inequality,  
$$\omega ^{\overline{A}_r^{\pm}(y,s)}(\Delta _{r'}(y,s))\leq C(n,L)\omega ^{\overline{\overline{A}}_r(y,s)}(\Delta_{r'}(y,s)), \quad \forall 0<r'<r.$$ 
Then one can proceed as in Lemma~1.6 of \cite{Kemper} by using $F^0_r$, $D^0_r$, ${\omega}^0$ to show that any kernel function (at $(y,s)$) $u$ satisfies $u\geq CK_0$ for some $C>0$.  Then the uniqueness follows from Theorem~1.7, Remark~1.8 of \cite{Kemper}. 

For $(y,s)\in \partial _pD\setminus E_f$, for $r$ sufficiently small one has either $\Psi_r(y,s)\cap D\subset D_+$ or $\Psi_r(y,s)\cap D\subset D_-$. In either case one can proceed as in Lemma~1.6, Theorem~1.7 and Remark~1.8 of \cite{Kemper}. 

For $(y,s)\in E_f\setminus G_f$, by Theorem~\ref{thm:kernel8}, $K_0(x,t;y,s)=\lambda K^+(x,t; y,s)+(1-\lambda)K^-(x,t;y,s)$ for some $\lambda\in [0,1]$. By Proposition~\ref{prop:limitfunc}(ii), the symmetry of the domain about $x_{n-1}$ and the definition of $K^\pm$, one has $\lambda =1/2$.
\end{proof}

\begin{remark}\label{rem:representation}
From Theorem~\ref{thm:kernel1} we can conclude that the Radon-Nikodym derivative $d\omega ^{(x,t)}/d\omega^{(X,T)}$ exists at every $(y,s)\in \partial _pD$ and it is the kernel function $K_0(x,t;y,s)$ with respect to $(X,T)$. 

\end{remark}

The following corollary is an easy consequence of Theorems
\ref{thm:kernel8} and \ref{thm:kernel1}. 

\begin{corollary}\label{cor:cont}
For fixed $(x,t)\in D$, the function $(y,s)\mapsto K_0(x,t;y,s)$ is continuous on $\partial _pD$, where $K_0$ is given by the limit of \eqref{eq:kernel}.
\end{corollary}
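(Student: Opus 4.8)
The plan is to combine a normal-families compactness argument with the structural results on kernel functions from Theorems~\ref{thm:twokernel}, \ref{thm:kernel8}, and \ref{thm:kernel1}. Fix $(x,t)\in D$ and let $(y_j,s_j)\to(y,s)$ in $\partial_p D$; it suffices to show that every subsequence of the numbers $K_0(x,t;y_j,s_j)$ has a further subsequence converging to $K_0(x,t;y,s)$. To this end I would first show that the family $\{K_0(\cdot,\cdot;y_j,s_j)\}_j$ is precompact in the topology of local uniform convergence on $D$: each $K_0(\cdot,\cdot;y_j,s_j)$ is nonnegative and caloric in $D$, vanishes continuously on $\partial_p D\setminus\{(y_j,s_j)\}$, and satisfies the normalization $K_0(X,T;y_j,s_j)=1$. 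Fixing $\epsilon>0$, applying the Carleson-type estimate \eqref{eq:carleson3} with center $(y_j,s_j)$ and radius comparable to $\epsilon$, and then connecting the corkscrew points $\overline{A}_\epsilon(y_j,s_j)$ to the fixed pole $(X,T)$ by a Harnack chain of controlled length (Lemmas~\ref{lem:H-chain-prop-I} and \ref{lem:H-chain-prop-II}; here Convention~\ref{convention} guarantees that $(X,T)$ lies in the future of all the relevant points), one gets a bound $K_0(\cdot,\cdot;y_j,s_j)\le C(\epsilon,n,L)$ on $D\setminus\Psi_\epsilon(y,s)$ for all large $j$. Interior parabolic estimates together with the uniform boundary H\"older estimate of Lemma~\ref{lem:holder} (applicable since, for large $j$, $K_0(\cdot,\cdot;y_j,s_j)$ vanishes on $\Delta_{\epsilon/2}(z,h)$ for every $(z,h)\in\partial_p D\setminus\Psi_\epsilon(y,s)$) then yield equicontinuity on $\overline{D}\setminus\Psi_\epsilon(y,s)$, and a diagonal argument as $\epsilon\to0$ produces a subsequence converging locally uniformly in $D$ to some $v\ge 0$.

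One then checks that $v$ is a kernel function at $(y,s)$ normalized at $(X,T)$: it is caloric in $D$ with $v(X,T)=1$, so $v\not\equiv0$, and since $(y_j,s_j)\to(y,s)$ and the decay of $K_0(\cdot,\cdot;y_j,s_j)$ near $\partial_p D$ away from $(y_j,s_j)$ is uniform (Lemma~\ref{lem:holder}), $v$ vanishes continuously on $\partial_p D\setminus\{(y,s)\}$. If $(y,s)\in G_f$ or $(y,s)\in\partial_p D\setminus E_f$, then by Theorem~\ref{thm:kernel1}(i) the kernel function at $(y,s)$ is unique, hence $v=K_0(\cdot,\cdot;y,s)$; as this pins down every subsequential limit, the full sequence converges and we are done in this case. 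The only remaining---and main---difficulty is the case $(y,s)\in E_f\setminus G_f$, where kernel functions are \emph{not} unique: by Theorem~\ref{thm:kernel8} all we know a priori is that $v=\lambda\,K^{+}(\cdot,\cdot;y,s)+(1-\lambda)\,K^{-}(\cdot,\cdot;y,s)$ for some $\lambda\in[0,1]$, and the crux is to show $\lambda=\tfrac12$.

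The resolution I would use is the reflection symmetry of the configuration. Write $\hat x$ for the mirror image of $x$ across $\{x_n=0\}$. Since $(y,s)\in E_f\subset\{x_n=0\}$, the surface ball $\Delta_{1/m}(y_j,s_j)$ is invariant under $x\mapsto\hat x$, and so are the domain $D$ and the pole $X$ (recall $X\in\{x_n=0\}$ by Convention~\ref{convention}); reflecting caloric measure therefore gives $\omega^{(\hat x,t)}(\Delta_{1/m}(y_j,s_j))=\omega^{(x,t)}(\Delta_{1/m}(y_j,s_j))$, hence $K_0(\hat x,t;y_j,s_j)=K_0(x,t;y_j,s_j)$ and, in the limit, $v(\hat x,t)=v(x,t)$. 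On the other hand, the constructions \eqref{eq:kernelplus}--\eqref{eq:kernelminus} of $K^{+}$ and $K^{-}$ are interchanged by $x\mapsto\hat x$ (as $D_r^{+}$ and $D_r^{-}$, and $\Delta_r^{+}$ and $\Delta_r^{-}$, are mirror images), so $K^{+}(\hat x,t;y,s)=K^{-}(x,t;y,s)$. Combining these with $v=\lambda K^{+}+(1-\lambda)K^{-}$ forces $(2\lambda-1)\bigl(K^{+}(\cdot,\cdot;y,s)-K^{-}(\cdot,\cdot;y,s)\bigr)\equiv0$, and since $K^{+}$ and $K^{-}$ are linearly independent (Theorem~\ref{thm:twokernel}) we conclude $\lambda=\tfrac12$, i.e. $v=K_0(\cdot,\cdot;y,s)$ by Theorem~\ref{thm:kernel1}(ii). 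As every subsequence yields the same limit, $(y,s)\mapsto K_0(x,t;y,s)$ is continuous on $\partial_p D$. The main obstacle is exactly this non-uniqueness at $E_f\setminus G_f$: a subsequential limit is only known to be \emph{some} convex combination of $K^{+}$ and $K^{-}$, and the point is to recognize that the $x_n$-symmetry of the pole and of the shrinking surface balls forces it to be the symmetric one $\tfrac12 K^{+}+\tfrac12 K^{-}$; the remaining steps are routine normal-family and boundary-Harnack bookkeeping.
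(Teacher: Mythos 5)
Your argument is correct and reaches the same conclusion, but it resolves the crucial case $(y,s)\in E_f\setminus G_f$ by a different mechanism than the paper. The paper applies Theorem~\ref{thm:kernel1}(ii) \emph{at the approximating points} $(y_m,s_m)$, writing $K_0(\cdot,\cdot;y_m,s_m)=\tfrac12K^+(\cdot,\cdot;y_m,s_m)+\tfrac12K^-(\cdot,\cdot;y_m,s_m)$, and then shows that each one-sided piece converges to the corresponding $K^\pm(\cdot,\cdot;y,s)$: a subsequential limit of $K^+(\cdot,\cdot;y_m,s_m)$ is $\lambda K^++(1-\lambda)K^-$ by Theorem~\ref{thm:kernel8}, and $\lambda=1$ is forced by the decay on $\partial_p D_r^+\cap D$ from Proposition~\ref{prop:limitfunc}(i), used \emph{uniformly in $m$}, together with the non-vanishing of $K^-$ from the $D_-$ side. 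You instead identify the subsequential limit $v$ of $K_0(\cdot,\cdot;y_j,s_j)$ directly as the unique $x_n$-symmetric kernel function at $(y,s)$: reflection invariance of $D$, of the pole $(X,T)$ (Convention~\ref{convention} puts $X$ on the thin space), and of the surface balls $\Delta_{1/m}(y_j,s_j)$ gives symmetry of each $K_0(\cdot,\cdot;y_j,s_j)$ and hence of $v$, while reflection interchanges the constructions \eqref{eq:kernelplus} and \eqref{eq:kernelminus}; combined with Theorems~\ref{thm:kernel8} and \ref{thm:twokernel} this forces $\lambda=\tfrac12$, i.e.\ $v=K_0(\cdot,\cdot;y,s)$ by Theorem~\ref{thm:kernel1}(ii). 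This is in the same spirit as the paper's own proof that $\lambda=1/2$ in Theorem~\ref{thm:kernel1}(ii), and it buys a shorter argument that avoids invoking the uniformity in $m$ of Proposition~\ref{prop:limitfunc}(i); what it gives up is the extra information the paper's route produces (continuity in $(y,s)$ of the one-sided kernels $K^\pm$ themselves) and some robustness, since your step needs $\hat X=X$ and needs the approximating points to lie on $\{x_n=0\}$ so that the balls $\Delta_{1/m}(y_j,s_j)$ are reflection-invariant. The latter holds for large $j$ when $(y,s)$ is in the relative interior of $E_f$, which is the same tacit restriction as in the paper's proof (there Theorem~\ref{thm:kernel1}(ii) is applied at $(y_m,s_m)$, so those points must also lie in $E_f\setminus G_f$); apart from this shared corner case, your compactness and boundary-regularity bookkeeping (via \eqref{eq:carleson3}, the Harnack chains to $(X,T)$, and Lemma~\ref{lem:holder}) matches the paper's Section~\ref{sec:exist-kern-funct} and is fine.
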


\begin{proof}
Given $(y,s)\in \partial _pD$, let $(y_m,s_m)\in \partial _pD$ with $(y_m,s_m)\rightarrow (y,s)$ as $m\rightarrow \infty$.

If $(y,s)\in G_f$ or $\partial _pD\setminus E_f$, continuity follows
from the uniqueness of the kernel function.

If $(y,s)\in E_f\setminus G_f$, by Theorem~\ref{thm:kernel1}(ii) for each $m$ we have
\begin{equation}\label{eq:kernel9}
K_0(x,t;y_m,s_m)=\frac{1}{2}K^+(x,t;y_m,s_m)+\frac{1}{2}K^-(x,t;y_m,s_m).
\end{equation}
Given $\epsilon>0$, $K^+(\cdot,\cdot;y_m,s_m)$ is uniformly bounded
and equicontinuous on $D\setminus \Psi_{\epsilon}(y,s)$ for $m$ large
enough, hence by a similar argument as in
Section~\ref{sec:exist-kern-funct}, up to a subsequence,
$K^+(\cdot,\cdot;y_m,s_m)\rightarrow v^+(\cdot,\cdot;y,s)$ uniformly
on compact subsets, where $v^+(\cdot, \cdot;y,s)$ is some kernel function
at $(y,s)$. Moreover, by Theorem~\ref{thm:kernel8} we have 
\begin{equation}\label{eq:kernelv}
v^+(\cdot,\cdot;y,s)=\lambda K^+(\cdot,\cdot;y,s)+(1-\lambda)K^-(\cdot,\cdot;y,s),\quad \text{ for some }\lambda \in [0,1].
\end{equation}

By Proposition~\ref{prop:limitfunc}(i),
$$\sup_{(x,t)\in\partial _pD^+_r\cap D}K^+(x,t;y_m,s_m)\rightarrow 0,\quad r\rightarrow 0$$
which is uniform in $m$ from the proof of the proposition. Hence after $m\rightarrow \infty$, $v^+$ satisfies
$$\sup_{(x,t)\in \partial _pD^+_r\cap D}v^+(x,t)\rightarrow 0, \quad r\rightarrow 0,$$
which combined with
$$K^-(x,t;y,s)\not\rightarrow 0, \text{ as } (x,t)\rightarrow (y,s),\quad \text{for }(x,t)\in D_-$$
gives $\lambda =1$ in \eqref{eq:kernelv}. 

Similarly, up to a subsequence $K^-(x,t;y_m,s_m)\rightarrow K^-(x,t;y,s).$

Thus along a subsequence $K(\cdot,\cdot;y_m,s_m)\rightarrow K_0(\cdot,\cdot;y,s)$ by \eqref{eq:halfkernel}. Since this holds for all the converging subsequences, then $K_0(x,t;y,s)$ is continuous on $\partial_pD$ for fixed $(x,t)$.
\end{proof}

By using Corollary~\ref{cor:cont}, Remark~\ref{rem:representation} and Theorem~\ref{thm:carleson2} we can prove some uniform behavior of $K_0$ on $\partial _pD$ as in Lemmas~2.2 and 2.3 of \cite{Kemper}. We state the results in the following two lemmas and omit the proof.

\begin{lemma}\label{lem:kernel}
Let $(y,s)\in \partial _pD$. Then for $0<r<1/4$,
\begin{equation*}
\sup_{(y',s')\in \partial _pD\setminus \Delta _r(y,s)}K_0(x,t;y',s')\rightarrow 0,\ \text{as}\ (x,t)\rightarrow (y,s)\ \text{in}\ D.
\end{equation*}
\end{lemma}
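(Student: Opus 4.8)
The plan is to argue by contradiction, following the scheme of Lemma~2.2 in \cite{Kemper} and using exactly the three ingredients the paper advertises: the Carleson estimate in the form of Theorem~\ref{thm:carleson2}, to bound a sequence of kernel functions uniformly away from their poles; the boundary H\"older estimate of Lemma~\ref{lem:holder}, to upgrade this to equicontinuity up to $\Delta_{r/2}(y,s)$; and the continuity of the pole map $(y',s')\mapsto K_0(x,t;y',s')$ from Corollary~\ref{cor:cont} (which itself rests on Remark~\ref{rem:representation}). Convention~\ref{convention} will be in force throughout, so that all kernels are normalized at a single $(X,T)$ with a uniform time gap below $T$ and the future corkscrew points are available for every pole that occurs.

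First I would fix $(y,s)\in\partial_pD$ and $r\in(0,1/4)$, suppose the assertion fails, and extract $\epsilon_0>0$, a sequence $(x_j,t_j)\in D$ with $(x_j,t_j)\to(y,s)$, and poles $(y_j',s_j')\in\partial_pD\setminus\Delta_r(y,s)$ such that $K_0(x_j,t_j;y_j',s_j')\ge\epsilon_0$ for all $j$. Using compactness of $\partial_pD$ I would pass to a subsequence with $(y_j',s_j')\to(y^*,s^*)\in\partial_pD$; since each $(y_j',s_j')$ lies outside the open box $\Psi_r(y,s)$, so does $(y^*,s^*)$, hence $(y^*,s^*)\ne(y,s)$ and there is a fixed $d_0>0$ with $\dist_p\bigl(\overline{\Psi_{r/2}(y,s)},(y^*,s^*)\bigr)\ge d_0$. (One must be a little careful here in converting ``outside $\Psi_r(y,s)$'' into a genuine parabolic-distance lower bound, because of the factor $4nL$ in the $x_{n-1}$-extent of the boxes $\Psi_\rho$.)

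Next I would show that the family $\{K_0(\cdot,\cdot;y_j',s_j')\}$, for $j$ large, is uniformly bounded and uniformly H\"older on a fixed neighborhood of $\overline{\Psi_{r/2}(y,s)\cap D}$ that stays away from $(y^*,s^*)$. For $j$ large the pole lies in $\Psi_{d_0/4}(y^*,s^*)$, so $K_0(\cdot,\cdot;y_j',s_j')$ is a nonnegative caloric function in $D$, normalized by $K_0(X,T;y_j',s_j')=1$, that vanishes continuously on $\partial_pD\setminus\Delta_{d_0/4}(y^*,s^*)$; Theorem~\ref{thm:carleson2} applied with center $(y^*,s^*)$ then bounds it on $D\setminus\Psi_{d_0/2}(y^*,s^*)$ by a constant times its values at the corkscrew points attached to $(y^*,s^*)$, and a fixed Harnack chain from those (fixed) corkscrew points to $(X,T)$ bounds those values by $C(d_0,n,L)$. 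Lemma~\ref{lem:holder}, applied at the boundary points of $\Delta_{r/2}(y,s)$ (where the poles are now at a definite distance), gives the uniform H\"older modulus. By Arzel\`a--Ascoli I would pass to a further subsequence so that $K_0(\cdot,\cdot;y_j',s_j')\to v$ uniformly on $\overline{\Psi_{r/2}(y,s)\cap D}$, with $v$ continuous there. On the other hand Corollary~\ref{cor:cont} gives $K_0(x,t;y_j',s_j')\to K_0(x,t;y^*,s^*)$ for each fixed $(x,t)\in D$, so $v=K_0(\cdot,\cdot;y^*,s^*)$ on $\Psi_{r/2}(y,s)\cap D$ and, by continuity, on its closure; since $K_0(\cdot,\cdot;y^*,s^*)$ is a kernel function at $(y^*,s^*)$ and $(y,s)\ne(y^*,s^*)$, it vanishes continuously at $(y,s)$, so $v(y,s)=0$. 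Finally, for $j$ large $(x_j,t_j)\in\Psi_{r/2}(y,s)\cap D$, and
\[
\epsilon_0\le K_0(x_j,t_j;y_j',s_j')\le\bigl|K_0(x_j,t_j;y_j',s_j')-v(x_j,t_j)\bigr|+\bigl|v(x_j,t_j)\bigr|;
\]
the first term on the right goes to $0$ by the uniform convergence and the second by the continuity of $v$ together with $v(y,s)=0$, contradicting $\epsilon_0>0$.

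The step I expect to be the main obstacle is the uniformity in $j$ of the two quantitative estimates: one has to make sure that Theorem~\ref{thm:carleson2} and Lemma~\ref{lem:holder} can be applied to the whole sequence with constants independent of $j$, which amounts to keeping the poles $(y_j',s_j')$ at a fixed positive distance from the region $\overline{\Psi_{r/2}(y,s)\cap D}$ where the estimates are used --- exactly what placing the limit pole $(y^*,s^*)$ outside $\Psi_r(y,s)$ secures --- and that the Harnack chain linking the corkscrew points of $(y^*,s^*)$ to $(X,T)$ has length bounded in terms of $d_0$, $n$, $L$ only. Both of these are where Convention~\ref{convention} does its work, giving a uniform time gap below $T$ and making the required corkscrew points (and hence the application of Theorem~\ref{thm:carleson2}) legitimate.
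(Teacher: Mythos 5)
Your argument is correct in substance, and it uses exactly the ingredients the paper points to (Theorem~\ref{thm:carleson2} plus a Harnack chain to $(X,T)$ for the uniform bound, Lemma~\ref{lem:holder} for boundary decay, Convention~\ref{convention} to legitimize the corkscrew points), but it is packaged differently from the Kemper-style proof the paper has in mind. The omitted proof is direct and quantitative: for \emph{every} pole $(y',s')\in\partial_pD\setminus\Delta_r(y,s)$ one first gets, from Theorem~\ref{thm:carleson2} applied at the pole and a Harnack chain to the normalization point, a bound $K_0(\cdot,\cdot;y',s')\leq C(r,n,L)$ on $\Psi_{r/2}(y,s)\cap D$ that is uniform in the pole; since each such kernel vanishes continuously on $\Delta_{r/2}(y,s)$, Lemma~\ref{lem:holder} then gives the uniform decay $K_0(x,t;y',s')\leq C\bigl(\|(x-y,t-s)\|/r\bigr)^{\alpha}$, and the supremum over poles tends to $0$ with an explicit rate. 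Your contradiction--compactness--Arzel\`a--Ascoli detour, with Corollary~\ref{cor:cont} used to identify the limit kernel, proves the same qualitative statement, but the compactness of $\partial_pD$, the equicontinuity discussion on $\overline{\Psi_{r/2}(y,s)\cap D}$, and Corollary~\ref{cor:cont} are all avoidable: once you have the uniform-in-pole bound (which you do establish), Lemma~\ref{lem:holder} alone finishes the proof and in addition yields a rate, which is what makes the direct route preferable.

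One detail to repair: you correctly flag that ``outside $\Psi_r(y,s)$'' must be converted into a distance bound with care because the boxes are elongated by the factor $4nL$ in the $x_{n-1}$-direction, but the same issue reappears, unaddressed, when you apply Theorem~\ref{thm:carleson2} at scale $d_0/2$ around $(y^*,s^*)$: a point of $\overline{\Psi_{r/2}(y,s)}$ at parabolic distance $\geq d_0$ from $(y^*,s^*)$ may still lie inside $\Psi_{d_0/2}(y^*,s^*)$, since that box reaches out to distance $2nLd_0$ in $x_{n-1}$. So the conclusion of Theorem~\ref{thm:carleson2} on $D\setminus\Psi_{d_0/2}(y^*,s^*)$ need not cover all of $\overline{\Psi_{r/2}(y,s)}\cap D$ as written. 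The fix is purely a matter of constants: apply the theorem at a scale $\rho\leq c\,d_0/(nL)$ so that $\Psi_{2\rho}(y^*,s^*)$ is disjoint from $\overline{\Psi_{r/2}(y,s)}$ (the vanishing hypothesis still holds for large $j$ since the poles converge to $(y^*,s^*)$), and the rest of your argument goes through unchanged.
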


The following lemma says that if $D'$ is a domain obtained by a
perturbation of a portion of $\partial_pD$ where $\omega ^{(x,t)}$
vanishes, then the caloric measure $\omega _{D'}$ is equivalent to
$\omega _D$ on the common boundary of $D' $ and $D$. 
We recall here $\omega^0_r$ is the caloric measure with respect to the domain 
$D^0_r$ defined in \eqref{eq:shift} and $\omega_r^\pm$ is the caloric measure with respect to
$D^\pm_r$ defined in \eqref{eq:shift00}.

\begin{lemma}\label{lem:kernel2}\mbox{}
\begin{enumerate}
\item[(i)] Let $r\in (0, 1/4)$ and $(y,s)\in G_f\cup(\partial
  _pD\setminus E_f)$ with $s>-1+4r^2$. Then
there exists $\rho_0=\rho_0(n,L)>0$, $C=C(n,L)>0$ such that for
$0<\rho<\rho_0$ we have 
\begin{equation}\label{eq:kernel2}  
{\omega }^{0^{(X',T')}}_\rho(\Delta _r(y,s))\geq C \omega
^{(X',T')}(\Delta _r(y,s)), \quad (X',T')\in \Psi_{1/4}(X,T),
\end{equation}
provided also $r<|y_{n}|$ for $(y,s)\in \partial_pD\setminus E_f$.
\item[(ii)] Let $(y,s)\in (\cN_r(E_f)\cap \partial_pD)\setminus G_f$. Then
there exists $\delta_0=\delta_0(n,L)>0$, such that for $0<r'<\delta_0$ we have
\begin{equation}\label{eq:kernel22}
\omega ^{+^{(X',T')}}_{r'} (\Delta _r^+(y,s))+\omega ^{-^{(X',T')}}_{r'} (\Delta _r^-(y,s))\geq \frac{1}{2} \omega ^{(X',T')}(\Delta _r(y,s))
\end{equation}
for $(X',T')\in \Psi_{1/4}(X,T)$ and $0<r<r_0$, where $r_0$ is the constant defined in \eqref{eq:r0}.
\end{enumerate}
\end{lemma}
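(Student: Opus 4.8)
The plan is to treat both parts by a single escaped-mass argument, reducing everything to the Hölder decay of $\omega^{(\cdot)}(\Delta_r(y,s))$ near the part of $\partial_p D$ where it vanishes, together with the Carleson estimates of Section~\ref{sec:forw-bound-harn}. In~(i) set $D'=D^0_\rho$ and in~(ii) set $D'=D^\pm_{r'}$; in either case $D'\subset D$ is $D$ with a slab of parabolic width comparable to $\rho$ (resp.\ $r'$) deleted, and since $y_{n-1}=f(y'',s)\in[-L,L]$ is far from the deleted faces this slab is disjoint from a fixed parabolic neighbourhood of $(y,s)$, so $\Delta_r(y,s)\subset\partial_p D'$. (For $(y,s)\in\partial_p D\setminus E_f$ with $r<|y_n|$, $\Psi_r(y,s)\cap D$ lies in $D_+$ or $D_-$, and part~(i) is then the classical perturbation statement for parabolic Lipschitz domains; the essential case is $(y,s)\in G_f$.) Put $N:=\partial_p D'\cap D$ and $g:=\omega^{(\cdot)}(\Delta_r(y,s))$; by Proposition~\ref{prop:weaksol}, $g$ is caloric in $D$ and vanishes continuously on $\partial_p D\setminus\overline{\Delta_r(y,s)}$, in particular on the faces that disappear on passing to $D'$. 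Applying the maximum principle to $g$ in $D'$ and splitting $\partial_p D'=(\partial_p D'\cap\partial_p D)\cup N$,
\[
\omega^{(x,t)}(\Delta_r(y,s))=\omega^{(x,t)}_{D'}(\Delta_r(y,s))+\int_N g(z,h)\,d\omega^{(x,t)}_{D'}(z,h),\qquad(x,t)\in D',
\]
so both parts reduce to bounding the last integral by a fixed fraction of $\omega^{(X',T')}(\Delta_r(y,s))$ for $(X',T')\in\Psi_{1/4}(X,T)\cap D'$.

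\emph{Part (i).} For $(z,h)\in N$ let $(w,h)\in\partial_p\Psi_1$ lie on the face $\{x_{n-1}=4nL\}$ with $\|(z-w,0)\|\le C(n,L)\rho$; by Convention~\ref{convention} the time-top face of $N$ sits at $t$ near $2$, hence carries no $\omega^{(X',T')}_{D^0_\rho}$-mass since $T'<7/4$. Fix $R_1=R_1(n,L)$ with $\Psi_{6R_1}(w,h)$ disjoint from $E_f\cup\overline{\Delta_r(y,s)}$ (possible for all $r<1/4$ as $w_{n-1}=4nL$ while $|y_{n-1}|\le L$). Then $g$ vanishes continuously on $\Delta_{6R_1}(w,h)$, so Lemma~\ref{lem:holder} gives $g(z,h)\le C\rho^{\alpha}\sup_{\Psi_{R_1}(w,h)\cap D}g$ and Theorem~\ref{thm:carleson} bounds the supremum by $Cg(\overline{A})$ for a corkscrew point $\overline{A}$ of $(w,h)$ at scale $\sim R_1$. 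If $h\le T'-\epsilon_0$ for a fixed $\epsilon_0=\epsilon_0(n,L)$, a Harnack chain in $D$ of controlled length from $\overline{A}$ to $(X',T')$ yields $g(z,h)\le C\rho^{\alpha}\omega^{(X',T')}(\Delta_r(y,s))$; on the slice of $N$ where $h>T'-\epsilon_0$ one uses instead the crude bound $g(z,h)\le C\omega^{(X',T')}(\Delta_r(y,s))$ coming from Lemma~\ref{lem:caloric2} at the fixed scale $\tfrac18$ followed by Harnack, together with the fact that this slice has $\omega^{(X',T')}_{D^0_\rho}$-measure $\le Ce^{-c/\epsilon_0}$ because $(X',T')$ starts at definite distance from the deleted slab. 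Summing the two contributions and taking $\rho_0=\rho_0(n,L)$ small makes the escaped mass $\le\tfrac12\omega^{(X',T')}(\Delta_r(y,s))$. The remaining degenerate configuration, in which $(X',T')$ itself is within $O(\rho)$ of the deleted slab, is treated separately: both sides are then of size $O(\rho^{\alpha'})$ and their comparability is a boundary Harnack statement at scale $\rho$, via Lemma~\ref{lem:holder}.

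\emph{Part (ii).} This follows from Proposition~\ref{prop:limitfunc}. Since $\omega^{\pm^{(x,t)}}_{r'}(\Delta^\pm_r(y,s))$ increases to $\vartheta^{(x,t)}_\pm(\Delta^\pm_r(y,s))$ as $r'\to0$ and $\vartheta^{(x,t)}_\pm(\Delta^\pm_r(y,s))$ equals $\chi_{\Delta^\pm_r(y,s)}$ on $\partial_p D^\pm_{r'}\cap\partial_p D$, the maximum principle in $D^\pm_{r'}$ gives
\[
\omega^{\pm^{(X',T')}}_{r'}(\Delta^\pm_r(y,s))\ \ge\ \vartheta^{(X',T')}_\pm(\Delta^\pm_r(y,s))-\sup_{(z,h)\in\partial_p D^\pm_{r'}\cap D}\vartheta^{(z,h)}_\pm(\Delta^\pm_r(y,s)).
\]
Adding the $+$ and $-$ inequalities and using Proposition~\ref{prop:limitfunc}(ii), $\vartheta^{(X',T')}_+(\Delta^+_r(y,s))+\vartheta^{(X',T')}_-(\Delta^-_r(y,s))=\omega^{(X',T')}(\Delta_r(y,s))$, reduces (ii) to making the two suprema small. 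They tend to $0$ with $r'$ by Proposition~\ref{prop:limitfunc}(i); feeding the argument of its proof through the form~\eqref{eq:carleson3} of Theorem~\ref{thm:carleson2} (which carries the extra factor $\omega^{(\cdot)}(\Delta_{2r}(y,s))$) upgrades this to $\sup_{\partial_p D^\pm_{r'}\cap D}\vartheta^{(z,h)}_\pm(\Delta^\pm_r(y,s))\le C(n,L)(r')^{\gamma}\omega^{(X',T')}(\Delta_r(y,s))$, so that $r'<\delta_0(n,L)$ makes the two suprema together at most $\tfrac12\omega^{(X',T')}(\Delta_r(y,s))$.

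\emph{Main obstacle.} The real difficulty throughout is keeping $\rho_0$, $\delta_0$ and $C$ dependent only on $n$ and $L$, and in particular independent of $r$ and of the scale $r_0$ of~\eqref{eq:r0}; this is exactly why one must route the escaped-mass estimate through the Carleson estimates, whose constants are universal, rather than through bare maximum-principle comparisons, and why the parabolic time-lag has to be tracked carefully enough that every Harnack chain to $(X',T')$ is guaranteed to exist — which is the purpose of the time extension in Convention~\ref{convention}.
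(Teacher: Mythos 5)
Your part (i) is essentially the Kemper-style escaped-mass argument that the paper silently invokes, and it is sound: for $(y,s)\in G_f$ the new boundary $N=\partial_p D^0_\rho\cap D$ lies only along the faces $\{x_{n-1}=4nL\}$ and the top-time face, at \emph{fixed} distance from $E_f\cup\Delta_r(y,s)$, so the H\"older decay (Lemma~\ref{lem:holder}), the Carleson estimate, and the forward Harnack chain to $(X',T')$ all run at scales depending only on $n,L$, and your splitting of $N$ according to whether $h\le T'-\epsilon_0$ handles the time-lag correctly. This is more detail than the paper gives for (i) (it omits the proof), and apart from routine caveats (e.g.\ $(y,s)\in\partial_p D\setminus E_f$ sitting on the shifted faces, null sets at the relative boundary of $\Delta_r$) it is fine.

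Part (ii), however, has a genuine gap exactly at the quantitative heart of the lemma. Your reduction — the maximum-principle inequality $\omega^{\pm^{(X',T')}}_{r'}(\Delta^\pm_r)\ge\vartheta^{(X',T')}_\pm(\Delta^\pm_r)-\sup_{\partial_p D^\pm_{r'}\cap D}\vartheta^{(z,h)}_\pm(\Delta^\pm_r)$ plus Proposition~\ref{prop:limitfunc}(ii) — is indeed the intended route (it is the paper's \eqref{eq:theta-omega}), but the one-sentence ``upgrade'' $\sup_{\partial_p D^\pm_{r'}\cap D}\vartheta^{(z,h)}_\pm(\Delta^\pm_r)\le C(n,L)(r')^{\gamma}\,\omega^{(X',T')}(\Delta_r)$ is not proved and cannot hold with constants depending only on $n,L$. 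Two concrete obstructions: (a) below (resp.\ above) $\Delta_r^{\mp}$ the function $\vartheta_\pm$ vanishes on $E_f$ only from one side, so the one-sided H\"older decay used in the proof of Proposition~\ref{prop:limitfunc}(i) is only available up to the distance to $G_f$, i.e.\ up to scale $\sim r_0$; what comes out is $C(r'/r_0)^{\gamma}$ times the size of $\vartheta_\pm$ at distance $\sim r_0$ from $\Delta_r$, not $C(r')^{\gamma}$ times anything. (b) Converting that into a bound \emph{relative to} $\omega^{(X',T')}(\Delta_r)$ via \eqref{eq:carleson3} leaves you with $\omega^{(z,h)}(\Delta_{2r})$ at points within distance $\sim r_0$ of the surface ball, and comparing this with $\omega^{(X',T')}(\Delta_r)$ is precisely a doubling/backward-Harnack statement (Theorem~\ref{thm:doubling}), which is proved \emph{later} using the present lemma; without it, the Harnack chains from scale-$r_0$ corkscrew points to $(X',T')$ cost a factor that blows up as $r_0\to0$, so your estimate is either circular or non-uniform. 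More structurally, a pointwise sup bound of the escaped boundary values by $\tfrac12\omega^{(X',T')}(\Delta_r)$ cannot work near $\Delta_r$: $\omega^{(X',T')}(\Delta_r)$ is small (of order a power of $r$), while the values of $\vartheta_\pm$ on the part of $\partial_p D^\pm_{r'}\cap D$ adjacent to $\Delta_r$ are only small relative to the measure of $\Delta_r$ seen from distance $\sim r_0$. The Kemper-type argument the paper alludes to must instead bound the escaped mass as an integral, using that the $\omega^{\pm^{(X',T')}}_{r'}$-measure of the portion of the new boundary adjacent to $\Delta_r$ is itself small/comparable to the measure of a ball of radius $Cr$, and it has to exploit the interplay between $r'$, $r$ and $r_0$ (note the lemma is applied in Theorem~\ref{thm:doubling} only with $r'=r<r_0$); your proposal does not engage with this scale interplay at all, and this is where the real difficulty of (ii) lies.
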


\begin{proof}
To show \eqref{eq:kernel22} we first argue similarly as in \cite{Kemper} to show there exists $\delta_0=\delta_0(n,L)>0$ such that
for any $0<r'<\delta_0$ 
\begin{equation}\label{eq:theta-omega}
\omega_{r'}^{\pm^{(X',T')}}(\Delta_r^\pm(y,s))\geq \frac{1}{2}\vartheta_\pm^{(X',T')}(\Delta_r^\pm(y,s))
\end{equation}
for each $\Delta_r^\pm(y,s)$ with $0<r<r_0$. Then using Proposition \ref{prop:limitfunc}(ii) we get the conclusion. 
\end{proof}

\section{Backward Boundary Harnack Principle}
\label{sec:backw-bound-harn}

In this section, we follow the lines of \cite{Garofalo3} to build up a
backward Harnack inequality for nonnegative caloric functions in
$D$. To prove this kind of inequalities, we have to ask the
nonnegative caloric functions to vanish on the \emph{lateral boundary}
$$
S:=\partial_pD\cap\{s>-1\},
$$
or at least a portion of it. This will allow to control the time-lag
issue in the parabolic Harnack inequality.

Some of the proofs in this section follow the lines of the
corresponding proofs in \cite{Garofalo3}. For that reason, we will
omit the parts that don't require modifications or additional arguments.

For $(x,t)$ and $(y,s)\in D$, denote by $G(x,t;y,s)$ the Green's function
for the heat equation in the domain $D$. Since $D$ is a regular
domain, Green's function can be written in the form
$$
G(x,t;y,s)=\Gamma(x,t;y,s)-V(x,t;y,s),
$$
where $\Gamma(\cdot,\cdot;y,s)$ is the fundamental solution of the
heat equation with pole at $(y,s)$ and $V(\cdot,\cdot;y,s)$ is a
caloric function in $D$ that equals  $\Gamma(\cdot,\cdot;y,s)$ on
$\partial_p D$. We note that by the maximum principle we have
$G(x,t;y,s)=0$ whenever $(x,t)\in D$ with $t\leq s$.

In this section, similarly to Section~\ref{sec:kernel-functions}, we will work under
Convention~\ref{convention}. In particular, in Green's function we
will allow pole $(y,s)$ to be in $\tilde D$ with $s\geq 1$. But in
that case we simply have $G(x,t;y,s)=0$ for all $(x,t)\in D$.

\begin{lemma}\label{lem:green}
Let $0<r<1/4$ and $(y,s)\in S$ with $s\geq -1+8r^2$. Then
there exists a constant $C=C(n,L)>0$ such that for $(x,t)\in D\cap\{t\geq s+4r^2\}$, we have
\begin{multline}\label{eq:green}
C^{-1}r^n\max\{G(x,t;\overline{A}^\pm_r(y,s))\}\leq \omega
^{(x,t)}(\Delta _r(y,s))\\\leq
Cr^n\max\{G(x,t;\underline{A}^\pm_r(y,s))\},\quad\text{if }(y,s)\in \cN_r(E_f),
\end{multline}
\begin{multline}\label{eq:green-2}
C^{-1}r^n G(x,t;\overline{A}_r(y,s))\leq \omega ^{(x,t)}(\Delta
_r(y,s))\\\leq Cr^n G(x,t;\underline{A}_r(y,s)),\quad\text{if }(y,s)\not\in \cN_r(E_f).
\end{multline}
\end{lemma}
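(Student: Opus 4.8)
The plan is to establish the two-sided estimate comparing the caloric measure of a surface ball $\Delta_r(y,s)$ with the Green's function evaluated at the corkscrew points, by the now-standard comparison between $\omega^{(x,t)}(\Delta_r(y,s))$ and $r^n G(x,t;A)$ for a suitable corkscrew $A$, in the spirit of Lemma~2.5 of \cite{Garofalo3} (and its elliptic ancestor in \cite{CFMS}). I will only indicate the modifications needed for the thin-complement geometry. First I fix $(y,s)\in S$, $r\in(0,1/4)$, and consider the case $(y,s)\in\cN_r(E_f)$ (the case $(y,s)\notin\cN_r(E_f)$ being handled by restricting to $D_+$ or $D_-$ and invoking the known parabolic Lipschitz result). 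Set $A^\pm=\overline{A}^\pm_r(y,s)$ and $B^\pm=\underline{A}^\pm_r(y,s)$; recall that $\Psi_{r/2}(A^\pm)$ and $\Psi_{r/2}(B^\pm)$ do not meet $\partial D$, that $A^\pm,B^\pm\in\Psi_{2r}(y,s)$, and (via Lemma~\ref{lem:H-chain-prop-I}) the one-sided Harnack-chain comparisons with any interior point of $\Psi_r(y,s)\cap D$ that is at a controlled distance from $\partial_pD$.

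The lower bound $C^{-1}r^n\max_\pm G(x,t;A^\pm)\le \omega^{(x,t)}(\Delta_r(y,s))$ is obtained as follows. For each sign, the function $(x,t)\mapsto r^n G(x,t;A^\pm)$ is caloric in $D\setminus\overline{\Psi_{r/4}(A^\pm)}$, and on the compact set $\partial\Psi_{r/4}(A^\pm)\cap\{t\ge s+ c r^2\}$ one has the standard on-diagonal bound $r^n G(\cdot,\cdot;A^\pm)\le C$ by the Gaussian upper bound for the fundamental solution (the subtracted term $V\ge0$). On the same set one has $\omega^{(\cdot,\cdot)}(\Delta_r(y,s))\ge c>0$ by Lemma~\ref{lem:caloric1} (after a short Harnack chain to bring points into $\Psi_{\gamma r}(y,s)$, using that the slab $\{t\ge s+cr^2\}$ gives the required time-lag). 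The remaining part of $\partial_p(D\setminus\overline{\Psi_{r/4}(A^\pm)})$ is either in $\partial_pD$, where $r^nG=0$, or lies in $\{t<s+cr^2\}$, where both sides vanish by the maximum principle for earlier times. Hence the maximum principle in $D\setminus\overline{\Psi_{r/4}(A^\pm)}\cap\{t\ge s+cr^2\}$ gives $r^nG(x,t;A^\pm)\le C\,\omega^{(x,t)}(\Delta_r(y,s))$ for $(x,t)$ with $t\ge s+4r^2$, and taking the max over $\pm$ yields the lower bound. For the upper bound one runs the symmetric argument with the past corkscrews $B^\pm$: one shows $\omega^{(x,t)}(\Delta_r(y,s))\le C r^n\max_\pm G(x,t;B^\pm)$ by comparing, on $\partial\Psi_{r/4}(B^\pm)$, the caloric measure (bounded above by Lemma~\ref{lem:holder} / the Carleson estimate Theorem~\ref{thm:carleson}, since $\Delta_r(y,s)\subset\partial_pD$ and $u=\omega^{(\cdot,\cdot)}(\Delta_r(y,s))$ vanishes on the rest of the lateral boundary there) with $Cr^nG(\cdot,\cdot;B^\pm)$ bounded below by the lower Gaussian bound for $G$ on the cube $\Psi_{r/4}(B^\pm)$ a fixed time-lag above $B^\pm$, together with the one-sided Harnack-chain comparison of Lemma~\ref{lem:H-chain-prop-I} that propagates the lower bound for $G(x,t;B^\pm)$ to all $(x,t)\in\Psi_r(y,s)\cap D$ with $t\ge s+4r^2$; away from $\Psi_r(y,s)$ one uses \eqref{eq:carleson3} to reduce to the corkscrew value, which is then comparable to $r^nG$ by the interior estimate. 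Summing over $\pm$ again, and in the region $t<s+4r^2$ using that both sides vanish, completes the upper bound; this establishes \eqref{eq:green}, and \eqref{eq:green-2} follows identically with $\overline A_r,\underline A_r$ in place of $\overline A^\pm_r,\underline A^\pm_r$.

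The main obstacle is the bookkeeping forced by the two corkscrew points and the non-cylindrical thin set: unlike the parabolic Lipschitz case, no single corkscrew dominates, so at every step where \cite{Garofalo3} compares with one corkscrew one must instead compare with the maximum over the $D_+$ and $D_-$ corkscrews and verify that the Harnack-chain constants from Lemmas~\ref{lem:H-chain-prop-I}--\ref{lem:H-chain-prop-II} remain uniform when $(y,s)$ is allowed to sit on or near $G_f$. The time-lag constraint $t\ge s+4r^2$ is exactly what makes the one-sided Harnack chains available (chains require increasing time), so one must be careful that every comparison point is taken with the correct parabolic time ordering; this is where the lateral-vanishing hypothesis $(y,s)\in S$ and $s\ge -1+8r^2$ enter, guaranteeing that the past corkscrews $\underline A^\pm_r(y,s)$ lie in $D$ and that the relevant slabs are inside the (time-extended) domain of Convention~\ref{convention}.
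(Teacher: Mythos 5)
Your lower-bound half is essentially the paper's (cited) argument: compare $r^nG(\cdot,\cdot;\overline A^\pm_r)$ with $\omega^{(\cdot,\cdot)}(\Delta_r(y,s))$ on a small box around the future corkscrew, using the Gaussian bound on $\Gamma$, Lemma~\ref{lem:caloric1} plus bounded Harnack chains, and the maximum principle in $D\setminus\overline{\Psi_{r/4}(\overline A^\pm_r)}$; that part is fine. The upper bound, however, is not a ``symmetric argument with the past corkscrews,'' and as written it has a genuine gap. First, every point of $\partial\Psi_{r/4}(\underline A^\pm_r(y,s))$ has time at most $s-2r^2+r^2/16<s-r^2$, while $\Delta_r(y,s)$ lies at times greater than $s-r^2$; hence $\omega^{(z,\tau)}(\Delta_r(y,s))\equiv0$ on that whole box, so the comparison you propose there carries no information, and no maximum-principle comparison in $D\setminus\overline{\Psi_{r/4}(\underline A^\pm_r)}$ can be run anyway, because on $\Delta_r(y,s)\subset\partial_pD$ the caloric measure has boundary value $1$ while $G(\cdot,\cdot;\underline A^\pm_r)$ vanishes. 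Second, the ``propagated'' bound $r^nG(x,t;\underline A^\pm_r)\ge c$ for $(x,t)\in\Psi_r(y,s)\cap D$ with $t\ge s+4r^2$ is vacuous as stated ($\Psi_r(y,s)$ only reaches times below $s+r^2$), and read as ``points near $(y,s)$ at later times'' it is false: $G$ vanishes on the lateral boundary, so Harnack chains of bounded length cannot give a uniform positive lower bound up to $\partial_pD$; matching the boundary decay of $\omega^{(\cdot)}(\Delta_r)$ with that of $G$ is exactly the comparison statement this lemma is meant to feed into (Theorem~\ref{thm:local}), so it cannot be presupposed. Third, \eqref{eq:carleson3} applied to $u=\omega^{(\cdot)}(\Delta_r(y,s))$ only bounds it by $\omega^{(x,t)}(\Delta_{2r}(y,s))$ times corkscrew values of the same caloric measure; it never produces the Green function, so the ``reduction to the corkscrew value'' does not close the estimate.

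What the proof of Lemma~1 in \cite{Garofalo3} (the proof the paper refers to) actually does for the right-hand inequality is different in kind. (a) One first converts the caloric measure into the Green function with the same observation point: taking a cutoff $\eta$ equal to $1$ on $\Psi_r(y,s)$ and supported in $\Psi_{2r}(y,s)$, the PWB solution with data $\eta$ dominates $\omega^{(\cdot)}(\Delta_r(y,s))$ and, for $(x,t)$ outside the support (guaranteed by $t\ge s+4r^2$), is represented by $-\iint_D G(x,t;z,\tau)(\Delta_z+\partial_\tau)\eta(z,\tau)\,dz\,d\tau$, which yields $\omega^{(x,t)}(\Delta_r(y,s))\le Cr^n\sup\{G(x,t;z,\tau):(z,\tau)\in\Psi_{2r}(y,s)\cap D\}$. (b) One then bounds this supremum by $C\max_\pm G(x,t;\underline A^\pm_r(y,s))$ by applying the Carleson estimate (Theorem~\ref{thm:carleson}) to $(z,\tau)\mapsto G(x,t;z,\tau)$, which is a nonnegative solution of the adjoint (backward) heat equation in the pole variable vanishing on the lateral boundary near $(y,s)$; after time reversal (which preserves the class of thin Lipschitz complements) the relevant corkscrew becomes the past one, and the hypothesis $t\ge s+4r^2$ is precisely what keeps the pole $(x,t)$ away from $\Psi_{2r}(y,s)$ so that this applies. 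Without step (a) and the adjoint-variable use of the Carleson estimate in step (b) (or an equivalent device), the upper bound does not follow, so the second half of your proposal needs to be redone along these lines.
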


\begin{proof}
The proof uses Lemma~\ref{lem:caloric1} and Theorem~\ref{thm:carleson}
and is similar to that of Lemma~1 in \cite{Garofalo3}. 
\end{proof}

\begin{theorem}[Interior backward Harnack inequality]
\label{thm:interior}
Let $u$ be a positive caloric function in $D$ vanishing continuously
on $S$. Then for any compact $K\Subset D$ there exists a constant $C=C(n,L, \dist(K,\partial_pD))$ such that
\begin{equation*}
\max_{K}{u}\leq C\min_{K}{u}
\end{equation*}
\end{theorem}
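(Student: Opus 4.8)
The plan is to reduce the interior Harnack-type two-sided estimate to the forward boundary Harnack principle (Theorem~\ref{thm:carleson}) together with the Green's function representation of the caloric measure in Lemma~\ref{lem:green}. The idea, following \cite{Garofalo3}, is this: since $u$ vanishes continuously on the whole lateral boundary $S$, near each boundary point $(y,s)\in S$ one can compare $u$ both with $\omega^{(x,t)}(\Delta_r(y,s))$ (giving an upper bound on $u$ in terms of its value at a future corkscrew point) and — crucially — use the fact that $u$ also vanishes on the \emph{past} part of the lateral boundary to get the reverse comparison at a past corkscrew point. Combining these two, one removes the time-lag obstruction that is present in a general parabolic Carleson estimate, and obtains that $u$ at a future corkscrew point and $u$ at a past corkscrew point (at the same boundary point, same scale) are comparable up to a universal constant depending on $\dist(K,\partial_pD)$.

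The key steps, in order, would be: (1) Fix $K\Subset D$ and set $d=\dist_p(K,\partial_pD)$; choose $r\sim d$ small. By an interior Harnack chain argument it suffices to prove the estimate for two points $(x_1,t_1),(x_2,t_2)\in K$ that are close to each other (at distance $\lesssim r$), and by connectedness of $K$ and a standard covering/chaining argument one then propagates to all of $K$ with a constant depending on $K$. (2) For such a pair, pick a boundary point $(y,s)\in\partial_pD$ with $\Psi_r(y,s)$ sitting below both points in time, i.e.\ with $s$ chosen so that $t_i \geq s + cr^2$, which is possible because $u$ vanishes on $S$ and $D$ has corkscrew points in both time directions (Convention~\ref{convention} gives us the time-extension so that future corkscrew points are available for all $s\in[-1,1]$). (3) Apply Lemma~\ref{lem:green}: up to universal constants, $\omega^{(x,t)}(\Delta_r(y,s))$ is comparable to $r^n\,G(x,t;\overline A_r^{\pm}(y,s))$ from below and $r^n\,G(x,t;\underline A_r^{\pm}(y,s))$ from above. (4) Apply the forward boundary Harnack principle / Carleson estimate (Theorem~\ref{thm:carleson}, and its consequence Theorem~\ref{thm:carleson2}, eqn.~\eqref{eq:carleson3}) to $u$, which vanishes on $\Delta_{3r}(y,s)$: this yields $u(x,t)\leq C\,\omega^{(x,t)}(\Delta_{2r}(y,s))\,\max\{u(\overline A_r^{\pm}(y,s))\}$ for $(x,t)$ outside $\Psi_r(y,s)$. (5) Symmetrically — here is where vanishing on \emph{all} of $S$, including its past part, is used — the same estimates applied with roles of future/past corkscrew points interchanged give a lower bound $u(x,t)\geq C^{-1}\omega^{(x,t)}(\Delta_{2r}(y,s))\,\min\{u(\underline A_r^{\pm}(y,s))\}$. (6) Since the corkscrew points $\overline A_r^{\pm}(y,s)$ and $\underline A_r^{\pm}(y,s)$ are all at parabolic distance $\sim r\sim d$ from $\partial_pD$ and within $\Psi_{Cr}$ of each other with controlled time-lags, the interior parabolic Harnack inequality (via Harnack chains of universal length) makes all four values $u(\overline A_r^{\pm})$, $u(\underline A_r^{\pm})$ comparable to each other and to $u(x_i,t_i)$ up to constants depending only on $n$, $L$, and $d$. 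Chaining (4)–(6) gives $u(x_1,t_1)\le C\,u(x_2,t_2)$ with $C=C(n,L,d)$, and then (1) upgrades this to $\max_K u \le C(n,L,\dist(K,\partial_pD))\min_K u$.

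The main obstacle is Step~(5): obtaining the \emph{lower} bound with the correct time direction. In a general (non-cylindrical) parabolic domain one cannot simply shift time to convert a Carleson estimate into its backward counterpart, and this is exactly why the hypothesis that $u$ vanishes on the whole lateral boundary $S$ is needed. One handles it by exploiting that $u$ vanishes also to the past: for $(y,s)\in S$ with $s$ sufficiently large relative to $-1$, the past corkscrew points $\underline A_r^{\pm}(y,s)$ exist, and applying the Carleson estimate \emph{for the adjoint (backward) heat equation} — or equivalently running the Green's function bound in Lemma~\ref{lem:green} in the direction it is stated (the second inequality there, with $\underline A_r^{\pm}$) — produces the matching lower bound $\omega^{(x,t)}(\Delta_r(y,s)) \le C r^n\max\{G(x,t;\underline A_r^{\pm}(y,s))\}$, which together with the interior Harnack inequality for $G(\cdot,\cdot;\underline A_r^{\pm})$ viewed as a caloric function away from its pole, and the fact that $u$ dominates a multiple of this caloric measure near $(y,s)$ (again because $u$ vanishes on $S$, so $u(x,t)\ge c\,u(\underline A_r^{\pm})\,\omega^{(x,t)}(\Delta_r(y,s))$ by the comparison principle), closes the loop. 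The remaining bookkeeping — keeping track of which $s$ are admissible, ensuring the needed corkscrew points lie in $D$ with the claimed separation from $\partial_pD$, and the covering argument in Step~(1) — is routine given Lemmas~\ref{lem:H-chain-prop-I}, \ref{lem:H-chain-prop-II} and the definitions of the corkscrew points.
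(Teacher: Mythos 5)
There is a genuine gap, and it sits exactly where the difficulty of the theorem lies. In step (6) you assert that the interior parabolic Harnack inequality, via Harnack chains, makes the four values $u(\overline A_r^{\pm}(y,s))$, $u(\underline A_r^{\pm}(y,s))$ and the values $u(x_i,t_i)$ all mutually comparable. But Harnack chains in this paper (and the parabolic Harnack inequality in general) run only forward in time: they give $u(\underline A_r^{\pm})\leq C\,u(\overline A_r^{\pm})$, never the reverse. The reverse inequality $u(\overline A_r^{\pm})\leq C\,u(\underline A_r^{\pm})$ is precisely the backward boundary Harnack principle, Theorem~\ref{thm:backward}, which in this paper is established only after the doubling Theorem~\ref{thm:doubling} and the kernel-function machinery of Section~\ref{sec:kernel-functions}; and two-sided comparability of $u$ at interior points at scale $d$ irrespective of their time order is essentially the statement of Theorem~\ref{thm:interior} itself, so invoking it here is circular. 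Two further problems compound this. First, the estimate \eqref{eq:carleson3} of Theorem~\ref{thm:carleson2} that you use in step (4) requires $u$ to vanish on $\partial_pD\setminus\Delta_{r/2}(y,s)$, in particular on the bottom portion of the parabolic boundary, whereas the hypothesis only gives vanishing on the lateral boundary $S$; a localized substitute would change the caloric measures appearing in (4) and (5). Second, even granting (4) and (5), dividing them leaves the ratio $\omega^{(x_1,t_1)}(\Delta)/\omega^{(x_2,t_2)}(\Delta)$ with $t_1>t_2$ in the problematic case, and bounding that ratio is again a backward-type statement for a nonnegative caloric function, not available at this stage. So the plan, as written, does not close.

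The proof the paper intends (Theorem~1 of \cite{Garofalo3}) needs none of the Green's function or caloric-measure lower bounds and avoids any comparison between future and past corkscrew values. The hypothesis that $u$ vanishes on \emph{all} of $S$ is used through the maximum principle: for $-1<\tau_0<\tau$, applying the maximum principle in $D\cap\{\tau_0<t<\tau\}$ (whose parabolic boundary consists of the time slice $\{t=\tau_0\}$ and a portion of $S$, where $u=0$) shows that $\tau\mapsto\sup\{u(x,\tau):(x,\tau)\in D\}$ is nonincreasing. Given $K$ with $d=\dist_p(K,\partial_pD)$, choose a time level $\tau_0$ a fixed multiple of $d^2$ below $\min\{t:(x,t)\in K\}$; then $\max_K u$ is bounded by the supremum of $u$ on the slice $\{t=\tau_0\}$. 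If that supremum is (nearly) attained at a point whose parabolic distance to $S$ is comparable to $d$, a forward Harnack chain (Lemmas~\ref{lem:H-chain-prop-I} and \ref{lem:H-chain-prop-II}) bounds it by $u(x,t)$ for any $(x,t)\in K$; if it is attained near $S$, the Carleson estimate Theorem~\ref{thm:carleson} first transfers it, at scale comparable to $d$, to a future corkscrew value whose time coordinate is still a fixed fraction of $d^2$ below the times of $K$, and a forward Harnack chain again finishes. This yields $\max_K u\leq C(n,L,d)\,u(x,t)$ for all $(x,t)\in K$, which is the assertion. If you want to salvage your scheme instead, you would have to prove the backward comparison of corkscrew values first, which in this paper is only done in Section~\ref{sec:backw-bound-harn} and, via the global comparison Theorem~\ref{thm:global}, itself rests on Theorem~\ref{thm:interior}.
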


\begin{proof}
 The proof is similar to that of Theorem~1 in \cite{Garofalo3} and
 uses Theorem~\ref{thm:carleson} and the Harnack inequality.
\end{proof}

\begin{theorem}[Local comparison theorem]
\label{thm:local}
Let $0<r<1/4$ and $(y,s)\in S$ with $s\geq -1+18r^2$, and $u,v$ be two positive caloric functions in
$\Psi_{3r}(y,s)\cap D$ vanishing continuously on
$\Delta_{3r}(y,s)$. Then there exists $C=C(n,L)>0$ such that for $(x,t)\in \Psi_{r/8}(y,s)\cap D$
we have: 
\begin{alignat}{2}\label{eq:local}
\frac{u(x,t)}{v(x,t)}&\leq C\frac{\max\{u(\overline{A}_r^+(y,s)),
  u(\overline{A}_r^-(y,s))\}}{\min\{v(\underline{A}_r^+(y,s)),
  v(\underline{A}_r^-(y,s))\}},&&\quad\text{if }(y,s)\in\cN_r(E_f)
\intertext{and}
\label{eq:local-2}
\frac{u(x,t)}{v(x,t)}&\leq C\frac{u(\overline{A}_r(y,s))}{v(\underline{A}_r(y,s))},&&\quad\text{if }(y,s)\not\in\cN(E_f).
\end{alignat}
\end{theorem}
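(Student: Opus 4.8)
The plan is to reduce the ratio estimate to an application of the forward boundary Harnack principle (Theorem~\ref{thm:carleson}) on $u$ combined with a lower bound on $v$ coming from the interior Harnack inequality and a Carleson-type estimate from below; the latter is obtained from Lemma~\ref{lem:caloric1} together with the comparison principle. Concretely, I would first fix $(y,s)\in S$ with $s\ge -1+18r^2$ and reduce, as in the proof of Theorem~\ref{thm:carleson}, to the case $(y,s)\in E_f$ (replacing $(y,s)$ by a nearby point of $E_f$ in $\Psi_{r/4}(y,s)$ costs only universal constants), so that only the case $(y,s)\in \cN_r(E_f)$ needs serious work; the case $(y,s)\notin\cN(E_f)$ follows from the known parabolic Lipschitz results applied to $u,v$ restricted to $D_+$ or $D_-$.

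The upper bound for $u$ is immediate: by Theorem~\ref{thm:carleson} applied at scale $r$ (with $3r<1$, which forces the restriction $s\le 1-\text{const}\cdot r^2$; this is why the box is extended via Convention~\ref{convention} or why we assume the stated lower bound on $s$ so that the future corkscrews exist — here the relevant constraint is the presence of past corkscrews for $v$, hence $s\ge -1+18r^2$), we get
\[
u(x,t)\le C\max\{u(\overline A_r^+(y,s)),u(\overline A_r^-(y,s))\},\qquad (x,t)\in\Psi_{r/2}(y,s)\cap D,
\]
and in particular for $(x,t)\in\Psi_{r/8}(y,s)\cap D$. For the lower bound on $v$, I would argue as follows. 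Since $v$ vanishes continuously on $\Delta_{3r}(y,s)$, for $(z,h)\in\Delta_{2r}(y,s)$ we have, integrating against caloric measure of $\Psi_{2r}(y,s)\cap D$,
\[
v(x,t)\ \ge\ \int_{(\partial_p(\Psi_{2r}(y,s)\cap D))\setminus\Delta_{2r}(y,s)} v\,d\omega^{(x,t)}_{\Psi_{2r}(y,s)\cap D}
\ \ge\ \Big(\inf v\Big)\,\omega^{(x,t)}_{\Psi_{2r}(y,s)\cap D}\big(\,\cdot\,\big),
\]
but the cleaner route is to use Lemma~\ref{lem:caloric1}: it gives $\omega^{(x,t)}(\Delta_r(y,s))\ge c$ for $(x,t)\in\Psi_{\gamma r}(y,s)\cap D$, and then a comparison-principle argument in $\Psi_{r/4}(y,s)\cap D$ — comparing $v$ with the caloric measure of a suitable ``ring'' where $v$ is bounded below by its value at a corkscrew point via the Harnack inequality — yields
\[
v(x,t)\ \ge\ C^{-1}\min\{v(\underline A_r^+(y,s)),v(\underline A_r^-(y,s))\},\qquad (x,t)\in\Psi_{r/8}(y,s)\cap D.
\]
Here one uses the past corkscrew points $\underline A_r^\pm$, which is exactly why we need $s\ge -1+18r^2$ (so that $\underline A_r^\pm(y,s)$ and the intermediate Harnack-chain cylinders sit inside $D$), and one uses Harnack chains (Lemmas~\ref{lem:H-chain-prop-I}, \ref{lem:H-chain-prop-II}) to pass from the corkscrew at the relevant intermediate scale back to $\underline A_r^\pm(y,s)$. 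Dividing the two displays gives \eqref{eq:local}.

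The main obstacle is the lower bound on $v$: unlike the elliptic case, one cannot simply invoke a ``boundary Harnack from below'' because of the time lag, and one must be careful that the past corkscrew points are the right comparison points and that all the Harnack chains used have controllably bounded length with constants depending only on $n$ and $L$ (this is where the parabolic Lipschitz continuity of $f$ enters, through Lemma~\ref{lem:H-chain-prop-I}). A secondary technical point is that $\Psi_{3r}(y,s)\cap D$ may be of type (2) or (3) in Lemma~\ref{lem:loc-property}, so after rescaling by $T^{\tilde r}_{(\tilde y,\tilde s)}$ one applies the above estimates in the normalized domain and transfers back; the constants produced are universal by the localization property. Once these pieces are in place the conclusion is just the quotient of the upper bound for $u$ and the lower bound for $v$, exactly as in the proof of Theorem~2 in \cite{Garofalo3}, with the single-corkscrew expressions replaced by the max/min over the two sides $D_\pm$.
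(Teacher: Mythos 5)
Your reduction to the case $(y,s)\in E_f$, your treatment of the case $(y,s)\notin\cN_r(E_f)$ by restricting to $D_\pm$, and your upper bound for $u$ via Theorem~\ref{thm:carleson} all match the paper's argument. The gap is the lower bound for $v$: the inequality $v(x,t)\ge C^{-1}\min\{v(\underline A_r^+(y,s)),v(\underline A_r^-(y,s))\}$ for \emph{all} $(x,t)\in\Psi_{r/8}(y,s)\cap D$ is false. Since $(y,s)\in\partial_p D$ and $v$ vanishes continuously on $\Delta_{3r}(y,s)$, we have $v(x,t)\to 0$ as $(x,t)$ approaches $(y,s)$ or any point of $E_f$ inside $\Psi_{r/8}(y,s)$, while the right-hand side is a fixed positive number; no constant $C=C(n,L)$ can make this hold. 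Lemma~\ref{lem:caloric1} cannot rescue the step: it bounds $\omega^{(x,t)}(\Delta_r(y,s))$ from below, but $v$ vanishes on $\Delta_r(y,s)$, so that part of the boundary contributes nothing to the representation of $v$; the contribution comes from the portion of $\partial_p(\Psi_r(y,s)\cap D)$ lying inside $D$, whose caloric measure itself degenerates as $(x,t)$ approaches the thin set.

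What the theorem actually requires --- and what your proposal is missing --- is a comparison of the two degenerating quantities, not two constant bounds. In the paper (following Theorem~3 of \cite{Garofalo3}) one proves $u(x,t)\le C\max\{u(\overline A_r^+(y,s)),u(\overline A_r^-(y,s))\}\,\omega_r^{(x,t)}(\alpha_r)$ and $v(x,t)\ge c\,\min\{v(\underline A_r^+(y,s)),v(\underline A_r^-(y,s))\}\,\omega_r^{(x,t)}(\beta_r)$, where $\omega_r$ is the caloric measure of $\Psi_r(y,s)\cap D$, $\alpha_r=\partial_p(\Psi_r(y,s)\cap D)\setminus S$ and $\beta_r=\partial_p(\Psi_r(y,s)\cap D)\setminus\cN_{\mu r}(S)$. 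The heart of the proof is the estimate $\omega_r^{(x,t)}(\alpha_r)\le C\,\omega_r^{(x,t)}(\beta_r)$ on $\Psi_{r/2}(y,s)\cap D$, which is exactly what encodes that $u$ and $v$ decay at the same rate near $E_f$, and whose proof uses Lemma~\ref{lem:green} (caloric measure versus Green's function at corkscrew points) together with Theorem~\ref{thm:carleson2}. Your sketch never compares these two caloric measures and never invokes the Green's function bounds, so the essential step is absent; dividing a constant upper bound for $u$ by a (false) constant lower bound for $v$ cannot yield \eqref{eq:local}.
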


\begin{proof}
The proof is similar to that of Theorem~3 in \cite{Garofalo3}. 
First, note that if $\Psi_{r/8}(y,s)\cap E_f=\emptyset$, we can
consider restriction of $u$ and $v$ to $D_+$ or $D_-$ (which are Lipschitz cylinders)
and apply the arguments from \cite{Garofalo3} directly there. Thus, we may
assume that $\Psi_{r/8}(y,s)\cap E_f\not=\emptyset$. If we now argue
as in the proof of the localization property
(Lemma~\ref{lem:loc-property}) by replacing $(y,s)$ and $r$ with
$(\tilde y, \tilde s)\in \Psi_{(3/8)r}(y,s)\cap E_f$ we may further assume that $(y,s)\in E_f$, and that $\Psi_r(y,s)\cap D$ falls either into
category (2) or (3) in the localization property. For definiteness, we
will assume category (3). To account for the possible
change in $(y,s)$ we then change the hypothesis to $u=0$ on
$\Delta_{2r}(y,s)$ and prove \eqref{eq:local} for $(x,t)\in\Psi_{r/2}(y,s)\cap D$.

With the above simplification in mind, we proceed as in the proof of
Theorem~3 in \cite{Garofalo3}. By using Lemma~\ref{lem:green} and
Theorem~\ref{thm:carleson2} we first show
\begin{equation}\label{eq:alphabeta}
\omega_r^{(x,t)}(\alpha_r)\leq C\omega_r^{(x,t)}(\beta_r),\quad (x,t)\in \Psi_{r/2}(y,s)\cap D
\end{equation}
where $\alpha_r=\partial_p (\Psi_r(y,s)\cap D)\setminus S$,
$\beta_r=\partial_p (\Psi_r(y,s)\cap D)\setminus \cN_{\mu r}(S)$ with
a small fixed $\mu\in (0,1)$, and where $\omega _r$
denotes the caloric measure with respect to $\Psi_r(y,s)\cap D$.
Then by Theorem~\ref{thm:carleson}, Harnack inequality, and the
maximum principle we obtain
\begin{align*}
u(x,t)&\leq C\max\{u(\overline{A}_r^+(y,s)),
  u(\overline{A}_r^-(y,s))\} \omega_r^{(x,t)}(\alpha_r)\\
v(x,t)&\geq C\min\{v(\underline{A}_r^+(y,s)),
  v(\underline{A}_r^-(y,s))\}\omega_r^{(x,t)}(\beta_r),
\end{align*}
which combined with \eqref{eq:alphabeta} completes the proof. 
\end{proof}

\begin{theorem}[Global comparison theorem]
\label{thm:global}
Let $u,v$ be two positive caloric functions in $D$, vanishing
continuously on $S$, and let $(x_0,t_0)$ be a fixed point in $D$. If
$\delta >0$, then there exists $C=C(n,L,\delta)>0$, such that
\begin{equation}\label{eq:global}
\frac{u(x,t)}{v(x,t)}\leq C\frac{u(x_0,t_0)}{v(x_0,
  t_0)},\quad\text{for all }(x,t)\in D\cap\{t>-1+\delta ^2\}.
\end{equation}
\end{theorem}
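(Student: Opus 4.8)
The plan is to combine the interior backward Harnack inequality (Theorem~\ref{thm:interior}) with the local comparison theorem (Theorem~\ref{thm:local}), in the spirit of the corresponding argument in \cite{Garofalo3}. Write $D_\delta:=D\cap\{t>-1+\delta^2\}$; we may assume $\delta<\sqrt2$, since otherwise $D_\delta=\emptyset$, and after the normalization $u(x_0,t_0)=v(x_0,t_0)$ it suffices to prove $u\le Cv$ on $D_\delta$. We would fix once and for all a scale $r_0=c(n,L)\,\delta<1/4$ and a threshold $c_0=c(n,L)\,\delta$, both small enough that the inequalities below hold, and split $D_\delta=D_\delta^{\mathrm i}\cup D_\delta^{\mathrm b}$ according to whether $\dist_p((x,t),S)\ge c_0$ or $<c_0$. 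Throughout we invoke Convention~\ref{convention}, so that $u,v$ are regarded as caloric on the time-extended domain and the top face $\{t=1\}$ is no longer part of the boundary.

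On the interior piece $D_\delta^{\mathrm i}$ the set $K^{\mathrm i}:=\overline{D_\delta^{\mathrm i}}\cup\{(x_0,t_0)\}$ is a compact subset of $D$ with $\dist(K^{\mathrm i},\partial_pD)\ge c>0$ (the constraint $t>-1+\delta^2$ keeps us off the bottom face $\{t=-1\}$), so Theorem~\ref{thm:interior} applied to $u$ and to $v$ on $K^{\mathrm i}$ gives $u(x,t)\le C\,u(x_0,t_0)$ and $v(x,t)\ge C^{-1}v(x_0,t_0)$, hence $u(x,t)\le C\,v(x,t)$ there. On the boundary piece $D_\delta^{\mathrm b}$, for each $(x,t)$ we pick $(y,s)\in S$ with $\|(x,t)-(y,s)\|=\dist_p((x,t),S)<c_0$; a routine check of the scales shows $(x,t)\in\Psi_{r_0/8}(y,s)$, $\Delta_{3r_0}(y,s)\subset S$ (so $u,v$ vanish on it), and $s\ge-1+18r_0^2$, so Theorem~\ref{thm:local} applies at scale $r_0$ and gives
\[
\frac{u(x,t)}{v(x,t)}\le C\,\frac{\max\{u(\overline{A}_{r_0}^{+}(y,s)),\,u(\overline{A}_{r_0}^{-}(y,s))\}}{\min\{v(\underline{A}_{r_0}^{+}(y,s)),\,v(\underline{A}_{r_0}^{-}(y,s))\}}
\]
(with the single-corkscrew version when $(y,s)\notin\cN_{r_0}(E_f)$). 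Since $\Psi_{r_0/2}(\overline{A}_{r_0}^{\pm}(y,s))\cap\partial D=\Psi_{r_0/2}(\underline{A}_{r_0}^{\pm}(y,s))\cap\partial D=\emptyset$ and the time coordinates $s\pm2r_0^2$ of all these corkscrew points stay above $-1+c$, there is a single compact set $K^{\mathrm b}\Subset D$ with $\dist(K^{\mathrm b},\partial_pD)\ge c>0$ that contains every corkscrew point produced this way (over all admissible $(y,s)$) together with $(x_0,t_0)$. A second application of Theorem~\ref{thm:interior} on $K^{\mathrm b}$ gives $u(\overline{A}_{r_0}^{\pm}(y,s))\le C\,u(x_0,t_0)$ and $v(\underline{A}_{r_0}^{\pm}(y,s))\ge C^{-1}v(x_0,t_0)$; inserting this into the last display and using the normalization yields $u(x,t)\le C\,v(x,t)$ on $D_\delta^{\mathrm b}$ as well. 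Taking the larger constant and undoing the normalization gives \eqref{eq:global} (the constant then depends on $n$, $L$, $\delta$, and, through $\dist_p((x_0,t_0),\partial_pD)$, on the fixed point, as in \cite{Garofalo3}).

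The crux — and the place where the hypothesis that $u$ and $v$ vanish on the \emph{entire} lateral boundary $S$ is indispensable — is the final step: passing from the \emph{future} corkscrew points $\overline{A}_{r_0}^{\pm}$, which enter the numerator through the Carleson estimate built into Theorem~\ref{thm:local}, and from the \emph{past} corkscrew points $\underline{A}_{r_0}^{\pm}$, which enter the denominator through the Green's function bound (Lemma~\ref{lem:green}), to one and the same time-independent reference value $u(x_0,t_0)/v(x_0,t_0)$. This is legitimate only because, for functions vanishing on all of $S$, Theorem~\ref{thm:interior} is an elliptic-type Harnack inequality free of the parabolic time-lag, so that $u(\overline{A}_{r_0}^{\pm})$ can be dominated by an \emph{earlier} value $u(x_0,t_0)$; without the vanishing on $S$ this domination fails, which is exactly why the statement is restricted to the lateral boundary. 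The non-cylindrical, two-sided geometry of $D$ along $E_f$ costs nothing here beyond carrying the $\pm$ corkscrew pairs through the argument, exactly as in the proofs of Theorems~\ref{thm:carleson} and \ref{thm:local}.
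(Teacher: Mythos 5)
Your proof is correct and follows essentially the same route as the paper, which simply states that the theorem is an easy consequence of Theorems~\ref{thm:interior} and \ref{thm:local}; your decomposition into an interior region handled by the interior backward Harnack inequality and a near-lateral-boundary region handled by the local comparison theorem plus a second application of Theorem~\ref{thm:interior} at the corkscrew points is exactly the intended argument (as in \cite{Garofalo3}). Your explicit remarks on Convention~\ref{convention} and on the implicit dependence of the constant on $(x_0,t_0)$ only make precise what the paper leaves unstated.
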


\begin{proof}
It is an easy consequence of Theorems~\ref{thm:interior} and \ref{thm:local}.
\end{proof}

Now we show the doubling properties of the caloric measure at the
lateral boundary points by using the properties of the kernel
functions we showed in Section~\ref{sec:kernel-functions}. The idea of
the proof is similar to that of Lemma~2.2 in \cite{Wu}, but with a
more careful inspection of the different types of boundary points.  

To proceed, we will need to define the time-invariant corkscrew points
at $(y,s)$ on the lateral boundary, in addition to future and past
corkscrew points. 
Namely, for $(y,s)\in S$ we let 
\begin{alignat*}{2}
A_r(y,s)&=(y(1-r),s),&\quad&\text{if }\Psi_r(y,s)\cap E_f=\emptyset\\
A_r^\pm(y,s)&=(y'',y_{n-1}+r/2,\pm r/2,s),&&\text{if }\Psi_r(y,s)\cap
E_f\not=\emptyset.
\end{alignat*}

\begin{theorem}[Doubling at the lateral boundary points]
\label{thm:doubling}
For $0<r<1/4$ and $(y,s)\in S$ with $s\geq -1+8r^2$, there exist
$\epsilon _0=\epsilon _0(n,L)>0$ small and $C=C(n,L)>0$ such that for
any $r<\epsilon_0$ we have: 
\begin{align}
(i)\quad &\text{ If } (y,s)\in E_f \text{ and } \Psi_{2r}(y,s)\cap G_f\not=\emptyset, \text{ then }\notag\\
&C^{-1}r^nG(X,T;A_r^{\pm}(y,s))\leq \omega ^{(X,T)}(\Delta
_r(y,s))\leq Cr^n G(X,T;A_r^{\pm}(y,s))\label{eq:doubling};\\
(ii)\quad &\text{ If } (y,s)\in \cN_r(E_f)\cap \partial_p D \text{ and } \Psi_{2r}(y,s)\cap G_f=\emptyset, \text{ then }\notag\\
& C^{-1}r^nG(X,T;A_r^+(y,s))\leq \vartheta_+ ^{(X,T)}(\Delta
_r^+(y,s))\leq Cr^nG(X,T;A_r^+(y,s)),\label{eq:doubling-2}\\
& C^{-1}r^nG(X,T;A_r^-(y,s))\leq \vartheta_- ^{(X,T)}(\Delta
_r^-(y,s))\leq Cr^nG(X,T;A_r^-(y,s));\label{eq:doubling-22}\\
(iii)\quad &\text{ If } (y,s)\in \partial _pD\setminus \cN_r(E_f), \text{ then }\notag\\
& C^{-1}r^nG(X,T;A_r(y,s))\leq \omega ^{(X,T)}(\Delta
_r(y,s))\leq Cr^nG(X,T;A_r(y,s))\label{eq:doubling-3}.
\end{align}

Moreover, there is a constant $C=C(n,L)>0$, such that
\begin{align}
(i)\quad &\text{ For } (y,s)\in S\cap \{s\geq -1+8r^2\},\notag\\
&\omega ^{(X,T)}(\Delta _{2r}(y,s))\leq C\omega ^{(X,T)}(\Delta _r(y,s))u(x,t);\label{eq:doubling2}\\
(ii)\quad &\text{ For } (y,s)\in \cN_r(E_f)\cap S\cap \{s\geq -1+8r^2\},\notag\\
& \vartheta^{(X,T)}_+(\Delta_{2r}^+(y,s))\leq C \vartheta^{(X,T)}_+(\Delta_r^+(y,s)), \notag\\
& \vartheta^{(X,T)}_-(\Delta_{2r}^-(y,s))\leq C \vartheta^{(X,T)}_-(\Delta_r^-(y,s)).\label{eq:doubling2-2}
\end{align}
\end{theorem}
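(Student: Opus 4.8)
The plan is to first establish the surface--ball estimates in the first group of displays — which compare $\omega^{(X,T)}(\Delta_r(y,s))$, resp.\ $\vartheta_\pm^{(X,T)}(\Delta_r^{\pm}(y,s))$, with $r^n$ times Green's function $G(X,T;\cdot)$ evaluated at the \emph{time--invariant} corkscrew points — and then to deduce the doubling estimates \eqref{eq:doubling2}, \eqref{eq:doubling2-2} by comparing the scales $r$ and $2r$. The argument splits into the three geometric situations for $(y,s)$: (a) $(y,s)\in E_f$ with $\Psi_{2r}(y,s)\cap G_f\neq\emptyset$; (b) $(y,s)\in\cN_r(E_f)\cap\partial_pD$ with $\Psi_{2r}(y,s)\cap G_f=\emptyset$; and (c) $(y,s)\in\partial_pD\setminus\cN_r(E_f)$. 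In (a) and (c) one works with $\omega^{(x,t)}$ directly, while in (b), where $\Psi_{2r}(y,s)\cap D$ is a disjoint union of two parabolically Lipschitz cylinders $\Psi_{2r}(y,s)\cap D_\pm$, one works with the two--sided measures $\vartheta_\pm$ from Section~\ref{sec:kernel-functions}. A recurring simplification is the $x_n$--symmetry of $D$ and of the base point $(X,T)$ (recall $X\in\{x_n=0\}$ by Convention~\ref{convention}), which makes $G(X,T;A_r^{+}(y,s))=G(X,T;A_r^{-}(y,s))$, so that $G(X,T;A_r^{\pm}(y,s))$ is unambiguous; the smallness $r<\epsilon_0$ guarantees that all $2r$--scale corkscrew points stay in $D$ and lets the localization constructions apply, while $s\ge-1+8r^2$ (as in Lemma~\ref{lem:green}) keeps the past corkscrew points in $D$.

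For the surface--ball estimates in cases (a) and (c) the starting point is Lemma~\ref{lem:green}: using the time extension of Convention~\ref{convention} (so that $T=3/2>s$ and every Green's function value below is strictly positive), it already gives
\[
C^{-1}r^n\max\{G(X,T;\overline{A}_r^{\pm}(y,s))\}\le\omega^{(X,T)}(\Delta_r(y,s))\le Cr^n\max\{G(X,T;\underline{A}_r^{\pm}(y,s))\}
\]
together with the single--corkscrew version in case (c). To upgrade this to the time--invariant corkscrew points $A_r^{\pm}(y,s)$, I would note that $G^{*}:=G(X,T;\cdot)$, restricted to $D$, is a positive solution of the adjoint heat equation vanishing continuously on the whole lateral boundary $S=E_f\cup(\text{spatial faces of }\Psi_1)$; after a time reversal — under which $D$ becomes another domain with a thin Lipschitz complement — Theorem~\ref{thm:interior} applies and yields an elliptic--type Harnack inequality $\max_K G^{*}\le C\min_K G^{*}$ for every $K\Subset D$, with $C$ depending on $\dist(K,\partial_pD)$. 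Since $\overline{A}_r^{\pm}(y,s)$, $A_r^{\pm}(y,s)$, $\underline{A}_r^{\pm}(y,s)$ all lie in $\Psi_{2r}(y,s)$ at distance $\simeq r$ from $\partial_pD$, after the rescaling $T^r_{(y,s)}$ they are joined to one another by parabolic Harnack chains of universally bounded length, so $G^{*}$ is comparable at all three points; this proves \eqref{eq:doubling} and \eqref{eq:doubling-3}. For case (b) I would run the same scheme inside the one--sided domains $D_{r'}^{\pm}$, which are parabolically Lipschitz near $(y,s)$: since $\vartheta_\pm^{(X,T)}(\Delta_r^{\pm}(y,s))=\lim_{r'\to0}\omega_{r'}^{\pm(X,T)}(\Delta_r^{\pm}(y,s))$, the known surface--ball estimates for parabolic Lipschitz domains applied in $D_{r'}^{\pm}$, combined with the passage to the limit $r'\to0$ (using $D_{r'}^{\pm}\nearrow D$ and hence $G_{D_{r'}^{\pm}}\nearrow G_D$, the convergence of the corkscrew points of $D_{r'}^{\pm}$ towards $A_r^{\pm}(y,s)$, and Proposition~\ref{prop:limitfunc} together with Lemma~\ref{lem:kernel2}(ii) for the uniformity of constants) yield \eqref{eq:doubling-2}, \eqref{eq:doubling-22}.

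Granting the surface--ball estimates, the doubling follows quickly. For $2r<\epsilon_0$ and $(y,s)$ in case (a) or (c), $\omega^{(X,T)}(\Delta_{2r}(y,s))\simeq(2r)^nG(X,T;A_{2r}^{\pm}(y,s))$ and $\omega^{(X,T)}(\Delta_r(y,s))\simeq r^nG(X,T;A_r^{\pm}(y,s))$; the points $A_r^{\pm}(y,s)$ and $A_{2r}^{\pm}(y,s)$ share the time coordinate $s$, lie in $\Psi_{4r}(y,s)$, and are at distance $\gtrsim r$ from $\partial_pD$, so the elliptic--type Harnack for $G^{*}$ gives $G(X,T;A_{2r}^{\pm}(y,s))\le C\,G(X,T;A_r^{\pm}(y,s))$ — here it is essential that this Harnack inequality is two--sided, the two corkscrew points being on the same time level — and with $(2r)^n=2^nr^n$ this yields \eqref{eq:doubling2}. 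For $(y,s)$ in case (b) the identical computation with $\vartheta_\pm$ replacing $\omega$ gives \eqref{eq:doubling2-2}, and \eqref{eq:doubling2} for a general $(y,s)\in S$ then follows by patching the cases — determining the case at the scales $r$ and $2r$ separately (they may differ) and, when $(y,s)$ is near $G_f$, using Proposition~\ref{prop:limitfunc}(ii) and the $x_n$--symmetry to recombine $\vartheta_+$ and $\vartheta_-$ into $\omega$.

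The step I expect to be the main obstacle is the uniform control of the adjoint Green's function $G^{*}$ near the thin set, i.e.\ making the ``elliptic--type Harnack for $G^{*}$'' and the attendant Harnack chains precise across the three regimes. Near the edge $G_f$ the domain is genuinely two--sided, so that the backward Harnack inequality (Theorem~\ref{thm:interior}), the interior Harnack chains, and the forward Carleson estimates (Theorems~\ref{thm:carleson} and \ref{thm:carleson2}) must be invoked in combination — since $G^{*}$ vanishes on $E_f$, its control from a single corkscrew value has to be read off from Theorem~\ref{thm:carleson2} — and this interaction near $G_f$ is the real content of the proof. A secondary technical point is the limiting procedure $r'\to0$ in case (b): one must check that the corkscrew points and Green's functions of the exhausting domains $D_{r'}^{\pm}$ converge correctly and that the parabolic--Lipschitz estimates are uniform in $r'$, for which Lemma~\ref{lem:kernel2}(ii) and Proposition~\ref{prop:limitfunc} are the decisive inputs.
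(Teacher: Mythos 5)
Your reduction of everything to an ``elliptic-type Harnack inequality for $G^{*}=G(X,T;\cdot,\cdot)$ at scale $r$'' is where the argument breaks down, and it breaks in a way that cannot be repaired with the tools you cite. To pass from Lemma~\ref{lem:green} (which involves $\overline{A}^\pm_r$ and $\underline{A}^\pm_r$) to the time-invariant points $A^\pm_r$, and later to compare $G(X,T;A^\pm_{2r})$ with $G(X,T;A^\pm_r)$, you need a two-sided comparison of a nonnegative (adjoint) caloric function at points lying on the \emph{same} time level, at distance $\sim r$ from $\partial_pD$, with a constant independent of $r$. Parabolic Harnack chains cannot do this: they only move monotonically in time, so after the rescaling $T^r_{(y,s)}$ a chain of bounded length still only compares values with the correct time ordering. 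Theorem~\ref{thm:interior} cannot do it either: its constant depends on $\dist(K,\partial_pD)$, which is $\sim r$ here, and rescaling does not remove this dependence because the rescaled function is again only an adjoint caloric function vanishing on the lateral boundary of the rescaled domain. What you actually need is the backward boundary Harnack at the boundary (Theorem~\ref{thm:backward}, applied after time reversal), and in this paper that theorem is \emph{deduced from} Theorem~\ref{thm:doubling}; so your route is circular. Indeed, avoiding exactly this circularity is the whole point of the kernel-function machinery of Section~\ref{sec:kernel-functions}.

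The paper's proof of the upper bounds in \eqref{eq:doubling}--\eqref{eq:doubling-3} uses none of this. It works with the shifted domains $D^0_r$, $D^\pm_r$ and the shift maps $F^0_r$, $F^\pm_r$: one proves a direct Gaussian-type lower bound $G(\cdot,\cdot;A^\pm_r(y,s))\geq Cr^{-n}$ on the shifted surface ball $F^0_r(\Delta_{4r}(\tilde y,\tilde s))$ (with $(\tilde y,\tilde s)\in G_f$ from the localization lemma), applies the maximum principle in $F^0_r(D^0_r)$ to get $G(X,T;A^\pm_r)\geq Cr^{-n}\omega^{0\,(X',T')}_r(\Delta_{4r}(\tilde y,\tilde s))$, then uses Lemma~\ref{lem:kernel2} to compare the shifted caloric measure with $\omega^{(X',T')}$, and finally uses the strict positivity and continuity of the kernel $K_0(X',T';\cdot,\cdot)$ on the compact set $G_f$ (Theorem~\ref{thm:kernel1} and Corollary~\ref{cor:cont}, via the Radon--Nikodym representation) to return to the base point $(X,T)$; the split case is handled the same way inside $D^\pm_r$ together with Lemma~\ref{lem:kernel2}(ii) and Proposition~\ref{prop:limitfunc}(ii), not by a limit of Lipschitz-domain estimates with Green functions $G_{D^\pm_{r'}}$. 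The lower bounds come from Lemma~\ref{lem:caloric1} (resp.\ \eqref{eq:theta-lemma2}) and the maximum principle, and this argument is flexible enough in the relation between the pole and the radius of the surface ball that the doubling \eqref{eq:doubling2}--\eqref{eq:doubling2-2} follows from the two-sided estimates and Proposition~\ref{prop:limitfunc}(ii) without ever comparing $G$ at two points on the same time level. So beyond the circularity, the essential content of the paper's proof --- the shift construction, Lemma~\ref{lem:kernel2}, and the positivity/continuity of $K_0$ on $G_f$ --- is absent from your proposal, and the step you yourself flag as ``the main obstacle'' is precisely the one your sketch does not, and cannot, supply with the ingredients you allow yourself.
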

\begin{proof}

We start by showing the estimates from above in \eqref{eq:doubling} and
\eqref{eq:doubling-2}.

\smallskip
\case{Case 1:} $(y,s)\in E_f$ and $\Psi_{2r}(y,s)\cap G_f\neq \emptyset$. By Lemma~\ref{lem:loc-property} there is $(\tilde{y},\tilde{s})\in G_f$ such that 
$$
\Psi_{r}(y,s)\cap D\subset \Psi_{4r}(\tilde{y},\tilde{s})\cap
D\subset \Psi_{8r}(y,s)\cap D.$$ 
It is not hard to check by \eqref{eq:shift} that $F^0_r(\Delta
_{4r}(\tilde{y},\tilde{s}))\subset D$. Moreover, the parabolic
distance between $F^0_r(\Delta_{4r}(\tilde{y},\tilde{s}))$ and
$\partial _pD$, and the $t$ coordinate distance from
$F^0_r(\Delta_{4r}(\tilde{y},\tilde{s}))$ down to $A_r^\pm$ are
greater than $cr$ for some universal $c$ which only depends on $n$ and
$L$. Therefore, by the estimate of Green's function as in \cite{Wu} we
have 
\begin{equation*}
G(x,t; A_r^{\pm}(y,s))\geq C(n,L)r^{-n}, \quad (x,t)\in F^0_r(\Delta_{4r}(\tilde{y},\tilde{s}))
\end{equation*}
Applying the maximum principle to $F^0_r(D^0_r)$, we have
\begin{equation*}
G(x,t;A_r^{\pm}(y,s))\geq C(n,L)r^{-n}\omega_r^{0^{F^{0^{-1}}_r(x,t)}}(\Delta _{4r}(\tilde{y},\tilde{s})).
\end{equation*}
In particular,  
\begin{equation*}
G(X,T;A_r^{\pm}(y,s))\geq C(n,L)r^{-n}\omega_r^{0^{F^{0^{-1}}_r(X,T)}}(\Delta _{4r}(\tilde{y},\tilde{s})).
\end{equation*}
Let $(X_r,T_r):=F^{0^{-1}}_r(X,T)$ and take $(X',T')\in D$ with $T'=T-1/4$, $X'=X$ in particular $T'>1/4+T_r$. Then we obtain by the Harnack inequality that
\begin{equation}\label{eq:harnack00}
G(X,T;A_r^{\pm}(y,s))\geq C(n,L)r^{-n}\omega_r^{0^{(X',T')}}(\Delta _{4r}(\tilde{y},\tilde{s})).
\end{equation}

By Lemma~\ref{lem:kernel2}(i), for $0<r<\min\{1/4,\rho_0\}$ there exists $C=C(n,L)$ independent of $r$ such that
\begin{equation}\label{eq:harnack001}
\omega_r^{0^{(X',T')}}(\Delta_{4r}(\tilde{y},\tilde{s}))\geq C \omega ^{(X',T')}(\Delta_{4r}(\tilde{y},\tilde{s})).
\end{equation}
By Theorem~\ref{thm:kernel1} for each $(\tilde{y},\tilde{s})\in G_f$
\begin{equation*}
K_0(X',T';\tilde{y},\tilde{s})=\lim_{r\rightarrow 0} \omega ^{(X',T')}(\Delta_{4r}(\tilde{y},\tilde{s}))/\omega^{(X,T)}(\Delta_{4r}(\tilde{y},\tilde{s}))>0,
\end{equation*}
and by Corollary~\ref{cor:cont} for $(X',T')$ fixed
$K_0(X',T';\cdot,\cdot)$ is continuous on $\partial _p D$. Therefore, in the compact set $G_f$ there exists $c>0$
only depending on $n, L$ such that $K_0(X',T';\tilde{y},\tilde{s})\geq
c>0$ for any $(\tilde{y},\tilde{s})\in G_f$. Hence by the Radon-Nikodym theorem for $0<r<\min\{1/4, \rho_0\}$ we have 
\begin{equation}\label{eq:harnack01}
\omega^{(X',T')}(\Delta_{4r}(\tilde{y},\tilde{s}))\geq \frac{c}{2}\omega ^{(X,T)}(\Delta_{4r}(\tilde{y},\tilde{s}))\geq \frac{c}{2}\omega ^{(X,T)}(\Delta_r(y,s)).
\end{equation}

Combining \eqref{eq:harnack00}, \eqref{eq:harnack001} and \eqref{eq:harnack01} we obtain the
estimate from above in \eqref{eq:doubling} for Case~1.

\smallskip
\case{Case 2:} $(y,s)\in \cN_r(E_f)\cap \partial_pD$ and $\Psi_{2r}(y,s)\cap
G_f=\emptyset$. 

In this case $\Psi_{2r}(y,s)\cap D$ splits into a disjoint union of $\Psi_{2r}(y,s)\cap D_\pm$. 
We use $F^+_r$ and $F^-_r$ defined in \eqref{eq:shift11} and \eqref{eq:shift22}, and apply the same arguments as in Case 1 in $D_r^+$ and $D_r^-$. Then 
\begin{equation*}
\omega ^{\pm^{(X,T)}}_{r}(\Delta _{r}^\pm(y,s))\leq C r^nG(X,T; A_r^\pm(y,s)).
\end{equation*}
Taking $0<r<\delta _0$, where $\delta_0=\delta_0(n,L)$ is the constant
in Lemma~\ref{lem:kernel2}(ii), we have
\begin{equation*}
\vartheta^{(X,T)}_\pm(\Delta _r (y,s))\leq 2\omega^{\pm{(X,T)}}_r(\Delta_r(y,s))\leq C r^nG(X,T; A_r^\pm(y,s)).
\end{equation*}

\smallskip
\case{Case 3:} $(y,s)\in \partial_pD\setminus \cN_r(E_f)$. We argue similarly to Case 1 and 2.

\smallskip
Taking $\epsilon_0=\min\{\rho_0,\delta_0,1/4\}$, we complete the proof
of the estimates from above in \eqref{eq:doubling}--\eqref{eq:doubling-3}.

The proof of the estimate from below in \eqref{eq:doubling}--\eqref{eq:doubling-3} is the same as in \cite{Wu}. For \eqref{eq:doubling} it is a consequence
of Lemma~\ref{lem:caloric1} and the maximum principle. \eqref{eq:doubling-2} and \eqref{eq:doubling-22} follow from \eqref{eq:theta-lemma2} and the maximum principle. The doubling properties of caloric measure $\omega^{(x,t)}$ and $\theta_\pm^{(x,t)}$ are easy
consequences of \eqref{eq:doubling}--\eqref{eq:doubling-3} and Proposition~\ref{prop:limitfunc}(ii) for $0<r<\epsilon_0/2$. For
$r>\epsilon_0/2$ we use Lemma~\ref{lem:caloric1} and \eqref{eq:theta-lemma2}. 

\end{proof}

Theorem~\ref{thm:doubling} implies the following
backward Harnack principle.

\begin{theorem}[Backward boundary Harnack principle]
\label{thm:backward}
Let $u$ be a positive caloric function in $D$ vanishing continuously
on $S$ and let $\delta >0$. Then there exists a positive constant
$C=C(n,L, \delta)$ such that for $(y,s)\in \partial_pD\cap
\{s>-1+\delta^2\}$ and for $0<r<r(n,L,\delta)$ sufficiently small we have
\begin{align*}
C^{-1}u(\underline{A}^+_r(y,s))&\leq u(\overline{A}^+_r(y,s))\leq Cu(\underline{A}^+_r(y,s)),\\
C^{-1}u(\underline{A}^-_r(y,s))&\leq u(\overline{A}^-_r(y,s))\leq Cu(\underline{A}^-_r(y,s)),\quad \text{if }(y,s)\in \cN_r(E_f);
\end{align*}
and
\begin{equation}
C^{-1}u(\underline{A}_r(y,s))\leq u(\overline{A}_r(y,s))\leq Cu(\underline{A}_r(y,s)),\quad \text{if }(y,s)\not\in \cN_r(E_f).\label{eq:backward33}
\end{equation}

\end{theorem}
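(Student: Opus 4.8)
\emph{Forward bounds.} The inequalities $C^{-1}u(\underline A_r^\pm(y,s))\le u(\overline A_r^\pm(y,s))$ and $C^{-1}u(\underline A_r(y,s))\le u(\overline A_r(y,s))$ require nothing about the vanishing of $u$ on $S$. Indeed, $\overline A_r^\pm(y,s)$ and $\underline A_r^\pm(y,s)$ have the same spatial coordinates, their time coordinates differ by $4r^2\gtrsim r^2$, and both sit at parabolic distance comparable to $r$ from $\partial_pD$ (in $D_+$, resp.\ $D_-$); hence by Lemma~\ref{lem:H-chain-prop-I} (or a direct construction) there is a parabolic Harnack chain in $D$ of length and constant depending only on $n$ and $L$ from $\underline A_r^\pm(y,s)$ to $\overline A_r^\pm(y,s)$, and the parabolic Harnack inequality gives the claim; the unmarked case is identical. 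So the content is the \emph{upper} (``backward'') bounds, and for these I follow the scheme of \cite{Garofalo3}, the new point being that the doubling of the caloric and two-sided measures from Theorem~\ref{thm:doubling} replaces the cylindrical time-shift used in the Lipschitz setting.

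First I would reduce, via the localization property (Lemma~\ref{lem:loc-property}) together with the Carleson estimate (Theorem~\ref{thm:carleson}), to the case $(y,s)\in E_f$ with $\Psi_{2r}(y,s)\cap G_f\ne\emptyset$: when $\Psi_{2r}(y,s)\cap G_f=\emptyset$ (resp.\ $(y,s)\notin\cN_r(E_f)$) the set $\Psi_{2r}(y,s)\cap D$ is a disjoint union of two (resp.\ contains one) parabolically Lipschitz cylinders $D_\pm$, to which the classical backward Harnack principle of \cite{Garofalo3} applies directly. Next I would establish the backward Harnack principle for the caloric measure itself, namely
\[
\omega^{(X,T)}(\Delta_r(y,s))\ \approx\ r^n\,G(X,T;\overline A_r^\pm(y,s))\ \approx\ r^n\,G(X,T;\underline A_r^\pm(y,s)),
\]
and likewise for $\vartheta_\pm$ and the associated Green functions. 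One inclusion of each comparison is already in Lemma~\ref{lem:green}, which squeezes $\omega^{(X,T)}(\Delta_r(y,s))$ between $r^n\max_\pm G(X,T;\overline A_r^\pm(y,s))$ from below and $r^n\max_\pm G(X,T;\underline A_r^\pm(y,s))$ from above (since $(X,T)$ sits at a later time than $(y,s)$), while $G(X,T;\overline A_r^\pm(y,s))\le CG(X,T;\underline A_r^\pm(y,s))$ is just the interior Harnack inequality for the backward heat equation in the pole variable. The remaining, reverse bound is where the doubling property enters: combining Theorem~\ref{thm:doubling} (which supplies $\omega^{(X,T)}(\Delta_r(y,s))\approx r^nG(X,T;A_r^\pm(y,s))$ for the time-invariant corkscrew and $\omega^{(X,T)}(\Delta_{2r}(y,s))\le C\omega^{(X,T)}(\Delta_r(y,s))$) with Lemma~\ref{lem:green} at scales $r$ and $2r$ and with Harnack chains joining the corkscrew points $A_r^\pm$, $\underline A_r^\pm$, $\overline A_r^\pm$, $A_{2r}^\pm$ of $(y,s)$, one closes the loop; the decisive bookkeeping is that moving across one dyadic scale advances time by an amount of order $r^2$, the same order as the shift $\pm2r^2$ hard-wired into $\overline A_r^\pm$ and $\underline A_r^\pm$.

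Finally I would pass from the caloric measure to a general $u$ by the global comparison theorem (Theorem~\ref{thm:global}). Fix a point $(z_0,-1)$ in the interior of the bottom slice of $\partial_pD$ and let $v(x,t):=K_0(x,t;z_0,-1)$ be the kernel function there, which by Theorem~\ref{thm:kernel1} and Remark~\ref{rem:representation} is a positive caloric function on all of $D$ vanishing continuously on $S$ (indeed on $\partial_pD\setminus\{(z_0,-1)\}$), and whose values at corkscrew points are controlled, through the Radon--Nikodym identity $K_0=d\omega^{(x,t)}/d\omega^{(X,T)}$, the caloric-measure estimates of the previous step, and the Carleson-type estimate \eqref{eq:carleson3}; in particular $v(\overline A_r^\pm(y,s))\le Cv(\underline A_r^\pm(y,s))$ throughout the admissible range of $(y,s)$ and $r$. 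Then Theorem~\ref{thm:global}, applied to $u$ and $v$ with $(x_0,t_0)=\underline A_r^\pm(y,s)$ and evaluation point $\overline A_r^\pm(y,s)$, yields
\[
\frac{u(\overline A_r^\pm(y,s))}{v(\overline A_r^\pm(y,s))}\le C(\delta)\,\frac{u(\underline A_r^\pm(y,s))}{v(\underline A_r^\pm(y,s))},
\]
hence $u(\overline A_r^\pm(y,s))\le C(\delta)\,\frac{v(\overline A_r^\pm(y,s))}{v(\underline A_r^\pm(y,s))}\,u(\underline A_r^\pm(y,s))\le C(n,L,\delta)\,u(\underline A_r^\pm(y,s))$; the $D_-$ statement and \eqref{eq:backward33} follow identically.

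\emph{Main obstacle.} The hard part is the reverse bound in the middle step — upgrading the one-sided Green-function/caloric-measure estimates of Lemma~\ref{lem:green} to the two-sided comparison at a point of $E_f$. A plain Harnack-chain argument gives only one of the two inequalities, since chains must advance in time; the doubling property of Theorem~\ref{thm:doubling} is exactly the compensating information, but exploiting it forces one to balance the time increment spent crossing one dyadic scale against the $\pm2r^2$ time shift built into the future and past corkscrew points, and to do this uniformly across the three geometric types of boundary point (interior of $E_f$, the split case near $E_f\setminus G_f$, and the Lipschitz case away from $E_f$) and across the dyadic range of $r$, while staying away from the bottom slice $\{t=-1\}$ — which is precisely why the hypothesis $s>-1+\delta^2$ and the time-extension of $D$ in Convention~\ref{convention} are in force.
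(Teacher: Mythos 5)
Your forward (``trivial'') bounds, the localization reduction, and the comparisons of $\omega^{(X,T)}(\Delta_r(y,s))$ with $r^nG(X,T;\cdot)$ at the various corkscrew poles (Lemma~\ref{lem:green} plus Theorem~\ref{thm:doubling}, together with Harnack chains in the pole variable) are fine, and your overall strategy --- find one positive caloric $v$ vanishing on $S$ with a verified backward Harnack at corkscrew points and then transfer to general $u$ by Theorem~\ref{thm:global} --- is a legitimate scheme in spirit (it is the cylindrical time-shift argument of \cite{Garofalo2} with $v$ replacing the time-translate). The paper itself proceeds differently: it simply runs the argument of Theorem~4 of \cite{Garofalo3}, with Theorem~\ref{thm:doubling} playing the role of Wu's Lemma~2.2.

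The gap is in your transfer step, and it is the step that carries all the content. Your comparison function $v(x,t)=K_0(x,t;z_0,-1)$ is, by Theorem~\ref{thm:kernel1} and the representation of caloric measure on the bottom face, nothing but $G(x,t;z_0,-1)/G(X,T;z_0,-1)$: a positive caloric function in $D$ vanishing continuously on $S$. Hence the inequality you assert for it, $v(\overline A^\pm_r(y,s))\le C\,v(\underline A^\pm_r(y,s))$ uniformly in $(y,s)$ and $r$, is itself an instance of Theorem~\ref{thm:backward}, and it does \emph{not} follow from the ingredients you cite. The Radon--Nikodym identity reduces it to $\omega^{\overline A^\pm_r(y,s)}(\Delta_\rho(z_0,-1))\le C\,\omega^{\underline A^\pm_r(y,s)}(\Delta_\rho(z_0,-1))$ uniformly as $\rho\to 0$ (equivalently, a backward Harnack for $G$ in its \emph{first} variable with pole on the bottom); but Lemma~\ref{lem:green}, Theorem~\ref{thm:doubling} and \eqref{eq:carleson3} all keep the observation point fixed at $(X,T)$ and move the surface ball along the lateral boundary, whereas here the boundary set is fixed at the bottom and the observation point moves to the corkscrews of $(y,s)$, approaching $S$ as $r\to0$. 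The interior Harnack inequality gives only the opposite inequality ($\underline A$ is earlier than $\overline A$), and the adjoint Harnack acts on the pole variable, not on the observation variable. One could repair this by rerunning the kernel-function and doubling machinery for the time-reversed (adjoint) problem, i.e.\ proving the analogue of Theorem~\ref{thm:doubling} with the normalization point placed near the bottom, but that is a genuine additional argument which your proposal does not supply; as written, the decisive inequality is assumed rather than proved. (A lesser point: applying Theorem~\ref{thm:global} with the base point $(x_0,t_0)=\underline A^\pm_r(y,s)$ requires the constant to be uniform as the base point approaches $S$; this is true and extractable from Theorems~\ref{thm:interior} and \ref{thm:local}, but it should be justified rather than taken from the bare statement.)
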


\begin{proof}
Once we have Theorem~\ref{thm:doubling}, which is an analogue of Lemma~2.2 in \cite{Wu}, we can proceed as Theorem 4 in \cite{Garofalo3} to show the above backward Harnack principle.

\end{proof}

\begin{remark}\label{rem:equivalent}
From \eqref{eq:doubling} and using the same proof as in Theorem~\ref{thm:backward} we can conclude that for any positive caloric function $u$ vanishing continuously on $S$ and $(y,s)\in G_f$ there exists 
$C=C(n,L,\delta)>0$ such that
\begin{align*}
& C^{-1} u(\overline{A}_r^-(y,s))\leq u(\overline{A}_r^+(y,s))\leq C u(\overline{A}_r^-(y,s)),\\
& C^{-1} u(\underline{A}_r^-(y,s))\leq u(\underline{A}_r^+(y,s))\leq C u(\underline{A}_r^-(y,s)).
\end{align*}
\end{remark}

\section{Various versions of boundary Harnack}
\label{sec:applications}
In the applications,
 it is very useful to have a local version of the backward
Harnack for solutions vanishing only on a portion of the lateral
boundary $S$. For the parabolically Lipschitz domains this was proved
in \cite{ACS} as a consequence of the (global) backward Harnack principle.

To state the results, we will use the following corkscrew point associated with
$(y,s)\in G_f$: for $0<r<1/4$, let 
\begin{align*}
& \overline{A}_r(y,s)=(y'',y_{n-1}+4nLr,0,s+2r^2),\\
& \underline{A}_r(y,s)=(y'',y_{n-1}+4nLr,0,s-2r^2),\\
& A_r(y,s)=(y'',y_{n-1}+4nLr,0,s).
\end{align*}
When $(y,s)=(0,0)$ we simply write $\overline{A}_r$, $\underline{A}_r$
and $A_r$, in addition to $\Psi_r$, $\Delta_r$, $\overline{A}^\pm_r$, $\underline{A}^\pm_r$.

\begin{theorem}\label{thm:backward2}
Let $u$ be nonnegative caloric in $D$, continuously vanishing
continuously on $E_f$. Let $m=u(\underline{A}_{3/4})$, $M=\sup _D u$, then there exists a constant $C=C(n, L, M/m)$, such that for any $0<r<1/4$ we have \begin{equation}\label{eq:vari1}
u(\overline{A}_r)\leq Cu(\underline{A}_r).
\end{equation}
\end{theorem}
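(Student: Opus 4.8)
The plan is to compare $u$ with a fixed auxiliary positive caloric function that vanishes on the \emph{entire} lateral boundary $S$, so that the global backward boundary Harnack principle (Theorem~\ref{thm:backward}) — which does not apply to $u$ directly, since $u$ is only assumed to vanish on $E_f$ — can be invoked for the auxiliary function and then transferred to $u$. Concretely, working under Convention~\ref{convention}, let $g$ be the bounded caloric function in $D$ with boundary data $g=0$ on $S$ and $g=1$ on $\partial_p D\setminus S=\partial_p D\cap\{s=-1\}$; the latter set is a nonempty portion of the bottom of the time-extended cylinder, disjoint from $E_f$, so $g$ is a well defined positive caloric function in $D$, and since the discontinuity set of its boundary data lies in $\{s=-1\}$ (hence off $S$), $g$ vanishes continuously on all of $S$ by Proposition~\ref{prop:weaksol}. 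Thus Theorem~\ref{thm:backward} is legitimately applicable to $g$, with constants depending only on $n$ and $L$.

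The first step is to record a backward Harnack inequality for $g$ at the ``thin'' corkscrew points $\overline{A}_r=\overline{A}_r(0,0)$ and $\underline{A}_r=\underline{A}_r(0,0)$ of this section. Theorem~\ref{thm:backward} provides it for the corkscrew points $\overline{A}^{\pm}_r(0,0),\underline{A}^{\pm}_r(0,0)$ of Section~\ref{sec:notat-prel}, and Remark~\ref{rem:equivalent} (applicable since $(0,0)\in G_f$ and $g$ vanishes on $S$) shows that the two ``$\pm$'' values are comparable. Bounding $g(\overline{A}_r)$ from above by $g$ at a future corkscrew point of comparable scale via the Carleson estimate (Theorem~\ref{thm:carleson}), and $g(\underline{A}_r)$ from below by $g$ at a past corkscrew point via a short forward Harnack chain — exactly as in the arguments of Section~\ref{sec:backw-bound-harn} — one obtains a constant $C_1=C_1(n,L)$ such that
\[
g(\overline{A}_r)\ \le\ C_1\, g(\underline{A}_r),\qquad 0<r<r(n,L).
\]

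The second step is to show that $u$ is comparable to $g$ on a \emph{fixed} neighborhood of the origin. Choose $\rho_1=\rho_1(n,L)\in(0,1/\sqrt{18})$ small enough that $\Delta_{3\rho_1}(0,0)\subset E_f$, and apply the local comparison theorem (Theorem~\ref{thm:local}) to $u$ and $g$ at $(0,0)$ with radius $\rho_1$: both functions are positive and caloric in $\Psi_{3\rho_1}(0,0)\cap D$ (positivity of $u$ follows from $u(\underline{A}_{3/4})=m>0$ and the connectedness of $D$) and vanish continuously on $\Delta_{3\rho_1}(0,0)$. This yields, for $(x,t)\in\Psi_{\rho_1/8}(0,0)\cap D$,
\[
\frac{u(x,t)}{g(x,t)}\le C\,\frac{\max\{u(\overline{A}^{+}_{\rho_1}(0,0)),u(\overline{A}^{-}_{\rho_1}(0,0))\}}{\min\{g(\underline{A}^{+}_{\rho_1}(0,0)),g(\underline{A}^{-}_{\rho_1}(0,0))\}},
\]
\[
\frac{g(x,t)}{u(x,t)}\le C\,\frac{\max\{g(\overline{A}^{+}_{\rho_1}(0,0)),g(\overline{A}^{-}_{\rho_1}(0,0))\}}{\min\{u(\underline{A}^{+}_{\rho_1}(0,0)),u(\underline{A}^{-}_{\rho_1}(0,0))\}}.
\]
Here $\overline{A}^{\pm}_{\rho_1}(0,0)$, $\underline{A}^{\pm}_{\rho_1}(0,0)$ and $\underline{A}_{3/4}$ are fixed interior points of $D$ at bounded distance from $\partial_p D$, so the values of $g$ at them lie between two positive constants depending only on $n$ and $L$; the bound $\max\{u(\overline{A}^{\pm}_{\rho_1}(0,0))\}\le M$ is trivial; and $\min\{u(\underline{A}^{\pm}_{\rho_1}(0,0))\}\ge c(n,L)\,m$ follows from a single forward Harnack chain joining $\underline{A}_{3/4}$ (which lies in their past) to each of $\underline{A}^{+}_{\rho_1}(0,0)$ and $\underline{A}^{-}_{\rho_1}(0,0)$ — this is where the normalization $m=u(\underline{A}_{3/4})$ enters. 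Writing $\rho_0=\rho_1/8$, we obtain $C_0=C_0(n,L)$ with
\[
\frac{m}{C_0}\,g(x,t)\ \le\ u(x,t)\ \le\ C_0 M\, g(x,t),\qquad (x,t)\in\Psi_{\rho_0}(0,0)\cap D.
\]

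Finally I would split according to the size of $r$. For $0<r<\rho_0/\sqrt 2$ the points $\overline{A}_r$ and $\underline{A}_r$ lie in $\Psi_{\rho_0}(0,0)\cap D$, so the two-sided bound above together with the backward Harnack inequality for $g$ gives
\[
u(\overline{A}_r)\ \le\ C_0 M\, g(\overline{A}_r)\ \le\ C_0 C_1 M\, g(\underline{A}_r)\ \le\ \frac{C_0^2 C_1 M}{m}\, u(\underline{A}_r),
\]
which is \eqref{eq:vari1} with $C=C(n,L)\,M/m$. For $\rho_0/\sqrt 2\le r<1/4$ both $\overline{A}_r$ and $\underline{A}_r$ lie at distance bounded below by a constant depending only on $n,L$ from $\partial_p D$, with $\underline{A}_{3/4}$ in their past at a bounded time-lag, so that $u(\overline{A}_r)\le M$ and $u(\underline{A}_r)\ge c(n,L)\,m$ by a bounded forward Harnack chain from $\underline{A}_{3/4}$, and \eqref{eq:vari1} again follows with $C=C(n,L)\,M/m$. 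The main obstacles are the bookkeeping in the first step — transferring the global backward Harnack inequality for $g$ from the corkscrew points of Section~\ref{sec:notat-prel} to the thin corkscrew points used here — and checking that the auxiliary function $g$ really does vanish on all of $S$ (rather than only on $E_f$); once the comparison $\frac{m}{C_0}g\le u\le C_0 M g$ near the origin is in place, with the factor $M/m$ entering only once, the rest is a routine Harnack-chain argument.
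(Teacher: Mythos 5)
Your argument is correct, but it follows a genuinely different route from the paper's. The paper proves \eqref{eq:vari1} by running the scheme of Theorem~13.7 in \cite{CS} on $u$ itself: the doubling property of the caloric measure (Theorem~\ref{thm:doubling}) together with the global backward Harnack principle (Theorem~\ref{thm:backward}) yields $u(\overline{A}^{\pm}_{2r})\le C(n,L,M/m)\,u(\underline{A}^{\pm}_{2r})$ for $0<r<1/4$, and the passage to the thin corkscrew points $\overline{A}_r,\underline{A}_r$ is then exactly the Harnack-chain observation you also make. You instead avoid Theorem~\ref{thm:doubling} and the caloric-measure representation of $u$ altogether: Theorem~\ref{thm:backward} is applied only to the auxiliary function $g$, which legitimately vanishes on all of $S$, and the backward inequality is transferred to $u$ by a single application of the local comparison theorem (Theorem~\ref{thm:local}) at one fixed scale $\rho_1(n,L)$, which is the only place where the factor $M/m$ enters. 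This buys a shorter and more transparent argument given the results already established in Sections~\ref{sec:forw-bound-harn}--\ref{sec:backw-bound-harn}; what it costs, and what you should write out explicitly, are two routine points: first, the quantitative lower bound $g(\underline{A}^{\pm}_{\rho_1})\ge c(n,L)>0$, which follows, e.g., from Lemma~\ref{lem:caloric1} applied at a bottom boundary point of $\partial_p D$ away from $E_f$ together with a forward Harnack chain of length depending only on $n$ and $L$; second, the splitting threshold in $r$, since Theorem~\ref{thm:backward} is stated only for $r<r(n,L,\delta)$ sufficiently small — you should split at $\min\{\rho_0/\sqrt{2},\,r(n,L,\delta)/2\}$ rather than at $\rho_0/\sqrt{2}$, which is harmless because your direct estimate $u(\overline{A}_r)\le M$ and $u(\underline{A}_r)\ge c(n,L)\,m$ (via a bounded Harnack chain from $\underline{A}_{3/4}$) covers every $r$ bounded below by a constant depending only on $n$ and $L$. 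With these details filled in, your proof yields the stated constant $C=C(n,L)\,M/m$, the same dependence as in the paper.
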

\begin{proof}
Using Theorems~\ref{thm:backward} and \ref{thm:doubling} and following the lines of Theorem~13.7 in \cite{CS} we have
\begin{equation*}
u(\overline{A}^\pm_{2r})\leq C u(\underline{A}^\pm_{2r}), \quad
0<r<1/4,
\end{equation*}
for $C=C(n,L,M/m)$. Then \eqref{eq:vari1} follows from Theorem~\ref{thm:backward} and an
observation that there is a Harnack chain with a constant
$\mu=\mu(n,L)$ and length $N=N(n,L)$ joining $\overline{A}_r$ to $\overline{A}_{2r}^\pm$ and $\underline{A}_{2r}^\pm$ to $\underline{A}_r$.
\end{proof}

Theorem~\ref{thm:backward2} implies the boundary H\"older regularity
of the quotient of two negative caloric functions vanishing on
$E_f$. The proof of the following corollary is the same as for Corollary~13.8 in \cite{CS} and is therefore omitted.

\begin{theorem}\label{thm:backward3}
Let $u_1$, $u_2$ be nonnegative caloric functions in $D$ continuously vanishing
 on $E_f$.  Let $M_i=\sup _D u_i$ and
$m_i=u_i(\underline{A}_{3/4})$
with $i=1,2$. Then we have 
\begin{equation}\label{eq:backward3}
C^{-1}\frac{u_1(A_{1/4})}{u_2(A_{1/4})}\leq
\frac{u_1(x,t)}{u_2(x,t)}\leq
C\frac{u_1(A_{1/4})}{u_2(A_{1/4})},\quad\text{for
}(x,t)\cap\Psi_{1/8}\cap D,
\end{equation}
where $C=C(n,L, M_1/m_1, M_2/m_2)$. Moreover, if $u_1$ and $u_2$ are symmetric
in $x_n$, then $u_1/u_2$ extends to a
function in $C^{\alpha}(\Psi_{1/8})$ for some $0<\alpha <1$, where the
exponent $\alpha$ and the $C^{\alpha}$ norm depend only on $n, L, M_1/m_1,
M_2/m_2$.\qed 
\end{theorem}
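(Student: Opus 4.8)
The argument adapts the proofs of Theorem~13.7 and Corollary~13.8 in \cite{CS} to the present setting, the new inputs being the boundary Harnack machinery of Sections~\ref{sec:forw-bound-harn}--\ref{sec:backw-bound-harn} together with the two-valued corkscrew structure of $D$. The plan is: (1) prove the two-sided quotient bound \eqref{eq:backward3}; (2) upgrade it to H\"older continuity by an oscillation-decay iteration; (3) invoke the $x_n$-symmetry to see that the quotient is a genuine function on $\Psi_{1/8}$.

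For step (1), I would first use Theorem~\ref{thm:backward2} applied to $u_1$ and to $u_2$ --- and, where needed, to their parabolic rescalings centered at points of $E_f\cap\overline{\Psi_{1/8}}$, whose domains remain admissible by the localization Lemma~\ref{lem:loc-property} --- together with short Harnack chains routed through $G_f$ (which lies within a bounded multiple of $\diam\Psi_{1/8}$ of every point of $E_f\cap\Psi_{1/8}$) and the backward inequality of Theorem~\ref{thm:backward2} to lift the time-lag restriction, to conclude that for every $0<r<1/8$ and $(y,s)\in E_f\cap\overline{\Psi_{1/8}}$ the corkscrew values $u_i(\overline{A}_r^{\pm}(y,s))$, $u_i(\underline{A}_r^{\pm}(y,s))$, $u_i(A_r^{\pm}(y,s))$ ($i=1,2$) are mutually comparable, with constants depending only on $n$, $L$ and $M_i/m_i$. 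Next, for $(x,t)\in\Psi_{1/8}\cap D$ one picks $(y,s)\in E_f$ nearest to it and applies the local comparison Theorem~\ref{thm:local} to the pair $(u_1,u_2)$ on $\Psi_{3r}(y,s)\cap D$ at scale $r\sim\dist_p((x,t),E_f)$ (or, when this distance exceeds a fixed $r_0=r_0(n,L)$, at scale $r_0$ after a bounded Harnack chain); combined with the comparison of corkscrew values this gives both $u_1(x,t)/u_2(x,t)\le C\,u_1(A_r^{\pm}(y,s))/u_2(A_r^{\pm}(y,s))$ and the reverse inequality with $C^{-1}$. Thus $u_1/u_2$ is comparable to one constant on all of $\Psi_{1/8}\cap D$; taking $(x,t)=A_{1/4}$, which is itself a corkscrew point, identifies that constant as $u_1(A_{1/4})/u_2(A_{1/4})$, i.e.\ \eqref{eq:backward3}.

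For step (2), by the scale invariance of the admissible class (Lemma~\ref{lem:loc-property}) the bound \eqref{eq:backward3} holds with a uniform constant on every cylinder $\Psi_{4^{-k}}\cap D$. Put $\omega(r)=\operatorname{osc}_{\Psi_r\cap D}(u_1/u_2)$, $a=\inf_{\Psi_r\cap D}(u_1/u_2)$, $b=\sup_{\Psi_r\cap D}(u_1/u_2)$; after Harnack's inequality one may assume $u_1(A_r)/u_2(A_r)\ge(a+b)/2$. Then $w:=u_1-a\,u_2\ge 0$ is caloric, vanishes continuously on $E_f$, satisfies $w\le(b-a)u_2$ on $\Psi_r\cap D$ and, by the Carleson estimate (Theorem~\ref{thm:carleson}) and Theorem~\ref{thm:backward2}, has $\sup_{\Psi_r\cap D}w$ comparable to $w(\underline{A}_{3r/4})$ up to a factor depending only on $n,L,M_i/m_i$, so that the relevant ``$M/m$'' ratio of $w$ at the rescaled unit scale stays bounded independently of $k$. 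Applying \eqref{eq:backward3} at scale $r$ to $(w,u_2)$ then yields $u_1/u_2-a=w/u_2\ge c\,(b-a)$ on $\Psi_{r/4}\cap D$, hence $\omega(r/4)\le(1-c)\,\omega(r)$; iterating gives the H\"older decay of $u_1/u_2$ with exponent and constant depending only on $n,L,M_i/m_i$. Finally, for step (3), if $u_1,u_2$ are even in $x_n$ then so is $u_1/u_2$, its limits from $D_+$ and $D_-$ agree on $E_f$ (by evenness, consistently with the oscillation estimate), and therefore $u_1/u_2$ extends to a function in $C^{\alpha}(\Psi_{1/8})$.

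The main obstacle is making the constant in the oscillation iteration of step (2) uniform over all dyadic scales, equivalently controlling the ``$M/m$'' ratio of the auxiliary function $w=u_1-a\,u_2$: this is precisely where one must replace the \emph{global} backward Harnack principle (Theorem~\ref{thm:backward}), which is unavailable here because $u_1,u_2$ are only assumed to vanish on $E_f$ rather than on the entire lateral boundary $S$, with the \emph{local} backward Harnack Theorem~\ref{thm:backward2} together with the Carleson estimate. A secondary, purely bookkeeping, difficulty is that each point of $E_f\setminus G_f$ carries two independent families of corkscrew points, so the ``$\pm$'' alternatives must be tracked through steps (1)--(2); the $x_n$-symmetry hypothesis is exactly what collapses this ambiguity at the end and allows $u_1/u_2$ to be viewed as a single continuous function on $\Psi_{1/8}$.
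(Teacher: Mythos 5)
Your overall route (two-sided bound via the local comparison theorem plus Theorem~\ref{thm:backward2}, then an oscillation-decay iteration, then $x_n$-symmetry for the extension) is the one the paper intends: the paper omits the proof, referring to Corollary~13.8 in \cite{CS}, and the detailed template appears in its proof of Theorem~\ref{thm:nondegeneracy2}. However, your step (2) has a genuine gap at exactly the point you yourself flag as ``the main obstacle''. The assertion that $\sup_{\Psi_r\cap D} w$ is comparable to $w(\underline{A}_{3r/4})$ with a constant depending only on $n,L,M_i/m_i$ is a backward-Harnack statement for $w=u_1-a\,u_2$, and it does not follow from the Carleson estimate together with Theorem~\ref{thm:backward2}. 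Indeed, Theorem~\ref{thm:backward2} applied to $w$ has a constant depending on the ratio $\sup w/w(\underline{A}_{3/4})$ of the rescaled function, i.e.\ on precisely the quantity you are trying to bound (note that $w$ is nonnegative only inside $\Psi_r$, since $a=\inf_{\Psi_r\cap D}u_1/u_2$, so you cannot fall back on unit-scale data for $w$). On the other hand, Carleson plus Theorem~\ref{thm:backward2} applied to $u_2$ only give $\sup_{\Psi_r\cap D}w\leq (b-a)\sup_{\Psi_r\cap D}u_2\leq C(b-a)\,u_2(\underline{A}_{3r/4})$; to convert this into $\leq C\,w(\underline{A}_{3r/4})$ you need $u_1(\underline{A}_{3r/4})/u_2(\underline{A}_{3r/4})-a\geq\tfrac12(b-a)$. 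But you made the dichotomy at the time-centered corkscrew $A_r$, and transporting the ``above the midpoint'' information from $A_r$ backward in time to $\underline{A}_{3r/4}$ (or even sideways to $A_{r/4}$, which lies at the same time level, so the parabolic Harnack inequality does not apply) is again a backward/elliptic-type estimate for $w$ that is unavailable: the scenario in which $u_1/u_2$ is near its infimum at $\underline{A}_{3r/4}$ while above the midpoint at $A_r$ is ruled out only by the H\"older continuity you are in the middle of proving. For the same reason, the final step ``applying \eqref{eq:backward3} at scale $r$ to $(w,u_2)$ yields $w/u_2\geq c(b-a)$ on $\Psi_{r/4}$'' does not follow from your case assumption at $A_r$.

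The repair is standard and is exactly how the paper runs the analogous iteration in the proof of Theorem~\ref{thm:nondegeneracy2}: make the dichotomy at the \emph{past} corkscrew, i.e.\ compare $\mu_k=u_1(\underline{A}_{r/4})/u_2(\underline{A}_{r/4})$ with the midpoint of $[l_k,L_k]$ and work with $w=u_1-l_k u_2$ or $L_k u_2-u_1$ accordingly. Then $w(\underline{A}_{r/4})\geq\tfrac12(L_k-l_k)\,u_2(\underline{A}_{r/4})$ is immediate; the bound $\sup_{\Psi_r\cap D}w\leq C\,w(\underline{A}_{r/4})$ follows from $w\leq(L_k-l_k)u_2$, the Carleson estimate, and Theorem~\ref{thm:backward2} applied only to $u_2$ (whose ratio $M_2/m_2$ is given data); and the oscillation decay is obtained by applying the local comparison Theorem~\ref{thm:local} (whose constant is universal) to the pair $(u_2,w)$, so that a backward Harnack inequality is never invoked for $w$ at all --- only for $u_1,u_2$, with constants controlled by $M_i/m_i$. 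With this modification, and with your step (1), which is correct in outline, the argument closes with constants depending only on $n,L,M_1/m_1,M_2/m_2$.
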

\begin{remark} The symmetry condition in the latter part of the
  theorem is important to guarantee the continuous extension of
  $u_1/u_2$ to the Euclidean closure $\overline{\Psi_{1/8}\setminus
    E_f}=\overline{\Psi_{1/8}}$, 
  since the limits at $E_f\setminus G_f$, as we approach from different
  sides, may be different. Without the symmetry condition, one may
  still prove that $u_1/u_2$ extends to a $C^\alpha$ function on the
  completion $(\Psi_{1/8}\setminus E_f)^*$ with respect to the inner
  metric.
  \end{remark}

For a more general application, we need to have a boundary Harnack
inequality for $u$ satisfying a nonhomogeneous equation with bounded
right hand side but additionally with a nondegeneracy condition. The
method we use here is similar as the one used in the elliptic case
(\cite{CSS}). 

\begin{theorem}\label{thm:nondegeneracy}
Let $u$ be a nonnegative function in $D$, continuously
vanishing on $E_f$, and satisfying  
\begin{align}
|\Delta u-\partial _tu|\leq C_0&\quad\text{in }D\\
\label{eq:nondeg} 
u(x,t)\geq c_0d(x,t)^\gamma&\quad\text{in }D,
\end{align}
where $d(x,t)=\dist_p((x,t);E_f)$, $0<\gamma<2$, $c_0>0$,
$C_0\geq0$. Then there exists $C=C(n,L,\gamma,C_0,c_0)>0$ such that for
$0<r<1/4$ we have
\begin{equation}\label{eq:nondegeneracy}
u(x,t)\leq Cu(\overline{A}_r),\quad (x,t)\in \Psi _{r}.
\end{equation}
Moreover, if $M=\sup _D u$, then there exists a constant $C=C(n, L,
\gamma, C_0, c_0, M)$, such that for any $0<r<1/4$ we have 
\begin{equation}\label{eq:nondeg-vari1}
u(\overline{A}_r)\leq Cu(\underline{A}_r).
\end{equation}

\end{theorem}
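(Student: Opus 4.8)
The plan is to reduce the statement about the subsolution–type function $u$ to the homogeneous boundary Harnack inequalities (Theorems~\ref{thm:carleson}, \ref{thm:carleson2}, and \ref{thm:backward2}) by splitting off the contribution of the right-hand side. First I would fix $0<r<1/4$ and work in $\Psi_{2r}\cap D$ (after a harmless rescaling one may assume we are in case (3) of the localization property Lemma~\ref{lem:loc-property}). Let $h$ solve the heat equation in $\Psi_{2r}\cap D$ with $h=u$ on $\partial_p(\Psi_{2r}\cap D)$, and let $w=u-h$. Then $w$ vanishes on $\partial_p(\Psi_{2r}\cap D)$, $|\Delta w-\partial_t w|\le C_0$, so by the standard interior estimate (Duhamel / comparison with the particular solution $C_0(\cdot)$ against the Green's function $G$ of $\Psi_{2r}\cap D$) we get $|w(x,t)|\le C C_0 r^2$ for $(x,t)\in\Psi_{2r}\cap D$. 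Since $h\ge 0$ on $\partial_p(\Psi_{2r}\cap D)$ except possibly where $w$ absorbs a small negative part, one checks $h\ge u - CC_0r^2 \ge 0$ once we know $u\ge c_0 d^\gamma$ is large enough away from $E_f$; more precisely $h$ is nonnegative and caloric in $\Psi_{2r}\cap D$, continuously vanishing on $\Delta_{2r}$, hence the Carleson estimate Theorem~\ref{thm:carleson} applies to $h$.

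The second step uses the nondegeneracy \eqref{eq:nondeg} to control the error term $CC_0 r^2$ by $u(\overline A_r)$ itself. By \eqref{eq:nondeg}, $u(\overline A_r)\ge c_0\,d(\overline A_r)^\gamma \ge c\, c_0\, r^\gamma$ for a dimensional $c=c(n,L)>0$, since $\dist_p(\overline A_r,E_f)\simeq r$. Because $\gamma<2$, for $r<1/4$ we have $r^2\le r^\gamma$, so $CC_0 r^2 \le (CC_0/(c\,c_0))\, u(\overline A_r)=:C_* u(\overline A_r)$. Then for $(x,t)\in\Psi_r\cap D$,
\begin{equation*}
u(x,t)\le h(x,t)+CC_0r^2\le C\max\{h(\overline A_r^+),h(\overline A_r^-)\}+C_* u(\overline A_r)
\end{equation*}
by Theorem~\ref{thm:carleson} (in case (3); in cases (1)–(2) one uses $u(\overline A_r)$ in place of the max, or the symmetrization as in its proof). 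Finally $h(\overline A_r^\pm)=u(\overline A_r^\pm)-w(\overline A_r^\pm)\le u(\overline A_r^\pm)+CC_0 r^2\le u(\overline A_r^\pm)+C_* u(\overline A_r)$, and a bounded Harnack chain from $\overline A_r^\pm$ to $\overline A_r$ (as in Lemma~\ref{lem:H-chain-prop-I}, valid for $w+h=u$ only because $u\ge 0$ — here one applies Harnack to the nonnegative caloric comparison function $h$, not directly to $u$) gives $h(\overline A_r^\pm)\le C u(\overline A_r)$. Combining yields $u(x,t)\le C u(\overline A_r)$ on $\Psi_r$, which is \eqref{eq:nondegeneracy}.

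For the backward estimate \eqref{eq:nondeg-vari1}, the idea is to run the same splitting on $\Psi_{3/4}\cap D$: write $u=\tilde h+\tilde w$ with $\tilde h$ caloric in $D$, $\tilde h=u$ on $\partial_p D$, $|\tilde w|\le CC_0$ globally and $\tilde w$ vanishing on $\partial_p D\supset E_f$; since $\tilde h$ is nonnegative, caloric, and vanishes continuously on $E_f$, Theorem~\ref{thm:backward2} applies to $\tilde h$ with $m=\tilde h(\underline A_{3/4})$, $M=\sup_D\tilde h$, giving $\tilde h(\overline A_r)\le C\tilde h(\underline A_r)$. Then $u(\overline A_r)\le \tilde h(\overline A_r)+CC_0\le C\tilde h(\underline A_r)+CC_0\le C u(\underline A_r)+CC_0$, and the nondegeneracy $u(\underline A_r)\ge c\,c_0 r^\gamma$ together with $r<1/4$ absorbs the constant $CC_0$ into $Cu(\underline A_r)$, exactly as in the forward case. \textbf{The main obstacle} is the first step: because $u$ is only a sub-/supersolution with bounded right-hand side (not caloric and not necessarily a genuine subsolution), one cannot apply the boundary Harnack results to $u$ directly, and one must carefully verify that the caloric replacement $h$ (resp.\ $\tilde h$) is genuinely nonnegative and that the $O(r^2)$ (resp.\ $O(1)$) error is truly lower-order — this is precisely where \eqref{eq:nondeg} and the restriction $\gamma<2$ enter, and getting the dependence of constants on $\gamma,C_0,c_0$ (and $M$ in the second part) bookkept correctly is the delicate point.
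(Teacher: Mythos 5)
Your proof of the first estimate \eqref{eq:nondegeneracy} is essentially the paper's argument: replace $u$ by its caloric substitute $h$ in $\Psi_{2r}\cap D$, bound $|u-h|\leq CC_0r^2$ (the paper does this with the explicit barrier $C(|x|^2-t-8r^2)$ rather than the Green potential, which is the same thing), apply the Carleson estimate to $h$, and absorb the $O(r^2)$ error using \eqref{eq:nondeg} and $\gamma<2$. Two small remarks: $h\geq 0$ is immediate from the maximum principle (its boundary data is $u\geq 0$), so the sentence suggesting nondegeneracy is needed for this is confused though harmless; and the ``Harnack chain from $\overline{A}_r^\pm$ to $\overline{A}_r$'' does not exist as stated, since these points lie at the same time level $2r^2$ and a parabolic Harnack chain requires a definite time increase. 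This is a fixable bookkeeping slip (run the Carleson estimate at a slightly smaller scale, say with corkscrew points $\overline{A}^\pm_{r/2}$ at time $r^2/2$, and then chain forward in time to $\overline{A}_r$, with chain constant depending on $n,L$), and the paper glosses over the same point, but as written the step is not literally correct.

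The genuine gap is in your proof of \eqref{eq:nondeg-vari1}. With the global caloric replacement $\tilde h$ in $D$ the error is only $|u-\tilde h|\leq CC_0$, of order one, and your final absorption ``$CC_0\leq C u(\underline{A}_r)$ exactly as in the forward case'' fails: the nondegeneracy only gives $u(\underline{A}_r)\geq c\,c_0 r^\gamma$, which tends to $0$ as $r\to 0$, so no constant $C=C(n,L,\gamma,C_0,c_0,M)$ uniform in $0<r<1/4$ can absorb an error of size $CC_0$. The forward case worked precisely because the error there was $O(r^2)$ and $\gamma<2$ gives $r^2\leq r^\gamma$; that structure is lost once you replace at unit scale. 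Your argument therefore proves \eqref{eq:nondeg-vari1} only for $r$ bounded away from $0$. The paper's (terse) indication is to run the same scale-$r$ splitting and apply Theorem~\ref{thm:backward2} to the caloric replacement $u^\ast$, so that the error remains $O(r^2)$ and is absorbable by $c_0 r^\gamma$; in doing so one must also control the ratio $\sup u^\ast/u^\ast(\underline{A}_{\cdot})$ entering the constant of Theorem~\ref{thm:backward2} in terms of the admissible data (using the just-proved \eqref{eq:nondegeneracy} and \eqref{eq:nondeg}), a point you likewise leave unaddressed when you invoke Theorem~\ref{thm:backward2} for $\tilde h$ with $m=\tilde h(\underline{A}_{3/4})$ without showing that this $m$ is bounded below in terms of $n,L,\gamma,c_0,C_0,M$.
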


\begin{proof}
Let $u^{\ast}$ solve the heat equation in $\Psi _{2r}\cap D$ and equal
to $u$ on $\partial _p(\Psi_{2r}\cap D)$. Then by the Carleson
estimate we have $u^{\ast}(x,t)\leq C(n,L)u^{\ast}(\overline{A}_r)$
for $(x,t)\in \Psi_r$. 

On the other hand, we have
\begin{alignat*}{2}
u^{\ast}(x,t)+C(|x|^2-t-8r^2)&\leq u(x,t)&&\quad\text{on }\partial
_p(\Psi_{2r}\cap D)\\
(\Delta -\partial _t)(u^{\ast}(x,t)+C(|x|^2-t-8r^2))&\geq C(2n-1)\\
&\geq (\Delta -\partial _t)u(x,t)&&\quad\text{in }\Psi _{2r}\cap D
\end{alignat*}
for $C\geq C_0/(2n-1)$. Hence, by the comparison principle we have $u^{\ast}-u\leq Cr^2$ in $\Psi_{2r}\cap D$ for $C=C(C_0,n)$.
Similarly, $u-u^{\ast}\leq Cr^2$ and hence 
$|u-u^{\ast}|\leq Cr^2$ in $\Psi_{2r}\cap D$. Consequently,
\begin{equation}\label{eq:nondegg}
u(x,t)\leq C(n,L)(u(\overline{A}_r)+C(C_0,n)r^2),\quad (x,t)\in\Psi_r.
\end{equation}
Next note that by the nondegeneracy condition \eqref{eq:nondeg}
\begin{equation}\label{eq:nondeggg}
u(\overline{A}_r)\geq c_0r^\gamma \geq c_0r^2, \quad r\in (0,1).
\end{equation}
Thus, combining \eqref{eq:nondegg} and
\eqref{eq:nondeggg}, we obtain  \eqref{eq:nondegeneracy}. 

The proof of \eqref{eq:nondeg-vari1} follows in a similar manner from
Theorem~\ref{thm:backward2} for $u_*$.
\end{proof}

\begin{remark} In fact, the nondegeneracy condition \eqref{eq:nondeg}
  is necessary. An easy counterexample is $u(x,t)=x_{n-1}^2x_n^2$ in
  $\Psi_1$ and $E_f=\{(x,t):x_{n-1}\leq 0,x_{n}=0 \}\cap \Psi_1$. Then
  $u(\overline{A}_r)=0$ for $r\in (0,1)$ but obviously $u$ does not
  vanish in $\Psi_r\cap D$. 
\end{remark}

We next state a generalization of the local comparison theorem.

\begin{theorem}\label{thm:nondegeneracy2}
Let $u_i$, $i=1,2$, be nonnegative functions in $D$, continuously
vanishing on $E_f$, and satisfying  
\begin{align*}
|\Delta u_i-\partial _tu_i|\leq C_0&\quad\text{in }D\\
u_i(x,t)\geq c_0d(x,t)^\gamma&\quad\text{in }D,
\end{align*}
where $d(x,t)=\dist_p((x,t);E_f)$, $0<\gamma<2$, $c_0>0$,
$C_0\geq0$.
Let also $M=\max\{\sup_D u_1,\sup_D
u_2\}$. Then there exists a 
constant $C=C(n, L,\gamma, C_0,c_0, M)>0$ such that 
\begin{equation}\label{eq:nondegeneracy2}
C^{-1}\frac{u_1(A_{1/4})}{u_2(A_{1/4})}\leq \frac{u_1(x,t)}{u_2(x,t)}\leq C\frac{u_1(A_{1/4})}{u_2(A_{1/4})},\quad (x,t)\in \Psi _{1/8}\cap D.
\end{equation}
Moreover, if $u_1$ and $u_2$ are symmetric
in $x_n$, then $u_1/u_2$ extends to a function in
$C^{\alpha}(\overline{\Psi_{1/8}})$ for some $0<\alpha <1$, with
$\alpha$ and  $C^{\alpha}$ norm  depending 
only on $n, L, \gamma, C_0,c_0, M$. 
\end{theorem}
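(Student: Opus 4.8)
The plan is to reduce the statement to the comparison theorem for caloric functions, Theorem~\ref{thm:backward3}, and to Theorem~\ref{thm:nondegeneracy}, by the caloric replacement device already used in the proof of Theorem~\ref{thm:nondegeneracy}; the point is that the error this device produces is harmless thanks to the nondegeneracy and the strict inequality $\gamma<2$. Recall the mechanism: for $(z,h)\in E_f$ and small $\rho>0$, if $u^{*}$ solves the heat equation in $\Psi_\rho(z,h)\cap D$ with $u^{*}=u_i$ on $\partial_p(\Psi_\rho(z,h)\cap D)$, then comparison of $u_i$ with the barriers $u^{*}\pm C(C_0,n)(|x|^{2}-t)$ gives $|u_i-u^{*}|\le C(C_0,n)\rho^{2}$ on $\Psi_\rho(z,h)\cap D$, while $u^{*}$ is a nonnegative caloric function vanishing continuously on $\Delta_\rho(z,h)$ (which, for $(z,h)\in\overline{\Psi_{1/16}}\cap E_f$ and $\rho$ small, is just $\Psi_\rho(z,h)\cap E_f$). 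Since $u_i\ge c_0 d^{\gamma}$ with $d=\dist_p(\cdot,E_f)$ and $\gamma<2$, for $\rho$ below a threshold $\rho_{*}=\rho_{*}(n,L,\gamma,C_0,c_0)$ the replacement still obeys $u^{*}\ge\tfrac{c_0}{2}d^{\gamma}$ on $\{d\ge\rho/4\}$; in particular the values of $u^{*}$ at the corkscrew points $\overline A^{\pm}_{\rho'}(z,h)$, $\underline A^{\pm}_{\rho'}(z,h)$ with $\rho/4\le\rho'\le\rho$ are comparable, with constants depending only on $n$ and $L$, to those of $u_i$, while $\sup u^{*}\le\sup_D u_i\le M$.

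I would first prove the two-sided bound \eqref{eq:nondegeneracy2}. The growth estimate \eqref{eq:nondegeneracy}, localized via Lemma~\ref{lem:loc-property} at a point $(y,s)\in E_f$, bounds $u_i(x,t)$ on $\Psi_r(y,s)\cap D$ by a constant times the relevant future corkscrew value, and \eqref{eq:nondeg-vari1} turns this into a bound by the corresponding past corkscrew value; a matching lower bound for $(x,t)\in\Psi_r(y,s)\cap D$ with $d(x,t)\ge\mu r$ follows by running a Harnack chain (Lemma~\ref{lem:H-chain-prop-I}) from a past corkscrew to $(x,t)$ for the splitting $u_i=h_i+w_i$ into a caloric part $h_i$ and a part $w_i$ with $|w_i|\le C C_0 r^{2}$ (so $h_i+CC_0r^{2}\ge u_i\ge0$ is caloric), the resulting error $O(C_0 r^{2})$ being absorbed into $C^{-1}u_i(\underline A^{\pm}_r)\gtrsim c_0 r^{\gamma}$ once $r<\rho_{*}$. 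Consequently $u_i(x,t)$, for such $(x,t)$, is comparable to the corkscrew value of $u_i$, hence $u_1(x,t)/u_2(x,t)$ is comparable to the ratio of corkscrew values of $u_1$ and $u_2$; near $E_f\setminus G_f$ (away from $G_f$) one simply restricts to the side $D_{\pm}$ containing $(x,t)$, a Lipschitz cylinder, where this is classical. Concatenating these comparisons — together with the analogous interior estimates in the region $\{d\ge\delta_1\}$, obtained again by the caloric-plus-$O(C_0)$ splitting — over the finitely many scales between a fixed $\delta_1$ and $1/4$ gives \eqref{eq:nondegeneracy2} with $C=C(n,L,\gamma,C_0,c_0,M)$, the dependence on $M$ entering only through \eqref{eq:nondeg-vari1}.

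For the H\"older regularity — which, under the $x_n$-symmetry hypothesis, makes sense on all of $\overline{\Psi_{1/8}}$, the symmetry forcing the two one-sided limits of $u_1/u_2$ along $E_f\setminus G_f$ to agree — I would run the oscillation--decay iteration underlying Theorem~\ref{thm:backward3} (as in \cite{CS}) at dyadic scales, with caloric replacement at each scale. Fix $(z,h)\in E_f\cap\overline{\Psi_{1/16}}$ and small dyadic $r<\rho_{*}$, set $\Omega_\rho=\Psi_\rho(z,h)\cap D$, and let $u_i^{*}$ be the caloric replacement of $u_i$ in $\Omega_r$. From \eqref{eq:nondeg-vari1} one checks that $\sup u_i^{*}/u_i^{*}(\underline A_{3r/4}(z,h))\le C(n,L,\gamma,C_0,c_0,M)$ \emph{uniformly in $r$}; hence the rescaled Theorem~\ref{thm:backward3} applies to $u_1^{*},u_2^{*}$ with a constant independent of $r$ and yields a definite decay $\operatorname{osc}_{\Omega_{r/8}}(u_1^{*}/u_2^{*})\le\theta\,\operatorname{osc}_{\Omega_{r}}(u_1^{*}/u_2^{*})$ with $\theta=\theta(n,L,\gamma,C_0,c_0,M)<1$. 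One then transfers back to $u_1/u_2$: on $\{d\ge\mu r\}\cap\Omega_{r/2}$ one has $u_i\sim u_i^{*}\sim u_i(\overline A^{+}_r(z,h))\gtrsim r^{\gamma}$ and $|u_i-u_i^{*}|\le Cr^{2}$, whence $|u_1/u_2-u_1^{*}/u_2^{*}|\le Cr^{2-\gamma}$ there, while on $\{d<\mu r\}$ one recenters at the nearest point of $E_f$ and recurses to a smaller scale, adding a geometrically small further error. Combining the decay for the replacements with these errors — which sum to a convergent series since $2-\gamma>0$ — gives $\operatorname{osc}_{\Omega_\rho}(u_1/u_2)\le C\rho^{\alpha}$ for all small $\rho$ and some $\alpha=\alpha(n,L,\gamma,C_0,c_0,M)\in(0,1)$, and the estimate away from $E_f$ follows once more from the interior estimates; this is the asserted $C^{\alpha}(\overline{\Psi_{1/8}})$ bound. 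I expect the main obstacle to be exactly this final bookkeeping: absorbing the $O(r^{2})$ replacement errors uniformly across all scales and keeping the aspect ratios $\sup u_i^{*}/u_i^{*}(\underline A)$ of the replacements bounded — both of which rely on the strict inequality $\gamma<2$ and on the backward Harnack inequality \eqref{eq:nondeg-vari1}.
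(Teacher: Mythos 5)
There are two genuine gaps, one in each half of your argument, and both occur at the same place: points close to $E_f$ relative to the working scale. For the two-sided bound \eqref{eq:nondegeneracy2}, note that at points with $d(x,t)\geq\delta_1$ the estimate is essentially trivial (both ratios are pinned between constants depending on $c_0,\gamma,M$), so the whole content is at points with $d(x,t)=d$ small. There your argument yields only $u_1(x,t)/u_2(x,t)\sim u_1(A^\pm_d(y,s))/u_2(A^\pm_d(y,s))$, a comparison with the corkscrew ratio at the scale $d$ of the point itself; it never relates that corkscrew ratio to $u_1(A_{1/4})/u_2(A_{1/4})$ with a constant independent of $d$. ``Concatenating over the finitely many scales between a fixed $\delta_1$ and $1/4$'' cannot do this: the number of scales between $d$ and $1/4$ is of order $\log(1/d)$, and per-scale multiplicative constants compound to $d^{-\log C}$. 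This cross-scale statement is exactly the boundary Harnack content being proved, and the caloric-replacement device cannot supply it, because the replacement error $O(\rho^2)$ at scale $\rho$ is not small compared with $u_i\geq c_0 d^\gamma$ at points with $d\ll\rho$. The paper's proof of this half is entirely different: after normalizing $u_i(A_{1/4})=1$ it rescales $u_{i\rho}(x,t)=u_i(\rho x,\rho^2 t)/\rho^\gamma$, so that the right-hand side becomes $C_0\rho^{2-\gamma}\leq\epsilon_0$ while the nondegeneracy is preserved, and then applies Lemma~\ref{lem:signor-glob-local} directly to $h=u_{2\rho}-s\,u_{1\rho}$, followed by translation of the center along $E_f$; there is no reduction to caloric functions and no scale concatenation.

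In the H\"older part the same obstruction reappears and is not resolved by ``on $\{d<\mu r\}$ one recenters at the nearest point of $E_f$ and recurses to a smaller scale, adding a geometrically small further error.'' The transfer $|u_1/u_2-u_1^{*}/u_2^{*}|\leq Cr^{2-\gamma}$ is valid only where $d\gtrsim r$; on $\{d<\mu r\}$ the replacement error $Cr^{2}$ dominates $u_2\geq c_0 d^{\gamma}$, and what you have at the smaller scale is again only multiplicative comparability, while the oscillation of $u_1/u_2$ over the near-$E_f$ part of the smaller box is precisely the inductive unknown; the recursion as described does not close, and the near-$E_f$ errors are not additive $O(r^{2-\gamma})$ quantities that sum geometrically. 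The paper closes the loop differently: at each dyadic scale it applies the already-proven nonhomogeneous estimate \eqref{eq:nondegeneracy2} itself (not Theorem~\ref{thm:backward3} for caloric replacements) to the pair $v_1=(u_1-l_k u_2)(\lambda^k\cdot)$, $v_2=u_2(\lambda^k\cdot)$, suitably normalized, via a dichotomy: either $\mu_k-l_k\leq C\lambda^{(2-\gamma)k}$, which gives the decay outright, or the normalization makes $|(\Delta-\partial_t)v_1|\leq\eta_0$, and then the nondegeneracy of $v_1,v_2$ is established through the parabolic Harnack inequality and Lemma~\ref{lem:h-nondeg}, with sup bounds from Theorem~\ref{thm:nondegeneracy}; the improvement $L_{k+1}-l_{k+1}\leq(1-c\lambda/2)(L_k-l_k)$ then requires no transfer near $E_f$ at all. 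Your auxiliary observations (uniform control of $\sup u_i^{*}/u_i^{*}(\underline A_{3r/4})$ in $r$, and the role of the $x_n$-symmetry in identifying the two one-sided limits along $E_f\setminus G_f$) are correct, but they do not repair these two gaps.
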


To prove this theorem, we will also need the following two lemmas, which
are essentially Lemmas~11.5 and 11.8 in \cite{DGPT}. The proofs are therefore omitted.

\begin{lemma}\label{lem:signor-glob-local} Let $\Lambda$ be a subset of
  $\R^{n-1}\times(-\infty,0]$, and $h(x,t)$ a continuous function
  in $\Psi_{1}$. Then for any $\delta_0>0$
  there exists $\epsilon_0>0$ depending only on $\delta_0$ and $n$
  such that if
\begin{enumerate}
\item[i)] $h\geq 0$ on $\Psi_1\cap\Lambda$,
\item[ii)] $(\Delta-\partial_t)h\leq \epsilon_0$ in $\Psi_1\setminus\Lambda$,
\item[iii)] $h\geq-\epsilon_0$ in $\Psi_1$,
\item[iv)] $h\geq\delta_0$ in $\Psi_1\cap\{|x_{n}|\geq \beta_n\}$, $\beta_n=1/(32\sqrt{n-1})$
\end{enumerate}
then $h\geq 0$ in $\Psi_{1/2}$.\qed
\end{lemma}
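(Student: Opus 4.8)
The plan is to prove this by comparison with an explicit barrier supported on the thin slab $\{|x_n|<\beta_n\}$; the subtlety is that $\Lambda$, lying in the hyperplane $\{x_n=0\}$, is not removable for the heat equation, so everything must be done on the slit domain $\Omega:=(\Psi_1\cap\{|x_n|<\beta_n\})\setminus\Lambda$, whose parabolic boundary I split as $\Gamma_a\cup\Gamma_b\cup\Gamma_c$ with $\Gamma_a=\{|x_n|=\beta_n\}\cap\overline{\Psi_1}$, $\Gamma_b=\Lambda$ and $\Gamma_c=\partial_p\Psi_1\cap\{|x_n|<\beta_n\}$.

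After reducing to $h\in C(\overline{\Psi_1})$ (restrict to $\Psi_\rho$, $\rho\uparrow1$), conditions (i) and (iv) show $\{h<0\}\cap\Psi_1\subset\Omega$, so it suffices to prove $h\ge0$ on $\Psi_{1/2}\cap\Omega$. First I would remove the right-hand side by setting $\tilde h:=h-\tfrac{\epsilon_0}{2}x_n^2$: since $(\Delta-\partial_t)(\tfrac{\epsilon_0}{2}x_n^2)=\epsilon_0$, (ii) makes $\tilde h$ supercaloric in $\Omega$, and using (i), (iii), (iv) one gets $\tilde h\ge\delta_0-\tfrac{\epsilon_0}{2}\beta_n^2$ on $\Gamma_a$, $\tilde h=h\ge0$ on $\Gamma_b$ (where $x_n=0$), and $\tilde h\ge-\epsilon_0(1+\tfrac12\beta_n^2)$ on $\Gamma_c$. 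Thus $\tilde h$ is nonnegative on $\partial_p\Omega$ except on the distant lateral piece $\Gamma_c$, where it may be $\approx-\epsilon_0$.

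The core of the proof is to cancel this $\Gamma_c$–deficit by a correction that nonetheless stays nonpositive deep inside. I would let $W$ be the caloric function in $\Omega$ with boundary data $1$ on $\Gamma_c$, $0$ on $\Gamma_b$, $-\lambda$ on $\Gamma_a$, for a constant $\lambda=\lambda(n)>0$, and set $\psi:=2\epsilon_0W$. On $\partial_p\Omega$ then $\psi\ge0$ on $\Gamma_b$, $\psi=2\epsilon_0$ on $\Gamma_c$, $\psi\ge-2\lambda\epsilon_0$ on $\Gamma_a$, so the previous bounds give $\tilde h+\psi\ge0$ on $\partial_p\Omega$ provided $\epsilon_0(\tfrac12\beta_n^2+2\lambda)\le\delta_0$. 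As $\tilde h+\psi$ is supercaloric, the minimum principle yields $\tilde h+\psi\ge0$ in $\Omega$, hence $h=\tilde h+\tfrac{\epsilon_0}{2}x_n^2\ge-\psi$ in $\Omega$; so everything comes down to showing $\psi\le0$, i.e. $W\le0$, on $\Psi_{1/2}\cap\Omega$, which together with (iv) on $\{|x_n|\ge\beta_n\}$ completes the argument. One then takes $\epsilon_0:=\delta_0/(\tfrac12\beta_n^2+2\lambda)$ (smaller for the few small $n$), depending only on $\delta_0$ and $n$.

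The required bound $W\le0$ on $\Psi_{1/2}$ is, in caloric–measure form, $\omega^{(x,t)}_\Omega(\Gamma_c)\le\lambda\,\omega^{(x,t)}_\Omega(\Gamma_a)$ for $(x,t)\in\Psi_{1/2}\cap\Omega$, and this is the main obstacle. The point is that in a slab of thickness $2\beta_n$ the heat flow from $\Psi_{1/2}$ escapes to the transversal faces $\{|x_n|=\beta_n\}$ overwhelmingly more easily than to $\Gamma_c$, which lies at parabolic distance $\gtrsim\tfrac12$ (the $x_{n-1}$–face, at distance $\sim nL$, being negligible — hence no $L$ in the final constants); quantitatively one expects $\omega^{(x,t)}_\Omega(\Gamma_c)\lesssim e^{-c(n)/\beta_n}\omega^{(x,t)}_\Omega(\Gamma_a)$. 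I would establish this via an explicit supercaloric comparison function on $\Omega$, built as a sum over the faces of $\Gamma_c$ of products of a factor vanishing linearly at $\{x_n=0\}$ with exponentials in $x_1,\ldots,x_{n-1}$; the precise value $\beta_n=1/(32\sqrt{n-1})$ is what makes the $(n-1)$ transversal directions share the spectral budget $(\pi/\beta_n)^2$ so that the decay rates come out universal, and $\lambda(n)$ is then the implied ratio constant. The remaining points — that $\Lambda$ may meet $\partial\Psi_1$ (so $\Gamma_b,\Gamma_c$ overlap on a lower–dimensional set, where both boundary conditions hold) and that the comparison principle is applied on the irregular domain $\Omega$ (legitimate for supercaloric functions via $\liminf$ at parabolic boundary points) — are routine.
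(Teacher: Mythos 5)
Your outer reductions are sound: restricting to the slit slab $\Omega=(\Psi_1\cap\{|x_n|<\beta_n\})\setminus\Lambda$, absorbing the right-hand side with $\tfrac{\epsilon_0}{2}x_n^2$, and the bookkeeping showing that everything follows once $W\le 0$ on $\Psi_{1/2}\cap\Omega$. But that last claim is where the whole content of the lemma sits, and your treatment of it has a genuine gap. The inequality $\omega^{(x,t)}_\Omega(\Gamma_c)\le\lambda(n)\,\omega^{(x,t)}_\Omega(\Gamma_a)$ has to hold uniformly over \emph{arbitrary} $\Lambda$ and at every point of $\Psi_{1/2}\cap\Omega$, including points on or arbitrarily close to the slit, where both caloric measures degenerate. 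The easy half is the upper bound $\omega^{(x,t)}_\Omega(\Gamma_c)\le Ce^{-c/\beta_n}$ (compare with the unslit slab), but it cannot be paired with a pointwise lower bound on $\omega^{(x,t)}_\Omega(\Gamma_a)$, since near $\Lambda$ (e.g.\ at a mid-plane point surrounded by $\Lambda$ up to a pinhole) that measure is arbitrarily small; only the \emph{ratio} statement can be true, and it carries essentially the full difficulty of the lemma: $-W$ is caloric in $\Omega$, vanishes on $\Lambda$, equals $\lambda$ on the slab faces and $-1$ on the far boundary, so ``$-W\ge 0$ on $\Psi_{1/2}$'' is, after rescaling, exactly the homogeneous case of the statement being proved. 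As written, the reduction is therefore close to circular unless this step is genuinely established.

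Moreover, the barrier you sketch for it cannot work in the stated form. Any admissible upper comparison function $V$ must dominate $W$ pointwise in $\Omega$; but if $\Lambda$ stays away from $\Gamma_c$ (say $\Lambda\subset\Psi_{1/4}$), then $W$ is of order one at interior points of $\Omega$ lying on the mid-plane $\{x_n=0\}$ at distance $O(\beta_n)$ from $\Gamma_c$, whereas every summand ``exponential in the lateral variables times a factor vanishing linearly at $\{x_n=0\}$'' vanishes there, and $V$ must in any case be $\ge 1$ on all of $\Gamma_c$, including $\Gamma_c\cap\{x_n=0\}$. In addition, such products (e.g.\ $e^{kx_1}\sin(k|x_n|)$) have an upward kink across the plane, i.e.\ they are subcaloric, not supercaloric, on $\{x_n=0\}\setminus\Lambda\subset\Omega$, and the negative correction needed to restore supercaloricity and to force $V\le 0$ in $\Psi_{1/2}$ destroys the required domination near $\Gamma_c\cap\{x_n=0\}$. (Also, the specific value $\beta_n=1/(32\sqrt{n-1})$ comes from a polynomial comparison function of the type $|x'|^2-(n-1)x_n^2$, not from a spectral balance of the lateral directions.) Note that the paper itself offers no proof here: it cites Lemmas~11.5 and 11.8 of \cite{DGPT}, where the statement is obtained by a short, direct comparison argument in $\Psi_1\setminus\Lambda$ with an explicit barrier built from conditions (i)--(iv), with no caloric-measure machinery. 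Your preliminary reductions could be retained, but the core step needs an argument of that direct type (or a full proof of the ratio estimate for arbitrary thin slits), not the barrier as proposed.
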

\begin{lemma}\label{lem:h-nondeg}
\pushQED{\qed}
For any $\delta_0>0$ there exists $\epsilon_0>0$ and
  $c_0>0$ depending only on $\delta_0$ and $n$ such that if $h$ is a
  continuous function on $\Psi_1\cap \{0\leq x_{n}\leq\beta_n\}$, $\beta_n=1/(32\sqrt{n-1})$, satisfying
\begin{enumerate}
\item[i)] $(\Delta -\partial_t)h\leq \epsilon_0$ in $\Psi_1\cap\{0<x_{n}<\beta_n\}$
\item[ii)] $h\geq 0$ in  $\Psi_1\cap\{0<x_{n}<\beta_n\}$,
\item[iii)] $h\geq \delta_0$ on $\Psi_1\cap\{x_{n}=\beta_n\}$,
\end{enumerate}
then
\[
h(x,t)\geq c_0 x_{n}\quad\text{in }\Psi_{1/2}\cap\{0<x_{n}<\beta_n\}.\qedhere
\]
\popQED
\end{lemma}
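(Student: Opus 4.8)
\textbf{Plan of proof for Lemma~\ref{lem:h-nondeg}.}
The strategy is to build an explicit subsolution barrier in the slab $\Psi_1\cap\{0<x_n<\beta_n\}$ that vanishes on $\{x_n=0\}$, is linear in $x_n$ near that face, lies below $h$ on the rest of the parabolic boundary of a suitable subdomain, and accounts for the slack $\epsilon_0$ in the subcaloricity condition. Concretely, I would first fix the geometry: work on the cylinder $Q:=\Psi_{3/4}\cap\{0<x_n<\beta_n\}$ (or any fixed intermediate box between $\Psi_{1/2}$ and $\Psi_1$) and note that its parabolic boundary splits into the bottom face $\{x_n=0\}$, the top face $\{x_n=\beta_n\}$ where $h\geq\delta_0$ by hypothesis (iii), and the ``side'' part $\partial_p\Psi_{3/4}\cap\{0<x_n<\beta_n\}$ together with the initial time slice. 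On the bottom face $h\geq 0$ by (ii); on the top face $h\geq\delta_0$; on the remaining side portions we only know $h\geq 0$, so the barrier must also vanish (or be nonpositive) there.

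Next I would write down the comparison function. Take
\[
w(x,t)=a\,x_n\,\chi(x'',x_{n-1},t)-b\,\epsilon_0\,\psi(x,t),
\]
where the first term is the ``good'' part: $a>0$ small, and $\chi$ is a smooth fixed cutoff, supported in $\Psi_{3/4}'$ in the $(x'',x_{n-1},t)$ variables, equal to $1$ on $\Psi_{1/2}'$, so that $a\,x_n\,\chi$ vanishes on the lateral sides of $Q$ and on $\{x_n=0\}$; the second term is the ``error absorbing'' part, with $\psi$ a fixed nonnegative caloric-or-supercaloric function on $Q$ (for instance a polynomial like $|x|^2-(2n)t+\text{const}$ adjusted to be $\geq 0$ on $\overline Q$ and with $(\Delta-\partial_t)\psi$ a controlled constant), chosen so that $b\epsilon_0\psi$ dominates the contribution of the $\epsilon_0$ slack. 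A cleaner variant: let $w_1$ solve the heat equation in $Q$ with boundary data $\delta_0/2$ on the top face and $0$ on the rest of $\partial_p Q$; by the interior Harnack/boundary regularity for the (smooth, flat) domain $Q$ one has $w_1(x,t)\geq c\,x_n$ on $\Psi_{1/2}\cap\{0<x_n<\beta_n\}$ for a dimensional $c>0$, since $\{x_n=0\}$ is a flat portion of $\partial_p Q$ and Hopf-type estimates apply. Then set $w=w_1-b\epsilon_0(|x|^2-2nt+M_0)$ for suitable constants so that $(\Delta-\partial_t)w\geq \epsilon_0\geq (\Delta-\partial_t)h$ in $Q$ and $w\leq h$ on $\partial_p Q$ once $\epsilon_0$ is small enough (depending only on $\delta_0$ and $n$): on the top face $w\leq \delta_0/2 + \text{(small)}\leq \delta_0\leq h$; on the bottom and side faces $w\leq 0 + \text{(small)}$, and here we use hypothesis (ii), $h\geq 0$, together with smallness of $\epsilon_0$—this is the delicate point, so I would instead arrange $w\leq 0$ exactly on those faces by choosing the subtracted polynomial to vanish there, i.e.\ use a term of the form $b\epsilon_0(\,\beta_n^2 - (\beta_n-x_n)^2\,)\cdot(\text{cutoff})$ or simply $b\epsilon_0 x_n(\beta_n-x_n)$, which is $0$ on both $\{x_n=0\}$ and $\{x_n=\beta_n\}$, nonnegative in between, and has $(\Delta-\partial_t)[\,b\epsilon_0 x_n(\beta_n-x_n)\,]=-2b\epsilon_0<0$; combining with a genuine heat subsolution handling the lateral cutoff error.

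Then the comparison principle in $Q$ gives $h\geq w$ throughout $Q$, hence $h(x,t)\geq w(x,t)\geq c_0 x_n$ on $\Psi_{1/2}\cap\{0<x_n<\beta_n\}$ after choosing $\epsilon_0$ small enough that the $O(\epsilon_0)$ corrections do not destroy the linear lower bound coming from $w_1$; the resulting $c_0$ depends only on $\delta_0$ and $n$. \textbf{The main obstacle} is the bookkeeping on the lateral and initial faces: there $h$ is only known to be $\geq 0$, not strictly positive, so the barrier $w$ must be built to be $\leq 0$ there while still retaining a genuinely positive linear-in-$x_n$ lower bound on $\Psi_{1/2}$; this forces the introduction of a spatial cutoff in the $(x'',x_{n-1},t)$ variables, which in turn generates an extra $(\Delta-\partial_t)$-error that has to be absorbed—either into the $\epsilon_0$ budget or by enlarging $Q$ slightly and using that $h$ has room to spare. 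Once the geometry of $Q$ and the precise form of the polynomial/cutoff correction are pinned down, the rest is a routine application of the parabolic comparison principle and the flat-boundary Hopf estimate (which holds because $\{x_n=0\}$ is a smooth, in fact flat, part of $\partial_p Q$), so no further heavy machinery is needed.
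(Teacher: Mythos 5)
The paper does not actually prove this lemma: it states it and refers to Lemma~11.8 of \cite{DGPT}, so there is no in-text argument to compare against. Your proposal is a correct, self-contained proof of the standard barrier/comparison type, and in its final assembled form it works: take $Q=\Psi_{3/4}\cap\{0<x_n<\beta_n\}$, let $w_1$ be the caloric function in $Q$ with boundary data $\delta_0/2$ on $\{x_n=\beta_n\}$ and $0$ on the rest of $\partial_p Q$, and compare $h$ with $w=w_1-\tfrac12\epsilon_0\,x_n(\beta_n-x_n)$. Indeed $h+\tfrac12\epsilon_0 x_n(\beta_n-x_n)$ is supercaloric in $Q$ by (i) and dominates the boundary data of $w_1$ by (ii)--(iii), so the maximum principle (or the PWB definition, which conveniently sidesteps the discontinuity of $w_1$'s boundary data at the edges) gives $h\geq w_1-\tfrac12\epsilon_0 x_n(\beta_n-x_n)\geq w_1-\tfrac12\epsilon_0\beta_n x_n$; choosing $\epsilon_0$ small relative to $\delta_0$ then yields the claim, with all constants depending only on $n$ and $\delta_0$ (the elongation of $\Psi_r$ in the $x_{n-1}$-direction only increases the distance to the lateral boundary, so no $L$-dependence enters). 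The two ingredients you invoke but leave as ``standard'' do need to be recorded: a caloric-measure lower bound $w_1\geq c(n)\delta_0$ on, say, $\Psi_{5/8}\cap\{x_n=\beta_n/2\}$ (available because points of $\Psi_{1/2}$ sit a fixed time $\geq 5/16$ above the initial face of $Q$, so the usual small-cylinder comparison applies), and a flat-boundary Hopf/barrier estimate converting this into $w_1\geq c(n)\delta_0\,x_n$ on $\Psi_{1/2}\cap\{0<x_n<\beta_n\}$; both are routine with explicit subsolutions in small boxes. Two of the intermediate variants you float should be discarded, as you in effect do yourself: the correction $b\epsilon_0(|x|^2-2nt+M_0)$ does not vanish on the bottom and lateral faces and would break the ordering $w\leq h$ there, and the cutoff term $a\,x_n\chi(x'',x_{n-1},t)$ is not a subsolution without absorbing the cutoff error; the $w_1$ plus $x_n(\beta_n-x_n)$ construction avoids both issues, and with it your argument is complete.
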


\begin{proof}[Proof of Theorem~\ref{thm:nondegeneracy2}]
We first note that arguing as in the proof of
Theorem~\ref{thm:nondegeneracy} and using Theorem~\ref{thm:backward2}, we
will have that 
\begin{equation}\label{eq:rho0}
u_i(x,t)\leq C u_i(A_{1/4}), \quad (x,t)\in \Psi_{1/8},
\end{equation}
for $C=C(n, L,\gamma, C_0,c_0, M)$. 
Next, dividing $u_i$ by $u_i(A_{1/4})$, we can assume
$u_i(A_{1/4})=1$. Then, consider the rescalings  
$$
u_{i\rho}(x,t)=\frac{u_i(\rho x, \rho^2 t)}{\rho^\gamma}, \quad \rho\in (0,1),\quad i=1,2.$$
It is immediate to verify that $u_{i\rho}$ satisfy for $(x,t)\in
\Psi_{1/(8\rho)}\cap D$, 
\begin{align}
& |(\Delta -\partial _t) u_{i\rho}(x,t)|\leq C_0 \rho^{2-\gamma}, \label{eq:rho1}\\
& u_{i\rho}(x,t)\geq c_0  \dist((x,t), E_{f_\rho})^\gamma,\label{eq:rho2}\\
& u_{i\rho}(x,t)\leq \frac{C}{\rho^\gamma}, \quad C \text{ is the constant in \eqref{eq:rho0}}\label{eq:rho3},
\end{align}
where $f_\rho(x'',t)=(1/\rho)f(\rho x'',\rho^2 t)$ is the scaling of $f$.
By \eqref{eq:rho2} there exists $c_n>0$ such that 
\begin{equation}\label{eq:rho4}
u_{i\rho}(x,t)\geq c_0c_n,\quad (x,t)\in \Psi_{1/(8\rho)}\cap \{|x_{n}|\geq \beta_n\}. 
\end{equation}
Consider now the difference 
$$
h=u_{2\rho}-su_{1\rho},
$$ 
for a small positive $s$, specified below. By \eqref{eq:rho1},
\eqref{eq:rho4}, \eqref{eq:rho3} one can choose a positive
$\rho=\rho(n, L,\gamma, C_0,c_0, M)<1/16$ and $s=s(\rho,n,c_0,C)>0$ such
that   
\begin{alignat*}{2}
& h(x,t)\geq c_0c_n-s \cdot \frac{C}{\rho^\gamma}\geq \frac{c_0c_n}{2},&\quad& (x,t)\in \Psi_{1/(8\rho)}\cap \{|x_{n}|\geq \beta_n\},\\
& h(x,t) \geq -s \cdot \frac{C}{\rho^\gamma}\geq -\epsilon_0,&\quad& (x,t)\in \Psi_{1/(8\rho)},\\
& |(\Delta-\partial_t)h(x,t)|\leq  C_0 \rho^{2-\gamma}\leq \epsilon_0,&\quad& (x,t)\in \Psi_{1/(8\rho)}\cap D,
\end{alignat*}
where $\epsilon_0=\epsilon_0(c_0,c_n, n)$ is the constant in Lemma~11.5. Thus by Lemma~11.5, $h>0$ in $\Psi_{1/2}\cap D$, which implies
\begin{equation}\label{eq:rho5}
\frac{u_1(x,t)}{u_2(x,t)}\leq \frac{1}{s},\quad (x,t)\in \Psi_{\rho/2}\cap D.
\end{equation}
By moving the origin to any $(z,h)\in \Psi_{1/8}\cap E_f$ we will
therefore obtain the bound
\begin{equation}\label{eq:rho5-1}
\frac{u_1(x,t)}{u_2(x,t)}\leq C(n, L,\gamma, C_0,c_0, M)
\end{equation}
for any $(x,t)\in \Psi_{1/8}\cap \cN_{\rho/2}(E_f)\cap D$. On the other
hand, for $(x,t)\in \Psi_{1/8}\setminus \cN_{\rho/2}(E_f)$ the
estimate \eqref{eq:rho5-1} will follow from \eqref{eq:nondeg} and
\eqref{eq:rho0}.
Hence \eqref{eq:rho5-1} holds for any $(x,t)\in \Psi_{1/8}\cap D$, which gives the bound from above in
\eqref{eq:nondegeneracy2}. Changing the roles of $u_1$ and $u_2$ we
get the bound from below.

The proof of $C^\alpha$ regularity follows by iteration from
\eqref{eq:nondegeneracy2} similarly to the proof of Corollary~13.8 in
\cite{CS}; however, we need to make sure that at every
step the nondegeneracy condition is satisfied. We will only verify the H\"older continuity of $u_1/u_2$ at the origin, the
rest being standard.

For $k\in\N$ and $\lambda>0$ to be specified below let
$$
l_k=\inf_{\Psi_{\lambda^k}\cap D}\frac{u_1}{u_2},\quad L_k=\sup_{\Psi_{\lambda^k}\cap D}\frac{u_1}{u_2}.
$$
We then know that $1/C\leq l_k\leq L_k\leq C$ for $\lambda\leq
1/8$. Let also 
$$
\mu_k=\frac{u_1(\underline A_{\lambda^k/4})}{u_2(\underline A_{\lambda^k/4})}\in [l_k,L_k].
$$
Then there are two possibilities: 
$$
\text{either}\quad L_k-\mu_k\geq \frac12
(L_k-l_k)\quad\text{or}\quad \mu_k-l_k\geq \frac12
(L_k-l_k).
$$
For definiteness, assume that we are in the latter case, the former
cases being treated similarly.
Then consider two functions
$$
v_1(x,t)=\frac{u_1(\lambda^k x,\lambda^{2k} t)-l_k u_2(\lambda^k
  x,\lambda^{2k}t)}{u_1(\underline A_{\lambda^k/4})-l_k u_2(\underline
  A_{\lambda^k/4})},\quad
v_2(x,t)=\frac{u_2(\lambda^kx,\lambda^{2k}t)}{u_2(\underline A_{\lambda^k/4})}.
$$
In $\Psi_1\setminus E_{f_{\lambda^k}}$, we will have
\begin{align*}
|(\Delta-\partial_t) v_1(x,t)|&\leq
\frac{\lambda^{2k}(1+l_k)C_0}{u_1(\underline A_{\lambda^k/4})-l_k
  u_2(\underline A_{\lambda^k/4})},\\
|(\Delta-\partial_t) v_2(x,t)|&\leq
\frac{\lambda^{2k}C_0}{u_2(\underline A_{\lambda^k/4})}.
\end{align*}
To proceed, fix a small $\eta_0>0$, to be specified below. Then from the nondegeneracy of $u_2$, we immediately have
$$
|(\Delta-\partial_t) v_2(x,t)|\leq C\lambda^{(2-\gamma)k}<\eta_0,
$$
if we take $\lambda$ small enough. For $v_1$, we have a dichotomy:
$$
\text{either}\quad |(\Delta-\partial_t) v_1(x,t)|\leq
\eta_0\quad\text{or}\quad \mu_k-l_k\leq C\lambda^{(2-\gamma)k}.
$$
In the latter case, we obtain
\begin{equation}
\label{eq:nondeg-iter-1}
L_k-l_k\leq 2(\mu_k-l_k)\leq C\lambda^{(2-\gamma)k}.
\end{equation}
In the former case we notice that both functions $v=v_1$, $v_2$
satisfy 
$$
v\geq 0,\quad v(\underline A_{1/4})=1\quad\text{and}\quad |(\Delta-\partial_t) v(x,t)|\leq
\eta_0\quad\text{in }\Psi_1\setminus E_{f_{\lambda^k}} 
$$
and that $v$ vanishes continuously on $\Psi_1\cap E_{f_{\lambda^k}}$.
We next establish a nondegeneracy property for such $v$.
Indeed, first note that by the parabolic Harnack
inequality, see Theorems 6.17 and
6.18 in \cite{Lieberman}, for small enough $\eta_0$, we will have that
$$
v\geq c_n\quad\text{on }\Psi_{1/8}\cap \{|x_n|\geq \beta_n/8\}.
$$
Then, by invoking Lemma~\ref{lem:h-nondeg}, we will obtain that
\begin{equation}
\label{eq:nondeg-iter-2}
v(x,t)\geq c_n|x_n|\quad\text{in }\Psi_{1/16}\setminus E_{f_{\lambda^k}}.
\end{equation}
We further claim that
\begin{equation}
\label{eq:nondeg-iter-3}
v(x,t)\geq c\dist_p((x,t), E_{f_{\lambda^k}})\quad\text{in }\Psi_{1/32}\setminus E_{f_{\lambda^k}}.
\end{equation}
To this end, for $(x,t)\in \Psi_{1/32}\setminus E_{f_{\lambda^k}}$ let $d=\sup\{r:\Psi_r(x,t)\cap
E_{f_{\lambda^k}}=\emptyset\}$ and consider the box
$\Psi_d(x,t)$. Without loss of generality assume $x_n\geq 0$. Then let
$(x_*,t_*)=(x',x_n+d, t-d^2)\in\partial_p\Psi_d(x,t)$. From
\eqref{eq:nondeg-iter-2} we have that
$$
v(x_*,t_*)\geq c_n (x_n+d)\geq c_nd
$$
and applying the parabolic Harnack inequality, we obtain
$$
v(x,t)\geq c_n v(x_*,t_*)-C_n \eta_0 d^2\geq c_n d,
$$
provided $\eta_0$ is sufficiently small. Hence,
\eqref{eq:nondeg-iter-3} follows.

Having the nondegeneracy, we also have the bound from above for
functions $v_1$ and $v_2$. Indeed, by Theorem~\ref{thm:nondegeneracy}
for $v_1$ and $v_2$ we have
\begin{multline}
\label{eq:nondeg-iter-4}
\sup_{\Psi_1} v_1\leq C v_1(\overline A_{1/4})=
C\frac{u_1(\overline A_{\lambda^k/4})-l_k u_2(\overline
  A_{\lambda^k/4})}{u_1(\underline A_{\lambda^k/4})-l_k u_2(\underline
  A_{\lambda^k/4})}\\
\leq C \frac{u_2(\overline A_{\lambda^k/4})}{u_2(\underline
  A_{\lambda^k/4})}\frac{L_k-l_k}{\mu_k-l_k}\leq C
\end{multline}
and
\begin{equation}\label{eq:nondeg-iter-5}
\sup_{\Psi_1} v_2  \leq C v_2(\overline A_{1/4})=C \frac{u_2(\overline A_{\lambda^k/4})}{u_2(\underline
  A_{\lambda^k/4})}\leq C,
\end{equation}
where we have also invoked the second part of
Theorem~\ref{thm:nondegeneracy} for $u_2$.

We thus verified all conditions necessary for applying the estimate \eqref{eq:nondegeneracy2} to functions
$v_1$ and $v_2$. Particularly, the inequality from below, applied in $\Psi_{8\lambda}\setminus
  E_{f_{\lambda^k}}$, will give
$$
\inf_{\Psi_{\lambda}\setminus
  E_{f_{\lambda^k}}}\frac{v_1}{v_2}\geq c\frac{v_1(A_{2\lambda})}{v_2(A_{2\lambda})}\geq c\lambda
$$
for a small $c>0$, or equivalently 
$$
l_{k+1}-l_k\geq c\lambda (\mu_k-\l_k)\geq \frac{c\lambda}2 (L_k-l_k).
$$
Hence, we will have
\begin{equation}\label{eq:nondeg-iter-6}
L_{k+1}-l_{k+1}\leq L_k-\l_{k}-(\l_{k+1}-l_k)\leq \left(1-\frac{c\lambda}2\right)(L_k-l_k).
\end{equation}
Summarizing,  \eqref{eq:nondeg-iter-1} and \eqref{eq:nondeg-iter-6}
give a dichotomy: for any $k\in\N$,
$$
\text{either}\quad L_k-l_k\leq C
\lambda^{(2-\gamma)k}\quad\text{or}\quad L_{k+1}-l_{k+1}\leq (1-c\lambda/2)(L_k-l_k).
$$
This clearly implies that
$$
L_k-l_k\leq C\beta^k\quad\text{for some }\beta\in (0,1),
$$
for any $k\in\N$, which is nothing but the H\"older continuity of
$u_1/u_2$ at the origin. 
\end{proof}

We next want to prove a variant of Theorem~\ref{thm:nondegeneracy2} but
with $\Psi_r$ replaced with their lower halves
$$
\Theta_r=\Psi_r\cap\{t\leq 0\}.
$$
\begin{theorem}\label{thm:nondegeneracy2-lower}
Let $u_i$, $i=1,2$, be nonnegative functions in $\Theta_1\setminus E_f$,
continuously vanishing on $\Theta_1\cap E_f$, and satisfying  
\begin{align*}
|\Delta u_i-\partial _tu_i|\leq C_0&\quad\text{in }\Theta_1\setminus E_f\\
u_i(x,t)\geq c_0\dist((x,t),E_f)&\quad\text{in }\Theta_1\setminus E_f,
\end{align*}
for some $c_0>0$, $C_0\geq0$.
Let also $M=\max\{\sup_D u_1,\sup_D
u_2\}$. 
Moreover, if $u_1$ and $u_2$ are symmetric
in $x_n$, then $u_1/u_2$ extends to a function in
$C^{\alpha}(\overline\Theta_{1/8})$ for some $0<\alpha <1$, with
$\alpha$ and  $C^{\alpha}$ norm  depending only on $n, L, \gamma, C_0,c_0, M$.
\end{theorem}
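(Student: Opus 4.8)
The plan is to deduce this from Theorem~\ref{thm:nondegeneracy2} by extending $u_1,u_2$ forward in time past the slice $\{t=0\}$. The only hypothesis of Theorem~\ref{thm:nondegeneracy2} that is not available for functions defined only on $\Theta_1\setminus E_f$ is the existence of \emph{future} corkscrew points at boundary points with time near $0$ — the Carleson estimate, the local comparison theorem, and the nondegeneracy bound \eqref{eq:nondegeneracy} all reach forward by $\sim r^2$ — so it suffices to manufacture an extension for which all the hypotheses of Theorem~\ref{thm:nondegeneracy2} remain in force. For a small $\lambda=\lambda(n,L,c_0,C_0,M)>0$ to be fixed below, define $\tilde u_i$ on $\Psi_\lambda\setminus E_f$ by $\tilde u_i=u_i$ for $t\le 0$ and, for $t>0$, as the PWB solution of the heat equation in $(\Psi_\lambda\setminus E_f)\cap\{0<t<\lambda^2\}$ — which is again a domain with thin Lipschitz complement, hence regular by Section~\ref{sec:barriers} — with boundary data equal to $u_i(\cdot,0)$ on $\{t=0\}$, to $0$ on $E_f$, and to $u_i(\cdot,0)$ extended constantly in $t$ on $\partial\Psi_\lambda\cap\{0<t<\lambda^2\}$. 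Being continuous across $\{t=0\}$ and caloric for $t>0$, the function $\tilde u_i$ is nonnegative, bounded by $M$, vanishes continuously on $E_f$, satisfies $|\Delta\tilde u_i-\partial_t\tilde u_i|\le C_0$ on $\Psi_\lambda\setminus E_f$ in the distributional sense, and is symmetric in $x_n$ whenever $u_i$ is (choose the lateral data symmetric).

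The one property that requires an argument is the nondegeneracy $\tilde u_i(x,t)\ge c'\,\dist_p((x,t),E_f)$ for $0<t<\lambda^2$; for $t\le 0$ it is the hypothesis. Here Lemma~\ref{lem:h-nondeg} is the key tool. From $u_i(x,0)\ge c_0\,\dist_p((x,0),E_f)\ge c_0|x_n|$ on $\{t=0\}$ we get, in particular, a uniform positive lower bound on $\{|x_n|=\beta_n\lambda\}\cap\overline{\Psi_{\lambda/2}}\cap\{t=0\}$; since $\tilde u_i$ is a bounded caloric function continuous up to $\{t=0\}$, this lower bound persists (halved) on that set for $0\le t\le\tau_0$ with $\tau_0=\tau_0(n,L,c_0,C_0,M)>0$. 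Applying Lemma~\ref{lem:h-nondeg} after the parabolic rescaling $T^{\lambda}_{(0,0)}$, restricted to the slab $\{0<t<\tau_0\}$, yields $\tilde u_i(x,t)\ge c'|x_n|$ in $\Psi_{\lambda/4}\cap\{0<t<\tau_0\}$, and this is upgraded to $\tilde u_i(x,t)\ge c''\,\dist_p((x,t),E_f)$ exactly as in the derivation of \eqref{eq:nondeg-iter-3} (move up to $(x',x_n+d,t-d^2)$ with $d=\dist_p((x,t),E_f)$ and apply the interior Harnack inequality). Choosing $\lambda$ so small that $\lambda^2\le\tau_0$, the pair $(\tilde u_1,\tilde u_2)$, after rescaling by $T^\lambda_{(0,0)}$, satisfies all the hypotheses of Theorem~\ref{thm:nondegeneracy2} on $\Psi_1\setminus E_{f_\lambda}$ — with the exponent taken to be $\max\{1,\gamma\}$ if the given nondegeneracy exponent differs from $1$ — and with constants depending only on $n,L,\gamma,c_0,C_0,M$.

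Theorem~\ref{thm:nondegeneracy2} then gives, in the symmetric case, $\tilde u_1/\tilde u_2\in C^\alpha(\overline{\Psi_{\lambda/8}})$ with $\alpha$ and the $C^\alpha$ norm controlled by $n,L,\gamma,c_0,C_0,M$; restricting to $\overline{\Theta_{\lambda/8}}$ and using $\tilde u_i=u_i$ there gives $u_1/u_2\in C^\alpha(\overline{\Theta_{\lambda/8}})$. To pass to $\overline{\Theta_{1/8}}$, cover the compact set $\overline{\Theta_{1/8}}$ by finitely many parabolic neighborhoods: around each $(z,h)\in\overline{\Theta_{1/8}}\cap E_f$ we repeat the above with the origin translated to $(z,h)$ (the translated $f$ is still parabolically $L$-Lipschitz and all hypotheses are translation invariant), obtaining a $C^\alpha$ bound for $u_1/u_2$ with uniform constant on $\overline{\Theta_{c}(z,h)}$, $c=c(n,L,\gamma,c_0,C_0,M)$; around each $(z,h)\in\overline{\Theta_{1/8}}\setminus E_f$ — including points on the final slice $\{t=0\}$ — a $C^\alpha$ bound for $u_1/u_2$ in a fixed parabolic neighborhood follows from interior parabolic regularity on backward cylinders $Q_r(z,h)$ (which needs no condition on $\{t=0\}$) together with $u_2\ge c_0\,\dist_p^\gamma>0$ there. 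Patching these Hölder estimates in the usual way gives $u_1/u_2\in C^\alpha(\overline{\Theta_{1/8}})$. The symmetry hypothesis enters, as in the remark following Theorem~\ref{thm:nondegeneracy2}, to guarantee the continuous extension across $E_f\setminus G_f$, where the one-sided limits might otherwise disagree.

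The main obstacle is the second step: verifying that the forward caloric continuation of $u_i$ does not degenerate near $E_f$ for small positive times. This is precisely what Lemma~\ref{lem:h-nondeg} (the Signorini-type nondegeneracy lemma of \cite{DGPT}) delivers, starting from the linear lower bound $u_i(\cdot,0)\ge c_0|x_n|$ on the initial slice; everything else is a routine tracking of constants under the parabolic rescalings together with a standard covering argument.
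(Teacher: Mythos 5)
Your strategy is the same as the paper's: extend $u_i$ forward in time by solving the heat equation above $\{t=0\}$, show that the extension keeps the linear nondegeneracy near $E_f$ in a small box (this is exactly Lemma~\ref{lem:nondeg-extension}, proved there with the same two tools you invoke, the parabolic Harnack inequality and Lemma~\ref{lem:h-nondeg}), then apply Theorem~\ref{thm:nondegeneracy2} and cover $\overline{\Theta_{1/8}}$. Two steps, however, are asserted rather than proved, and both are precisely where the paper is more careful. First, the claim that the lower bound on $\{|x_n|=\beta_n\lambda\}$ ``persists (halved) for $0\le t\le\tau_0$ with $\tau_0=\tau_0(n,L,c_0,C_0,M)$'' does not follow from the stated reason: mere continuity of $\tilde u_i$ up to $\{t=0\}$ yields a $\tau_0$ depending on the individual function (through the modulus of continuity of its initial trace), not a uniform one. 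To make it uniform you would need interior parabolic estimates up to the final time slice for $u_i$ away from $E_f$ (giving an equicontinuous trace) plus a quantitative continuity estimate at $t=0^+$ for the caloric extension; or, much more simply --- and this is what the proof of Lemma~\ref{lem:nondeg-extension} does --- apply the parabolic Harnack inequality to the glued function, which after rescaling is a nonnegative solution with right-hand side bounded by $C_0\lambda$ across the slice $\{t=0\}$ in the region $\{|x_n|>\beta_n\lambda/2\}$ away from $E_f$; this propagates the bound forward in time with only a multiplicative constant and an additive error of size $C_nC_0\lambda$, uniformly in the function.

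Second, in the covering step, for $(z,h)\in\overline{\Theta_{1/8}}\cap E_f$ at distance of order one from $G_f$, ``translating the origin to $(z,h)$ and repeating the above'' is not licensed: Theorem~\ref{thm:nondegeneracy2} (and the corkscrew points $A_{1/4}$, $\overline A_r$, $\underline A_r$ entering its proof via Theorems~\ref{thm:nondegeneracy} and \ref{thm:backward2}) is formulated under the normalization $f(0,0)=0$, i.e.\ with the origin on $G_f$, whereas after your translation the origin lies in the relative interior of the translated $E_f$, where $\tilde f(0,0)>0$. At such points the domain locally splits into two flat half-boxes (case (2) of Lemma~\ref{lem:loc-property}), and the H\"older bound for the ratio should instead come from the corresponding results for parabolically Lipschitz cylinders applied on each side, with the $x_n$-symmetry gluing the two one-sided ratios continuously across $E_f$ --- which is exactly how the paper finishes. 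Both repairs are routine; with them your argument coincides with the paper's proof.
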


The idea is that the functions $u_i$ can be extended to
$\Psi_\delta$, for some $\delta>0$, while still keeping the same
inequalities, including the nondegeneracy condition.

\begin{lemma}\label{lem:nondeg-extension} Let $u$ be a nonnegative continuous function on
  $\Theta_1$ such that
\begin{align*} 
u=0&\quad\text{in }\Theta_1\cap E_f\\
|(\Delta-\partial_t) u|\leq C_0&\quad\text{in }\Theta_1\setminus E_f\\
u(x,t)\geq c_0\dist_p(x,t;E_f)&\quad\text{in }\Theta_1\setminus E_f. 
\end{align*}
for some $C_0\geq 0$, $c_0>0$.
Then, there exists positive $\delta$ and $\tilde c_0$ depending only
on $n$, $L$, $c_0$ and $C_0$, and a nonnegative extension $\tilde u$ of $u$ to
$\Psi_\delta$ such that
\begin{align*} 
\tilde u=0&\quad\text{in }\Psi_\delta\cap E_f\\
|(\Delta-\partial_t) \tilde u|\leq C_0&\quad\text{in }\Psi_\delta\setminus E_f\\
\tilde u(x,t)\geq \tilde c_0\dist_p((x,t),E_f)&\quad\text{in }\Psi_\delta\setminus E_f. 
\end{align*}
Moreover, we will also have that $\sup_{\Psi_\delta}\tilde u\leq \sup_{\Theta_1}u$.
\end{lemma}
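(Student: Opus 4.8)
The plan is to push the function forward in time by solving a Cauchy--Dirichlet problem for the heat equation. Write $g=(\Delta-\partial_t)u$ in $\Theta_1\setminus E_f$, so $|g|\le C_0$, and note that $u$ is continuous on $\Theta_1$, so that its trace $u(\cdot,0)$ is a well-defined continuous function on $\Psi_1\cap\{t=0\}$, nonnegative, vanishing on $E_f$, and satisfying $u(x,0)\ge c_0|x_n|$ (since $E_f\subset\{x_n=0\}$ forces $\dist_p((x,0),E_f)\ge|x_n|$). Fix a small $\delta>0$, to be chosen depending only on $n,L,c_0,C_0$, set $R^+:=\big(\Psi_{2\delta}\cap\{0<t<4\delta^2\}\big)\setminus E_f$, and let $\tilde u$ be the PWB solution in $R^+$ of $(\Delta-\partial_t)\tilde u=0$ with boundary data equal to $u(\cdot,0)$ on the bottom and lateral faces of $\Psi_{2\delta}$ and equal to $0$ on $E_f\cap\{0\le t<4\delta^2\}\cap\Psi_{2\delta}$; then set $\tilde u:=u$ on $\Psi_{2\delta}\cap\{t\le 0\}\setminus E_f$ and $\tilde u:=0$ on $\Psi_{2\delta}\cap E_f$.

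First I would check the basic properties. The bottom and lateral faces of $\Psi_{2\delta}$ are regular for the heat equation, and each point of $E_f$ carries an exterior flat parabolic cone, hence is regular by the Wiener-type criterion exactly as in Section~\ref{sec:barriers}; moreover every point of $E_f\cap\Psi_{2\delta}$ lies at a positive distance from the lateral boundary of $R^+$, so near it the data is identically $0$ and is therefore continuous there, while across $\{t=0\}$ the prescribed bottom data $u(\cdot,0)$ is continuous and matches $u$ from below. Thus $\tilde u$ is continuous on the open box $\Psi_{2\delta}$, extends $u$, and vanishes on $\Psi_{2\delta}\cap E_f$. The maximum principle gives $\sup_{R^+}\tilde u\le\sup_{\partial_pR^+}(\text{data})\le\sup_{\Theta_1}u$, so $\sup_{\Psi_{2\delta}}\tilde u\le\sup_{\Theta_1}u$. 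Finally $(\Delta-\partial_t)\tilde u=0$ in $R^+$ and $=g$ in $\Psi_{2\delta}\cap\{t<0\}\setminus E_f$, whence $|(\Delta-\partial_t)\tilde u|\le C_0$ in $\Psi_{2\delta}\setminus E_f$.

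For nonnegativity and nondegeneracy I would compare $\tilde u$ with the stationary subsolution $c_0|x_n|$: on $\partial_pR^+$ one has $c_0|x_n|\le\tilde u$ (this is $u(\cdot,0)\ge c_0|x_n|$ on the bottom and lateral faces, and $0=c_0\cdot 0$ on $E_f$), while $(\Delta-\partial_t)(c_0|x_n|)=2c_0\delta_{\{x_n=0\}}\ge 0=(\Delta-\partial_t)\tilde u$ in $R^+$, so $\tilde u\ge c_0|x_n|$ in $R^+$; the same bound on $\{t\le 0\}$ is the hypothesis, so $\tilde u\ge c_0|x_n|\ge 0$ on all of $\Psi_{2\delta}\setminus E_f$. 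To upgrade this to $\tilde u\ge\tilde c_0\dist_p((x,t),E_f)$ I would repeat the Harnack-chain argument of \eqref{eq:nondeg-iter-3} from the proof of Theorem~\ref{thm:nondegeneracy2}: for $(x,t)\in\Psi_\delta\setminus E_f$ with (say) $x_n\ge 0$, put $d=\dist_p((x,t),E_f)$ (comparable to the largest $\rho$ with $\Psi_\rho(x,t)\cap E_f=\emptyset$) and $(x_*,t_*)=(x',x_n+d,t-d^2)$; then $(x_*)_n\ge d$, so $\tilde u(x_*,t_*)\ge c_0 d$ by the bound just proved, or by the original nondegeneracy of $u$ when $t_*\le0$. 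A parabolic Harnack chain of length and constant depending only on $n,L$ joining $(x_*,t_*)$ to $(x,t)$ inside $\Psi_{2\delta}\setminus E_f$ (which is available since both points lie at parabolic distance $\asymp d$ from $E_f$ with time lag $\asymp d^2$) then gives $\tilde u(x,t)\ge c\,\tilde u(x_*,t_*)-CC_0d^2\ge\tfrac{cc_0}{2}d$, provided $d\le c(n,L,c_0,C_0)$; since $d\le C(n,L)\delta$ on $\Psi_\delta$ this holds once $\delta$ is small enough. Restricting everything from $\Psi_{2\delta}$ to $\Psi_\delta$ completes the construction.

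The main obstacle is precisely the propagation of the nondegeneracy into $\{t>0\}$: a direct comparison only yields $\tilde u\ge c_0|x_n|$, which is vacuous on the thin set $\{x_n=0\}\setminus E_f$, and one must combine it with the Harnack-chain step above (absorbing the $C_0d^2$ error by taking $\delta$ small when $C_0>0$) to recover the full distance-nondegeneracy. The secondary point requiring care is verifying that $R^+$ is regular for the heat equation at the points of $E_f$ and that the combined boundary data is admissible and continuous there, so that $\tilde u$ is genuinely continuous on $\Psi_\delta$ and vanishes on $\Psi_\delta\cap E_f$.
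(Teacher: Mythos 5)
Your construction is correct and its skeleton matches the paper's: both proofs extend $u$ past $t=0$ by solving the heat equation in the upper-time region (minus $E_f$) with boundary data inherited from $u$, observe that this preserves nonnegativity, the sup bound, the vanishing on $E_f$ and the bound $|(\Delta-\partial_t)\tilde u|\le C_0$, and then reduce everything to recovering the nondegeneracy at positive times, finishing with the same Harnack-chain upgrade from a lower bound linear in $|x_n|$ to a lower bound by $\dist_p(\cdot,E_f)$ as in \eqref{eq:nondeg-iter-3}. Where you genuinely diverge is the mechanism for that linear-in-$|x_n|$ bound at positive times: you exploit your specific choice of lateral data $u(\cdot,0)$, which dominates $c_0|x_n|$, so that the time-independent subcaloric barrier $c_0|x_n|$ lies below the data on all of $\partial_p R^+$ (including on $E_f$, where both vanish) and the comparison principle gives $\tilde u\ge c_0|x_n|$ in the whole upper region at one stroke, with no error term since the continuation is exactly caloric. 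The paper instead takes an arbitrary nonnegative continuous extension of $u|_{\partial_p\Theta}$ as lateral data, for which no such barrier comparison is available, and recovers the bound by rescaling by $\delta$, pushing positivity forward in time to $\{|x_n|=\beta_n/2\}$ via the parabolic Harnack inequality, and then invoking Lemma~\ref{lem:h-nondeg}; your route is more direct and bypasses Lemma~\ref{lem:h-nondeg} entirely, at the price of having to choose the lateral data carefully, while the paper's route is insensitive to that choice. Two small points to tidy up, both harmless: the data you prescribe may be discontinuous where the lateral faces of $\Psi_{2\delta}$ meet $E_f$ at positive times (since $f$ depends on $t$), which is fine only because you restrict to $\Psi_\delta$ at the end, so say so; and in the final Harnack-chain step the point $(x_*,t_*)=(x',x_n+d,t-d^2)$ and the chain can leave $\Psi_{2\delta}$ when $d$ is comparable to $C(n,L)\delta$ with $C(n,L)>1$ (the elongated boxes make $\dist_p(\cdot,E_f)\le C(n,L)\delta$ on $\Psi_\delta$, not $\le 2\delta$), so you should either perform the extension on $\Psi_{K\delta}$ with $K=K(n,L)$ or replace $d$ by $\min\{d,\delta\}$ in the choice of $(x_*,t_*)$; either fix is purely cosmetic and the constants still depend only on $n,L,c_0,C_0$.
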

\begin{proof} We first continuously extend the function $u$ from the
  parabolic boundary $\partial_p\Theta_{1/2}$ to
  $\partial_p\Psi_{1/2}$ by also keeping it nonnegative and bounded
  above by the same constant. Further, put $u=0$
  on $E_f\cap(\Psi_{1/2}\setminus \Theta_{1/2})$. Then extend $u$ to $\Psi_{1/2}$ by solving the Dirichlet problem for the heat
  equation in $(\Psi_{1/2}\setminus \Theta_{1/2})\setminus E_f$, with
  already defined boundary values. We still denote it the extended
  function by $u$. 

Then it is easy to see that $u$ is nonnegative in $\Psi_{1/2}$,
$\sup_{\Psi_{1/2}}u\leq\sup_{\Theta_1} u$, $u$
vanishes on $\Psi_{1/2}\cap E_f$ and $|(\Delta-\partial_t) u|\leq
C_0$ in $\Psi_{1/2}\setminus E_f$.  Note that we still have the
nondegeneracy property  $u(x,t)\geq c_0 \dist_p((x,t),E_f)$ for in
$\Theta_{1/2}\setminus E_f$, so it remains to prove the nondegeneracy
for $t\geq 0$. We will be able to do it in a small box $\Psi_\delta$,
as a consequence of Lemma~\ref{lem:h-nondeg}.

For $0<\delta<1/2$ consider the rescalings
$$
u_\delta(x,t)=\frac{u(\delta x,\delta^2 t)}{\delta},\quad (x,t)\in \Psi_{1/(2\delta)}.
$$
Then we have
\begin{align*}
|(\Delta -\partial_t)u_\delta|\leq C_0\delta,&\quad\text{in
}\Psi_1\setminus E_{f_\delta}\\
u_\delta(x,t)\geq c_0|x_{n}|&\quad\text{in }\Theta_1,
\end{align*}
where $f_\delta(x'',t)=(1/\delta)f(\delta x'',\delta^2 t)$ is the
rescaling of $f$.
Then by using the parabolic Harnack inequality (see Theorems 6.17 and
6.18 in \cite{Lieberman}) in $\Theta_1^\pm$, we
obtain that 
$$
u_\delta(x,t)\geq c_n c_0-C_nC_0\delta> c_1 \quad\text{on
}\{|x_{n}|=\beta_n/2\}\cap \Psi_{1/2}.
$$
Further, choosing $\delta$ small and applying Lemma~\ref{lem:h-nondeg},
we deduce that
$$
u_\delta(x,t)\geq c_2 |x_{n}|\quad\text{in } \Psi_{1/4}.
$$
Then, repeating the arguments based on the parabolic Harnack
inequality, as for the inequality \eqref{eq:nondeg-iter-3}, we obtain
\[
u(x,t)\geq C \dist_p((x,t),E_{f_\delta}),\quad \text{in }\Psi_{1/8}.
\]
Scaling back, this gives
\[
u(x,t)\geq C \dist_p((x,t),E_{f}),\quad \text{in }\Psi_{\delta/8}.\qedhere
\]
\end{proof}
\begin{proof}[Proof of Theorem~\ref{thm:nondegeneracy2-lower}]
Extend functions $u_i$ as is Lemma~\ref{lem:nondeg-extension} and
apply Theorem~\ref{thm:nondegeneracy2}. If we repeat this at every
$(y,s)\in \Theta_{1/8}\cap G_f$, we will obtain the H\"older
regularity of $u_1/u_2$ in $\cN_{\delta/8}(\Theta_{1/8}\cap
G_f)\cap\{t\leq 0\}$. For
the remaining part of $\Theta_{1/8}$, we argue as in the proof of
localization property Lemma~\ref{lem:loc-property} cases 1) ans 2),
and use the corresponding results for parabolically Lipschitz domains.
\end{proof}

\subsection{Parabolic Signorini problem} 
In this subsection we discuss an application of the boundary Harnack
principle in the parabolic Signorini problem. The idea of such
applications goes back to the paper Athanasopoulos and Caffarelli
\cite{AC}. The particular result that we will discuss here, can be found
also in \cite{DGPT}, with the same proof based on our
Theorem~\ref{thm:nondegeneracy2-lower}.

In what follows, we will use $H^{\ell,\ell/2}$, $\ell>0$, to denote the parabolic
H\"older classes, as defined for instance in \cite{LSU}.

For a given function $\phi\in H^{\ell,\ell/2}(Q_1')$, $\ell\geq 2$,
known as the \emph{thin obstacle}, we say that a function $v$ solves the
\emph{parabolic Signorini problem} if $v\in W^{2,1}_2(Q_1^+)\cap
H^{1+\alpha,(1+\alpha)/2}(\overline{Q_1^+})$ , $\alpha>0$, and
\begin{align}
\label{eq:par-sig-1}(\Delta -\partial _t)v=0 &\quad \text{in } Q_1^+,\\
\label{eq:par-sig-2}v\geq \phi,\quad -\partial _{x_{n}}v\geq 0,\quad (v-\phi)\partial_{x_{n}}v=0&\quad \text{on } Q'_1.
\end{align}
This kind of problems appears in many applications, such as thermics
(boundary heat control), 
biochemistry (semipermeable membranes and osmosis), and elastostatics
(the original Signorini problem). We refer to the book \cite{DL} for
the derivation of such models as well as for some basic existence and
uniqueness results. 

The regularity that we impose on the solutions
\eqref{eq:par-sig-1}--\eqref{eq:par-sig-2} is also well known in the
literature, see e.g.\ \cites{Ath1,Ur,AU}. It was proved recently in \cite{DGPT} that one can actually take
$\alpha=1/2$ in the regularity assumptions on $v$, which is the
optimal regularity as can be seen from the explicit example
$$
v(x,t)=\Re(x_{n-1}+i x_n)^{3/2},
$$
which solves the Signorini problem with $\phi=0$. One of the main
objects of study in the Signorini problem is the \emph{free boundary}
$$
G(v)=\partial_{Q_1'}(\{v>\phi\}\cap Q_1'),
$$
where $\partial_{Q_1'}$ is the boundary in the relative topology of
$Q_1'$. 

As the initial step in the study, we make the following reduction. We
observe that the difference  
$$
u(x,t)=v(x,t)-\phi(x',t)
$$
will satisfy
\begin{align}
\label{eq:par-sig-1-nonhom}(\Delta -\partial _t)u=g &\quad \text{in } Q_1^+,\\
\label{eq:par-sig-2-nonhom}u\geq 0,\quad -\partial _{x_{n}}u\geq 0,\quad u\partial_{x_{n}}u=0&\quad \text{on } Q'_1,
\end{align}
where $g=-(\Delta_{x'}-\partial_t)\phi\in H^{\ell-2,(\ell-2)/2}$.  That
is, one can make the thin obstacle equal to $0$ at the expense
of getting a nonzero right-hand side in the equation for $u$.
For our purposes, this simple reduction will be sufficient, however, to
take the full advantage of the regularity of $\phi$, when $\ell>2$, one may need to
subtract an additional polynomial from $u$ to guarantee the decay
rate
$$
|g(x,t)|\leq M(|x|^2+|t|)^{(\ell-2)/2}
$$
near the origin, see Proposition~4.4 in \cite{DGPT}.
With the reduction above, the free
boundary $G(v)$ becomes
$$
G(u)=\partial_{Q_1'}(\{u>0\}\cap Q_1').
$$
Further, it will be convenient to consider the even
extension of $u$ in $x_{n-1}$ variable to the entire $Q_1$, i.e., by
putting $u(x',x_n,t)=u(x',-x_{n},t)$. Then such an
extended function will satisfy
$$
(\Delta-\partial_t)u=g\quad\text{in }Q_1\setminus\Lambda(u), 
$$
where $g$ has also been extended by even symmetry in $x_n$, and where
$$
\Lambda(u)=\{u=0\}\cap Q_1',
$$
the so-called \emph{coincidence set}.

As shown in \cite{DGPT}, a successful study of the properties of the free boundary near $(x_0,t_0)\in
G(u)\cap Q_{1/2}'$ can be made by considering the rescalings
$$
u_r(x,t)=u_r^{(x_0,t_0)}(x,t)=\frac{u(x_0+rx,t_0+r^2t)}{H_u^{(x_0,t_0)}(r)^{1/2}},
$$
for $r>0$ and then studying the limits of $u_r$ as $r=r_j\to 0+$
(so-called blowups). Here
$$
H_u^{(x_0,t_0)}(r):=\frac1{r^2}\int_{t_0-r^2}^{t_0}\int_{\R^n}
u(x,t)^2\psi^2(x)\Gamma(x_0-x,t_0-t) dxdt,
$$
where $\psi(x)=\psi(|x|)$ is a cutoff function that equals $1$ on
$B_{3/4}$. Then  a point $(x_0,t_0)\in G(u)\cap B_{1/2}$ is called
regular, if $u_r$ converges in the appropriate sense to 
$$
u_0(x,t)=c_n\Re(x_{n-1}+i x_{n})^{3/2},
$$
as $r=r_j\to 0+$, after a possible rotation of coordinate axes in $\R^{n-1}$. See \cite{DGPT} for more details. 
\newcommand{\cR}{\mathcal{R}}
Thus, let $\cR(u)$
be the set of regular points of $u$. The following result has been
proved in \cite{DGPT}.

\begin{proposition}\label{prop:signor-known} Let $u$ be a solution of
  the parabolic Signorini 
  problem \eqref{eq:par-sig-1-nonhom}--\eqref{eq:par-sig-2-nonhom} in $Q_1^+$ with $g\in H^{1,1/2}(Q_1^+)$. Then the regular set
  $\cR(u)$ is a relatively open subset of $G(u)$. Moreover, if
  $(0,0)\in \cR(u)$, then there exists $\rho=\rho_u>0$ and a
  parabolically Lipschitz function $f$ such that
\begin{align*}
G(u)\cap Q_\rho'=\cR(u)\cap Q_\rho'&=G_f\cap Q_\rho'\\
\Lambda(u)\cap Q_\rho'&=E_f\cap Q_\rho'.
\end{align*}
Furthermore, for any $0<\eta<1$, we can find $\rho>0$ such that
$$
\partial_e u\geq 0\quad\text{in }Q_\rho, 
$$
for any unit direction $e\in\R^{n-1}$ such that $e\cdot e_{n-1}>\eta$
and moreover
$$
\partial_e u(x,t)\geq c\,\dist_p((x,t), E_f)\quad\text{in }Q_\rho, 
$$
for some $c>0$.\qed
\end{proposition}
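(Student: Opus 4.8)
The plan is to feed the blow-up analysis of the parabolic Signorini problem from \cite{DGPT} into the two Signorini-specific comparison lemmas established above, Lemma~\ref{lem:signor-glob-local} and Lemma~\ref{lem:h-nondeg}, following the strategy of \cite{AC}. First I would recall from \cite{DGPT} the Almgren-type monotonicity of the truncated frequency $N_u^{(x_0,t_0)}(r)$: it is nondecreasing in $r$, so $\kappa(x_0,t_0):=N_u^{(x_0,t_0)}(0+)$ exists and is upper semicontinuous (being an infimum over $r>0$ of functions continuous in $(x_0,t_0)$); moreover $\kappa$ takes only the value $3/2$ or values $\ge 2$, every blow-up at a point with $\kappa=3/2$ equals $c_n\Re((x\cdot\nu)+ix_n)^{3/2}$ for some unit $\nu\in\R^{n-1}$, and this blow-up is unique (via the epiperimetric/Monneau-type estimate of \cite{DGPT}), so that the rescalings $u_r$ converge to it, which by the optimal $H^{3/2,3/4}$ regularity of $u$ takes place in $C^1_{\loc}$. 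Since $3/2$ is isolated at the bottom of the frequency spectrum, this already gives that $\cR(u)=\{\kappa=3/2\}$ is relatively open in $G(u)$. From here on I assume $(0,0)\in\cR(u)$ and rotate the axes in $\R^{n-1}$ so that the blow-up at the origin is $u_0(x,t)=c_n\Re(x_{n-1}+ix_n)^{3/2}$, extended evenly in $x_n$.

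The key step is to show that $\partial_e u\ge 0$ in a fixed cylinder $Q_\rho$ for every unit $e\in\R^{n-1}$ with $e\cdot e_{n-1}>\eta$. Since $u_0$ depends neither on $x''$ nor on $t$, one has $\partial_e u_0=(e\cdot e_{n-1})\,\partial_{x_{n-1}}u_0$, which is a positive multiple of $\rho^{1/2}\cos(\theta/2)$ in the polar coordinates $x_{n-1}+i|x_n|=\rho e^{i\theta}$, $\theta\in[0,\pi]$; thus $\partial_e u_0\ge 0$ everywhere and $\partial_e u_0\ge 2\delta_0>0$ on $\{|x_n|\ge\beta_n\}$ near the origin, for a $\delta_0=\delta_0(n,\eta)$. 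I would then apply Lemma~\ref{lem:signor-glob-local} to $h=\partial_e u_r$ with $\Lambda=\Lambda(u_r)$ and this $\delta_0$, letting $\epsilon_0$ be the associated threshold: condition (i) holds because $u_r\ge 0$ on the thin space and vanishes on $\Lambda(u_r)$, so $\nabla_{x'}u_r=0$ there; condition (ii) holds because $(\Delta-\partial_t)\partial_e u_r=\partial_e g_r\to 0$ uniformly as $r\to 0$ (using $g\in H^{1,1/2}$ and $H_u(r)^{1/2}\gtrsim r^{3/2}$); and (iii), (iv) follow from $\partial_e u_r\to\partial_e u_0$ uniformly up to the thin space near the origin. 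Hence $\partial_e u_r\ge 0$ for all small $r$, i.e.\ $\partial_e u\ge 0$ in $Q_\rho$; by compactness of the cone $\{e:|e|=1,\ e\cdot e_{n-1}\ge\eta\}$ the thresholds and $\rho$ can be taken independent of $e$.

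Monotonicity of $u$ along the whole cone of directions then forces the coincidence set and the free boundary to be a subgraph and a graph in the $x_{n-1}$ direction: $\Lambda(u)\cap Q_\rho'=\{x_{n-1}\le f(x'',t)\}=E_f\cap Q_\rho'$ and, shrinking $\rho$ so that $Q_\rho'$ meets only regular points, $G(u)\cap Q_\rho'=\cR(u)\cap Q_\rho'=G_f\cap Q_\rho'$, with $f$ Lipschitz in $x''$ with constant controlled by $\eta$. The remaining $C^{1/2}$-in-$t$ regularity of $f$ is not produced by the spatial cone; I would obtain it as in \cite{DGPT} from the optimal $H^{3/2,3/4}$ regularity of $u$ together with the two-sided bound $\sup_{Q_r(x_0,t_0)}u\sim r^{3/2}$ at free boundary points, applied across nearby time slices. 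Finally, for the nondegeneracy: with $\partial_e u_r\ge 0$, $(\Delta-\partial_t)\partial_e u_r\le\epsilon_0$ and $\partial_e u_r\ge\delta_0$ on $\{x_n=\beta_n\}$ in hand, Lemma~\ref{lem:h-nondeg} gives $\partial_e u_r(x,t)\ge c_0 x_n$ in $\{0<x_n<\beta_n\}$, hence $\ge c_0|x_n|$ by even symmetry; repeating the parabolic Harnack iteration used for \eqref{eq:nondeg-iter-3} in the proof of Theorem~\ref{thm:nondegeneracy2} upgrades this to $\partial_e u_r(x,t)\ge c\,\dist_p((x,t),E_{f_r})$ on a smaller box, and scaling back yields $\partial_e u(x,t)\ge c\,\dist_p((x,t),E_f)$ in $Q_\rho$, after a final shrinking of $\rho$.

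The hard part is the package of inputs from \cite{DGPT} collected in the first paragraph: the Almgren-type monotonicity formula, the classification of $3/2$-homogeneous global Signorini solutions, and above all the \emph{uniqueness} of the blow-up at a regular point, which is what makes the rotation of the axes well defined and promotes the convergence $u_r\to u_0$ to the full limit $r\to 0+$ rather than merely along subsequences. Once these together with the $C^1_{\loc}$ convergence of the rescalings are granted, the rest is soft, relying only on Lemmas~\ref{lem:signor-glob-local} and~\ref{lem:h-nondeg} and a standard Harnack iteration.
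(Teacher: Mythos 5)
Your sketch cannot be checked against an in-paper argument, because the paper gives none: Proposition~\ref{prop:signor-known} is quoted verbatim from \cite{DGPT} with a \qed, and the only Signorini-specific ingredients the paper imports are Lemmas~\ref{lem:signor-glob-local} and \ref{lem:h-nondeg} (themselves Lemmas~11.5 and 11.8 of \cite{DGPT}, stated without proof). That said, what you wrote is essentially a faithful reconstruction of how the cited work proves this statement: frequency monotonicity and the gap $\kappa\in\{3/2\}\cup[2,\infty)$ give relative openness of $\cR(u)$; closeness of the rescalings $u_r$ to the half-space blow-up, fed into Lemma~\ref{lem:signor-glob-local} with $h=\partial_e u_r$ (your verification of (i)--(iv), including the smallness of $\partial_e g_r$ from $g\in H^{1,1/2}$ and $H_u(r)\gtrsim r^3$, is the right bookkeeping), gives the cone of monotonicity and hence the Lipschitz-in-space graph; Lemma~\ref{lem:h-nondeg} plus the Harnack-chain upgrade as in \eqref{eq:nondeg-iter-3} gives $\partial_e u\geq c\,\dist_p(\cdot,E_f)$; and the $1/2$-H\"older continuity of $f$ in $t$ comes from the growth/nondegeneracy of $u$ across time slices, as you indicate. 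The one place where you lean on more than is needed is blow-up uniqueness: since $\partial_e u\geq 0$ in a fixed box is a statement about $u$ itself, it suffices to apply Lemma~\ref{lem:signor-glob-local} at a single sufficiently small scale $r_0$ along a convergent subsequence; full convergence $u_r\to u_0$ (and any epiperimetric/Monneau-type input) is not required for this proposition, and attributing such an estimate to \cite{DGPT} for the parabolic problem is shaky --- uniqueness of the axis is rather a consequence of the Lipschitz graph you obtain. With that caveat, and with the heavy analytic inputs (frequency monotonicity, blow-up classification, uniform $H^{1+\alpha}$ estimates up to the thin space) explicitly delegated to \cite{DGPT}, your outline matches the intended proof.
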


We next show that an application of
Theorem~\ref{thm:nondegeneracy2-lower} implies the following result.

\begin{theorem} Let $u$ be as in Proposition~\ref{prop:signor-known}
  and $(0,0)\in\cR(u)$. Then there exist $\delta<\rho$ such that
  $\nabla''f\in H^{\alpha,\alpha/2}(Q'_{\delta})$ for some $\alpha>0$, i.e., $\cR(u)$
  has H\"older continuous spatial normals in $Q'_{\delta}$.
\end{theorem}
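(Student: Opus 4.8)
The strategy follows the classical Athanasopoulos--Caffarelli argument \cite{AC}, now made available in the parabolic thin setting by Theorem~\ref{thm:nondegeneracy2-lower}. By Proposition~\ref{prop:signor-known}, after possibly shrinking $\rho$ we may assume that near the origin the free boundary and the coincidence set are graphs, $G(u)\cap Q'_\rho = G_f\cap Q'_\rho$ and $\Lambda(u)\cap Q'_\rho = E_f\cap Q'_\rho$, with $f$ parabolically Lipschitz, and that for every unit $e\in\R^{n-1}$ with $e\cdot e_{n-1}>\eta$ the directional derivative $\partial_e u$ is nonnegative in $Q_\rho$ and satisfies the nondegeneracy bound $\partial_e u(x,t)\geq c\,\dist_p((x,t),E_f)$ there. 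Fix $\eta<1/2$ (say), and note that $w_e:=\partial_e u$ solves $(\Delta-\partial_t)w_e=\partial_e g$ away from $\Lambda(u)$; since $g\in H^{1,1/2}$, the right-hand side $\partial_e g$ is bounded, so $|(\Delta-\partial_t)w_e|\leq C_0$ in $Q_\rho\setminus E_f$. Because $u$ is even in $x_n$, so is $w_e$ for $e$ in the spatial hyperplane. Thus after the parabolic rescaling $(x,t)\mapsto(x/\rho,t/\rho^2)$ that maps $Q_\rho$ to (a fixed fraction of) $\Theta_1$, each $w_e$ is exactly a function of the type treated in Theorem~\ref{thm:nondegeneracy2-lower}: nonnegative in $\Theta_1\setminus E_f$, vanishing on $\Theta_1\cap E_f$, with bounded heat operator and the linear nondegeneracy $w_e\geq c_0\dist_p(\cdot,E_f)$, and even in $x_n$.

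\textbf{Key steps.} First, apply Theorem~\ref{thm:nondegeneracy2-lower} to the pair $u_1=w_e=\partial_e u$ and $u_2=w_{n-1}=\partial_{e_{n-1}} u$ for each admissible direction $e$. This yields that the quotient
$$
\frac{\partial_e u}{\partial_{e_{n-1}} u}
$$
extends to a function in $H^{\alpha,\alpha/2}(\overline\Theta_{1/8})$ (after undoing the rescaling, in $\overline{\Theta_{\delta}}$ for some $\delta=\delta(n,L,C_0,c_0,\sup u)>0$), with $\alpha$ and norm controlled uniformly in $e$. Second, translate this ratio control into a statement about the normal to $G_f$. Since $G_f=\{x_{n-1}=f(x'',t),\,x_n=0\}$ and $\Lambda(u)=E_f=\{x_{n-1}\le f(x'',t),x_n=0\}$ is exactly where $\partial_{e_{n-1}} u$ vanishes, the spatial normal to the free boundary at a point is determined by the direction $\nu(x'',t)=(-\nabla''f,1)/\sqrt{1+|\nabla''f|^2}$, and the defining property of the normal is that it is the unique (up to sign) spatial direction $e$ with $\partial_e u=0$ on the approach to the free boundary from within $\Lambda$. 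Concretely, for $i=1,\dots,n-2$ one has, on $G_f$,
$$
\partial_{i}f = -\frac{\partial_{e_i} u}{\partial_{e_{n-1}} u}\Big|_{G_f},
$$
which follows by differentiating the relation $u(x'',f(x'',t),0,t)\equiv 0$ along $\Lambda(u)$ (and using that $\partial_{e_{n-1}} u\neq 0$ on $G_f$ by nondegeneracy). Third, combine: the right-hand side above is the boundary trace of an $H^{\alpha,\alpha/2}$ function, hence $\partial_i f\in H^{\alpha,\alpha/2}(Q'_\delta)$ for each $i=1,\dots,n-2$. Since $\nabla''f=(\partial_1 f,\dots,\partial_{n-2}f)$, this is precisely the asserted spatial $C^{1,\alpha}$ (i.e.\ $H^{1+\alpha,(1+\alpha)/2}$ in the space variable) regularity of the free boundary: $\cR(u)$ has Hölder continuous spatial normals in $Q'_\delta$.

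\textbf{Main obstacle.} The genuinely nontrivial input is already packaged in Theorem~\ref{thm:nondegeneracy2-lower} and, behind it, in the forward and backward boundary Harnack principles and the nondegeneracy lemmas of this paper; the work remaining in this proof is the geometric bookkeeping. The step requiring the most care is making rigorous the identification of $\partial_i f$ with the boundary trace of $\partial_{e_i}u/\partial_{e_{n-1}}u$: one must check that the $H^{\alpha,\alpha/2}$ extension of the quotient furnished by Theorem~\ref{thm:nondegeneracy2-lower}, which a priori lives on $\overline\Theta_\delta=\overline{\Psi_\delta}\cap\{t\le 0\}$, indeed restricts on $G_f$ to the difference quotient of $f$ — this uses the even symmetry (so that the limit from $\{x_n>0\}$ and $\{x_n<0\}$ agree, giving a genuine continuous extension across $E_f\setminus G_f$, as emphasized in the remark after Theorem~\ref{thm:backward3}) together with a standard argument that the directional derivatives $\partial_{e}u$ converge, as $(x,t)\to G_f$ nontangentially, to quantities encoding the tangent plane of $G_f$. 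Once that link is established, the conclusion is immediate; there is also a routine verification that a single choice of $\delta$ works simultaneously for all the finitely many coordinate directions $e_1,\dots,e_{n-2}$ and that all constants depend only on $n,L,C_0,c_0$ and $\sup_{Q_\rho^+}|\nabla u|$.
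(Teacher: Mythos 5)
Your overall strategy is the same as the paper's: apply Theorem~\ref{thm:nondegeneracy2-lower} to pairs of directional derivatives of $u$ (made admissible by Proposition~\ref{prop:signor-known}), conclude H\"older continuity of the quotients up to $E_f$, and read off $\nabla'' f$ from the traces of these quotients. However, the step where you actually make that identification contains a genuine error. You claim that on $G_f$ one has $\partial_i f=-\partial_{e_i}u/\partial_{e_{n-1}}u$ ``by differentiating $u(x'',f(x'',t),0,t)\equiv 0$ and using that $\partial_{e_{n-1}}u\neq 0$ on $G_f$ by nondegeneracy.'' But the nondegeneracy in Proposition~\ref{prop:signor-known} is $\partial_e u\geq c\,\dist_p(\cdot,E_f)$, which gives nothing on $E_f$ itself; in fact $u\geq 0$ on the thin space, $u\equiv 0$ on $\Lambda(u)=E_f$, and $u\in H^{1+\alpha,(1+\alpha)/2}$, so every thin-space first derivative of $u$ vanishes at each point of $G_f$ (consistent with the $3/2$-homogeneous blowup at regular points). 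Thus $\partial_{e_{n-1}}u=0$ on $G_f$, the quotient is a $0/0$ indeterminate there, and implicit differentiation of the relation $u=0$ along the free boundary itself cannot produce the formula; the value of the quotient on $G_f$ is only defined through the H\"older extension furnished by the boundary Harnack, and one still has to prove that this trace equals $-\partial_i f$.

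The paper closes this gap with the level-set argument you would need to supply: for $\sigma>0$ the sets $\{u=\sigma\}$ in the thin space lie where $\partial_{e_{n-1}}u>0$, so the implicit function theorem applies there and gives graphs $x_{n-1}=f_\sigma(x'',t)$ with $\nabla''f_\sigma=-\bigl(\partial_{e_j}u/\partial_{e_{n-1}}u\bigr)$ evaluated on $\{u=\sigma\}$; the uniform $H^{\alpha,\alpha/2}$ bound on the quotients (uniform in $\sigma$) then passes to the limit $f_\sigma\to f$ as $\sigma\to 0^+$, yielding $\nabla''f\in H^{\alpha,\alpha/2}$. A further, minor point: the pure coordinate directions $e_j$, $j\leq n-2$, are not admissible ($e_j\cdot e_{n-1}=0$), so Theorem~\ref{thm:nondegeneracy2-lower} is applied to $e=(\cos\epsilon)e_{n-1}+(\sin\epsilon)e_j$ with $\cos\epsilon>\eta$, and the H\"older regularity of $\partial_{e_j}u/\partial_{e_{n-1}}u$ is then obtained by the linear identity $\partial_{e_j}u=(\partial_e u-\cos\epsilon\,\partial_{e_{n-1}}u)/\sin\epsilon$; your write-up uses these quotients without recording this (easy) reduction. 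With the level-set limiting argument in place of the implicit differentiation at $G_f$, your proof matches the paper's.
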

\begin{proof} We will work in parabolic boxes
  $\Theta_\delta=\Psi_\delta\cap \{t\leq 0\}$ instead
  of cylinders $Q_\delta$.
For a small $\epsilon>0$ let $e=(\cos\epsilon)e_{n-1}+(\sin\epsilon)e_j$ for some
$j=1,\ldots,n-2$ and consider two functions
$$
u_1=\partial_{e}u\quad\text{and}\quad u_2=\partial_{e_{n-1}}u.
$$
Then by Proposition~\ref{prop:signor-known}, the conditions of
Theorem~\ref{thm:nondegeneracy2-lower} are satisfied (after a
rescaling), provided $\cos\epsilon>\eta$. Thus, if we
fix such $\epsilon>0$, then we will have that for some $\delta>0$ and $0<\alpha<1$
$$
\frac{\partial_e u}{\partial_{e_{n-1}}u}\in
H^{\alpha,\alpha/2}(\Theta_{\delta}).               
$$
This gives that
$$
\frac{\partial_{e_j} u}{\partial_{e_{n-1}}u}\in
H^{\alpha,\alpha/2}(\Theta_{\delta}),\quad
j=1,\ldots, n-2.               
$$
Hence the level surfaces $\{u=\sigma\}\cap \Theta'_{\delta}$ are given
as graphs
$$
x_{n-1}=f_\sigma(x'',t),\quad x''\in \Theta_{\delta}'',
$$
with uniform in $\sigma>0$ estimate on
$\|\nabla''f_\sigma\|_{H^{\alpha,\alpha/2}(\Theta_{\delta}'')}$. Consequently, this
implies that 
\[
\nabla'' f\in H^{\alpha,\alpha/2}(\Theta_{\delta}''), 
\]
and completes the proof of the theorem.
\end{proof}

\begin{bibdiv}
\begin{biblist}

\bib{HA}{article}{
   author={Aikawa, Hiroaki},
   title={Martin boundary and boundary Harnack principle for non-smooth
   domains [MR1962228]},
   conference={
      title={Selected papers on differential equations and analysis},
   },
   book={
      series={Amer. Math. Soc. Transl. Ser. 2},
      volume={215},
      publisher={Amer. Math. Soc.},
      place={Providence, RI},
   },
   date={2005},
   pages={33--55},
   review={\MR{2186550}},
}
\bib{ALM}{article}{
   author={Aikawa, Hiroaki},
   author={Lundh, Torbj{\"o}rn},
   author={Mizutani, Tomohiko},
   title={Martin boundary of a fractal domain},
   journal={Potential Anal.},
   volume={18},
   date={2003},
   number={4},
   pages={311--357},
   issn={0926-2601},
   review={\MR{1953266 (2004a:31002)}},
   doi={10.1023/A:1021823023212},
}

\bib{AU}{article}{
   author={Arkhipova, A.},
   author={Uraltseva, N.},
   title={Sharp estimates for solutions of a parabolic Signorini problem},
   journal={Math. Nachr.},
   volume={177},
   date={1996},
   pages={11--29},
   issn={0025-584X},
   review={\MR{1374941 (97a:35084)}},
   doi={10.1002/mana.19961770103},
}

\bib{Aronson}{article}{
   author={Aronson, D. G.},
   title={Non-negative solutions of linear parabolic equations},
   journal={Ann. Scuola Norm. Sup. Pisa (3)},
   volume={22},
   date={1968},
   pages={607--694},
   review={\MR{0435594 (55 \#8553)}},
}

\bib{Ath1}{article}{
   author={Athanasopoulous, Ioannis},
   title={Regularity of the solution of an evolution problem with
   inequalities on the boundary},
   journal={Comm. Partial Differential Equations},
   volume={7},
   date={1982},
   number={12},
   pages={1453--1465},
   issn={0360-5302},
   review={\MR{679950 (84m:35052)}},
   doi={10.1080/03605308208820258},
}

\bib{AC}{article}{
   author={Athanasopoulos, Ioannis},
   author={Caffarelli, Luis A.},
   title={A theorem of real analysis and its application to free boundary
   problems},
   journal={Comm. Pure Appl. Math.},
   volume={38},
   date={1985},
   number={5},
   pages={499--502},
   issn={0010-3640},
   review={\MR{803243 (86j:49062)}},
   doi={10.1002/cpa.3160380503},
}

\bib{ACS}{article}{
   author={Athanasopoulos, I.},
   author={Caffarelli, L.},
   author={Salsa, S.},
   title={Caloric functions in Lipschitz domains and the regularity of
   solutions to phase transition problems},
   journal={Ann. of Math. (2)},
   volume={143},
   date={1996},
   number={3},
   pages={413--434},
   issn={0003-486X},
   review={\MR{1394964 (97e:35074)}},
   doi={10.2307/2118531},
}

\bib{ACS2}{article}{
   author={Athanasopoulos, I.},
   author={Caffarelli, L. A.},
   author={Salsa, S.},
   title={The structure of the free boundary for lower dimensional obstacle
   problems},
   journal={Amer. J. Math.},
   volume={130},
   date={2008},
   number={2},
   pages={485--498},
   issn={0002-9327},
   review={\MR{2405165 (2009g:35345)}},
   doi={10.1353/ajm.2008.0016},
}
\bib{CFMS}{article}{
   author={Caffarelli, L.},
   author={Fabes, E.},
   author={Mortola, S.},
   author={Salsa, S.},
   title={Boundary behavior of nonnegative solutions of elliptic operators
   in divergence form},
   journal={Indiana Univ. Math. J.},
   volume={30},
   date={1981},
   number={4},
   pages={621--640},
   issn={0022-2518},
   review={\MR{620271 (83c:35040)}},
   doi={10.1512/iumj.1981.30.30049},
}
\bib{CS}{book}{
   author={Caffarelli, Luis},
   author={Salsa, Sandro},
   title={A geometric approach to free boundary problems},
   series={Graduate Studies in Mathematics},
   volume={68},
   publisher={American Mathematical Society},
   place={Providence, RI},
   date={2005},
   pages={x+270},
   isbn={0-8218-3784-2},
   review={\MR{2145284 (2006k:35310)}},
}

\bib{CSS}{article}{
   author={Caffarelli, Luis A.},
   author={Salsa, Sandro},
   author={Silvestre, Luis},
   title={Regularity estimates for the solution and the free boundary of the
   obstacle problem for the fractional Laplacian},
   journal={Invent. Math.},
   volume={171},
   date={2008},
   number={2},
   pages={425--461},
   issn={0020-9910},
   review={\MR{2367025 (2009g:35347)}},
   doi={10.1007/s00222-007-0086-6},
}

\bib{Dahl}{article}{
   author={Dahlberg, Bj{\"o}rn E. J.},
   title={Estimates of harmonic measure},
   journal={Arch. Rational Mech. Anal.},
   volume={65},
   date={1977},
   number={3},
   pages={275--288},
   issn={0003-9527},
   review={\MR{0466593 (57 \#6470)}},
}

\bib{DGPT}{article}{
   author={Danielli, Donatella},
   author={Garofalo, Nicola},
   author={Petrosyan, Arshak},
   author={To, Tung},
   title={Optimal regularity and the free boundary in the parabolic
     signorini problem},
date={2013},
status={in preparation},
}

\bib{Doob}{book}{
   author={Doob, Joseph L.},
   title={Classical potential theory and its probabilistic counterpart},
   series={Classics in Mathematics},
   note={Reprint of the 1984 edition},
   publisher={Springer-Verlag},
   place={Berlin},
   date={2001},
   pages={xxvi+846},
   isbn={3-540-41206-9},
   review={\MR{1814344 (2001j:31002)}},
   doi={10.1007/978-3-642-56573-1},
}

\bib{DL}{book}{
   author={Duvaut, G.},
   author={Lions, J.-L.},
   title={Inequalities in mechanics and physics},
   note={Translated from the French by C. W. John;
   Grundlehren der Mathematischen Wissenschaften, 219},
   publisher={Springer-Verlag},
   place={Berlin},
   date={1976},
   pages={xvi+397},
   isbn={3-540-07327-2},
   review={\MR{0521262 (58\#25191)}},
}

\bib{Eklund}{article}{
   author={Eklund, Neil A.},
   title={Boundary behavior of solutions of parabolic equations with
   discontinuous coefficients},
   journal={Bull. Amer. Math. Soc.},
   volume={77},
   date={1971},
   pages={788--792},
   issn={0002-9904},
   review={\MR{0293234 (45 \#2311)}},
}

\bib{Garofalo3}{article}{
   author={Fabes, Eugene B.},
   author={Garofalo, Nicola},
   author={Salsa, Sandro},
   title={Comparison theorems for temperatures in noncylindrical domains},
   language={English, with Italian summary},
   journal={Atti Accad. Naz. Lincei Rend. Cl. Sci. Fis. Mat. Natur. (8)},
   volume={77},
   date={1984},
   number={1-2},
   pages={1--12 (1985)},
   issn={0392-7881},
   review={\MR{884371 (88i:35069)}},
}

\bib{Garofalo2}{article}{
   author={Fabes, Eugene B.},
   author={Garofalo, Nicola},
   author={Salsa, Sandro},
   title={A backward Harnack inequality and Fatou theorem for nonnegative
   solutions of parabolic equations},
   journal={Illinois J. Math.},
   volume={30},
   date={1986},
   number={4},
   pages={536--565},
   issn={0019-2082},
   review={\MR{857210 (88d:35089)}},
}

\bib{EG}{article}{
   author={Evans, Lawrence C.},
   author={Gariepy, Ronald F.},
   title={Wiener's criterion for the heat equation},
   journal={Arch. Rational Mech. Anal.},
   volume={78},
   date={1982},
   number={4},
   pages={293--314},
   issn={0003-9527},
   review={\MR{653544 (83g:35047)}},
   doi={10.1007/BF00249583},
}

\bib{Garofalo}{article}{
   author={Garofalo, Nicola},
   title={Second order parabolic equations in nonvariational forms: boundary
   Harnack principle and comparison theorems for nonnegative solutions},
   journal={Ann. Mat. Pura Appl. (4)},
   volume={138},
   date={1984},
   pages={267--296},
   issn={0003-4622},
   review={\MR{779547 (87f:35115)}},
   doi={10.1007/BF01762548},
}

\bib{HLN}{article}{
   author={Hofmann, Steve},
   author={Lewis, John L.},
   author={Nystr{\"o}m, Kaj},
   title={Caloric measure in parabolic flat domains},
   journal={Duke Math. J.},
   volume={122},
   date={2004},
   number={2},
   pages={281--346},
   issn={0012-7094},
   review={\MR{2053754 (2005e:35092)}},
   doi={10.1215/S0012-7094-04-12222-5},
}

\bib{HW}{article}{
    author = {Hunt, Richard A. and Wheeden, Richard L.},
     title = {Positive harmonic functions on {L}ipschitz domains},
   journal = {Trans. Amer. Math. Soc.},
    volume = {147},
      date = {1970},
     pages = {507--527},
      issn = {0002-9947},
}

\bib{JK}{article}{
   author={Jerison, David S.},
   author={Kenig, Carlos E.},
   title={Boundary behavior of harmonic functions in nontangentially
   accessible domains},
   journal={Adv. in Math.},
   volume={46},
   date={1982},
   number={1},
   pages={80--147},
   issn={0001-8708},
   review={\MR{676988 (84d:31005b)}},
   doi={10.1016/0001-8708(82)90055-X},
}

\bib{Ke-ell}{article}{
   author={Kemper, John T.},
   title={A boundary Harnack principle for Lipschitz domains and the
   principle of positive singularities},
   journal={Comm. Pure Appl. Math.},
   volume={25},
   date={1972},
   pages={247--255},
   issn={0010-3640},
   review={\MR{0293114 (45 \#2193)}},
}

\bib{Kemper}{article}{
   author={Kemper, John T.},
   title={Temperatures in several variables: Kernel functions,
   representations, and parabolic boundary values},
   journal={Trans. Amer. Math. Soc.},
   volume={167},
   date={1972},
   pages={243--262},
   issn={0002-9947},
   review={\MR{0294903 (45 \#3971)}},
}

\bib{LSU}{book}{
   author={Lady{\v{z}}enskaja, O. A.},
   author={Solonnikov, V. A.},
   author={Ural{\cprime}ceva, N. N.},
   title={Linear and quasilinear equations of parabolic type},
   language={Russian},
   series={Translated from the Russian by S. Smith. Translations of
   Mathematical Monographs, Vol. 23},
   publisher={American Mathematical Society},
   place={Providence, R.I.},
   date={1967},
   pages={xi+648},
   review={\MR{0241822 (39 \#3159b)}},
}

\bib{Lieberman}{book}{
   author={Lieberman, Gary M.},
   title={Second order parabolic differential equations},
   publisher={World Scientific Publishing Co. Inc.},
   place={River Edge, NJ},
   date={1996},
   pages={xii+439},
   isbn={981-02-2883-X},
   review={\MR{1465184 (98k:35003)}},
}

\bib{Salsa}{article}{
   author={Salsa, Sandro},
   title={Some properties of nonnegative solutions of parabolic differential
   operators},
   language={English, with Italian summary},
   journal={Ann. Mat. Pura Appl. (4)},
   volume={128},
   date={1981},
   pages={193--206},
   issn={0003-4622},
   review={\MR{640782 (83j:35078)}},
   doi={10.1007/BF01789473},
}

\bib{Ur}{article}{
   author={Ural{\cprime}tseva, N. N.},
   title={H\"older continuity of gradients of solutions of parabolic
   equations with boundary conditions of Signorini type},
   language={Russian},
   journal={Dokl. Akad. Nauk SSSR},
   volume={280},
   date={1985},
   number={3},
   pages={563--565},
   issn={0002-3264},
   review={\MR{775926 (87b:35025)}},
}

\bib{Wu-ell}{article}{
   author={Wu, Jang Mei G.},
   title={Comparisons of kernel functions, boundary Harnack principle and
   relative Fatou theorem on Lipschitz domains},
   language={English, with French summary},
   journal={Ann. Inst. Fourier (Grenoble)},
   volume={28},
   date={1978},
   number={4},
   pages={147--167, vi},
   issn={0373-0956},
   review={\MR{513884 (80g:31005)}},
}

\bib{Wu}{article}{
   author={Wu, Jang Mei G.},
   title={On parabolic measures and subparabolic functions},
   journal={Trans. Amer. Math. Soc.},
   volume={251},
   date={1979},
   pages={171--185},
   issn={0002-9947},
   review={\MR{531974 (82b:31019a)}},
   doi={10.2307/1998688},
}

\end{biblist}
\end{bibdiv}

\end{document}